\def\final{1}
\titleformat{\subsubsection}[runin]
{\normalfont\normalsize\bfseries}{\thesubsubsection.}{1 ex}{}
\newcommand{\mynote}[1]{\marginpar{\tiny\sf #1}}
\newcommand{\mynote}[1]{}
\declaretheorem{theorem}
\declaretheorem[within=section]{lemma}
\declaretheorem[sibling=lemma,name=Proposition]{prop}
\newcommand{\figref}[1]{Figure \ref{fig:#1}}
\newcommand{\lemref}[1]{Lemma \ref{lemma:#1}}
\newcommand{\lemrefX}[1]{\ref{lemma:#1}}
\newcommand{\propref}[1]{Proposition \ref{prop:#1}}
\newcommand{\theoref}[1]{Theorem \ref{theo:#1}}
\newcommand{\secref}[1]{Section \ref{sec:#1}}
\newcommand{\secrefX}[1]{\ref{sec:#1}}
\newcommand{\lemlab}[1]{\label{lemma:#1}}
\newcommand{\proplab}[1]{\label{prop:#1}}
\newcommand{\theolab}[1]{\label{theo:#1}}
\newcommand{\seclab}[1]{\label{sec:#1}}
\newcommand{\Gal}{(\tilde{G},\varphi,\tilde{\bm{\ell}})}
\newcommand{\Gad}{(\tilde{G},\varphi,\tilde{\vec d})}
\newcommand{\Gtilde}{\tilde{G}}
\newcommand{\elltilde}{\tilde{\bm{\ell}}}
\newcommand{\bgamma}{\bm{\gamma}}
\newcommand{\Gpa}{G(\vec p,\Phi)}
\newcommand{\Vtilde}{\tilde{V}}
\newcommand{\Etilde}{\tilde{E}}
\newcommand{\Euc}{\operatorname{Euc}}
\newcommand{\into}{\hookrightarrow}
\newcommand{\Trans}{\Lambda}
\newcommand{\Teich}{\operatorname{Teich}}
\newcommand{\JMat}[1]{ \left( \begin{array}{rr} #1 \end{array} \right)}
\newcommand{\Id}{\operatorname{Id}}
\newcommand{\rep}{\operatorname{rep}}
\newcommand{\teich}{\operatorname{teich}}
\newcommand{\GammaTT}{$\Gamma$-$(2,2)$ }
\newcommand{\GammaOO}{$\Gamma$-$(1,1)$ }
\newcommand{\GammaCL}{$\Gamma$-colored-Laman }
\newcommand{\cent}{\operatorname{cent}}
\newcommand{\Cent}{\operatorname{Cent}}
\newcommand{\Rep}{\operatorname{Rep}}
\newcommand{\Rad}{\operatorname{cl}}
\renewcommand{\vec}[1]{\mathbf{#1}}
\newcommand{\iprod}[2]{\left\langle {#1},{#2}\right\rangle}
\newcommand{\R}{\mathbb{R}}
\newcommand{\Z}{\mathbb{Z}}
\newcommand{\eop}{\hfill$\qed$}
\begin{document}
\title{Frameworks with forced symmetry II: \\
Orientation-preserving crystallographic groups}
\author{Justin Malestein\thanks{Einstein Institute of Mathematics, Hebrew University of Jerusalem, justinmalestein@gmail.com}
\and Louis Theran\thanks{Institut für Mathematik,
Diskrete Geometrie, Freie Universität Berlin, theran@math.fu-berlin.de}}
\date{}
\maketitle
\begin{abstract}
We give a combinatorial characterization of minimally rigid planar frameworks with
orientation-preserving crystallographic symmetry, under the constraint of forced symmetry.
The main theorems are proved by extending the methods of the first paper in this sequence
from groups generated by a single rotation to groups generated by translations and
rotations.  The proofs make use of a new family of matroids defined on crystallographic
groups and associated submodular functions.
\end{abstract}

\section{Introduction} \seclab{intro}

A \emph{crystallographic framework} is an \emph{infinite} planar structure, \emph{symmetric} with respect to a crystallographic
group, made of fixed-length bars connected by universal joints with full rotational freedom.  The allowed continuous
motions preserve the lengths and connectivity of the bars (as in the finite framework case) and
\emph{symmetry with respect to the group $\Gamma$} (this is the new addition).
However, the representation of $\Gamma$ is \emph{not} fixed and may change. This
model extends the one from \cite{BS10}, using a formalism similar to \cite{MT13,MT11,MT12a,MT12fsi}.
Figures \ref{fig:gam2graphtoperiodic} and \ref{fig:gam4graphtoperiodic} show examples of crystallographic frameworks.
\begin{figure}[htbp]
\centering
\subfigure[]{\includegraphics[width=.35\textwidth]{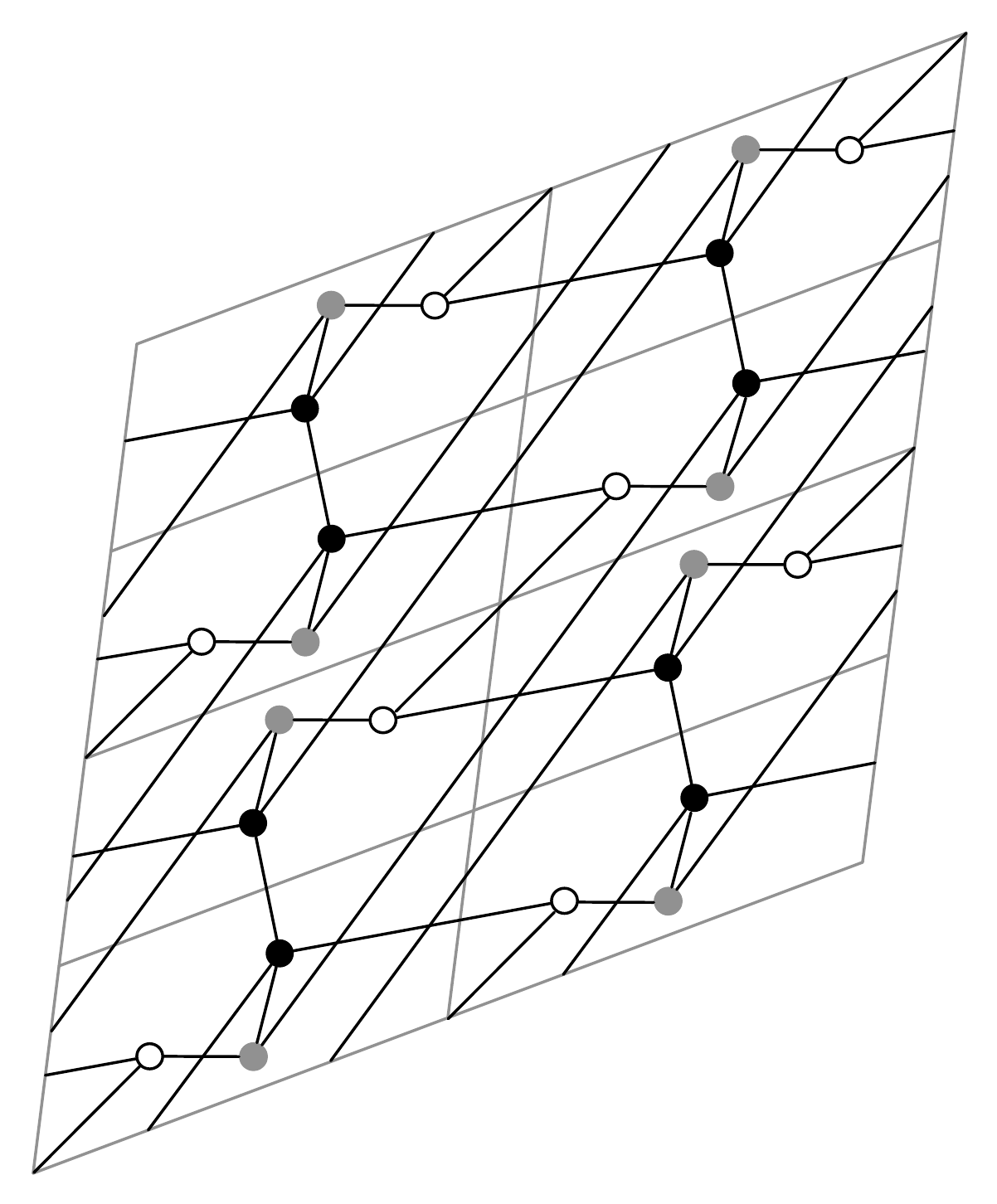}}
\subfigure[]{\includegraphics[width=.35\textwidth]{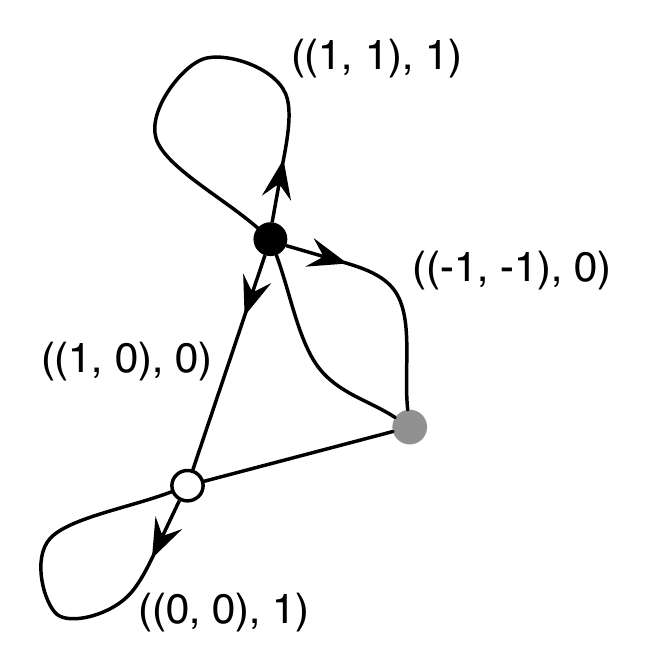}}
\caption{A $\Gamma_2$-crystallographic framework:
(a) A piece of an infinite crystallographic framework with $\Gamma_2$ symmetry.  The group
$\Gamma_2$ is generated by an order $2$ rotation and translations.  The origin, which is
a rotation center, is at the center of the diagram.  Each quadrilateral (with gray edges) is a
fundamental domain of the $\Gamma_2$-action on $\R^2$.
(b) The associated \emph{colored graph} capturing the underlying combinatorics.  Edges that
are not marked and oriented are colored with the identity element of $\Gamma_2$.  The vertices
in (b) are shaded differently to show the fibers over each of them in (a).}
\label{fig:gam2graphtoperiodic}
\end{figure}
A crystallographic framework is \emph{rigid} when the only allowed motions (that, additionally,
must act on the representation of $\Gamma$) are Euclidean isometries  and \emph{flexible} otherwise.

\begin{figure}[htbp]
\centering
\subfigure[]{\includegraphics[width=0.35\textwidth]{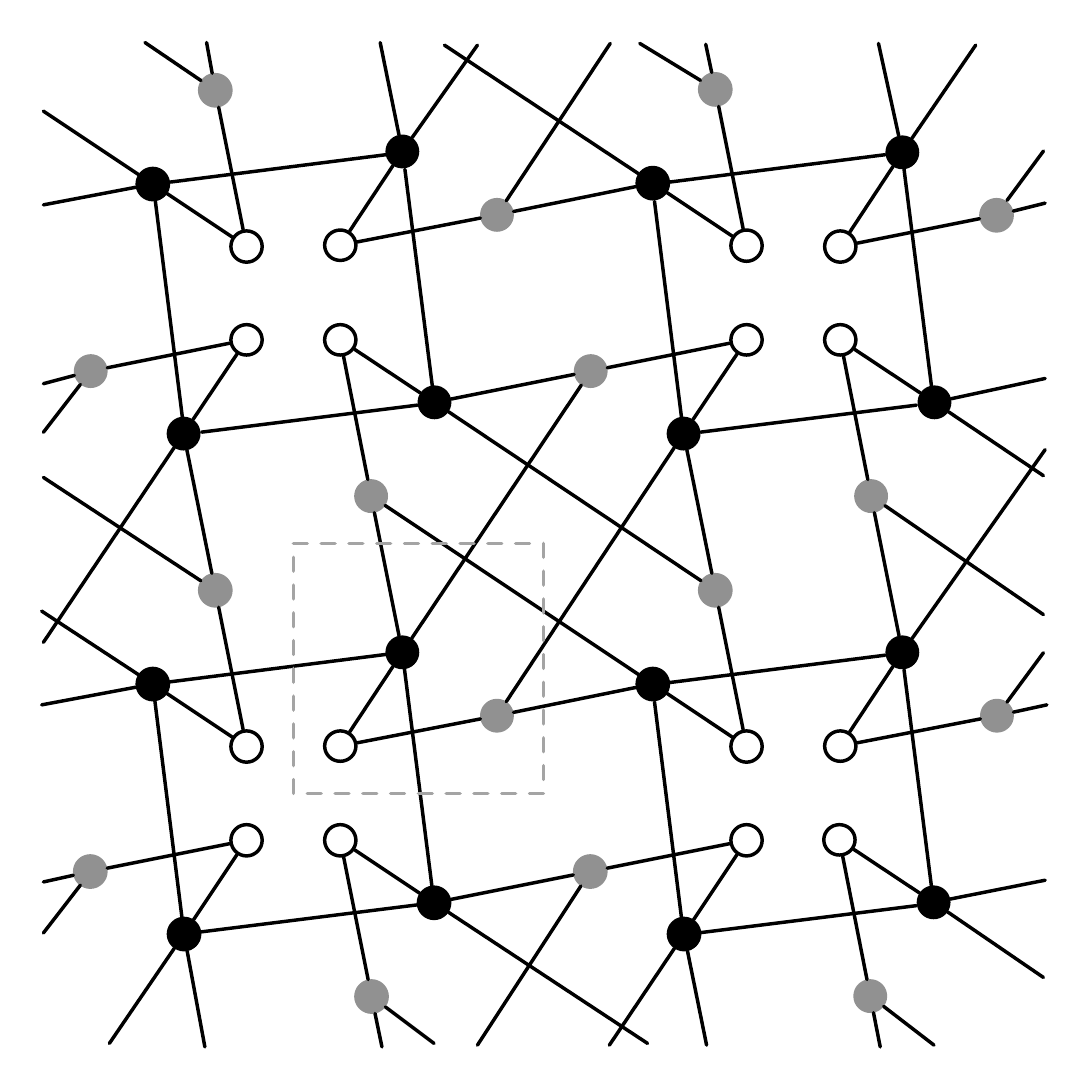}}
\subfigure[]{\includegraphics[width=0.35\textwidth]{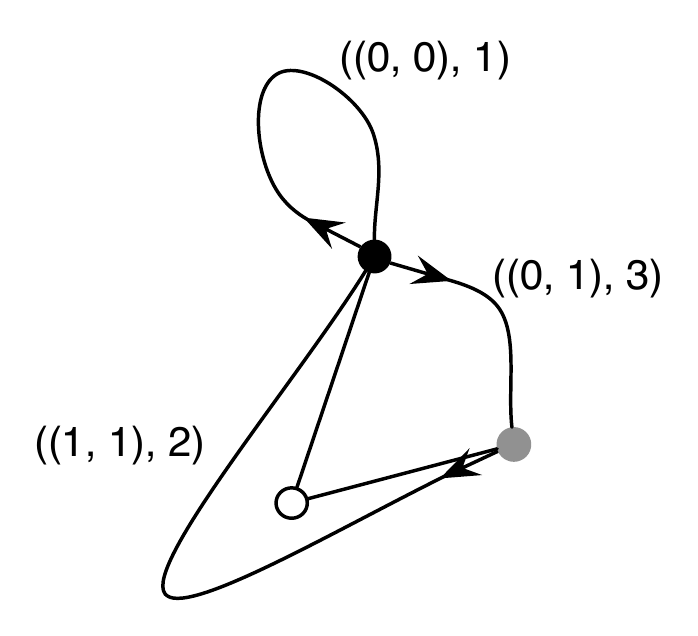}}
\caption{A $\Gamma_4$-crystallographic framework:
(a) A piece of an infinite crystallographic framework with $\Gamma_4$ symmetry.  The group
$\Gamma_4$ is generated by an order $4$ rotation and translations.  The fundamental domain of the
$\Gamma_4$-action on $\R^2$ is shown as a dashed box.
(b) The associated colored graph capturing the underlying combinatorics.  The color coding
conventions are as in \figref{gam2graphtoperiodic}.
}
\label{fig:gam4graphtoperiodic}
\end{figure}

\subsection{Algebraic setup and combinatorial model}
A $\Gamma$-crystallographic framework is given by the data $\Gal$.  The infinite
graph $\tilde{G}$ encodes the combinatorial structure of the bars.  The crystallographic
group $\Gamma$, along with the free $\Gamma$-action $\varphi$ on $\tilde{G}$ by
automorphisms (i.e., $\varphi : \Gamma\to\operatorname{Aut}(\tilde{G})$ is a
representation) determines the framework's symmetry; for convenience, we define the
notation $\gamma(i) := \varphi(\gamma)(i)$ for $\gamma\in \Gamma$ and $i\in \Vtilde$.
The rest of the framework's geometric data is given by the vector $\elltilde$, which is an
assignment of a positive length to each edge $ij\in \Etilde$.

We assume that $\Gtilde$
has finite quotient $G = \Gtilde/\Gamma$ with $n$ vertices and $m$ edges.
To keep the terminology in this framework manageable,
we will refer simply to \emph{frameworks} when the context is clear, with the understanding that the
frameworks appearing in the paper are crystallographic.

A \emph{realization} $\Gpa$ of the abstract framework $\Gal$ is defined to be an assignment
$\vec p=\left(\vec p_{i}\right)_{i\in\Vtilde}$ of points to the vertices of $\Gtilde$ and
a representation $\Phi$ from $\Gamma$ to a Euclidean isometry group, such that
\begin{align}
||\vec p_i - \vec p_j||  =   \elltilde_{ij} & \text{\qquad for all edges $ij\in \Etilde$} \label{lengths} \\
\Phi(\gamma)\cdot \vec p_i  =  \vec p_{\gamma(i)} & \text{\qquad for all group elements $\gamma\in \Gamma$ and vertices $i\in\Vtilde$} \label{equivariant}
\end{align}
The condition \eqref{lengths}, which appears in the theory of finite frameworks, says that a realization
respects the given edge lengths.  Equation \eqref{equivariant} says that, if we hold $\Phi$ fixed, regarded as a map
$\vec p : \Vtilde\to \R^2$, $\vec p$ is equivariant with respect to the $\Gamma$-actions on $\tilde{G}$ and $\R^2$.
However, $\Phi$ is, in general, \emph{not} fixed.
This is a very important feature of the model: the motions  available to the framework include those that deform
the representation $\Phi$ of $\Gamma$, provided this happens in a way compatible with the $\Gamma$-action
$\varphi$.

The \emph{realization space} $\mathcal{R}\Gal$  (shortly $\mathcal{R}$)
of an abstract framework is defined as the set of its realizations.  The \emph{configuration space}
$\mathcal{C}$ is defined to be the quotient of $\mathcal{R}$ by Euclidean isometries.  A realization  $\tilde{G}(\vec p, \Phi)$
is \emph{rigid} if it is isolated in $\mathcal{C}$ and otherwise \emph{flexible}.
(See \secref{continuous} for a detailed treatment of these spaces.)

As the combinatorial model for crystallographic frameworks it will be more convenient to use colored graphs.
A \emph{colored graph} $(G,\bgamma)$ is a finite, directed graph $G$, with an assignment
$\bgamma = (\gamma_{ij})_{ij\in E(G)}$ of an element of a group $\Gamma$ to each edge.

A straightforward specialization of covering space theory, described in \secref{colored-graphs},
associates $(\tilde{G},\varphi)$ with a colored graph $(G,\bgamma)$: $G$ is the quotient of $\tilde{G}$
by $\Gamma$, and the colors encode the covering map $\tilde{G} \to G$ via a map
$\rho : \pi_1(G,b) \to \Gamma$.

\subsection{Main theorem}\seclab{mainstatement}
Our main result is the following ``Maxwell-Laman-type''
theorem for  crystallographic frameworks where the symmetry group is
generated by translations and a finite order rotation.
The ``\emph{$\Gamma$-colored-Laman graphs}'' appearing in the statement are defined in \secref{gamma-laman};
genericity is defined in detail in \secref{infinitesimal}, but the term is used in the standard sense of
algebraic geometry: generic frameworks are the (open, dense) complement of a proper algebraic
subset of $\mathbb{R}^{m}$.
\begin{restatable}{theorem}{mainthm} \theolab{main}%
Let $\Gamma$ be an orientation-preserving crystallographic group.
A generic crystallographic framework $(\tilde{G}, \varphi, \tilde{\bm{\ell}})$
with symmetry group $\Gamma$
is minimally rigid if and only if its colored quotient graph is $\Gamma$-colored-Laman.
\end{restatable}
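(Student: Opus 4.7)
The plan is to reduce the statement to a question about the generic rank of a symmetry-adapted rigidity matrix and then to identify this matroid with the $\Gamma$-colored-Laman matroid. First I would set up an \emph{infinitesimal rigidity matrix} $R(\Gtilde,\varphi,\vec p,\Phi)$ associated with a realization $\Gpa$: its columns are indexed by vertex velocities $\dot{\vec p}_i$ for $i$ ranging over a fundamental domain, together with ``representation columns'' encoding an infinitesimal deformation of $\Phi$ in the relevant Teichmüller-type space of discrete faithful representations of $\Gamma$. Its rows are indexed by edges in the quotient $G$, and the row for an edge $ij$ colored by $\gamma_{ij}$ is obtained by differentiating the length equation $\|\vec p_i - \Phi(\gamma_{ij})\vec p_j\|^2 = \elltilde_{ij}^2$. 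Standard algebraic geometry (to be carried out in the promised Section on infinitesimal rigidity) reduces minimal rigidity at a generic point to the equality of $m$ with the generic rank of $R$, so the theorem becomes the assertion that this generic rank equals $2n + \teich(\Gamma) - \isom_\Gamma$, i.e., the number of parameters modulo the centralizer of $\Gamma$ in $\Euc(\R^2)$.

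Second, I would do the dimension bookkeeping for the ``only if'' (Maxwell) direction. For each subgraph $H$ of the colored quotient, restricting $R$ to the rows of the lifts of $H$ and projecting onto the columns that the edges of $H$ can possibly touch bounds $|E(H)|$ by a function $f(H)$ of the vertex count and the algebraic properties of $\rho(\pi_1(H,b)) \subseteq \Gamma$ (in particular, whether the restricted holonomy has nontrivial translation subgroup or a full-rank rotation contribution). The function $f$ should match the sparsity function defining \GammaCL graphs in \secref{gamma-laman}, yielding the necessary direction as an immediate corank computation and identifying the ``new submodular functions'' promised in the abstract.

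Third, for the ``if'' direction I expect the argument to proceed via a matroid union identity: express the \GammaCL matroid as the union of two (or more) count matroids---one capturing the ``translation-like'' degrees of freedom coming from deforming the lattice, and one a graphic/$\Gamma$-gain matroid capturing the combinatorial skeleton of the quotient---then match each summand with an independence statement for a block of columns of $R$ at a generic point. This reduces sufficiency to independently constructing certificates for each summand, which can be done by evaluating the rigidity matrix at a carefully chosen symmetric configuration and using the genericity transfer principle. As a backup, a Henneberg-type inductive scheme on \GammaCL graphs (vertex $0$- and $1$-extensions, perhaps plus colored edge splits tailored to the orbit structure of $\Gamma$) would give an alternative route once base cases are handled directly.

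The main obstacle will be the sufficiency step: the representation $\Phi$ itself deforms, so the ``extra'' columns of $R$ are not simply those of a standard planar framework, and the translation subgroup of $\Gamma$ couples non-trivially with the rotation part (which is where Part I of this sequence stops being directly applicable). Proving that the matroid-union decomposition \emph{exactly} captures $\rk(R)$ generically---rather than merely bounding it---is where the real work lies, and it is also where the combinatorial subtlety of $\Gamma_k$ for $k \in \{2,3,4,6\}$ enters, since the centralizer of a rotation of order $k$ inside $\Euc(\R^2)$ behaves quite differently in each case.
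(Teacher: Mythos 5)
Your reduction to the generic rank of the symmetry-adapted rigidity matrix, and your Maxwell-direction bookkeeping, are broadly consistent with what the paper does (Sections \secrefX{continuous}--\secrefX{infinitesimal} and \propref{maxwell-direction}); note, though, that even there the honest content is not a pure column count: the term $\sum_i T(G_i)$ is justified by exhibiting, for every subgraph, a space of ``collapsed'' solutions of an auxiliary direction network (\lemref{collapsed-dimensions}), because the trivial motions of a component are a kernel that must be shown to exist, not columns that are absent.

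The genuine gap is in your sufficiency step, and it is exactly the point you flag as ``where the real work lies'' without supplying the idea that overcomes it. Your plan --- decompose the \GammaCL matroid by Matroid Union and match each summand with an independence statement for a block of columns of $R$, certified at a carefully chosen symmetric configuration --- does not go through as stated: the representation columns (the deformation of $\Phi$) are shared by every edge row and couple the translation and rotation parts of $\Gamma_k$, so there is no partition of columns corresponding to the union summands, and the paper explicitly notes that a straightforward reduction to Matroid Union in the style of \cite{ST10} fails here. What the paper does instead is the direction network method: it first proves \theoref{direction} (in the pinned form \propref{dnthmfixedcenter}), whose heart is \propref{crystal-collapse} --- a generic direction network whose quotient is \GammaTT has only collapsed realizations. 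That proposition is proved geometrically, using the decomposition of a \GammaTT graph into two spanning \GammaOO graphs (\propref{gamma22-decomp}, which is where Matroid Union actually enters, at the level of edge sets rather than matrix columns), together with special direction assignments (algebraically independent directions for $k=3,4,6$; a single common direction per \GammaOO piece for $k=2$, forcing all vertices onto lines through rotation centers). Sufficiency for rigidity (\propref{laman-direction}) then follows by taking a generic direction assignment $\vec d$ with $\vec d^{\perp}$ also generic, realizing the \GammaCL graph faithfully via \propref{dnthmfixedcenter}, and observing that substituting the realized edge vectors into the rigidity system turns it into the realization system of the perpendicular direction network, which has full rank. Your Henneberg-type backup is likewise not what the paper uses and would require handling colored $0$/$1$-extensions and edge splits compatible with all the $\Gamma_k$, which is nontrivial and unaddressed in your sketch; without the direction-network intermediary (or a worked-out substitute), the sufficiency direction of \theoref{main} remains unproved in your proposal.
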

Whether a colored graph is $\Gamma$-colored-Laman can be checked in polynomial time
by combinatorial algorithms based on Edmonds's augmenting path algorithm for Matroid Union
\cite{E65}.

\subsection{Infinitesimal rigidity and direction networks}
In order to prove the rigidity \theoref{main}, we will prove a
combinatorial characterization of \emph{generic infinitesimal rigidity},
which is a linearization of the problem.  Standard kinds of arguments, along the
lines of \cite{AR78}, imply that, generically, rigidity and infinitesimal rigidity
coincide.  We will study infinitesimal rigidity using crystallographic direction
networks.

A \emph{$\Gamma$-crystallographic direction network} $\Gad$ consists of an infinite graph $\Gtilde$
with a free $\Gamma$-action $\varphi$ on the edges and vertices, and an assignment of a \emph{direction}
$\tilde{\vec d}_{ij}\in \R^2\setminus\{0\}$ to each edge $ij\in \tilde{E}$.
We define a realization $\Gpa$ of $\Gad$ to be a mapping of $\Vtilde$ to a point set $\vec p$ and a
representation $\Phi$ of $\Gamma$ by Euclidean isometries such that
\begin{align}
\iprod{\vec p_i - \vec p_j}{\tilde{\vec d}_{ij}^\perp}  =  0 & \text{\qquad for all edges $ij\in \Etilde$} \label{directions}
\\
\Phi(\gamma)\cdot\vec p_i  =  \vec p_{\gamma(i)} & \text{\qquad for all group elements $\gamma\in \Gamma$ and vertices $i\in\Vtilde$}
\label{equivariantD}
\end{align}
Equation \eqref{directions} says that, in any realization, $\vec p_i - \vec p_j$ is a scalar multiple of $\tilde{\vec d}_{ij}$,
for each edge $ij\in \Etilde$; \eqref{equivariantD} gives the symmetry constraint.
Since setting all the $\vec p_i$ equal and $\Phi$ to be trivial produces a realization, the realization space is never empty.
For our purpose, though, such realizations are degenerate.  We define a realization of a crystallographic
direction network to be \emph{faithful} if none of the edges of $\Gtilde$ are realized with coincident endpoints.
Our second main result is an exact characterization of when a generic direction network
admits a faithful realization.
\begin{restatable}{theorem}{directionthm} \theolab{direction}%
Let $\Gamma$ be an orientation-preserving crystallographic group.  A
generic $\Gamma$-crystallographic direction network $\Gad$ has a unique, up to translation and scaling,
faithful realization if and only if its associated colored graph is $\Gamma$-colored-Laman.
\end{restatable}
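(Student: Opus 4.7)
The plan is to convert the existence and uniqueness of faithful realizations into a rank statement about a finite direction matrix, and then identify that rank with the rank function of the matroid underlying the $\Gamma$-colored-Laman definition in \secref{gamma-laman}. After quotienting by the $\Gamma$-action, a realization $\Gpa$ of $\Gad$ is determined by positions $(\vec p_i)_{i\in V(G)}$ of one representative per $\Gamma$-orbit together with a representation $\Phi$ varying in the finite-dimensional representation variety $\Rep(\Gamma)$ of the orientation-preserving crystallographic group $\Gamma$. Each edge $ij \in E(G)$ with color $\gamma_{ij}$ covers an edge of $\Gtilde$ whose direction constraint \eqref{directions} takes the form
\[
\iprod{\vec p_j - \Phi(\gamma_{ij})\cdot\vec p_i}{\tilde{\vec d}_{ij}^\perp} = 0,
\]
and assembling these yields a \emph{direction matrix} $M(\tilde{\vec d},\bgamma)$ with $m$ rows and $2n + \dim \Rep(\Gamma)$ columns. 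Global translation of the positions (with a compensating conjugation of $\Phi$) together with global scaling span a three-dimensional subspace of $\ker M$. Hence a faithful realization unique up to translation and scaling exists generically if and only if $\rk M = 2n + \dim \Rep(\Gamma) - 3$, since the edge-collapse locus is a proper subvariety that a generic choice of $\tilde{\vec d}$ avoids.

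The forward implication, ``unique faithful realization implies $\Gamma$-colored-Laman,'' would then follow from a matroidal count: for any subgraph $H\subseteq G$, the rows of $M$ labeled by $E(H)$ are supported only in the position columns for $V(H)$ and the coordinates of $\Rep(\Gamma)$ detected by the colors appearing on $E(H)$. The submodular function of \secref{gamma-laman} is arranged so that, on each $H$, it equals exactly this number of accessible columns minus three; a violation of sparsity therefore produces a strictly overdetermined block which forces $M$ to drop rank, contradicting uniqueness.

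The converse, where the real work lies, requires exhibiting, for each $\Gamma$-colored-Laman colored graph, at least one direction assignment $\tilde{\vec d}$ for which $M$ attains the predicted rank; genericity then spreads this to a Zariski-open set of direction assignments, and the open condition of faithfulness completes the theorem. Following the strategy of \cite{MT12fsi}, I would proceed by induction along a Henneberg-type inductive construction of the $\Gamma$-colored-Laman matroid and, at each step, realize the combinatorial move geometrically so that a rank-maximal direction assignment on the smaller colored graph extends to one on the larger. The principal obstacle is exactly this inductive step: unlike the single-rotation situation of \cite{MT12fsi}, the variety $\Rep(\Gamma)$ now includes a full two-dimensional translation lattice alongside the rotational part, so the geometric extension must simultaneously adjust lattice parameters in a way compatible with the colors of the newly added edges. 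This is the point at which the crystallographic structure of $\Gamma$ and the precise matching between $\Rep(\Gamma)$ and the submodular function of \secref{gamma-laman} must be used in an essential way, and I expect it to be the main point of divergence from the argument of the earlier paper.
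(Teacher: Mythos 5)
Your reduction to a rank statement is essentially sound (after fixing the rotation center, the paper works with exactly this linear system, and the trivial translation-plus-scaling kernel accounts for the ``up to translation and scaling''), but the proposal has two genuine gaps. First, in the Maxwell direction your claim that the sparsity function equals ``accessible columns minus three'' on each subgraph is not correct: the terms $\rep_{\Gamma_k}(G')$ and $\sum_i T(G_i')$ in $h$ are not counts of representation columns touched by the rows of $E(G')$ (an edge colored $(t,r)$ with $r\neq 0$ touches essentially all representation coordinates); they measure the dimension of \emph{collapsed} solutions, which must be exhibited as explicit kernel vectors. This is what \lemref{collapsed-dimensions} does, by constructing, for each connected component, representations trivial on $\Trans(G,b)$ and point sets sitting at rotation centers; the rank deficit then follows from these kernel vectors together with \lemref{gamma-laman-circuit-sparsity}, not from untouched columns. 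Relatedly, even at full rank your step ``the edge-collapse locus is a proper subvariety that a generic $\tilde{\vec d}$ avoids'' is precisely what has to be proved, since the solution depends on $\tilde{\vec d}$ and you must show the collapse condition is not identically satisfied; the paper handles this with the edge-doubling trick (\propref{doubleedge}): if an edge were collapsed in the one-dimensional solution space, doubling it with a generic direction would leave the space unchanged, yet the doubled graph is \GammaTT{} and \propref{crystal-collapse} would force the space to be zero-dimensional.

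Second, and more seriously, the sufficiency direction---the actual content of the theorem---is deferred to an unproven Henneberg-type induction, which you yourself identify as the main obstacle. The paper does not argue this way at all, and for good reason: inductive constructions compatible with counts involving subgroup-dependent terms like $\rep_{\Gamma_k}(G')$ and $T(G_i')$ are not available. Instead, the paper proves \propref{crystal-collapse} (generic direction networks on \GammaTT{} quotients have only collapsed realizations) by (i) building the matroid $M_{\Gamma_k,n}$ on the group (\secref{group-matroid}), (ii) deducing via Matroid Union the decomposition of any \GammaTT{} graph into two spanning \GammaOO{} graphs (\propref{gamma22-decomp}), and (iii) a direct geometric argument: for $k=3,4,6$, choosing directions algebraically independent over $\mathbb{Q}$, passing to a generalized cone-$(2,2)$ basis and invoking the cone direction network result \theoref{conedn} from the first paper, then showing the two leftover edges force $\Phi$ to be trivial; for $k=2$, assigning one common direction to each \GammaOO{} piece and showing all lifted vertices lie on two transverse lines. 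Without a replacement for this collapsing argument (or a worked-out inductive scheme, which would be a substantial new contribution in itself), the proposal does not prove the theorem.
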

\subsection{Roadmap and novelty}
The overall methodology is an adaptation of the \emph{direction network method} (cf. \cite{ST10} and
\cite[Section 4]{W88}) for proving rigidity characterizations in the plane.  The
reduction from infinitesimal rigidity to direction network realizability is, by now,
fairly standard.  Thus, most of the novelty lies in proving \theoref{direction}.  This
is done in three main steps: (i) the construction of a matroid on an orientation-preserving
crystallographic group (\secref{bigsec-groups}); (ii) an extension of the group matroid
to one on graphs that serves as a kind of ``generalized graphic matroid'' (\secref{bigsec-graphs});
(iii) a linear representation result
relating bases of the new combinatorial matroid to direction networks with only trivial ``collapsed''
realizations (\secref{bigsec-dn}).  The last step uses a new kind of geometric argument that is not a
straightforward reduction to the Matroid Union Theorem as in \cite{ST10}; the matroids constructed
here, to our knowledge, appear for the first time here (and in \cite{MT11}).

\subsection{History and related work}
This paper is the second in a sequence derived from the preprints \cite{MT11,MT12a},
and the material here has appeared, with the same proofs in \cite{MT11}.  The
first part is the submitted manuscript \cite{MT12fsi}.  The results here
are built on the theory we developed for studying \emph{periodic frameworks} in
\cite{MT13}, which contains a detailed discussion of motivations and
other work on periodic frameworks.

The general area of rigidity with symmetry has been somewhat active in the past few years.
For completeness, we review some work along
similar lines. A specialization of our \cite[Theorem A]{MT13} is
due to Ross \cite{R11}. Schulze \cite{S10a,S10b} and Schulze and Whiteley \cite{SW10} studied the
question of when ``incidental'' symmetry induces non-generic behaviors in finite frameworks, which is
a different setting than the forced symmetry we consider here and in \cite{MT13, MT12fsi}, however
one can interpret some of those results in the present setting.
Ross, Schulze, and Whiteley \cite{RSW10} have studied the
problem we do here, but they do not give any combinatorial characterizations.  Borcea and Streinu
\cite{BS11} have proposed a kind of ``doubly generic'' periodic rigidity, where the
combinatorial model does not include the colors on the quotient graph.

A recent preprint of Tanigawa \cite{T12}  proves a number
of parallel redrawing and body-bar rigidity characterizations in higher dimensions and
for a larger number of groups than considered here.  The method of \cite{T12} is, essentially,
to axiomatize the properties of the rank function of the matroid we construct in
\secref{group-matroid} and then follow a similar program, making use of a
new generalization of Matroid Union.

\subsection{Acknowledgements} We thank Igor Rivin for encouraging us to take on this project and
many productive discussions on the topic.  Our initial work on this topic was
part of a larger effort to understand the rigidity and flexibility of hypothetical zeolites,
which is supported by NSF CDI-I grant DMR 0835586 to Rivin and M. M. J. Treacy.  LT is funded by the
European Research Council under the European Union's Seventh Framework Programme (FP7/2007-2013) /
ERC grant agreement no 247029-SDModels.  JM is supported by the European Research Council under the
European Union’s Seventh Framework Programme (FP7/2007-2013) / ERC grant agreement no 226135.

\section{Groups and matroids}\seclab{bigsec-groups}
\subsection{Crystallographic group preliminaries}\seclab{crystal-prelim}
We first review some basic facts about orientation-preserving crystallographic groups.

\subsubsection{Facts about the Euclidean group}
The Euclidean isometry group $\Euc(d)$ in any dimension can be represented as the semidirect product $\R^d \rtimes O(d)$
where $O(d)$ is the orthogonal group acting on $\R^d$ in the standard way. The group operation is thus:
\[
(\vec v, r) \cdot (\vec v', r') = (\vec v + r \cdot \vec v', r r')
\]
The subgroup $\R^d < \Euc(d)$ is the translation subgroup, and the projection $\Euc(d) \to O(d)$
is the map that associates to an isometry $\psi$ its derivative at the origin $D\psi_0$.
The action of $(\vec v, r) \in \Euc(d)$ on a point $\vec p \in \R^d$ is
$(\vec v, r) \cdot \vec p = \vec v + r \cdot \vec p$.

Since our setting is $2$-dimensional, from now on, we are interested in $\Euc(2)$.  In the two dimensional case,
we have the following simple lemma, which we state without proof.
\begin{lemma} \lemlab{TorR}
Any nontrivial orientation-preserving isometry of the Euclidean plane is either a rotation around
a point or a translation.
\end{lemma}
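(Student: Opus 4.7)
The plan is to exploit the semidirect product structure $\Euc(2) = \R^2 \rtimes O(2)$ recalled just above the lemma. Any orientation-preserving isometry $\psi$ has derivative $D\psi_0$ of determinant $+1$, so under the projection $\Euc(2) \to O(2)$ it lands in $SO(2)$. I would therefore write $\psi = (\vec v, r)$ with $r \in SO(2)$ a rotation by some angle $\theta$ (possibly zero), acting on $\R^2$ as $\vec p \mapsto \vec v + r \vec p$. The proof then splits on whether the linear part $r$ is trivial.

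If $r = \Id$, the action is $\vec p \mapsto \vec p + \vec v$, which is translation by $\vec v$; since $\psi$ is nontrivial, $\vec v \neq \vec 0$, and we are done. If $r \neq \Id$, I would show that $\psi$ has a unique fixed point $\vec p_0$, which by construction will serve as a rotation center. Fixed points are exactly solutions of the linear system $(I - r) \vec p = \vec v$, so it suffices to verify that $I - r$ is invertible. In the standard basis, $r = \JMat{\cos\theta & -\sin\theta \\ \sin\theta & \cos\theta}$ with $\theta \not\equiv 0 \pmod{2\pi}$, and a direct computation gives $\det(I - r) = (1-\cos\theta)^2 + \sin^2\theta = 2 - 2\cos\theta \neq 0$. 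Hence $\vec p_0$ exists and is unique, and conjugating $\psi$ by the translation taking $\vec p_0$ to the origin shows that $\psi$ is rotation about $\vec p_0$ by angle $\theta$.

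There is no real obstacle here beyond the bookkeeping; the key geometric input is that in dimension two every nontrivial element of $SO(2)$ has $1$ as a non-eigenvalue, which is exactly what distinguishes the planar orientation-preserving case both from higher dimensions, where screw motions appear, and from the orientation-reversing case in the plane, where reflections and glide reflections also arise.
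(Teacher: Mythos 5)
Your proof is correct. The paper states this lemma without proof, and your argument is the standard one it implicitly relies on: split on whether the linear part $r \in SO(2)$ is trivial, and when it is not, use $\det(I - r) = 2 - 2\cos\theta \neq 0$ to produce the unique fixed point serving as the rotation center — nothing is missing.
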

Thus, when we refer to orientation-preserving elements of $\Euc(2)$ we call them simply \emph{``rotations''}
or \emph{``translations''}.  We denote the counterclockwise rotation around the origin through angle $2\pi/k$
by $R_k$.

\subsubsection{Crystallographic groups}
A {\it $2$-dimensional crystallographic group} $\Gamma$ is a group admitting a discrete cocompact
faithful representation $\Gamma \to \Euc(2)$.  We will denote by $\Phi$ such representations
of $\Gamma$.  In this paper, we are interested in the case where all the group elements are represented
by rotations and translations (i.e., we disallow reflections and glides).

The enumeration of the $2$-dimensional crystallographic groups is classical, and there are precisely five
orientation-preserving ones (see, e.g., \cite{CDHT01}).
The first group, which we denote by $\Gamma_1$, is $\Z^2$.  The rest
are all semidirect products of $\Z^2$ with a cyclic group.  Namely, for $k = 2, 3, 4, 6$, we have
$\Gamma_k = \Z^2 \rtimes \Z/k\Z$.  The action on $\Z^2$ by the generator
of $\Z/k\Z$ is given by the following table.

\begin{center}
\begin{tabular}{|c|c|c|c|c|}
\hline  $k$ & $2$ & $3$ & $4$ & $6$ \\
\hline matrix & $\JMat{-1 & 0 \\ 0 & -1}$  & $\JMat{ 0 & -1 \\ 1 & -1} $ & $\JMat{0 & -1 \\ 1 & 0} $
&  $\JMat{0 & -1 \\ 1 & 1} $ \\
\hline
\end{tabular}
\end{center}

\noindent
We define the $\Z^2$ subgroup of $\Gamma_k$ to be the {\em translation subgroup} of $\Gamma_k$ and denote it by $\Trans(\Gamma_k)$.
We denote $\gamma\in \Gamma_k$, $k=2,3,4,6$ as $\gamma=(t,r)$ with $t\in \Z^2$ and $r\in \Z/k\Z$.

\subsubsection{Remark on groups considered}
Since we are only interested in crystallographic groups of this form, the rest of the paper will consider $\Gamma_k$ only (and not
more general crystallographic groups).  Moreover,
we will treat only $k=2,3,4,6$ in what follows, because the case of $\Gamma_1$ is covered by \cite[Theorem A]{MT13}.
However, the theory and proof methods presented here specialize to $\Gamma_1$.

\subsubsection{Finitely generated subgroups} If $\gamma_1,\ldots, \gamma_t$ are element of $\Gamma_k$, we denote the subgroup
generated by the $\gamma_i$ as $\langle \gamma_1,\ldots, \gamma_t\rangle$.   If $\Gamma^1, \ldots, \Gamma^t$
are a sequence of finitely generated subgroups then $\langle \Gamma^1, \Gamma^2,\ldots, \Gamma^t \rangle$
denotes the subgroup generated by the union of some choice of generators for each $\Gamma^i$. We will sometimes abuse
notation and consider groups generated together by some elements and some subgroups, e.g.
$\langle \gamma_1, \gamma_2, \Gamma^1, \Gamma^2, \Gamma^3 \rangle$.

\subsection{Representation space}\seclab{repspace}

$\Gamma$-crystallographic frameworks and direction networks are required to be symmetric
with respect to the group $\Gamma$.  However, the representation is allowed to flex.  In this
section, we formalize this flexing.

\subsubsection{The representation space}
Let $\Gamma$ be a crystallographic group.  We define the \emph{representation space} $\Rep(\Gamma)$ of $\Gamma$ to be
\[
\Rep(\Gamma) = \{ \Phi : \Gamma \to \R^2 \rtimes O(2) \; | \; \Phi \text{ is a discrete faithful representation} \}
\]

\subsubsection{Motions in representation space}
For our purposes, a $1$-parameter family of representations is a continuous motion if it is pointwise continuous.
More precisely, identify $\Euc(2) \cong \R^2 \times O(2)$ as topological spaces.  Suppose
$\Phi_t: \Gamma \to \Euc(2)$ is a family of representations defined for $t \in (-\epsilon, \epsilon)$ for some $\epsilon > 0$.
Then, $\Phi_t$ is a continuous motion through $\Phi_0$
if $\Phi_t(\gamma)$ is a continuous path in $\Euc(2)$ for all $\gamma \in \Gamma$.

\subsubsection{Generators for $\Gamma_k$}
To describe the representation space, we need a description of the generating sets for each of the $\Gamma_k$, which follows
from their descriptions as semi-direct products of $\Z^2\rtimes \left(\Z/k\Z\right)$.
\begin{lemma}\lemlab{gensets}
The following are generating sets for each of the $\Gamma_k$:
\begin{itemize}
\item $\Gamma_2$ is generated by the set $\{((1,0),0), ((0,1),0), ((0,0), 1)\}$.
\item $\Gamma_k$ is generated by the set $\{((1,0),0), ((0,0), 1)\}$ for $k=3,4,6$.
\end{itemize}
\end{lemma}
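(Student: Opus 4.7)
The plan is to verify the generation directly using the semidirect product structure $\Gamma_k = \Z^2 \rtimes \Z/k\Z$. Any element $\gamma = (t,r) \in \Gamma_k$ factors as $(t,0)\cdot((0,0),r)$, so it suffices to show that the subgroup generated by the proposed set contains (i) the full translation subgroup $\Trans(\Gamma_k) = \Z^2$ and (ii) the element $((0,0),1)$, whose powers give all of $\Z/k\Z$. Point (ii) is immediate since $((0,0),1)$ is in each generating set.

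For point (i), I would exploit the conjugation formula in the semidirect product. Writing $M_k$ for the matrix in the table giving the action of $1 \in \Z/k\Z$ on $\Z^2$, a direct computation in $\Euc(2)$ yields
\[
((0,0),1)\cdot((a,b),0)\cdot((0,0),1)^{-1} = (M_k\cdot(a,b),\,0).
\]
So any translation that can be obtained from the translations in the generating set by applying powers of $M_k$ and taking integer combinations lies in the subgroup generated.

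For $k = 3,4,6$, a one-line matrix multiplication shows $M_k\cdot(1,0)^T = (0,1)^T$ in each of the three cases. Hence conjugating $((1,0),0)$ by $((0,0),1)$ produces $((0,1),0)$, and together with $((1,0),0)$ this generates all of $\Z^2$, as required. For $\Gamma_2$, the matrix $M_2 = -\Id$ only yields $\pm(a,b)$ from $(a,b)$, so $((1,0),0)$ alone generates just the first coordinate axis; this is exactly why $((0,1),0)$ must be added as a second generator, after which the full $\Z^2$ is obtained.

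There is no real obstacle here; the only subtlety worth flagging is that the minimum number of translation generators depends on whether the point-group action mixes the two coordinate directions, which it does for $k=3,4,6$ but not for $k=2$. One might also note for completeness that the subgroup generated is contained in $\Gamma_k$ (obvious), so equality with $\Gamma_k$ follows from the two containments above.
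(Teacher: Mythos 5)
Your argument is correct and is exactly the verification the paper leaves implicit: the paper states that \lemref{gensets} ``follows from their descriptions as semi-direct products of $\Z^2\rtimes(\Z/k\Z)$'' and gives no further proof, and your conjugation computation (the first column of $M_k$ being $(0,1)^T$ for $k=3,4,6$, while $M_2=-\Id$ forces the extra translation generator) is precisely how that assertion is fleshed out. One trivial remark: the conjugation identity is a computation in the abstract group $\Gamma_k$ using the semidirect product multiplication rule, not literally in $\Euc(2)$, though the formula is the same.
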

For convenience, we set the notation $r_k=((0,0),1)$, $t_1=((1,0),0)$, and $t_2=((0,1),0)$.

\subsubsection{Coordinates for representations}
We now show how to give convenient coordinates for the representation space
for each $\Gamma_k$ for $k=2,3,4,6$; by the classification of
$2$-dimensional crystallographic groups, these are the only cases we need to check.
This next lemma follows readily from Bieberbach's Theorems \cite{B11,B12} and \lemref{gensets}, but we give a proof in
\secref{repspace-proof} for completeness.
\begin{lemma}\lemlab{repspace}
The representation spaces of each of the $\Gamma_k$ can be given coordinates as follows:
\begin{itemize}
\item $\Rep(\Gamma_2)\cong \{\vec v_1,\vec v_2, \vec w \in \R^2 : \text{$\vec v_1$ and $\vec v_2$ are linearly independent}\}$
\item $\Rep(\Gamma_k) \cong \{ \vec v_1, \vec w, \varepsilon \; | \; \vec v_1 \neq 0, \varepsilon = \pm 1, \vec v_1, \vec w \in \R^2 \}$ for $k=3,4,6$
\end{itemize}
\end{lemma}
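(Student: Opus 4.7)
The plan is to reduce the problem to specifying the images of the generators supplied by \lemref{gensets}, then to determine which choices yield discrete faithful representations. First, I would show that any nontrivial element $\gamma$ of the translation subgroup $\Trans(\Gamma_k)$ is sent by $\Phi$ to a pure translation of $\R^2$. Since $\gamma$ has infinite order, so does $\Phi(\gamma)$; by \lemref{TorR}, $\Phi(\gamma)$ is either a translation or a rotation, and an infinite-order rotation would have irrational angle, making its cyclic subgroup non-discrete in $\Euc(2)$ and contradicting discreteness of $\Phi(\Gamma_k)$. Consequently $\Phi(t_i) = (\vec v_i, I)$ for each translation generator, which accounts for the $\vec v_i$ parameters.

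Next, for the rotation generator $r_k$, which has order $k$ in $\Gamma_k$, the image $\Phi(r_k)$ has order $k$ in $\Euc(2)$. Translations are torsion-free, so \lemref{TorR} forces $\Phi(r_k) = (\vec w, R)$ with $R$ an orientation-preserving element of $O(2)$ of order $k$. For $k = 2$ the only such $R$ is $-I$; for $k = 3, 4, 6$ the rotation $R$ is through $\pm 2\pi/k$, and this choice of sign is exactly the parameter $\varepsilon = \pm 1$. The translation part $\vec w \in \R^2$ is free. I would then check that the semidirect-product relations of $\Gamma_k$ are automatic: for $\Gamma_2$ the relations $r_2 t_i r_2^{-1} = t_i^{-1}$ become the tautology $(-I)\vec v_i = -\vec v_i$; and for $k = 3, 4, 6$ the relation $r_k t_1 r_k^{-1} = (M \cdot (1,0)^T, 0)$ forces $\Phi(t_2) = (R\vec v_1, I)$, while the other relations follow from $R^k = I$ together with the identity $I + R + R^2 + \cdots + R^{k-1} = 0$.

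Finally, I would impose discreteness and faithfulness. For $\Gamma_2$, these amount to $\Phi(\Trans(\Gamma_2))$ being a rank-$2$ lattice, which is precisely the condition that $\vec v_1$ and $\vec v_2$ be linearly independent. For $\Gamma_k$ with $k = 3, 4, 6$, the image of $\Trans(\Gamma_k)$ is generated by $\vec v_1$ and $R\vec v_1$, which are linearly independent as soon as $\vec v_1 \neq 0$, since $R$ has no real eigenvectors for these $k$; faithfulness on the full group then follows from $\Phi(r_k)$ having order exactly $k$. The only genuinely subtle step is the first one, forcing translations to go to translations, and this is where \lemref{TorR} and the discreteness hypothesis do the real work; everything else is bookkeeping against \lemref{gensets} and the semidirect-product relations.
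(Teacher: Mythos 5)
Your proposal is correct and follows essentially the same route as the paper: reduce to the generators from \lemref{gensets}, use infinite order plus discreteness (via \lemref{TorR}) to force the $t_i$ to translations, identify $\Phi(r_k)=(\vec w,R_k^{\pm 1})$ (you re-derive inline what the paper isolates as \lemref{orderk}), and then construct the inverse map by setting $\Phi(t_2)=(R_k^{\varepsilon}\vec v_1,\Id)$ and checking the relations, discreteness, and faithfulness. You in fact spell out the ``straightforward to check'' converse in more detail than the paper does.
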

The vectors specify the ``$\R^2$-part'' of the image of a generator in $\Euc(2) \cong \R^2 \rtimes O(2)$.
Specifically, $\vec v_i, \vec w$ are the coordinates for $\Phi$ precisely when $\Phi(t_i) = (\vec v_i, \Id)$
and $\Phi(r_k) = (\vec w, R_k)$. The vector $\vec w$ determines the rotation center, but is \emph{not} the rotation center itself.
(In fact, the rotation center is $(I - R_k)^{-1}(\vec w)$.)

\subsubsection{Coordinates for finite-order rotations}
The following lemma characterizes an order $k$ rotation in terms of the semidirect product $\R^2 \rtimes O(2)$.
\begin{lemma}\lemlab{orderk}
Let $\psi$ be an orientation-preserving element of $\Euc(2)$.  Then $\psi$ has order
$k=2,3,4,6$ if and only if it is of the form $(\vec w,R_k^{\pm 1})$, where $R_k$ is the order $k$
counterclockwise rotation through angle $2\pi/k$ and $\vec w\in \R^2$.
\end{lemma}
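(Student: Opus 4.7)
The plan is to work in the semidirect product decomposition $\Euc(2) \cong \R^2 \rtimes O(2)$ and reduce the statement to identifying finite-order elements of $SO(2)$. By \lemref{TorR} every orientation-preserving isometry is either a rotation or a translation, and nontrivial translations have infinite order, so any $\psi$ of finite order $k\geq 2$ is a rotation of the form $(\vec w, r)$ with $r \in SO(2)$. The problem therefore reduces to: $\psi$ has order $k$ if and only if $r$ is a primitive $k$-th root of unity in $SO(2)$, and, for $k\in\{2,3,4,6\}$, the only such elements are $R_k$ and $R_k^{-1}$.

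For the ``if'' direction, I would compute $\psi^n$ directly in the semidirect product. A straightforward induction gives
\[
\psi^n = \left(\sum_{j=0}^{n-1} R_k^{\pm j}\vec w,\ R_k^{\pm n}\right).
\]
The rotational component is trivial exactly when $k\mid n$, so the order of $\psi$ is a multiple of $k$. To see that it is in fact $k$, note that $I - R_k^{\pm 1}$ is invertible (since $R_k^{\pm 1}\neq \Id$), so the telescoped sum $\sum_{j=0}^{k-1} R_k^{\pm j}=(I-R_k^{\pm k})(I-R_k^{\pm 1})^{-1}$ vanishes and $\psi^k = \Id$.

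For the converse, I would suppose $\psi = (\vec w, r)$ has order $k$ and let $d$ be the order of $r \in SO(2)$. From $r^k = \Id$ we get $d \mid k$. The main step, and the one requiring some care, is to rule out $d<k$: $\psi^d = (\vec w_d, \Id)$ is a pure translation, so if $\vec w_d\neq 0$ then $\psi^d$ has infinite order, contradicting the finite order of $\psi$, while if $\vec w_d = 0$ then $\psi$ has order dividing $d<k$, contradicting that $\psi$ has order exactly $k$. Hence $d=k$, i.e., $r$ is a primitive $k$-th root of unity in $SO(2)$. For $k \in \{2,3,4,6\}$, the set of primitive $k$-th roots has Euler-totient cardinality at most two (this is precisely the crystallographic restriction), and it is exhausted by $R_k$ and $R_k^{-1}$ (which coincide when $k=2$). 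The only mildly subtle step is the exclusion of $d<k$ just described; everything else is a direct calculation in $\R^2 \rtimes O(2)$ together with the standard enumeration of finite-order rotations.
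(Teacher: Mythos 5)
Your proof is correct and follows essentially the same route as the paper: reduce via \lemref{TorR} to elements of the form $(\vec w, r)$, show that the order of $(\vec w,r)$ equals the order of $r$ (the paper phrases this as ``no power of a rotation is a translation,'' you phrase it via the infinite order of nontrivial translations), and then identify the order-$k$ elements of $SO(2)$ as $R_k^{\pm 1}$ for $k=2,3,4,6$. Your explicit telescoping-sum computation of $\psi^k$ simply fills in what the paper dismisses as ``follows easily.''
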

\begin{proof}
By \lemref{TorR}, $\psi$ is a rotation or a translation, and translations clearly have the form $(\vec w, \Id)$. Thus,
$\psi$ is a rotation if and only if it has the form $(\vec w, R)$ for some nontrivial rotation $R$. If $\psi^k$ has the form
$(\vec w', \Id)$, then $\vec w'$ is necessarily zero as no power of a rotation is a translation. Hence, the order of
$(\vec w, R)$ is precisely that of $R$ and the rest of the theorem follows easily.
\end{proof}

\subsubsection{Proof of \lemref{repspace}}\seclab{repspace-proof}
We let $\Phi \in \Rep(\Gamma_k)$ be a discrete, faithful representation.  Thus $\Phi$ is determined by the images of the generators,
so \lemref{gensets} tells us we need only to check $t_1$, $t_2$, and $r_k$.

The generators $t_i$ must always be mapped to translations: since they are infinite order and $\Phi$ is faithful,
the only other possibility is an infinite order rotation.  This would contradict $\Phi$ being discrete.  Thus:
\begin{itemize}
\item For $k= 2$, the elements $t_1$ and $t_2$ are mapped to translations $(\vec v_1, \Id)$ and $(\vec v_2, \Id)$.
\item For $k=3,4,6$, the element $t_1$ is mapped to a translation $(\vec v_1, \Id)$.
\end{itemize}
Moreover, faithfulness and discreteness force:
\begin{itemize}
\item All the images $\vec v_i$ to be non-zero.
\item The images $\vec v_1$ and $\vec v_2$ to be linearly independent for $k=1,2$.
\end{itemize}
By \lemref{orderk} we must have
$\Phi(r_k) = (\vec w, R_k^{\varepsilon})$ for some $w \in \R^2$ and $\varepsilon \in \{-1, 1\}$.  Since $R_2$ is order $2$,
we have $\Phi(r_2) = (\vec w, R_2) = (\vec w,R^{-1}_2)$, and so $\varepsilon$ is unnecessary
for $\Gamma_2$.

In the other direction, given the data described in the statement of the lemma, we simply define
$\Phi(t_i)$ and $\Phi(r_k)$ as above. When $k = 3, 4, 6$, we set $\Phi(t_2) = (R_k^{\varepsilon} \vec v_1, \Id)$.
For arbitrary elements of $\Gamma$, we define $\Phi( (m_1, m_2), m_3) = \Phi(t_1)^{m_1} \Phi(t_2)^{m_2} \Phi(r_k)^{m_3}$.
It is straightforward to check $\Phi$ as defined is a homomorphism, and that it is discrete and faithful. \eop

\subsubsection{Degenerate representations}
When we are dealing with  ``collapsed realizations'' of direction networks in \secref{bigsec-dn}, we
will need to work with certain degenerate representations of $\Gamma_k$.  The space
$$\overline{\Rep}(\Gamma_k)$$
is defined to be representations of $\Gamma_k$ where we allow the $\vec v_i$ to be any vectors.
Topologically this is the closure of  $\Rep(\Gamma_k)$ in the space of all (not necessarily discrete or faithful)
representations $\Gamma_k \to \Euc(2)$.

\subsubsection{Rotations and translations in crystallographic groups}
As we have defined them, $2$-dimensional crystallographic groups are abstract groups admitting a discrete
faithful representation to $\Euc(2)$.  However, as we
saw in the proof of \lemref{repspace}, all group elements in $\Trans(\Gamma_k)$ must be mapped to translations,
and all group elements outside $\Trans(\Gamma_k)$ must be mapped to rotations.  Consequently, we will henceforth call
elements of $\Trans(\Gamma_k)$ \emph{``translations''} and elements outside of $\Trans(\Gamma_k)$ \emph{``rotations''}
(even though technically they are elements of $\Gamma_k$, not $\Euc(2)$).

\subsection{Subgroup structure}\seclab{subgroups}
This short section contains some useful structural lemmas about subgroups of $\Gamma_k$.

\subsubsection{The translation subgroup}
For a subgroup $\Gamma' < \Gamma_k$, we define its \emph{translation subgroup} $\Trans(\Gamma')$ to be $\Gamma' \cap \Trans(\Gamma_k)$.
(Recall that $\Trans(\Gamma_k)$ is the subgroup $\Z^2$ coming from the semidirect product decomposition of $\Gamma_k$.)

\subsubsection{Facts about subgroups}
With all the definitions in place, we state several lemmas about subgroups of $\Gamma_k$ that we need later.
\begin{lemma} \label{lemma:subgrpgen}
Let $\Gamma' < \Gamma_k$ be a subgroup of $\Gamma_k$, and
suppose $\Gamma' \neq \Trans(\Gamma')$.  Then $\Gamma'$ is generated by one rotation and $\Trans(\Gamma')$.
\end{lemma}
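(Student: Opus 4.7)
The plan is to exploit the semidirect product structure $\Gamma_k = \Z^2 \rtimes \Z/k\Z$ and reduce the claim to the elementary fact that every subgroup of a cyclic group is cyclic.

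Consider the projection $\pi : \Gamma_k \to \Z/k\Z$ sending $(t,r) \mapsto r$ that comes directly from the semidirect product presentation. By the definition of $\Trans(\Gamma_k)$, its kernel is exactly $\Trans(\Gamma_k)$, and therefore the restriction $\pi|_{\Gamma'}$ has kernel
\[
\Gamma' \cap \Trans(\Gamma_k) = \Trans(\Gamma').
\]
The image $\pi(\Gamma')$ is a subgroup of the cyclic group $\Z/k\Z$, hence cyclic; and it is nontrivial precisely because $\Gamma' \neq \Trans(\Gamma')$. Let $\bar r \in \Z/k\Z$ generate $\pi(\Gamma')$, and choose any lift $\gamma \in \Gamma'$ with $\pi(\gamma) = \bar r$. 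Since $\bar r \neq 0$, the element $\gamma$ is a rotation in the sense defined above.

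Now for an arbitrary element $g \in \Gamma'$, we have $\pi(g) = \bar r^{\,j}$ for some integer $j$ because $\pi(g) \in \langle \bar r\rangle$. Hence $\pi(g \gamma^{-j}) = 0$, so
\[
g \gamma^{-j} \in \ker(\pi|_{\Gamma'}) = \Trans(\Gamma'),
\]
and $g$ lies in the subgroup $\langle \gamma, \Trans(\Gamma')\rangle$. Since $g$ was arbitrary, this shows $\Gamma' = \langle \gamma, \Trans(\Gamma')\rangle$, which is the claim.

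There is no real obstacle: the only thing to check is that the kernel of $\pi|_{\Gamma'}$ is genuinely $\Trans(\Gamma')$ (immediate from the definition) and that the image is cyclic (immediate from the classification of subgroups of $\Z/k\Z$). The argument applies uniformly to $k = 2, 3, 4, 6$.
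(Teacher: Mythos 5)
Your proof is correct and follows essentially the same route as the paper: the paper's one-line argument observes that $\Gamma_k/\Trans(\Gamma_k)$ is finite cyclic and contains $\Gamma'/\Trans(\Gamma')$ as a subgroup, which is exactly your projection-to-$\Z/k\Z$ argument spelled out in full detail. Nothing is missing; you have simply made explicit the lift of a generator and the verification that it, together with $\Trans(\Gamma')$, generates $\Gamma'$.
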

\begin{proof}
We need only observe that $\Gamma_k/\Trans(\Gamma_k)$ is finite cyclic
and contains $\Gamma'_k/\Trans(\Gamma'_k)$ as a subgroup.
\end{proof}

This next lemma is straightforward, but useful.  We omit the proof.
\begin{lemma} \label{lemma:rotscomm}
Let $r_1, r_2 \in \Gamma_k$ be rotations.  Then $\langle r_1, r_2 \rangle$ is a finite cyclic subgroup consisting of
rotations if and only if some nontrivial powers $r_1^p$ and $r_2^q$ commute.
\end{lemma}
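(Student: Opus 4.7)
The forward direction is immediate: if $\langle r_1, r_2\rangle$ is cyclic then it is abelian, so $r_1$ and $r_2$ themselves commute, and we may take $p = q = 1$.

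For the converse, I plan to transport the problem into $\Euc(2)$ under a faithful representation $\Phi$ and exploit the geometry of commuting rotations. The key preparatory step---and the one requiring the most care---is to check that every nontrivial power of a rotation in $\Gamma_k$ is itself a rotation, never a (nontrivial) translation. To see this, write $r = (t, s) \in \Z^2 \rtimes \Z/k\Z$ with $s \neq 0$; the semidirect product structure yields
\[
r^n = \left(\textstyle\sum_{i=0}^{n-1} R_k^{is}\, t,\ ns \bmod k\right).
\]
Let $j$ be the order of $R_k^s$ in $O(2)$; since $s \neq 0$, $j \geq 2$. The identity $\sum_{i=0}^{j-1} R_k^{is} = 0$ (which holds because the minimal polynomial of the nontrivial finite-order rotation $R_k^s$ divides $x^{j-1} + \cdots + 1$) implies $r^j = \Id$, and, more generally, $r^n = \Id$ whenever $j \mid n$. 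Contrapositively, if $r^n$ is nontrivial then its rotational part $R_k^{ns}$ is also nontrivial, so $r^n$ is a rotation in the paper's sense.

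With this in hand, under $\Phi$ the elements $r_1^p$ and $r_2^q$ are two commuting nontrivial rotations of the plane. Such rotations must share a center: if $\rho_1$ fixes $c_1$ and $\rho_2$ fixes $c_2$ and $\rho_1\rho_2 = \rho_2\rho_1$, then $\rho_1(c_2)$ is a fixed point of $\rho_2$, forcing $\rho_1(c_2) = c_2$ and hence $c_1 = c_2$ (by uniqueness of the fixed point of a nontrivial planar rotation, from \lemref{TorR}). Since a rotation and any nontrivial power of it share their rotation center, both $r_1$ and $r_2$ rotate about a common point $c \in \R^2$.

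Therefore $\langle r_1, r_2\rangle$ sits inside the stabilizer of $c$ in $\Phi(\Gamma_k)$. This stabilizer is a discrete group of rotations about a single point, hence a finite cyclic group whose nonidentity elements are all rotations in the paper's sense; the conclusion follows. The only step requiring genuine attention is the first computation, where the specific semidirect product structure of $\Gamma_k$ (as opposed to a more general crystallographic group) is used to rule out translational powers; once that is settled, the rest of the argument is pure Euclidean geometry.
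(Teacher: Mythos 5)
Your argument is correct; note that the paper itself omits a proof of this lemma (it is labelled ``straightforward''), so there is no argument of record to compare against. Two small remarks. First, in your power computation you write the $\Z/k\Z$-action on $\Z^2$ as $R_k^{is}$, but $R_k$ is the orthogonal rotation of the paper, whereas $\Gamma_k=\Z^2\rtimes\Z/k\Z$ is defined via the integer matrices in the table, which are not orthogonal for $k=3,6$. The fix is cosmetic: all you use is that the action matrix $A$ has order $k$ and that $A^s-\Id$ is invertible for $s\not\equiv 0$ (its eigenvalues are nontrivial $k$-th roots of unity), which yields $\sum_{i=0}^{j-1}A^{is}=0$ exactly as you argue. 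In fact the entire first step can be bypassed: pass to a discrete faithful $\Phi$ at once, note that $\Phi(r)$ is a rotation about a point by \lemref{TorR}, and observe that any power of it fixes that point, hence is either trivial or again a rotation about the same center (a nontrivial translation has no fixed point); this is the observation already used in the paper's proof of \lemref{orderk}, and it hands you ``$r$ and its nontrivial powers share a center'' for free. Second, you implicitly read ``nontrivial powers'' as $r_1^p\neq\Id\neq r_2^q$ rather than merely $p,q\neq 0$; that is the only reading under which the statement holds (two order-$2$ rotations about distinct centers in $\Gamma_2$ have trivial commuting squares but generate an infinite group containing translations), but it is worth stating explicitly. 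Granting that, your shared-center argument for commuting nontrivial rotations, the identification of $\langle r_1,r_2\rangle$ with a subgroup of the stabilizer of the common center in the discrete group $\Phi(\Gamma_k)$ (a finite cyclic group of rotations), and the transfer back to $\Gamma_k$ via faithfulness of $\Phi$ and the fact that elements of $\Trans(\Gamma_k)$ map to translations, are all sound.
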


\begin{lemma} \label{lemma:Gam2nice}
Let $r' \in \Gamma_2$ be a rotation and $\Gamma' < \Trans(\Gamma_2)$ a subgroup of the translation
subgroup of $\Gamma_2$.
Then $\Trans( \langle r', \Gamma' \rangle) = \Gamma'$; i.e., after adding the rotation $r'$,
the translation subgroup of the group generated by $r'$ and $\Gamma'$ is again $\Gamma'$.
\end{lemma}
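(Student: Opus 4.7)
The plan is to exploit two special features of $\Gamma_2$ that distinguish it from the other $\Gamma_k$: every rotation in $\Gamma_2$ is an involution, and the conjugation action of any rotation on the translation subgroup is by $-\Id$, which preserves every subgroup of $\Trans(\Gamma_2)$.

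First I would write $r' = (t, 1)$ for some $t \in \Z^2$ (this is the form forced by $r'$ being a rotation, by the table in the excerpt). Using the semidirect product multiplication formula together with the fact that the generator of $\Z/2\Z$ acts by $-\Id$, a direct computation gives $r'^2 = (t + (-1)t, 0) = e$, so $r'$ has order $2$. A second direct computation shows that for any translation $(s, 0) \in \Trans(\Gamma_2)$,
\[
r' \cdot (s, 0) \cdot r'^{-1} = (-s, 0).
\]

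Next, I would observe that since $\Gamma' < \Trans(\Gamma_2) \cong \Z^2$ is a subgroup, it is closed under negation. The previous conjugation formula then gives $r' \Gamma' r'^{-1} = -\Gamma' = \Gamma'$, so $\Gamma'$ is normal in $H := \langle r', \Gamma' \rangle$. The quotient $H/\Gamma'$ is generated by the coset of $r'$; this coset is nontrivial because $r'$ is a rotation, hence not in $\Trans(\Gamma_2) \supseteq \Gamma'$, and its square is $r'^2 = e \in \Gamma'$. Therefore $[H : \Gamma'] = 2$.

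To conclude, I would note that $\Trans(H) = H \cap \Trans(\Gamma_2)$ is a proper subgroup of $H$ (since $r' \in H \setminus \Trans(\Gamma_2)$) and contains $\Gamma'$. Combined with $[H : \Gamma'] = 2$, this forces $\Trans(H) = \Gamma'$. There is no real obstacle here; the argument is driven entirely by the fact that rotations in $\Gamma_2$ are involutions and that the $-\Id$ action preserves arbitrary subgroups of $\Z^2$, and the degenerate case $\Gamma' = \{e\}$ goes through unchanged (then $H = \langle r' \rangle \cong \Z/2$ and $\Trans(H) = \{e\}$).
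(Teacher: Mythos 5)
Your proof is correct and takes essentially the same route as the paper's: both rest on the fact that conjugation by a rotation in $\Gamma_2$ acts on translations by $-\Id$, so $\Gamma'$ is normal and $\langle r',\Gamma'\rangle$ decomposes into the two cosets $\Gamma'$ and $r'\Gamma'$. The paper finishes by noting directly that every element of $r'\Gamma'$ is a rotation, while you finish with the index-two sandwich $\Gamma'\le\Trans(\langle r',\Gamma'\rangle)\lneq\langle r',\Gamma'\rangle$; this is only a cosmetic difference.
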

\begin{proof}
All translation subgroups of $\Gamma_2$ are normal, and so the set
$\{ gh \;\; | \;\; g \in \left\{r', \Id\right\} \;\; h \in \Gamma'\}$ is a subgroup and is equal to $\langle r', \Gamma' \rangle$.
Clearly, the only translations are those elements of $\Gamma'$.
\end{proof}

\subsection{The restricted representation space and its dimension}\seclab{radical}
To define our degree of freedom heuristics in \secref{bigsec-graphs},
we need to understand how representations of $\Gamma_k$ restrict to subgroups $\Gamma' < \Gamma_k$, or
equivalently, which representations of $\Gamma'$ extend to $\Gamma_k$.  For $\Gamma'<\Gamma_k$,
the \emph{restricted representation space} of $\Gamma'$ is the image of the restriction map
from $\Rep(\Gamma_k)$ to $\Rep(\Gamma')$, i.e.,
\[
\Rep_{\Gamma_k}(\Gamma') = \{ \Phi: \Gamma' \to \Euc(2) \; | \; \Phi
\text{ extends to a discrete faithful representation of $\Gamma_k$} \}
\]
We define the notation $\rep_{\Gamma_k}(\Gamma') := \dim \Rep_{\Gamma_k}(\Gamma')$, since the dimension of $\Rep_{\Gamma_k}(\Gamma')$ is
an important quantity in what follows. We also define the following invariant, which is essential for defining our combinatorial matroids:
\[
T(\Gamma') := \left\{ \begin{array}{rl} 0 & \text{ if $\Gamma'$ has a rotation} \\ 2 & \text{ if $\Gamma'$ has no rotations} \end{array} \right.
\]
Equivalently, we may define $T(\Gamma')$ as the dimension of the space of translations commuting
with $\Phi(\Gamma')$ for any $\Phi \in \Rep(\Gamma_k)$. In \secref{crystal-collapse}, we will show that $T(\Gamma')$
is the dimension of the space of collapsed solutions of a direction network for a connected graph
associated with the subgroup $\Gamma'$

We now develop some properties of $\rep_{\Gamma_k}(\cdot)$ and
how it changes as new generators are added to a finitely generated subgroup.
These will be important for counting the degrees of freedom
in direction networks (\secref{bigsec-dn}).

\subsubsection{Translation subgroups}
For translation subgroups $\Gamma'< \Gamma_k$, we are interested in the dimension of
$\Rep_{\Gamma_k}(\Gamma')$.  The following lemma gives a characterization for translation subgroups
in terms of the rank of $\Gamma'$.
\begin{lemma} \label{lemma:trrep}
Let $\Gamma' < \Gamma_k$ be a {\em nontrivial} subgroup of translations.
\begin{itemize}
\item If $k = 3, 4, 6$, then $\rep_{\Gamma_k}(\Gamma') = 2$.
\item  If $k = 2$, 	then $\rep_{\Gamma_k}(\Gamma') = 2 \cdot r$, where $r$ is the
rank of $\Gamma'$.
\end{itemize}
In particular, $\rep_{\Gamma_k}(\Gamma')$ is even.
\end{lemma}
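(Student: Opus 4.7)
The plan is to use the explicit coordinates from \lemref{repspace} to reduce the computation to linear algebra. Since every element of $\Gamma'$ is a word in $t_1$ and $t_2$, the restriction $\Phi|_{\Gamma'}$ depends only on $\Phi(t_1)$ and $\Phi(t_2)$. In those coordinates, $\vec w$ contributes nothing and $\varepsilon$ (if present) is discrete, so the restriction map $\Rep(\Gamma_k)\to\operatorname{Hom}(\Gamma',\Euc(2))$ factors (on each $\varepsilon$-sheet) as a linear map from the continuous data to $\operatorname{Hom}(\Gamma',\Euc(2))$, and computing $\rep_{\Gamma_k}(\Gamma')$ becomes a question of computing the rank of that linear map.

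For $k=3,4,6$, I would pick any nontrivial $\gamma=t_1^a t_2^b\in\Gamma'$ and use the relation $t_2=r_k t_1 r_k^{-1}$ (verified by direct computation in $\R^2\rtimes O(2)$) together with $\Phi(r_k)=(\vec w,R_k^\varepsilon)$ to conclude $\Phi(t_2)=(R_k^\varepsilon\vec v_1,\Id)$, hence $\Phi(\gamma)=((aI+bR_k^\varepsilon)\vec v_1,\Id)$. The key fact is that $R_k^\varepsilon$, being a rotation through $\pm 2\pi/k$ with $k\in\{3,4,6\}$, has no real eigenvalues, so $aI+bR_k^\varepsilon$ is invertible for every $(a,b)\neq(0,0)$. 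Thus $\vec v_1\mapsto\Phi(\gamma)$ is already injective onto a $2$-dimensional image, so the map $\vec v_1\mapsto\Phi|_{\Gamma'}$ is injective; since its domain is $2$-dimensional, $\rep_{\Gamma_k}(\Gamma')=2$.

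For $k=2$, I would fix a $\Z$-basis $\gamma_1,\dots,\gamma_r$ of $\Gamma'$ and write $\gamma_i=t_1^{a_i}t_2^{b_i}$. Since $\Phi(t_1)=(\vec v_1,\Id)$ and $\Phi(t_2)=(\vec v_2,\Id)$ are independent coordinates, the restriction map sends $(\vec v_1,\vec v_2)\in\R^4$ to the tuple $(a_i\vec v_1+b_i\vec v_2)_{i=1}^r\in\R^{2r}$. Splitting into $x$- and $y$-coordinates, this decomposes as two copies of multiplication by the $r\times 2$ integer matrix $M$ with rows $(a_i\;\;b_i)$. Since $\gamma_1,\dots,\gamma_r$ is a $\Z$-basis of a rank-$r$ subgroup of $\Z^2$, we have $\rk M=r$, so the total rank is $2r$ and $\rep_{\Gamma_2}(\Gamma')=2r$.

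The only technically delicate point is the eigenvalue observation underlying the $k=3,4,6$ case; it is what causes the dimension to ``collapse'' to $2$ regardless of $\rk\Gamma'$, and it contrasts sharply with $k=2$, where $R_2=-I$ makes $aI+bR_2=(a-b)I$ degenerate along the diagonal. The ``in particular, even'' clause is then immediate from both branches.
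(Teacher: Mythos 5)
Your proof is correct and takes essentially the same approach as the paper: both use the coordinates of \lemref{repspace} together with the relation $t_2=r_kt_1r_k^{-1}$ to get $\Phi(\gamma)=\left((m_1 I+m_2R_k^{\varepsilon})\vec v_1,\Id\right)$ for $k=3,4,6$ and then reduce everything to a linear-algebra rank count, with your explicit remark that $aI+bR_k^{\varepsilon}$ is invertible (no real eigenvalues) spelling out the step the paper states only as ``$\vec v_1$ is completely determined by $\Phi(\gamma)$''. The only minor divergence is in the $k=2$ case, where the paper argues by extending discrete faithful representations of $\Gamma'$ to $\Trans(\Gamma_2)$ and hence to $\Gamma_2$, while you compute the rank of the restriction map via the integer matrix $M$; both are correct and of the same difficulty.
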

\begin{proof}
Suppose $k = 3, 4$ or $6$.  By Lemma \ref{lemma:repspace}, the space of representations of $\Gamma_k$
is $4$-dimensional and is uniquely determined by the
parameters $\vec v_1, \vec  w$ and the sign $\varepsilon$.  The group $\Trans(\Gamma_k) \cong \Z^2$ is generated by $t_1$
and $r_k t_1 r_k^{-1}$, and so any $\gamma \in \Trans(\Gamma_k)$ can be written uniquely as
$t_1^{m_1} r_k t_1^{m_2} r_k^{-1}$ for integers $m_1, m_2$.
Thus, since $\Phi(\gamma)$ is a translation,
\begin{align*}
\Phi(\gamma) & =   \Phi(t_1)^{m_1} \Phi(r_k) \Phi(t_1)^{m_2} \Phi(r_k)^{-1} \\
& = \left(m_1 \vec v_1, \Id\right)  \left(\vec w, R_k^{\varepsilon}\right) \left(m_2 \vec v_1, \Id \right)\left(\vec w, R_k^{-\varepsilon} \right) \\
& = \left(m_1 \vec v_1 + m_2 R_k^\varepsilon \vec v_1, \Id\right)
\end{align*}
The computation shows that the restriction of $\Phi$ to $\Trans(\Gamma_k)$ is independent of
the parameter $\vec w$. Moreover two representations with the same parameter $\epsilon$ restrict to the
same representation of $\Trans(\Gamma_k)$ precisely when the $\vec v_1$ parameters are equivalent,
and $\vec v_1$ is completely determined by $\Phi(\gamma)$.

Suppose $k = 2$.  In this case by the proof of Lemma \ref{lemma:repspace}, any discrete faithful
representation $\Trans(\Gamma_k) \to \Euc(2)$ extends to a discrete faithful representation of $\Gamma_k$.
Since $\Trans(\Gamma_k) \cong \Z^2$, any discrete faithful representation of its subgroups to $\R^2$ %
extends to
$\Trans(\Gamma_k)$ and hence to $\Gamma_k$.  Hence $\rep_{\Gamma_k}(\Gamma')$ is equal to the dimension of
representations $\Gamma' \to \R^2$ which is twice the rank of $\Gamma'$.
\end{proof}

\subsubsection{The $r$-closure of a subgroup}
In \secref{group-matroid}, we will introduce a matroid on the elements of a crystallographic group.  To prove
the required properties, we need to know how the translation subgroup
$\Trans(\cdot)$ changes as generators are added to a
subgroup of $\Gamma_k$. %
We define the \emph{$r$-closure, $\Rad(\Gamma')$, of $\Gamma'$} to be the largest subgroup containing $\Gamma'$ such that
\begin{eqnarray}
\rep_{\Gamma_k}(\Trans(\Gamma')) = \rep_{\Gamma_k}(\Trans(\Rad(\Gamma'))) & \text{and} & T(\Gamma') = T(\Rad(\Gamma'))
\end{eqnarray}
The letter $r$ in this terminology refers to the rank function $r$ defined in \secref{group-matroid}, and the $r$-closure
is defined such that $\Rad(\Gamma')$ is the largest subgroup containing $\Gamma'$ with $r(\Rad(\Gamma')) = r(\Gamma')$.
The properties of the $r$-closure are needed to study the matroid defined by the closely related rank function $g_1$ (also
in \secref{group-matroid}),  which is a building block for the definition of $\Gamma$-Laman graphs in
\secref{gamma-laman}. Since there will be no confusion, we will henceforth drop the $r$ and simply refer to closures of subgroups.

\subsubsection{Properties of the closure}
This next sequence of lemmas enumerates the properties of the closure that we will
use in the sequel.

\begin{lemma}\lemlab{rad-welldefined}
Let $\Gamma'<\Gamma_k$ be a subgroup of $\Gamma_k$.  Then, the
closure $\Rad(\Gamma')$ is well-defined. Specifically for $k = 2$,
\begin{itemize}
\item If $\Gamma'$ is a translation subgroup, then $\Rad(\Gamma')$ is the subgroup of translations with
a non-trivial power in $\Gamma'$.
\item If $\Gamma'$ has translations and rotations, then $\Rad(\Gamma') = \langle r', \Rad(\Trans(\Gamma'))\rangle$
for any rotation $r' \in \Gamma'$.
\end{itemize}
For $k=3,4,6$, there are four possibilities for the closure:
\begin{itemize}
\item If $\Gamma'$ is trivial, then the closure is trivial.
\item If $\Gamma'$ is cyclic, then the closure is a cyclic subgroup of order $k$.
\item If $\Gamma'$ is a nontrivial translation subgroup, then the closure is the translation subgroup of $\Gamma_k$.
\item If $\Gamma'$ has translations and rotations, then the closure is all of $\Gamma_k$.
\end{itemize}
\end{lemma}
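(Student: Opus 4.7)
The plan is to prove the lemma by exhibiting an explicit maximum subgroup in each case, verifying directly that it preserves both invariants $T(\cdot)$ and $\rep_{\Gamma_k}(\Trans(\cdot))$, and then showing that any strictly larger subgroup must break at least one of them. Because both invariants take only finitely many values (by \lemref{trrep}, $\rep_{\Gamma_k}(\Trans(\cdot)) \in \{0, 2, 4\}$, and $T \in \{0, 2\}$), identifying a single valid maximum in each case immediately yields well-definedness of $\Rad(\Gamma')$.

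First I would dispose of the purely translational cases. If $\Gamma'$ is trivial, any nontrivial added element is either a translation (which by \lemref{trrep} raises $\rep_{\Gamma_k}(\Trans)$ above $0$) or a rotation (which flips $T$ from $2$ to $0$), so $\Rad(\{e\}) = \{e\}$. If $\Gamma'$ is a nontrivial translation subgroup, the closure candidate is the $\mathbb{Q}$-saturation of $\Gamma'$ inside $\Z^2 = \Trans(\Gamma_k)$ when $k = 2$, and all of $\Trans(\Gamma_k)$ when $k = 3, 4, 6$. Both preserve $T = 2$ (no rotations adjoined) and by \lemref{trrep} preserve $\rep_{\Gamma_k}(\Trans)$ (same $\mathbb{Q}$-rank for $k = 2$; both nontrivial for the other $k$). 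Maximality is clear: any additional rotation forces $T = 0$, and for $k = 2$ any translation outside the saturation raises the $\mathbb{Q}$-rank.

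For $\Gamma'$ containing a rotation, I would use \lemref{subgrpgen} to write $\Gamma' = \langle r', \Trans(\Gamma')\rangle$. When $k = 2$, the candidate is $\langle r', \Rad(\Trans(\Gamma'))\rangle$; \lemref{Gam2nice} identifies its translation subgroup as $\Rad(\Trans(\Gamma'))$, of the same $\mathbb{Q}$-rank as $\Trans(\Gamma')$, so both invariants are preserved. For maximality, any added rotation $r''$ contributes the translation $r' r''$ in $\Gamma_2$; since $r'$ is an involution, one checks that $r' r'' \in \Rad(\Trans(\Gamma'))$ forces $r''$ to already lie in the candidate, and otherwise the translation rank strictly increases. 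For $k = 3, 4, 6$ with $\Gamma'$ cyclic, \lemref{rotscomm} says any added rotation either has a nontrivial power commuting with a nontrivial power of $r'$ (placing it in the same finite cyclic rotation stabilizer, which is the asserted closure) or produces a nontrivial translation, breaking $\rep_{\Gamma_k}(\Trans) = 0$. Finally, if $\Gamma'$ already contains both translations and rotations for $k = 3, 4, 6$, then $T(\Gamma') = 0$ and $\rep_{\Gamma_k}(\Trans(\Gamma')) = 2$, matching the invariants of $\Gamma_k$ itself by \lemref{trrep}, so $\Rad(\Gamma') = \Gamma_k$ and maximality is automatic.

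The main obstacle is the maximality step in the $k = 2$ mixed rotation-translation case: showing that adjoining a rotation $r''$ to $\langle r', \Rad(\Trans(\Gamma'))\rangle$ cannot leave the translation subgroup's $\mathbb{Q}$-rank fixed unless $r''$ was already present. The key facts are that $r' r''$ must be a translation (since $r'$ has order two), that an integer vector in the $\mathbb{Q}$-span of the saturation $\Rad(\Trans(\Gamma'))$ already lies in it, and that, combined with the coset description afforded by \lemref{Gam2nice}, this forces $r''$ itself into the candidate. Once this case is handled, the $k = 3, 4, 6$ cyclic case is a clean application of \lemref{rotscomm}, and the remaining cases reduce to bookkeeping with \lemref{trrep}.
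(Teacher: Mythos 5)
Your proposal is correct and follows essentially the same route as the paper: the same case split on $k$ and on the type of $\Gamma'$, the same explicit candidates (the saturation of $\Trans(\Gamma')$, the group $\langle r', \Rad(\Trans(\Gamma'))\rangle$ analyzed via \lemref{Gam2nice}, the unique maximal cyclic rotation subgroup via \lemref{rotscomm}, $\Trans(\Gamma_k)$, and $\Gamma_k$), with the same invariant bookkeeping via \lemref{trrep}. The only caution is presentational: well-definedness requires a unique maximum, so the obstruction arguments should be run against an arbitrary subgroup $\Gamma''$ containing $\Gamma'$ with the same invariants (concluding $\Gamma''$ lies in the candidate), not merely against one-element enlargements of the candidate; your element-wise arguments apply verbatim in that form, exactly as in the paper's proof.
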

\begin{proof}
First let $k=2$.  There are two cases.
If $\Gamma'$ contains only translations, we set
\[
\Rad(\Gamma') = \{t\in \Trans(\Gamma_2) : \text{$t^i\in \Gamma'$ for some power $i$ of $t$} \}
\]
Any subgroup $\Gamma''<\Gamma_2$ containing $\Gamma'$ with $T(\Gamma')=T(\Gamma'')$ and
$\rep_{\Gamma_k}(\Gamma')=\rep_{\Gamma_k}(\Gamma'')$ must be a	translation group of the same rank as $\Gamma'$ and
$\Rad(\Gamma')$ is the largest such subgroup.

Otherwise, $\Gamma'$ contains a rotation $r'$.  In this case, we set
\[
\Rad(\Gamma') = \langle r', \Rad(\Trans(\Gamma'))\rangle
\]
By \lemref{Gam2nice}, for $\Rad(\Gamma')$ defined this way, the translation subgroup
$\Trans(\Rad(\Gamma'))$ is just $\Rad(\Trans(\Gamma'))$
which by the previous paragraph is the largest translation subgroup containing $\Trans(\Gamma')$ and having the same rank.
Suppose $\Gamma'' < \Gamma_2$ contains $\Gamma'$ and satisfies
$\rep_{\Gamma_k}(\Trans(\Gamma'')) = \rep_{\Gamma_k}(\Trans(\Gamma'))$ and $T(\Gamma'') = T(\Gamma')$.
Then, $\Gamma'' = \langle r', \Trans(\Gamma'') \rangle$ and $\Trans(\Gamma')$ and $\Trans(\Gamma'')$ have the
same rank. This implies that $\Trans(\Gamma'') < \Rad(\Trans(\Gamma'))$ and thus $\Gamma'' < \Rad(\Gamma')$.

Now we suppose that $k=3,4,6$.  There are four possibilities for $\Gamma'$:
\begin{itemize}
\item If $\Gamma'$ is trivial, then we define $\Rad(\Gamma') = \Gamma'$.
\item If $\Gamma'$ is a cyclic group of rotations, then \lemref{rotscomm} guarantees that there is a unique
largest cyclic subgroup containing it, and we define this to be $\Rad(\Gamma')$. Any larger group
will have a different $\rep_{\Gamma_k}$ value.
\item If $\Gamma'$ has only translations, then we define $\Rad(\Gamma') = \Trans(\Gamma_k)$. From
Lemma \ref{lemma:trrep} it follows that $\rep_{\Gamma_k}(\Trans(\Gamma')) = \rep_{\Gamma_k}(\Trans(\Rad(\Gamma')))$.
Any larger subgroup will have a different $T(\cdot)$ value.
\item If $\Gamma'$ has translations and rotations, then it has the same $\rep_{\Gamma_k}(\Trans(\cdot))$ and
$T(\cdot)$ values as $\Gamma_k$, so $\Rad(\Gamma') = \Gamma_k$.
\end{itemize}
\end{proof}

\begin{lemma}\lemlab{rad-monotone}
Let $\Gamma'<\Gamma_k$ be a finitely-generated subgroup of $\Gamma_k$,
and let $\Gamma''< \Gamma'$ be a subgroup of $\Gamma'$.  Then $\Rad(\Gamma'') < \Rad(\Gamma')$.
\end{lemma}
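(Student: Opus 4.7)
My plan is to proceed by case analysis on the structure of $\Gamma''$, applying the explicit description of the closure from \lemref{rad-welldefined} to identify both $\Rad(\Gamma'')$ and $\Rad(\Gamma')$, and then to check containment directly in each compatible case. The case $k=2$ and the case $k \in \{3,4,6\}$ will be handled separately because the closure behaves differently in these two regimes.

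For $k=2$, I would first treat the case where $\Gamma''$ consists only of translations. Here $\Rad(\Gamma'')$ is the set of translations some nontrivial power of which lies in $\Gamma''$; since $\Gamma''<\Gamma'$, any such power also lies in $\Gamma'$, so $\Rad(\Gamma'')<\Rad(\Trans(\Gamma'))$, and the latter is contained in $\Rad(\Gamma')$ by \lemref{rad-welldefined} whether or not $\Gamma'$ contains a rotation. Next, if $\Gamma''$ contains a rotation $r'$, then $r' \in \Gamma'$ as well, so by \lemref{rad-welldefined} both closures can be written as $\langle r', \Rad(\Trans(\cdot))\rangle$; since $\Trans(\Gamma'')<\Trans(\Gamma')$, the translation-only case already established gives $\Rad(\Trans(\Gamma''))<\Rad(\Trans(\Gamma'))$, and containment follows.

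For $k \in \{3,4,6\}$, I would enumerate the four types of subgroups from \lemref{rad-welldefined} and check each feasible pair. If $\Gamma''$ is trivial, the claim is immediate. If $\Gamma''$ is a nontrivial translation subgroup, then $\Rad(\Gamma'') = \Trans(\Gamma_k)$, and since $\Gamma'\supset \Gamma''$ contains a translation, $\Rad(\Gamma')$ is either $\Trans(\Gamma_k)$ or all of $\Gamma_k$; containment holds in both subcases. If $\Gamma''$ has both translations and rotations, then so does $\Gamma'$, and both closures equal $\Gamma_k$. The step requiring the most care is when $\Gamma''$ is a cyclic group of rotations: then $\Gamma'$ contains a rotation, so either $\Gamma'$ is itself a cyclic rotation group, in which case every rotation of $\Gamma''$ shares the unique rotation center of $\Gamma'$ by \lemref{rotscomm}, forcing $\Rad(\Gamma'')=\Rad(\Gamma')$; or $\Gamma'$ contains a translation as well, so $\Rad(\Gamma')=\Gamma_k$, which trivially contains $\Rad(\Gamma'')$.

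The main obstacle is the cyclic-rotation case for $k\in\{3,4,6\}$, where I need to invoke \lemref{rotscomm} carefully to conclude that the (a priori only well-defined) cyclic closure of $\Gamma''$ is supported at the same rotation center as that of $\Gamma'$; once this is pinned down, the rest is bookkeeping across the cases produced by \lemref{rad-welldefined}.
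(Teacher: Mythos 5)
Your proposal is correct and takes essentially the same route as the paper: the paper's proof just says to extend a generating set of $\Gamma''$ to one of $\Gamma'$ and ``analyze the cases in \lemref{rad-welldefined},'' and your explicit case-by-case comparison of the closures (for $k=2$ and $k\in\{3,4,6\}$) is exactly that analysis carried out directly rather than one added generator at a time. In particular, your treatment of the delicate cyclic-rotation case via the common rotation center is a sound way to get the needed containment.
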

\begin{proof}
Pick a generating set of $\Gamma''$ that extends to a generating set of $\Gamma'$.
Analyzing the cases in \lemref{rad-welldefined} shows that
the closure cannot become smaller after adding generators.
\end{proof}

\begin{lemma}\lemlab{rad-conj}
Let $\Gamma'<\Gamma_k$ be a translation subgroup of $\Gamma_k$, and let $\gamma\in \Gamma_k$.  Then
$\Rad(\gamma\Gamma'\gamma^{-1}) = \Rad(\Gamma')$; i.e., the closure of translation subgroups is
fixed under conjugation.
\end{lemma}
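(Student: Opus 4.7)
The plan is to read off $\Rad(\Gamma')$ from the explicit description in \lemref{rad-welldefined} and check case by case that it is preserved under conjugation by elements of $\Gamma_k$. As a preliminary, I would note that $\Trans(\Gamma_k)$ is normal in $\Gamma_k$ (being the kernel of the projection $\Gamma_k \to \Z/k\Z$ arising from the semidirect product structure), so that $\gamma \Gamma' \gamma^{-1}$ is again a translation subgroup, and both sides of the claimed equation indeed lie inside $\Trans(\Gamma_k)$.

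For $k = 3, 4, 6$, \lemref{rad-welldefined} tells us that $\Rad(\Gamma')$ is either trivial (when $\Gamma'$ is trivial) or the full translation subgroup $\Trans(\Gamma_k)$ (otherwise). Since conjugation is an automorphism, $\gamma\Gamma'\gamma^{-1}$ falls into exactly the same subcase as $\Gamma'$, and since both $\{e\}$ and $\Trans(\Gamma_k)$ are normal subgroups of $\Gamma_k$, the two closures coincide with the same subgroup. This disposes of all three higher-order cases at once.

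For $k = 2$, \lemref{rad-welldefined} gives $\Rad(\Gamma') = \{t \in \Trans(\Gamma_2) : t^i \in \Gamma' \text{ for some } i \geq 1\}$. A short calculation using $(\gamma t \gamma^{-1})^i = \gamma t^i \gamma^{-1}$ shows that, for any $\gamma \in \Gamma_2$, one has $\Rad(\gamma \Gamma' \gamma^{-1}) = \gamma \Rad(\Gamma') \gamma^{-1}$. It therefore suffices to check that $\Rad(\Gamma')$ is fixed under conjugation by every element of $\Gamma_2$. If $\gamma \in \Trans(\Gamma_2)$ this is automatic because $\Trans(\Gamma_2)$ is abelian; if $\gamma$ is a rotation then, since every rotation in $\Gamma_2$ has order $2$, conjugation by $\gamma$ inverts translations, and any subgroup of $\Trans(\Gamma_2)$ is closed under inverses, so $\Rad(\Gamma')$ is mapped to itself.

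I do not expect any genuine obstacle here: the work was done in establishing \lemref{rad-welldefined}, and the present statement is essentially a normality observation together with the order-$2$ rotation acting as $-1$ on $\Z^2$. The only tiny subtlety is to remember that the claim is an equality of subgroups of $\Gamma_k$ (not merely of their isomorphism types), which is why one needs the normality / $-1$-action steps rather than stopping at the abstract bijection induced by conjugation.
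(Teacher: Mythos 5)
Your proof is correct and follows essentially the same route as the paper: for $k=3,4,6$ it reads the two possible closures off \lemref{rad-welldefined}, and for $k=2$ it reduces to the normality of translation subgroups of $\Gamma_2$ (which the paper invokes directly, and which you justify via the order-$2$ rotation acting as $-1$). The only cosmetic difference is that the paper conjugates $\Gamma'$ itself rather than commuting conjugation past $\Rad(\cdot)$, which shortens the $k=2$ case slightly.
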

\begin{proof}
For $k=2$ this follows from the fact that all translation subgroups are normal.  For $k=3,4,6$ it
is immediate from \lemref{rad-welldefined}.
\end{proof}

\begin{lemma}\lemlab{rad-push-trans}
Let $\Gamma'<\Gamma_k$ be a subgroup of $\Gamma_k$, and let $\Gamma''<\Trans(\Gamma_k)$ be a translation subgroup
of $\Gamma_k$.  Then $\Rad( \langle \Trans(\Gamma'), \Gamma''\rangle) = \Rad(\Trans(\langle \Gamma', \Gamma'' \rangle))$.
\end{lemma}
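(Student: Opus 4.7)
The plan is to reduce both sides to closures of translation subgroups and then invoke the case analysis of \lemref{rad-welldefined}. The key preliminary observations are that $\langle \Trans(\Gamma'), \Gamma''\rangle$ lies in $\Trans(\Gamma_k)$ since it is generated by translations, that $\Trans(\langle \Gamma', \Gamma''\rangle)$ is a translation subgroup by definition, and that there is an obvious containment
\[
\langle \Trans(\Gamma'), \Gamma''\rangle \;\subseteq\; \Trans(\langle \Gamma', \Gamma''\rangle).
\]
First I would dispose of the case where $\Gamma'$ has no rotations: then $\Gamma' = \Trans(\Gamma')$, so $\langle \Gamma', \Gamma''\rangle = \langle \Trans(\Gamma'), \Gamma''\rangle$ is already a translation subgroup, and the two sides of the equation agree before taking closures. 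Otherwise \lemref{subgrpgen} yields a rotation $r' \in \Gamma'$ with $\Gamma' = \langle r', \Trans(\Gamma')\rangle$, and the task reduces to controlling the extra translations (if any) introduced by adjoining $r'$ to the translation subgroup $\langle \Trans(\Gamma'), \Gamma''\rangle$.

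For $k = 2$, I would apply \lemref{Gam2nice} to the rotation $r'$ and the translation subgroup $\langle \Trans(\Gamma'), \Gamma''\rangle$, which immediately gives
\[
\Trans(\langle r', \Trans(\Gamma'), \Gamma''\rangle) \;=\; \langle \Trans(\Gamma'), \Gamma''\rangle,
\]
so once more the two sides are literally equal before closure.

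For $k = 3, 4, 6$ the two inner translation subgroups need not coincide, since conjugating an element of $\Gamma''$ by $r'$ can produce a translation not in $\langle \Trans(\Gamma'), \Gamma''\rangle$. Here I would instead use the coarser dichotomy in \lemref{rad-welldefined}: the closure of any translation subgroup of $\Gamma_k$ is either trivial or all of $\Trans(\Gamma_k)$. From the containment displayed above, whenever $\langle \Trans(\Gamma'), \Gamma''\rangle$ is nontrivial, so is $\Trans(\langle \Gamma', \Gamma''\rangle)$, and both closures equal $\Trans(\Gamma_k)$. Conversely, if $\langle \Trans(\Gamma'), \Gamma''\rangle$ is trivial, then $\Gamma'' = \{e\}$ and $\Trans(\Gamma') = \{e\}$, so by \lemref{subgrpgen} $\Gamma' = \langle r'\rangle$ is a finite cyclic rotation group, and $\langle \Gamma', \Gamma''\rangle = \Gamma'$ again has trivial translation subgroup, so the closures on both sides are trivial.

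The main subtlety is precisely this $k \geq 3$ asymmetry: one would naïvely like to show that the two inner translation subgroups themselves coincide, but they generally do not, because a translation subgroup of $\Gamma_k$ is not normal in the presence of rotations. The correct observation is that the closure operation is coarse enough to erase this finer lattice structure — only the trivial/nontrivial distinction survives, and that dichotomy is preserved by the containment.
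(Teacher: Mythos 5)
Your proposal is correct and follows essentially the same route as the paper: a case split on $k$, using \lemref{Gam2nice} (via \lemref{subgrpgen}) to get equality of the two translation subgroups before closure when $k=2$, and the trivial-versus-$\Trans(\Gamma_k)$ closure dichotomy of \lemref{rad-welldefined} for $k=3,4,6$. The only cosmetic difference is that you key the $k\geq 3$ dichotomy to the triviality of $\langle \Trans(\Gamma'),\Gamma''\rangle$ (using the containment into $\Trans(\langle\Gamma',\Gamma''\rangle)$) rather than to the triviality of $\Gamma''$ as the paper does, which amounts to the same argument.
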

\begin{proof}
The proof is in cases based on $k$.  For $k=3,4,6$, \lemref{rad-welldefined} implies that
either $\Gamma''$ is trivial or both sides of the desired equation are
$\Trans(\Gamma_k)$.  In either case, the lemma follows at once.

Now suppose that $k=2$.  If $\Gamma'$ is a translation subgroup, then the lemma follows immediately.  Otherwise,
we know that $\Gamma'$ is generated by a rotation $r'$ and the translation subgroup $\Trans(\Gamma')$.  Applying
\lemref{Gam2nice}, we see that
\[
\Trans(\langle\Gamma', \Gamma'' \rangle) =
\Trans(\langle r', \Trans(\Gamma'), \Gamma'' \rangle) = \langle \Trans(\Gamma'), \Gamma''\rangle
\]
from which the lemma follows.
\end{proof}

\subsubsection{The quantity $\rep_{\Gamma_k}(\Trans(\Gamma')) - T(\Gamma')$}
The following statement plays a key role in the matroidal construction of \secref{group-matroid}.

\begin{prop}\proplab{rad-rep-minus-T}
Let $\Gamma'<\Gamma_k$ be a subgroup of $\Gamma_k$, and let $\gamma\in \Gamma_k$ be an element of $\Gamma_k$.  Then,
\[
\rep_{\Gamma_k}(\Trans(\langle \Gamma', \gamma \rangle)) -
T(\langle \Gamma', \gamma \rangle) - (\rep_{\Gamma_k}(\Trans(\Gamma')) - T(\Gamma'))
=
\left\{ \begin{array}{rl} 2 & \text{ if } \gamma \notin \Rad(\Gamma') \\
0 & \text{ otherwise} \end{array} \right.
\]
i.e., the quantity $\rep_{\Gamma_k}(\Lambda(\cdot))-T(\cdot)$ increases by two after adding
$\gamma$ to $\Gamma'$ if and only if $\gamma\notin\Rad(\Gamma')$ and otherwise the
increase is zero.
\end{prop}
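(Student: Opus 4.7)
The plan is to compute the difference
$\Delta := [\rep_{\Gamma_k}(\Trans(\langle \Gamma', \gamma \rangle)) - T(\langle \Gamma', \gamma \rangle)] - [\rep_{\Gamma_k}(\Trans(\Gamma')) - T(\Gamma')]$
by case analysis, using the explicit description of the closure from \lemref{rad-welldefined} together with the rank formula in \lemref{trrep}. The ingredients I would rely on are: monotonicity of $\rep_{\Gamma_k}(\Trans(\cdot))$ under subgroup inclusion (immediate from \lemref{trrep} and the definition of $\Trans$), anti-monotonicity of $T(\cdot)$ (from its definition), \lemref{rad-monotone}, and \lemref{Gam2nice}.

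The case $\gamma \in \Rad(\Gamma')$ is handled uniformly. Here $\Gamma' < \langle \Gamma', \gamma \rangle < \Rad(\Gamma')$, so by \lemref{rad-monotone} the closure of the middle group is sandwiched between $\Rad(\Gamma')$ and $\Rad(\Rad(\Gamma')) = \Rad(\Gamma')$, hence equals $\Rad(\Gamma')$. Since $\rep_{\Gamma_k}(\Trans(\cdot))$ and $T(\cdot)$ agree on $\Gamma'$ and $\Rad(\Gamma')$ by definition of closure, the monotonicity properties force both functions to be constant on the entire interval, so $\Delta = 0$.

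The interesting case is $\gamma \notin \Rad(\Gamma')$, where I must show $\Delta = 2$ exactly. For $k \in \{3,4,6\}$, \lemref{rad-welldefined} enumerates only four closure types, yielding $\rep - T$ values in $\{-2, 0, 0, 2\}$, and a short check shows that adding $\gamma \notin \Rad(\Gamma')$ always moves the closure ``upward'' by exactly one step in this chain. The only subcase requiring genuine thought is $\Gamma'$ a cyclic rotation subgroup together with $\gamma$ a rotation not in it: \lemref{rotscomm} then forces $\langle \Gamma', \gamma \rangle$ to contain a nontrivial translation, so the new closure is all of $\Gamma_k$. For $k = 2$ I split further on whether $\Gamma'$ contains a rotation and on whether $\gamma$ is a translation or a rotation. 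The main tool is \lemref{Gam2nice}, which identifies the translation subgroup of a group generated by a rotation together with a translation subgroup. Using it, the rotation case reduces to tracking the rank of the translation subgroup: if $\gamma = r' t$ is a rotation, then $\gamma \notin \Rad(\Gamma')$ translates, via \lemref{rad-welldefined}, into the statement that $t$ is independent modulo $\Rad(\Trans(\Gamma'))$, so the rank of $\Trans(\langle \Gamma', \gamma \rangle)$ grows by exactly one and \lemref{trrep} yields $\Delta = 2$.

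The main obstacle is the $k = 2$ subcase with $\Gamma'$ already containing both translations and rotations and with $\gamma$ a rotation: here one has to extract the ``translation content'' of $\gamma$ relative to a rotation in $\Gamma'$, confirm that it is this translation content---rather than $\gamma$ itself---that grows $\Trans(\langle \Gamma',\gamma\rangle)$, and verify that the growth is by exactly one in rank so that \lemref{trrep} delivers the precise increment of $2$.
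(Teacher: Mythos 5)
Your proposal is correct and takes essentially the same route as the paper's proof: the $\gamma\in\Rad(\Gamma')$ case follows from invariance of the quantity on the closure, and the $\gamma\notin\Rad(\Gamma')$ case is a case analysis on $k$ and on the rotation/translation content of $\gamma$ and $\Gamma'$, using \lemref{rad-welldefined}, \lemref{Gam2nice}, and \lemref{trrep}, including the key identity $\Trans(\langle \Gamma',\gamma\rangle)=\langle r'\gamma,\Trans(\Gamma')\rangle$ in the mixed $\Gamma_2$ subcase. The only cosmetic difference is that you compute the increment exactly in each case (invoking \lemref{rotscomm} for the cyclic-plus-rotation subcase when $k=3,4,6$), whereas the paper shows the increase is at least $2$ by monotonicity and parity and at most $2$ by the same case analysis.
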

\begin{proof}
If $\gamma \in \Rad(\Gamma')$, this follows at once from the definition, since the
quantity $\rep_{\Gamma_k}(\Trans(\Gamma'))-T(\Gamma')$ depends only on the closure.

Now suppose that $\gamma \notin \Rad(\Gamma')$.  Since the closure is defined in terms of
$\rep_{\Gamma_k}(\Lambda(\cdot))$ and $T(\cdot)$, \lemref{rad-monotone} implies that
at least one of $\rep_{\Gamma_k}(\Lambda(\cdot))$ or $-T(\cdot)$ increases. It is easy to see from the definition
that either type of increase is by at least $2$.  We will show that the increase is at most $2$, from which
the lemma follows.  The rest of the proof is in three cases, depending on $k$.

Now we let $k=3, 4, 6$.  The only way for the increase to be larger than $2$ is for
$\Gamma'$ to be trivial and $\Rad(\langle \gamma \rangle) = \Gamma_k$.  This is
impossible given the description from \lemref{rad-welldefined}.

To finish, we address the case $k = 2$.
Suppose $\gamma$ is a translation.  Then $T(\langle \gamma, \Gamma' \rangle) = T(\langle \Gamma'\rangle)$, since
adding $\gamma$ as a generator does not give us a new rotation if one was not already present in $\Gamma'$.
Lemmas \ref{lemma:subgrpgen} and \ref{lemma:Gam2nice} imply that
$\Trans(\langle \gamma, \Gamma' \rangle) = \langle \gamma, \Trans(\Gamma') \rangle$.
Hence, the rank of the translation subgroup increases by at most $1$, and so, by \lemref{trrep},
$\rep_{\Gamma_k}(\cdot)$ increases by at most $2$.

Now suppose that $\gamma$ is a rotation.  If $\Gamma'$ has no rotations,
then Lemma \ref{lemma:Gam2nice} implies $\Trans(\langle \gamma, \Gamma' \rangle) = \Gamma'$,
and so $T(\cdot)$ decreases and $\rep_{\Gamma_2}(\cdot)$ is unchanged.  If
$\Gamma'$ has rotations, then $\Gamma' = \langle r', \Trans(\Gamma')\rangle$ for some rotation $r'\in \Gamma'$.
Since $k=2$, the product $r' \gamma$ is a translation and so
\[\Trans(\langle \gamma, \Gamma' \rangle) = \Trans(\langle \gamma, r', \Trans(\Gamma') \rangle) =
\Trans(\langle r', r' \gamma, \Trans(\Gamma') \rangle) = \langle r' \gamma, \Trans(\Gamma') \rangle\]
Thus, in this case, the the number of generators of the translation subgroup increases by at most one and
$T(\cdot)$ is unchanged.  By \lemref{trrep}, the proof is complete.
\end{proof}

\subsection{Teichm\"uller space and the centralizer}\seclab{teich}
The representation spaces defined in the previous two sections are closely related to the degrees
of freedom in the crystallographic direction networks we study in the sequel.  In this section, we
discuss the \emph{Teichmüller space} and \emph{centralizer}, which play the same role for frameworks.

\subsubsection{Teichmüller space}
The \emph{Teichm\"uller space} of $\Gamma_k$ is defined to be
the space of discrete faithful representations, modulo conjugation by
$\Euc(2)$; i.e. $\Teich(\Gamma_k) = \Rep(\Gamma_k)/\Euc(2)$.  For a subgroup
$\Gamma' < \Gamma_k$, we define its \emph{restricted Teichm\"uller space} to be
\[
\Teich_{\Gamma_k}(\Gamma') = \Rep_{\Gamma_k}(\Gamma')/\Euc(2)
\]
Correspondingly, we define $\teich_{\Gamma_k}(\Gamma') = \dim(\Teich_{\Gamma_k}(\Gamma'))$.

\subsubsection{The centralizer}
For a subgroup $\Gamma' \leq \Gamma_k$ and a discrete faithful representation $\Phi: \Gamma_k \to \Euc(2)$,
the \emph{centralizer of $\Phi(\Gamma')$} which we denote $\Cent_{\Euc(2)}(\Phi(\Gamma'))$ is the set of
elements commuting with all elements in $\Phi(\Gamma')$.
We define $\cent(\Gamma')$ to be the dimension of the centralizer
$\Cent_{\Euc(2)}(\Phi(\Gamma'))$.  The quantity $\cent(\Gamma')$ is independent of $\Phi$,
and we can compute it.  Since we do not depend on \lemref{centtable} or \propref{reptoteich} for any of our main results,
we skip the proofs in the interest of space.
\begin{lemma} \lemlab{centtable}
Let notation be as above.  The quantity $\cent(\Gamma')$
is independent of the representation $\Phi$.
Furthermore, $\cent(\Gamma') \geq T(\Gamma')$, and in particular,
\[ \cent(\Gamma') =
\begin{cases}
0 & \text{ if $\Gamma'$ contains rotations and translations} \\
1 & \text{ if  $\Gamma'$ contains only rotations} \\
2 & \text{ if  $\Gamma'$ contains only translations} \\
3 & \text{ if  $\Gamma'$ is trivial}
\end{cases}
\]
\end{lemma}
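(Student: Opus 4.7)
The plan is to reduce the centralizer computation to a case analysis on whether $\Gamma'$ contains nontrivial rotations and/or translations. A preliminary observation from the proof of \lemref{repspace}, together with \lemref{TorR} and \lemref{orderk}, is that any $\Phi\in\Rep(\Gamma_k)$ sends elements of $\Trans(\Gamma')$ to translations and all other elements of $\Gamma'$ to nontrivial rotations; hence whether $\Gamma'$ contains translations or rotations is determined intrinsically by $\Gamma'$, not by $\Phi$. Once $\cent(\Gamma')$ is shown to depend only on this combinatorial type, the independence from $\Phi$ follows at once.

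Next, I would establish a key structural fact: if $\Gamma'$ contains no translations, then $\Gamma'\cap\Trans(\Gamma_k)=\{\Id\}$, so the composition $\Gamma'\hookrightarrow\Gamma_k\twoheadrightarrow\Gamma_k/\Trans(\Gamma_k)\cong\Z/k\Z$ is injective, making $\Gamma'$ itself finite cyclic. Consequently $\Phi(\Gamma')$ is a finite cyclic group of rotations in $\Euc(2)$, and any such group has a common fixed point that serves as a common rotation center.

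With these preparations I would compute centralizers case by case using the semidirect-product structure $\Euc(2)\cong\R^2\rtimes O(2)$ and the conjugation identity $(w,M)(v,N)(w,M)^{-1}=(w+Mv-MNM^{-1}w,\,MNM^{-1})$. The continuous part of the centralizer of a single nontrivial translation $(v,\Id)$ is the full $2$-dimensional translation subgroup (the condition $Mv=v$ forces $M=\Id$ in $SO(2)$ but leaves $w$ free), and the continuous part of the centralizer of a nontrivial rotation about a point $p$ is the $1$-dimensional circle of rotations about $p$ (the condition $MNM^{-1}=N$ forces $M\in SO(2)$ and then $w$ is uniquely determined by a linear equation). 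Intersecting over a generating set for $\Phi(\Gamma')$ yields $\cent(\Gamma')=3$ when $\Gamma'$ is trivial, $\cent(\Gamma')=2$ when $\Gamma'$ has only translations, $\cent(\Gamma')=1$ when $\Gamma'$ has only rotations (using the common rotation center from the previous paragraph), and $\cent(\Gamma')=0$ when $\Gamma'$ has both, since the translation subgroup and the circle of rotations about a single point meet only at $\Id$.

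The inequality $\cent(\Gamma')\geq T(\Gamma')$ is then immediate from the alternative definition of $T(\Gamma')$ as the dimension of the space of translations commuting with $\Phi(\Gamma')$, since these translations form a subgroup of the full centralizer. The main obstacle is the centralizer bookkeeping in $\Euc(2)$, in particular verifying that the orientation-reversing part of the centralizer never contributes continuous dimension beyond what the orientation-preserving part provides (a subtle point when the group contains a two-fold rotation, where reflection cosets do appear but only in the same dimension as the rotational part); the group-theoretic reduction via the $\Z/k\Z$ quotient is short once one observes that a translation-free subgroup intersects $\Trans(\Gamma_k)$ trivially.
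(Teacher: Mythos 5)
Your argument is correct, and there is in fact nothing in the paper to compare it against: the authors explicitly skip the proofs of \lemref{centtable} and \propref{reptoteich} ``in the interest of space,'' so your write-up supplies the missing verification. The structure is the natural one: (i) independence of $\Phi$ follows because any discrete faithful representation sends elements of $\Trans(\Gamma_k)$ to translations and all other elements to nontrivial rotations (the paper itself records this after the proof of \lemref{repspace}), so the combinatorial type of $\Gamma'$ is intrinsic; (ii) a translation-free subgroup injects into $\Gamma_k/\Trans(\Gamma_k)\cong\Z/k\Z$, hence is cyclic and $\Phi(\Gamma')$ has a single rotation center; (iii) the centralizer of a generating set is computed from the conjugation formula in $\R^2\rtimes O(2)$, giving dimensions $3,2,1,0$ in the four cases; and (iv) $\cent(\Gamma')\ge T(\Gamma')$ because the translations commuting with $\Phi(\Gamma')$ form a subgroup of the centralizer. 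The only imprecision is your parenthetical claim that $MNM^{-1}=N$ forces $M\in SO(2)$ for a rotation $N$; this fails when $N$ is a half-turn, but you flag exactly this case yourself and correctly observe that the orientation-reversing component, when present (half-turns, or a rank-one translation lattice), has the same dimension as the orientation-preserving part, so the stated values of $\cent(\Gamma')$ are unaffected. Writing out that reflection-coset check in each of the four cases is the only thing separating your outline from a complete proof.
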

As a corollary, we get the following proposition relating $\rep_{\Gamma_k}(\cdot)$ and
$T(\cdot)$ to $\teich_{\Gamma_k}(\cdot)$ and $\cent(\cdot)$.
\begin{prop} \proplab{reptoteich}
Let $\Gamma' < \Gamma_k$.  Then:
\begin{itemize}
\item[\textbf{(A)}] If $\Gamma'$ contains a translation, then $T(\Gamma') = \cent(\Gamma')$.  Otherwise,
$T(\Gamma') = \cent(\Gamma') - 1$.
\item[\textbf{(B)}] If $\Gamma'$ is a non-trivial translation subgroup, then
$\teich_{\Gamma_k}(\Gamma') = \rep_{\Gamma_k}(\Gamma') -1$.
\item[\textbf{(C)}] If $\Gamma'$ is trivial, then $\teich_\Gamma(\Gamma') = \rep_\Gamma(\Gamma') = 0$.
\item[\textbf{(D)}] For any $\Gamma'<\Gamma_k$, $\rep_{\Gamma_k}(\Trans(\Gamma')) - T(\Gamma') =
\teich_{\Gamma_k}(\Trans(\Gamma')) - \cent(\Gamma')$.
\end{itemize}
\end{prop}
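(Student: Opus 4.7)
The proposition bundles four related identities; my plan is to prove (A) directly by case analysis, to derive (B) and (C) from a single application of the orbit--stabilizer principle, and to reduce (D) to a combination of the first three.

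For part (A), I would enumerate the four mutually exclusive possibilities for $\Gamma'$: trivial; contains only rotations; contains only translations; contains both a rotation and a nontrivial translation. In each case, $T(\Gamma')$ is read off from its definition and $\cent(\Gamma')$ from \lemref{centtable}. The pattern to verify is that $\cent(\Gamma')$ coincides with $T(\Gamma')$ exactly when $\Gamma'$ contains a translation and exceeds it by one otherwise. Conceptually, the ``extra'' dimension in the latter case is the one-parameter family of rotations around a common fixed point (or the full copy of $SO(2)$ in the trivial case) that commutes with $\Phi(\Gamma')$ without contributing to the translation part of the centralizer.

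For parts (B) and (C), I would apply the orbit--stabilizer principle to the conjugation action of $\Euc(2)$ on $\Rep_{\Gamma_k}(\Gamma')$. At any $\Phi$, the stabilizer of the action is by definition $\Cent_{\Euc(2)}(\Phi(\Gamma'))$, which has dimension $\cent(\Gamma')$ by \lemref{centtable}, independent of $\Phi$. Since $\dim \Euc(2) = 3$, every orbit has dimension $3 - \cent(\Gamma')$, so
\[
\teich_{\Gamma_k}(\Gamma') \;=\; \rep_{\Gamma_k}(\Gamma') - 3 + \cent(\Gamma').
\]
Specializing $\cent(\Gamma') = 2$ for a nontrivial translation subgroup recovers (B), and specializing $\rep_{\Gamma_k}(\Gamma') = 0$, $\cent(\Gamma') = 3$ for the trivial subgroup recovers (C).

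For part (D), my plan is to apply (B) or (C) to the subgroup $\Trans(\Gamma')$, which is always either trivial or a pure translation subgroup, and then use (A) to convert the remaining centralizer terms into $T(\Gamma')$. The identity then reduces to a case-by-case arithmetic check that mirrors the split used in (A). The bookkeeping is immediate when $\Gamma'$ is trivial, purely rotational, or purely translational, since in each of these cases $\cent(\Gamma')$ and $\cent(\Trans(\Gamma'))$ differ in a way controlled directly by (A). The main obstacle I anticipate is the mixed case where $\Gamma'$ contains both a rotation and a nontrivial translation: there $\cent(\Gamma')$ collapses to zero while $\cent(\Trans(\Gamma'))$ is still two-dimensional, and the identity asks that this gap be exactly absorbed by the one-dimensional loss arising from quotienting $\Rep_{\Gamma_k}(\Trans(\Gamma'))$ by $\Euc(2)$. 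Once this case is verified, (D) follows by substitution.
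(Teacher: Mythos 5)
The paper itself omits the proof of \propref{reptoteich} (it says explicitly that the result is not needed and the proof is skipped), so your argument has to stand on its own; there is no proof in the paper to compare it to. Your handling of (A) by case analysis against \lemref{centtable}, and of (B) and (C) via the orbit--stabilizer identity $\teich_{\Gamma_k}(\Gamma') = \rep_{\Gamma_k}(\Gamma') - 3 + \cent(\Gamma')$ (the stabilizer of $\Phi$ under conjugation is exactly $\Cent_{\Euc(2)}(\Phi(\Gamma'))$), is essentially sound; to make (B) fully rigorous you should say why the quotient dimension is $\rep$ minus the orbit dimension, which is easiest to check directly in the coordinates of \lemref{repspace} (translations act trivially on the restriction to a translation subgroup, and the $O(2)$-factor acts with one-dimensional orbits). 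Also note that in (A) "contains a translation" must be read as "contains a nontrivial translation", since the trivial subgroup has $T=2$ and $\cent=3$.

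The genuine problem is (D). If you carry out the substitution you describe, it does not close: whenever $\Trans(\Gamma')$ is nontrivial, (B) gives $\teich_{\Gamma_k}(\Trans(\Gamma')) = \rep_{\Gamma_k}(\Trans(\Gamma')) - 1$ while (A) gives $T(\Gamma') = \cent(\Gamma')$, so the right-hand side of (D) equals $\rep_{\Gamma_k}(\Trans(\Gamma')) - T(\Gamma') - 1$, one less than the left-hand side; when $\Trans(\Gamma')$ is trivial, (C) and (A) produce the same unit discrepancy. Concretely, for $\Gamma'$ a rank-two translation subgroup of $\Gamma_2$ the two sides are $4-2=2$ and $3-2=1$; for a purely rotational $\Gamma'$ they are $0$ and $-1$. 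In particular the "mixed case" you single out is not absorbed: there the left side is $\rep_{\Gamma_k}(\Trans(\Gamma'))$ and the right side is $\rep_{\Gamma_k}(\Trans(\Gamma')) - 1$. So (A)--(C) actually force $\rep_{\Gamma_k}(\Trans(\Gamma')) - T(\Gamma') = \teich_{\Gamma_k}(\Trans(\Gamma')) - \cent(\Gamma') + 1$ in every case, which contradicts (D) as printed; the printed (D) appears to carry an off-by-one error (the corrected version is the one consistent with $h = f - 1$ versus $h'$ in \lemref{gamma-cl-equiv}, where the extra $-1$ is precisely this unit). Your writeup asserts that the bookkeeping "is immediate" and that (D) "follows by substitution"; as it stands that claim is false, and an honest execution of your own plan would have surfaced the discrepancy. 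You should either prove the corrected identity or flag (D) as misstated, but you cannot claim a complete proof of (D) as written.
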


\subsection{A matroid on crystallographic groups}\seclab{group-matroid}
We now define and study a matroid $M_{\Gamma_k,n}$ for $k=2,3,4,6$.

\subsubsection{Preview of $\Gamma$-$(1,1)$ graphs and $M_{\Gamma_k,n}$}
In \secref{sparse}, we will relate
$M_{\Gamma_k,n}$ to ``$\Gamma$-$(1,1)$ graphs'', which are defined in \secref{gamma11-def}.  The
results here, roughly speaking, are the group theoretic part of the proof of \propref{gamma11}
in \secref{sparse}.

We briefly motivative the definitions given next. In general, $\Gamma$-$(1,1)$ graphs need
not be connected, and
each connected component has an associated finitely generated subgroup of $\Gamma_k$.  The ground
set of $M_{\Gamma_k, n}$ and the $A_i$ defined below capture this situation.  The operations of
conjugating and fusing, defined here in Sections \secrefX{Mkn-conj} and \secrefX{Mkn-fuse} will
be interpreted graph theoretically in \secref{sparse}.

\subsubsection{The ground set} For the definition of the ground set, we fix $\Gamma_k$ and a natural number $n\ge 1$.
The ground set $E_{\Gamma_k,n}$ is defined to be:
\[
E_{\Gamma_k,n} =
\left\{
(\gamma,i) : 1\le i\le n
\right\}
\]
In other words the ground set is $n$ labeled copies of $\Gamma_k$.

Let $A\subset E_{\Gamma_k,n}$.  We define some notation:
\begin{itemize}
\item $A_i = \{\gamma : (\gamma,i)\in A\}$; i.e., $A_i$ is the group elements from copy $i$ of $\Gamma_k$ in $A$.  Some of the
$A_i$ may be empty and $A_i$ can be a multi-set.  $A$ may equivalently be defined by the $A_i$.
\item $\Gamma_{A,i} = \langle \gamma : \gamma\in A_i\rangle$; i.e., the subgroup generated by the elements in $A_i$.
\item $\Trans(A) = \langle \Trans(\Gamma_{A,1}), \Trans(\Gamma_{A,2}), \dots, \Trans(\Gamma_{A,n}), \rangle$; the translation subgroup
generated by the translations in each of the $\Gamma_{A,i}$.
\item $c(A)$ is the number of $A_i$ that are not empty.
\end{itemize}

\subsubsection{The rank function}\seclab{g1}
We now define the function $g_1(A)$ for $A\subset E_{\Gamma_k,n}$ to be
\[
g_1(A) = n + \frac{1}{2}\rep_{\Gamma_k}(\Trans(A)) - \frac{1}{2}\sum_{i=1}^n T(\Gamma_{A,i})
\]

The meaning of the terms in $g_1(A)$ are as follows:
\begin{itemize}
\item The second term is a global adjustment for the representation space of the group generated by
the translations in each of the $\Gamma_{A,i}$.  We note that this is \emph{not} the same as
the translation group $\Trans(\langle\gamma : \gamma\in \cup_{i=1}^n A_i)$, which includes
translations arising as products of rotations in different $A_i$.
\item The quantity $n - \frac{1}{2} \sum_{i=1}^n T(\Gamma_{A, i}) = \sum_{i=1}^n (1 - \frac{1}{2} T(\Gamma_{A, i}))$
is a local adjustment based on whether $\Gamma_{A,i}$ contains a rotation:
each term in the latter sum is one if $\Gamma_{A,i}$ contains a rotation and otherwise it contributes
nothing.
\end{itemize}

\subsubsection{An analogy to uniform linear matroids}
To give some intuition about why the construction above might be matroidal, we observe that
\propref{rad-rep-minus-T}, interpreted in matroidal language gives us:
\begin{prop}\proplab{gammak-matroid}
Let $A$ be a finite subset of $\Gamma_k$ generating a subgroup $\Gamma_A$.  Then the function
\[
r(A) = \frac{1}{2}\left(\rep_{\Gamma_k}(\Lambda(\Gamma_A)) - T(\Gamma_A)\right)
\]
is the rank function of a matroid on the ground set $\Gamma_k$.
\end{prop}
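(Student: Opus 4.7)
The plan is to prove \propref{gammak-matroid} by directly verifying the matroid rank-function axioms (normalization, unit-step increment, and submodularity), using \propref{rad-rep-minus-T} as the driving input and \lemref{rad-monotone} to handle submodularity.

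First, \propref{rad-rep-minus-T}, rewritten in the notation of the proposition, reads
\[
r(A \cup \{\gamma\}) - r(A) = \begin{cases} 1 & \text{if } \gamma \notin \Rad(\Gamma_A), \\ 0 & \text{if } \gamma \in \Rad(\Gamma_A), \end{cases}
\]
for every finite $A \subset \Gamma_k$ and every $\gamma \in \Gamma_k$. This single identity simultaneously yields integrality of $r$, monotonicity, and the unit-step upper bound $r(A \cup \{\gamma\}) \le r(A) + 1$; in particular $r(A) \le |A|$.

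Next I would check submodularity in its ``local'' form: for $A \subset A'$ and any $\gamma$, the increment $r(A \cup \{\gamma\}) - r(A)$ dominates $r(A' \cup \{\gamma\}) - r(A')$. If $\gamma \in \Rad(\Gamma_A)$, then \lemref{rad-monotone} gives $\Rad(\Gamma_A) \subset \Rad(\Gamma_{A'})$, so $\gamma \in \Rad(\Gamma_{A'})$ and both increments vanish. Otherwise the $A$-increment is $1$, which bounds the $A'$-increment by the unit-step property already established. Specializing with $A' = A \cup \{\gamma_2\}$ and $\gamma = \gamma_1$ yields the standard submodular inequality $r(A \cup \{\gamma_1\}) + r(A \cup \{\gamma_2\}) \ge r(A) + r(A \cup \{\gamma_1, \gamma_2\})$.

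The normalization $r(\emptyset) = 0$ is a small bookkeeping matter: since $\rep_{\Gamma_k}(\Trans(\{e\})) = 0$ and $T(\{e\}) = 2$, the raw formula gives $r(\emptyset) = -1$, so one should either shift $r$ by an additive constant or declare $r(\emptyset) = 0$ by convention without changing the underlying matroid structure. That structure is then fully determined: the closure operator is $A \mapsto \Rad(\Gamma_A) \cap \Gamma_k$, the independent sets are those $A$ on which $r$ attains its maximum value $|A|$, and the circuits are minimal $A$ with $\Rad(\Gamma_A) = \Rad(\Gamma_{A \setminus \{\gamma\}})$ for some $\gamma \in A$.

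The main obstacle in this proof has already been cleared by the time we reach it: \propref{rad-rep-minus-T} is the hard step, precisely because it required the case analysis of \secref{subgroups} on how $\rep_{\Gamma_k}(\Trans(\cdot))$ and $T(\cdot)$ respond to adjoining a new generator in each of the groups $\Gamma_2, \Gamma_3, \Gamma_4, \Gamma_6$. Once that proposition and \lemref{rad-monotone} are in hand, \propref{gammak-matroid} is essentially a translation into matroid language, with no additional geometric or group-theoretic input required.
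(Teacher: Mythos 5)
Your proposal is correct and takes essentially the same route as the paper, which presents this proposition as an immediate matroidal reading of \propref{rad-rep-minus-T} and, in proving the generalization \propref{Mgammarank}, carries out exactly your axiom checks (unit increments from \propref{rad-rep-minus-T}, local submodularity via \lemref{rad-monotone}). Your normalization remark is apt: since $T$ of the trivial subgroup is $2$, the displayed $r$ indeed has $r(\emptyset)=-1$, and the additive shift you propose is precisely how the paper's rank function $g_1$ fixes this via its leading term $n$ (for $n=1$ one has $g_1 = r+1$).
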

The matroid in the conclusion of \propref{gammak-matroid} is analogous to a  linear matroid, with
$\Gamma_k$ playing the role of a vector space and $r$ the role of dimension of the linear span.
(And, in fact, for the group $\Z^2$, $r$ reduces simply to linear independence, as in \cite[Section 4]{MT13}.)
Since
the function $g_1$, defined above, builds on $r$, one might expect that it inherits a matroidal
structure.  We verify this next.

\subsubsection{$M_{\Gamma_k,n}$ is a matroid}
The following proposition is the main result of this section.
\begin{prop}\proplab{Mgammarank}
The function $g_1$ is the rank function of a matroid $M_{\Gamma_k,n}$.
\end{prop}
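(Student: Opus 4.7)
The plan is to verify the defining axioms of a matroid rank function for $g_1$: $g_1(\emptyset) = 0$, integer-valuedness, non-negativity, monotone unit increments, and submodularity.

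The easy parts come first. Setting $A = \emptyset$, each $\Gamma_{A,i}$ is trivial, so $T(\Gamma_{A,i}) = 2$ and $\rep_{\Gamma_k}(\Trans(A)) = 0$, giving $g_1(\emptyset) = n + 0 - n = 0$. Integer-valuedness follows from \lemref{trrep}, which ensures $\rep_{\Gamma_k}$ on any translation subgroup is always even. Non-negativity is immediate from $T(\Gamma_{A,i}) \leq 2$.

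For unit increments, I would show $g_1(A \cup \{(\gamma, i)\}) - g_1(A) \in \{0, 1\}$. Adding a single element modifies only $\Gamma_{A, i}$, so the increment of $g_1$ is $\tfrac{1}{2}(\delta_R - \delta_T)$, where $\delta_T \in \{-2, 0\}$ is the change in $T(\Gamma_{A,i})$ and $\delta_R$ is the change in $\rep_{\Gamma_k}(\Trans(A))$. One argues $\delta_R \in \{0, 2\}$: adding one generator raises the rank of $\Trans(\Gamma_{A,i})$ by at most one --- immediate if $\gamma$ is a translation, and via \lemref{Gam2nice} (for $k = 2$) or the closure classification of \lemref{rad-welldefined} (for $k = 3, 4, 6$) if $\gamma$ is a rotation. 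Excluding the pair $(\delta_T, \delta_R) = (-2, 2)$ is the key step: $\delta_T = -2$ forces $\Gamma_{A,i}$ to have had no rotation and $\gamma$ to be a rotation, and in that situation the same lemmas yield $\delta_R = 0$.

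For submodularity, I would decompose $g_1 = f_1 + f_2$ with
\[
f_1(A) = \#\{i : A_i \text{ contains a rotation}\}, \qquad f_2(A) = \tfrac{1}{2}\rep_{\Gamma_k}(\Trans(A)).
\]
The function $f_1$ is the rank function of a partition matroid (the rotations in each copy $i$ form a rank-$1$ block; translations are loops), hence submodular. The function $f_2$ is \emph{not} individually submodular, because combining rotations from $A_i$ and $B_i$ in the same copy can produce a translation in $\Gamma_{A \cup B, i}$ lying in neither $\Gamma_{A, i}$ nor $\Gamma_{B, i}$. A case analysis on whether each of $A_i$, $B_i$, $(A \cap B)_i$ contains a rotation shows that the submodular slack of $f_1$ at a fixed pair $A, B$ equals $k_{\mathrm{bad}} := \#\{i : A_i \text{ and } B_i \text{ both contain rotations but } (A \cap B)_i \text{ does not}\}$, reducing submodularity of $g_1$ to
\[
\rep_{\Gamma_k}(\Trans(A \cup B)) + \rep_{\Gamma_k}(\Trans(A \cap B)) - \rep_{\Gamma_k}(\Trans(A)) - \rep_{\Gamma_k}(\Trans(B)) \leq 2 k_{\mathrm{bad}}.
\]
I would prove this by showing (a) non-bad indices contribute nothing new to $\rep_{\Gamma_k}(\Trans(A \cup B))$ beyond the naive join $\langle \Trans(\Gamma_{A, i}), \Trans(\Gamma_{B, i}) \rangle$, using \lemref{Gam2nice} and \lemref{rad-push-trans}, and (b) each bad index contributes at most $2$ to the $\rep$-defect, via \propref{rad-rep-minus-T} applied to a "cross" element $r_A r_B^{-1}$ for rotations $r_A \in A_i$, $r_B \in B_i$.

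The main obstacle is this per-index bound at the bad indices. Establishing it requires tracking how rotation parts in $\Gamma_k / \Trans(\Gamma_k)$ combine to yield new translations, and splits naturally by $k$: for $k = 2$, every product of two rotations is automatically a translation and \lemref{Gam2nice} controls everything; for $k = 3, 4, 6$, \lemref{trrep} forces $\rep_{\Gamma_k}$ of any nontrivial translation subgroup to equal $2$, which dramatically limits the possible jumps, and the explicit closure classification of \lemref{rad-welldefined} finishes the case check.
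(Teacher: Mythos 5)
Your proposal is sound and verifies a valid axiom set, but it takes a genuinely different route from the paper at the submodularity step. The paper never proves the set-pair inequality directly: it checks only the local (diminishing-returns) form, namely $g_1(A\cup\{(\gamma,\ell)\})-g_1(A)\ge g_1(B\cup\{(\gamma,\ell)\})-g_1(B)$ for $A\subset B$, and both this and the unit-increment property are obtained by reducing to the already-proved single-subgroup statement \propref{rad-rep-minus-T}: Lemmas \lemrefX{reduce1} and \lemrefX{reduce2} replace $\Gamma_{A,\ell}$ by $\Gamma_{A,\ell}'=\langle\Gamma_{A,\ell},\Trans(A)\rangle$, so that an increase upon adding $\gamma$ occurs exactly when $\gamma\notin\Rad(\Gamma_{\cdot,\ell}')$, and closure monotonicity (\lemref{rad-monotone}) transfers this from the larger set to the smaller one. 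Your decomposition $g_1=f_1+f_2$ (partition-matroid part plus $\tfrac12\rep_{\Gamma_k}(\Trans(\cdot))$) instead attacks the full set-pair inequality head-on; this is more work, but it makes explicit exactly where $f_2$ fails to be submodular (the cross translations at bad indices), which the paper's local argument never has to confront, and your unit-increment analysis is essentially \propref{rad-rep-minus-T} redone by hand and is fine. Two cautions on the remaining step: for $k=2$ the per-index accounting works as you describe (at a non-bad index a common rotation $r_0\in(A\cap B)_i$ together with \lemref{Gam2nice} shows the translation subgroup of $\Gamma_{(A\cup B),i}$ is the naive join, and a bad index adds the single generator $r_Ar_B$); but for $k=3,4,6$ the claim that a bad index contributes via one cross element is false as a group statement --- two order-$k$ rotations about distinct centers already generate a rank-two translation lattice --- so there you should instead invoke the cap from \lemref{trrep} ($\rep_{\Gamma_k}$ of any nontrivial translation subgroup equals $2$, so the left side of your displayed inequality is at most $2$, and the only configuration requiring $k_{\mathrm{bad}}\ge 1$ is $\Trans(A)$ and $\Trans(B)$ trivial with $\Trans(A\cup B)$ nontrivial, which forces a bad index because subgroups with trivial translation part are cyclic rotation groups about a single center). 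Your closing paragraph gestures at exactly this split, so I read it as a fixable imprecision rather than a gap; completing it yields a correct, if longer, proof than the paper's.
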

We note that although the ground set is infinite, since our matroids are finite rank,
all the facts for finite matroids which we cite apply here as well.

The proof depends on  Lemmas \lemrefX{reduce1} and \lemrefX{reduce2} below, so we defer it for the moment to \secref{Mgammarank-proof}.
The strategy is based on the observation that when $n=1$, the ground set is essentially $\Gamma_k$.  In this case,
submodularity and normalization of $g_1$ (the most difficult properties to establish) follow
immediately from \propref{rad-rep-minus-T}.  The motivation of Lemmas \ref{lemma:reduce1} and \ref{lemma:reduce2} is
to reduce, as much as possible, the proof of the general case to $n=1$.

\begin{lemma}\lemlab{reduce1}
Let $A \subset E_{\Gamma_k, n}$, and set $\Gamma_{A, \ell}' = \langle \Gamma_{A, \ell}, \Trans(A) \rangle$.
Then, for all $1 \leq \ell \leq n$,
\begin{itemize}
\item $\Rad(\Trans(A))  =  \Rad(\Trans(\Gamma_{A, \ell}'))$
\item $T(\Gamma_{A, \ell})   =  T(\Gamma_{A, \ell}') $
\end{itemize}
\end{lemma}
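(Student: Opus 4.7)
The plan is to dispatch each of the two bullets separately, using the infrastructure already built up in \secref{subgroups}; the first reduces directly to \lemref{rad-push-trans}, and the second is a one-line observation about the presence of rotations.

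For the equality $\Rad(\Trans(A))=\Rad(\Trans(\Gamma_{A,\ell}'))$, I would apply \lemref{rad-push-trans} with $\Gamma':=\Gamma_{A,\ell}$ and $\Gamma'':=\Trans(A)$, noting that $\Trans(A)$ is by construction a subgroup of $\Trans(\Gamma_k)$ (it is generated by translations) and so qualifies as a translation subgroup in the hypothesis. The lemma then yields
\[
\Rad\bigl(\langle \Trans(\Gamma_{A,\ell}),\Trans(A)\rangle\bigr) \;=\; \Rad\bigl(\Trans(\langle \Gamma_{A,\ell},\Trans(A)\rangle)\bigr) \;=\; \Rad(\Trans(\Gamma_{A,\ell}')).
\]
Since $\Trans(\Gamma_{A,\ell})$ is among the generators used to define $\Trans(A)$, the inclusion $\Trans(\Gamma_{A,\ell})\subset \Trans(A)$ collapses the left-hand join to $\Trans(A)$ itself, and the first bullet follows at once.

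For the equality $T(\Gamma_{A,\ell})=T(\Gamma_{A,\ell}')$, I would split on whether $\Gamma_{A,\ell}$ contains a rotation. If it does, then so does the larger group $\Gamma_{A,\ell}'$, and both $T$-values equal $0$. If it does not, then $\Gamma_{A,\ell}\subset \Trans(\Gamma_k)$; combined with $\Trans(A)\subset \Trans(\Gamma_k)$, the group $\Gamma_{A,\ell}'=\langle \Gamma_{A,\ell},\Trans(A)\rangle$ is generated entirely by translations and is therefore itself a translation subgroup, so both $T$-values equal $2$.

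I do not anticipate any serious obstacle here. The only point requiring mild care is verifying that the hypothesis of \lemref{rad-push-trans} is met, which hinges on two trivial facts extracted straight from the definition of $\Trans(A)$: that it is a translation subgroup of $\Gamma_k$, and that it contains $\Trans(\Gamma_{A,\ell})$ as one of its generating blocks.
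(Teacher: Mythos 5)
Your proof is correct and follows essentially the same route as the paper: the first bullet is exactly the intended application of \lemref{rad-push-trans} (with the join $\langle \Trans(\Gamma_{A,\ell}),\Trans(A)\rangle$ collapsing to $\Trans(A)$ because $\Trans(\Gamma_{A,\ell})$ is among its generators), and the second bullet is the paper's observation that, $\Trans(A)$ being a translation subgroup, $\Gamma_{A,\ell}'$ contains a rotation if and only if $\Gamma_{A,\ell}$ does. You simply spell out details the paper leaves implicit.
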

\begin{proof}
The first statement is immediate from \lemref{rad-push-trans}. The second statement follows from
the fact that $\Trans(A)$ is a translation subgroup of $\Gamma_k$, so $\Gamma_{A, \ell}'$ has a rotation
if and only if $\Gamma_{A,\ell}$ does.
\end{proof}

\begin{lemma}\lemlab{reduce2}
Let $A \subset E_{\Gamma_k, n}$, and set $\Gamma_{A, \ell}' = \langle \Gamma_{A, \ell}, \Trans(A) \rangle$.
If $B = A + (\gamma, \ell)$ and $\Gamma_{B, \ell}' = \langle \Gamma_{B, \ell}, \Trans(B) \rangle$,  Then,
\[
\Gamma_{B, \ell}' = \langle \gamma, \Gamma_{A, \ell}' \rangle
\]
\end{lemma}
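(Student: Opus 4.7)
The plan is to unfold both sides into explicit generating families of $\Gamma_k$ and match them directly. First I would record the two reformulations
\[
\Gamma_{B,\ell}' = \langle \Gamma_{B,\ell}, \Trans(B)\rangle = \langle \gamma, \Gamma_{A,\ell}, \Trans(B)\rangle,
\qquad
\langle \gamma, \Gamma_{A,\ell}'\rangle = \langle \gamma, \Gamma_{A,\ell}, \Trans(A)\rangle,
\]
where the first equality uses only $B_\ell = A_\ell \cup \{\gamma\}$ and the second is the definition of $\Gamma_{A,\ell}'$. The whole lemma is thus the single identity
\[
\langle \gamma, \Gamma_{A,\ell}, \Trans(B)\rangle = \langle \gamma, \Gamma_{A,\ell}, \Trans(A)\rangle.
\]

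Next I would prove the two containments. Since $B_i = A_i$ for $i \ne \ell$ and $A_\ell \subseteq B_\ell$, we get $\Gamma_{A,i} \le \Gamma_{B,i}$ for every $i$, hence $\Trans(\Gamma_{A,i}) \le \Trans(\Gamma_{B,i})$ and therefore $\Trans(A) \le \Trans(B)$. This gives the $\supseteq$ direction immediately. For the $\subseteq$ direction it is enough to show that each of the generators of $\Trans(B)$ already lies in $\langle \gamma, \Gamma_{A,\ell}, \Trans(A)\rangle$. These generators come in two kinds: for $i \ne \ell$ we have $\Trans(\Gamma_{B,i}) = \Trans(\Gamma_{A,i}) \le \Trans(A)$, and for $i = \ell$ we have $\Trans(\Gamma_{B,\ell}) \le \Gamma_{B,\ell} = \langle \gamma, \Gamma_{A,\ell}\rangle$. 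Both sit inside the right-hand group, finishing the proof.

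I do not expect any genuine obstacle: the statement is a bookkeeping identity whose only mildly subtle point is that when $\gamma$ is a rotation, $\Trans(\Gamma_{B,\ell})$ can be strictly larger than $\langle \Trans(\Gamma_{A,\ell})\rangle$ because new translations arise as commutators $\gamma\alpha\gamma^{-1}\alpha^{-1}$ with $\alpha \in \Gamma_{A,\ell}$. However, these new translations automatically live in $\Gamma_{B,\ell} = \langle \gamma, \Gamma_{A,\ell}\rangle$, so the crude bound $\Trans(\Gamma_{B,\ell}) \le \Gamma_{B,\ell}$ used above is sufficient; no appeal to the finer results of \secref{subgroups} (such as \lemref{Gam2nice} or \lemref{rad-push-trans}) is needed here.
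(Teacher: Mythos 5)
Your proof is correct and follows essentially the same route as the paper's: both reduce the claim to showing $\Trans(B)\leq\langle\gamma,\Gamma_{A,\ell},\Trans(A)\rangle$, handling the components $i\neq\ell$ via $\Trans(A)$ and the component $\ell$ via the crude containment $\Trans(\Gamma_{B,\ell})\leq\Gamma_{B,\ell}=\langle\gamma,\Gamma_{A,\ell}\rangle$. Your closing remark that no finer structural lemmas are needed is also consistent with the paper, whose proof of this lemma likewise uses only this bookkeeping.
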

\begin{proof}
First we observe that
\[
\Gamma_{B, \ell}' =
\langle \gamma, \Gamma_{A, \ell}, \Trans(B) \rangle \geq
\langle \gamma, \Gamma_{A, \ell}, \Trans(A) \rangle
\]
so to finish the proof we just have to show that
\[
\Trans(B) \leq \langle \gamma, \Gamma_{A, \ell}, \Trans(A) \rangle
\]
Since $\Gamma_{B, i} = \Gamma_{A, i}$ for all $i\neq \ell$, it follows that
\[
\Trans(B) = \langle \Trans( \langle \gamma, \Gamma_{A, \ell} \rangle)\;, \; \Trans(A) \rangle \leq
\langle \gamma, \Gamma_{A, \ell}, \Trans(A) \rangle
\]
\end{proof}

\subsubsection{Proof of \propref{Mgammarank}}\seclab{Mgammarank-proof}
We check the rank function axioms \cite[Section 1.3]{O11}.

\noindent\textbf{Non-negativity:} This follows from the fact that $\rep_{\Gamma_k}(\cdot)$ is non-negative, and the
sum of the $\frac{1}{2}T(\cdot)$ terms cannot exceed $n$.

\noindent\textbf{Monotonicity:} This is immediate from \lemref{rad-monotone} and the fact that $-T(\cdot)$ and $\rep_{\Gamma_k}(\Lambda(\cdot))$
only increase when the size of the closure increases.

\noindent\textbf{Normalization:} To prove that $g_1$ is normalized, let $A\subset E_{\Gamma_k,n}$ and $B = A + (\gamma,\ell)$.  Since all the
$T(\Gamma'_{\cdot,i})$ terms cancel except for the ones with $i=\ell$, the increase is given by
\[
g_1(B) - g_1(A) =
\frac{1}{2} \left(
\rep_{\Gamma_k}(\Trans(B)) - \rep_{\Gamma_k}(\Trans(A)) - T(\Gamma_{B, \ell}) + T(\Gamma_{A, \ell})
\right)
\]
Because the r.h.s. is an invariant of the closure by \propref{rad-rep-minus-T}, we pass to closures and apply
\lemref{reduce1} to see that the r.h.s. is equal to
\[
\frac{1}{2}
\left(
\rep_{\Gamma_k}(\Trans(\Gamma_{B, \ell}')) - \rep_{\Gamma_k}(\Trans(\Gamma_{A, \ell}'))  -
T(\Gamma_{B, \ell}') + T(\Gamma_{A, \ell}')
\right)
\]
Using \lemref{reduce2} then tells us that this can be simplified further to
\[
\frac{1}{2}
\left(
\rep_{\Gamma_k}(\Trans(\langle \gamma, \Gamma_{A, \ell}'\rangle)) - \rep_{\Gamma_k}(\Trans(\Gamma_{A, \ell}'))  -
T(\langle \gamma, \Gamma_{A, \ell}'\rangle) + T(\Gamma_{A, \ell}')
\right)
\]
at which point \propref{rad-rep-minus-T} applies, and we conclude that the increase is either zero or one.

\noindent\textbf{Submodularity:}
We will verify the following form of the submodular inequality:
\begin{eqnarray}\label{local-submodular}
f(A\cup \{(\gamma, \ell)\}) - f(A) \ge f(B\cup \{(\gamma, \ell)\}) - f(B) & \text{\qquad for all $A\subset B$}
\end{eqnarray}
Inspecting the argument for normalization and \propref{rad-rep-minus-T}, we
see that the r.h.s., is positive only if $\gamma \notin \Rad(\Gamma_{B,\ell}))$, in which case it
is always $2$.  By \lemref{rad-monotone}, for this $\gamma$, we also have
$\gamma \notin \Rad(\Gamma_{A,\ell})$, so the l.h.s. is also $2$.  Because both sides are
always non-negative, \eqref{local-submodular} follows.
\eop

\subsubsection{The bases and independent sets}\seclab{group-matroid-sets}
With the rank function of $M_{\Gamma_k,n}$ determined, we can give a structural characterization of its bases
and independent sets.  Let $A\subset E_{\Gamma_k,n}$.  We define $A$ to be \emph{independent} if
\[
|A| = g_1(A)
\]
If $A$ is independent and, in addition
\[
|A| = c(A) + \frac{1}{2} \rep_{\Gamma_k}(\Trans(\Gamma_k))
\]
we define $A$ to be \emph{tight}.  A (not-necessarily independent) set $A$ with $c(A)$ parts that contains a tight
subset on $c(A)$ is defined to be \emph{spanning}.

We define the classes
\begin{eqnarray*}
\mathcal{B}(M_{\Gamma_k,n}) & =   &
\left\{
B\subset E_{\Gamma_k,n} : \text{$B$ is independent and $|B| = n + \frac{1}{2} \rep_{\Gamma_k}(\Gamma_k)$}
\right\} \\
\mathcal{I}(M_{\Gamma_k,n}) & = &
\left\{
B\subset E_{\Gamma_k,n} : \text{$B$ is independent}
\right\}
\end{eqnarray*}
It is now immediate from \propref{Mgammarank} that:
\begin{lemma}\lemlab{Mgammasets}
The classes $\mathcal{I}(M_{\Gamma_k,n})$ and $\mathcal{B}(M_{\Gamma_k,n})$ are the independent sets and bases of
the matroid $M_{\Gamma_k,n}$.
\end{lemma}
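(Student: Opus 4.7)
The lemma is essentially a matter of matching the definitions of $\mathcal{I}(M_{\Gamma_k,n})$ and $\mathcal{B}(M_{\Gamma_k,n})$ against the standard matroid-theoretic characterizations of independent sets and bases via a rank function, using that \propref{Mgammarank} has already identified $g_1$ as the rank function of the matroid $M_{\Gamma_k,n}$. My plan is to carry out this matching in two short steps.

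For $\mathcal{I}(M_{\Gamma_k,n})$, I would invoke the standard equivalent characterization of a matroid which says that a subset $A$ of the ground set is independent if and only if $|A|=r(A)$, where $r$ is the rank function (see \cite[Section 1.3]{O11}). Specialized to $r=g_1$, this is exactly the defining condition for membership in $\mathcal{I}(M_{\Gamma_k,n})$, so the two classes coincide. As a side remark, this automatically forces every independent set to be finite, which is the correct behavior for a finite-rank matroid on the infinite ground set $E_{\Gamma_k,n}$.

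For $\mathcal{B}(M_{\Gamma_k,n})$, I would use the fact that the bases of a matroid are precisely the independent sets whose size equals $r(E)$, the rank of the whole ground set. The key step is therefore to evaluate $g_1$ on $E_{\Gamma_k,n}$ from the definition. Taking $A=E_{\Gamma_k,n}$ yields $A_i=\Gamma_k$ for every $i$, so each $\Gamma_{A,i}=\Gamma_k$ contains rotations and hence $T(\Gamma_{A,i})=0$; also $\Trans(A)=\Trans(\Gamma_k)$. Substituting into the formula for $g_1$ gives $g_1(E_{\Gamma_k,n})=n+\tfrac{1}{2}\rep_{\Gamma_k}(\Trans(\Gamma_k))$, which is the cardinality condition appearing in the definition of $\mathcal{B}(M_{\Gamma_k,n})$. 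Combined with the characterization of independence above, this identifies $\mathcal{B}(M_{\Gamma_k,n})$ as the family of bases.

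There is no substantive obstacle: the nontrivial work --- non-negativity, monotonicity, normalization, and submodularity of $g_1$ --- was already carried out in the proof of \propref{Mgammarank}, and the present lemma is just bookkeeping. The ``independent'', ``tight'', ``spanning'', and ``basis'' vocabulary introduced in \secref{group-matroid-sets} is simply the standard matroid vocabulary specialized to the rank function $g_1$.
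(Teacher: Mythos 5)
Your proposal is correct and takes essentially the same route as the paper, which simply declares the lemma immediate from \propref{Mgammarank}: once $g_1$ is known to be a rank function, the standard characterizations identify $\mathcal{I}(M_{\Gamma_k,n})$ with the sets satisfying $|A|=g_1(A)$ and $\mathcal{B}(M_{\Gamma_k,n})$ with those of size $g_1(E_{\Gamma_k,n})=n+\tfrac{1}{2}\rep_{\Gamma_k}(\Trans(\Gamma_k))$, exactly as you argue. Your silent identification of the quantity $\rep_{\Gamma_k}(\Gamma_k)$ appearing in the paper's definition of $\mathcal{B}(M_{\Gamma_k,n})$ with $\rep_{\Gamma_k}(\Trans(\Gamma_k))$ is the intended reading (it is what the tight-set definition and the maximum value of $g_1$ force), so there is no gap.
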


\subsubsection{Structure of tight sets}
We also have a structural characterization of the tight independent sets in $M_{\Gamma_k,n}$.
\begin{lemma}\lemlab{Mgammatight}
An independent set $A\in \mathcal{I}(M_{\Gamma_k,n})$ is tight if and only if it is one of two types:
\begin{itemize}
\item [\textbf{(A)}]  Each of the non-empty $A_i$ contains a rotation.
One exceptional non-empty $A_i$
contains $\frac{1}{2}\rep_{\Gamma_k}(\Trans(\Gamma_k))$ additional elements,
and $\rep_{\Gamma_k}(\Trans(\Gamma_{A,i}))=\rep_{\Gamma_k}(\Trans(\Gamma_k))$,
and all the rest of the $A_i$ contain a single rotation only.
\item [\textbf{(B)}] Each of the non-empty $A_i$ contains a rotation.  Two exceptional non-empty $A_i$
(w.l.o.g., $A_1$ and $A_2$)	contain, between them, $\frac{1}{2}\rep_{\Gamma_k}(\Trans(\Gamma_k))$ additional
elements and
$$\rep_{\Gamma_k}(\langle\Trans(\Gamma_{A,1}), \Trans(\Gamma_{A,2})\rangle)=\rep_{\Gamma_k}(\Trans(\Gamma_k)).$$
\end{itemize}
Type \textbf{(B)} is only possible when $\Gamma_k=\Gamma_2$.
\end{lemma}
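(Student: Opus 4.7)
The plan is to read the two types directly out of the two defining equations for a tight independent set: independence, $|A| = g_1(A)$, and tightness, $|A| = c(A) + \tfrac12 \rep_{\Gamma_k}(\Trans(\Gamma_k))$. The proof breaks into (i) an algebraic reduction to a single identity; (ii) an element count that isolates the ``extras''; (iii) a use of \propref{rad-rep-minus-T} to pin down the translation-subgroup structure; and (iv) the easy converse. The main obstacle is the bookkeeping in step~(iii), where the extras may be either rotations or translations.

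\emph{Step 1 (reduce to one identity).} Substituting the formula for $g_1$ into the independence equation and splitting $\sum_i T(\Gamma_{A,i})$ according to whether $A_i$ is empty, the empty parts contribute $T=2$ each (the trivial group has no rotations) for a total of $2(n-c(A))$. The $n$ in $g_1$ cancels against $(n-c(A))$, and the two $c(A)$ terms cancel, leaving
\[
\rep_{\Gamma_k}(\Trans(A)) - \sum_{i:\,A_i\neq\emptyset} T(\Gamma_{A,i}) \;=\; \rep_{\Gamma_k}(\Trans(\Gamma_k)).
\]
By monotonicity of $\rep_{\Gamma_k}\circ\Trans$ the left-hand $\rep$ is at most the right-hand one, and each $T(\Gamma_{A,i})\ge 0$. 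Hence both extremes must be attained: $\rep_{\Gamma_k}(\Trans(A)) = \rep_{\Gamma_k}(\Trans(\Gamma_k))$, and every non-empty $A_i$ must contain a rotation (else $T(\Gamma_{A,i})=2$).

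\emph{Step 2 (count the extras).} The ``one rotation per non-empty part'' accounts for $c(A)$ of the elements, so there are exactly $|A|-c(A)=\tfrac12\rep_{\Gamma_k}(\Trans(\Gamma_k))$ extras. By \lemref{trrep} this count is $1$ for $k=3,4,6$ and $2$ for $k=2$. For $k=3,4,6$ the single extra lands in one exceptional $A_\ell$, every other non-empty $A_i$ contains only one rotation, and Type~(B) cannot arise because it requires at least two extras in two components. For $k=2$, the two extras either both fall in one exceptional component (Type~A) or split one-and-one across two exceptional components (Type~B); three or more exceptional components are impossible because only two extras are available.

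\emph{Step 3 (pin down the $\rep$ conditions).} Each successive element must increase $g_1$ by exactly $1$. By \propref{rad-rep-minus-T}, combined with Lemmas~\lemrefX{reduce1} and~\lemrefX{reduce2}, an element added to a component that already carries a rotation (so $T$ is fixed at $0$) increments $g_1$ by $1$ iff it increments $\rep_{\Gamma_k}(\Trans(\Gamma_{A,\ell}'))$ by $2$, where $\Gamma_{A,\ell}'=\langle\Gamma_{A,\ell},\Trans(A)\rangle$. Building $A$ up one element at a time and using $\rep_{\Gamma_k}(\Trans(A))=\rep_{\Gamma_k}(\Trans(\Gamma_k))$ from Step~1, the extras force exactly the stated identity: $\rep_{\Gamma_k}(\Trans(\Gamma_{A,\ell}))=\rep_{\Gamma_k}(\Trans(\Gamma_k))$ in Type~(A), and $\rep_{\Gamma_2}(\langle\Trans(\Gamma_{A,1}),\Trans(\Gamma_{A,2})\rangle)=4$ in Type~(B). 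In the sub-case where an extra is itself a rotation, I will invoke \lemref{rotscomm} to rule out the bad configuration in which the two rotations in $A_\ell$ have commuting powers (which would collapse $\langle r_1',\gamma\rangle$ to a finite cyclic rotation group with trivial translation subgroup, contradicting the forced jump in $\rep_{\Gamma_k}(\Trans(\cdot))$).

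\emph{Step 4 (converse).} Given a set of Type~(A) or Type~(B), the computation of Step~1 runs in reverse: each non-empty $A_i$ has a rotation so contributes $T=0$; the prescribed translation identity gives $\rep_{\Gamma_k}(\Trans(A)) = \rep_{\Gamma_k}(\Trans(\Gamma_k))$; plugging these into $g_1$ and simplifying yields $g_1(A) = c(A) + \tfrac12\rep_{\Gamma_k}(\Trans(\Gamma_k)) = |A|$, so $A$ is independent and tight. I expect the only real work to be the bookkeeping in Step~3 flagged above; everything else is mechanical rearrangement of the formulas already in place.
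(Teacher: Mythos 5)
Your proposal is correct and follows essentially the same route as the paper's proof: combine independence with the tight count to force $T(\Gamma_{A,i})=0$ on every non-empty part and $\rep_{\Gamma_k}(\Trans(A))=\rep_{\Gamma_k}(\Trans(\Gamma_k))$, count the $\frac{1}{2}\rep_{\Gamma_k}(\Trans(\Gamma_k))$ extra elements, and verify the converse by direct substitution into $g_1$. Your Step~3 is more elaborate than needed --- since each single-rotation part generates a finite cyclic group with trivial translation subgroup, $\Trans(A)$ already equals $\Trans(\Gamma_{A,\ell})$ (resp.\ $\langle\Trans(\Gamma_{A,1}),\Trans(\Gamma_{A,2})\rangle$), so the stated $\rep$ identities follow directly from Step~1 without the incremental build-up or \lemref{rotscomm} --- but the argument as given is sound.
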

\begin{proof}
One direction is straightforward: A set $A\subset E_{\Gamma_k,n}$ of either type
\textbf{(A)} or \textbf{(B)} satisfies, by hypothesis, $|A|=c(A)+\frac{1}{2}\rep_{\Gamma_k}(\Trans(\Gamma_k))$;
by construction $T(\Gamma_{A,i})$ is zero for all the non-empty
$A_i$ and $\rep_{\Gamma_k}(\Trans(A)) = \rep_{\Gamma_k}(\Trans(\Gamma_k))$.

On the other hand, assuming that $A$ is tight, we see that each non-empty part has to contain a rotation,
and, since $A$ is independent there are only one (for $k=3,4,6$) or
two ($k=2$) additional elements in $A$. For $k=3,4,6$, the single $A_i$ containing the extra element must generate
a translation in which case $\rep_{\Gamma_k}(\Trans(\Gamma_{A,i}))=\rep_{\Gamma_k}(\Trans(\Gamma_k))$.
For $k=2$, the translation subgroups of the $A_i$ containing the extra elements must generate a rank $2$ translation
subgroup of $\Lambda(\Gamma_k)$, and the desired conclusion follows.
\end{proof}

\subsubsection{Conjugation of independent sets}\seclab{Mkn-conj}
Let $A\in \mathcal{I}(M_{\Gamma_k,n})$ be an independent set, and suppose, w.l.o.g.,
that $A_1, A_2,\ldots, A_{c(A)}$ are the non-empty parts of $A$.  Let $\gamma_1,\gamma_2,\ldots,
\gamma_{c(A)}$ be elements of $\Gamma_k$.  The
\emph{conjugation of $A$ by $\gamma_1,\gamma_2,\ldots, \gamma_{c(A)}$} is defined to be
\[
\left\{(\gamma_i^{-1}A_i\gamma_i, i) : 1\le i\le c(A) \right\}
\]
\begin{lemma}\lemlab{Mgammaconj}
Let $A\in \mathcal{I}(M_{\Gamma_k})$ be an independent set.  Then the conjugation of $A$ by $c(A)$
elements $\gamma_1,\ldots, \gamma_{c(A)}$ is also independent.
\end{lemma}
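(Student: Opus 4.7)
The plan is to show that conjugation preserves the value of $g_1$, so that since $|B|=|A|$ (where $B$ denotes the conjugated set), independence of $A$ immediately implies independence of $B$. I would decompose this into the invariance of each of the three ingredients of $g_1$: the term $n$ is trivially preserved; the term $T(\Gamma_{A,i})$ is preserved term-by-term because $\Gamma_{B,i}=\gamma_i^{-1}\Gamma_{A,i}\gamma_i$ is abstractly isomorphic to $\Gamma_{A,i}$, and because conjugation by an isometry sends rotations to rotations; the only nontrivial point is thus to show
\[
\rep_{\Gamma_k}(\Trans(B)) \;=\; \rep_{\Gamma_k}(\Trans(A)).
\]

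To handle the representation term, I would split into cases on $k$. For $k=3,4,6$, \lemref{trrep} reduces this to showing that $\Trans(A)$ is trivial if and only if $\Trans(B)$ is trivial, since $\rep_{\Gamma_k}$ then takes only the values $0$ or $2$. Since $\gamma_i^{-1}\Trans(\Gamma_{A,i})\gamma_i$ is trivial exactly when $\Trans(\Gamma_{A,i})$ is, and $\Trans(\cdot)$ of the join is the join of the pieces, this is immediate. For $k=2$, I would use that $\Trans(\Gamma_2)\cong\Z^2$ is normal and abelian in $\Gamma_2$, and that a direct computation in the semidirect product shows that the nontrivial rotation acts on $\Trans(\Gamma_2)$ by $-\Id$. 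Hence for every $\gamma\in\Gamma_2$ and every translation $t$, $\gamma^{-1}t\gamma \in \{t,t^{-1}\}$, so $\gamma_i^{-1}\Trans(\Gamma_{A,i})\gamma_i = \Trans(\Gamma_{A,i})$ as a set. Taking joins gives $\Trans(B)=\Trans(A)$ on the nose.

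The main obstacle is conceptual rather than technical: one must resist the temptation to think that $\Gamma_{B,i}$ and $\Gamma_{A,i}$ sit in ``different'' positions in $\Gamma_k$ in some way that could affect how the translation subgroups combine across distinct $i$. The key observation — captured by \lemref{rad-conj} in the $k=3,4,6$ case and by the explicit $\pm\Id$-action in the $k=2$ case — is that conjugation has essentially no effect on the translation subgroup data that $g_1$ sees. Once these two cases are dispatched, no further appeal to the structure of independent sets (such as \lemref{Mgammatight}) is needed; the conclusion follows purely from the invariance of $g_1$ under coordinate-wise conjugation of the nonempty parts. \eop
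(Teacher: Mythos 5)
Your proposal is correct and follows essentially the same route as the paper: the paper's proof also reduces everything to the invariance of $g_1$ under coordinate-wise conjugation, citing \lemref{rad-conj} for the translation-subgroup term and the normality of $\Trans(\Gamma_k)$ for the rotation-containment term. Your case split ($k=3,4,6$ via \lemref{trrep} and triviality, $k=2$ via the explicit $\pm\Id$ action showing $\gamma_i^{-1}\Trans(\Gamma_{A,i})\gamma_i=\Trans(\Gamma_{A,i})$) just spells out the content that the paper delegates to \lemref{rad-conj}.
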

\begin{proof}
\lemref{rad-conj} implies that the closure of translation subgroups is preserved under conjugation, and
whether or not $A_i$ contains a rotation is preserved as well. Since the rank function $g_1$ is determined by these two
properties of the $A_i$, we are done.
\end{proof}

\subsubsection{Separating and fusing independent sets}\seclab{Mkn-fuse}
Let $A\in \mathcal{I}(M_{\Gamma_k})$ be an independent set.  A \emph{separation of $A$} is defined to be the following operation:
\begin{itemize}
\item Select $i$ and $j$ such that $A_j$ is empty.
\item Select a (potentially empty) subset $A'_i\subset A_i$ of $A_i$.
\item Replace all elements $(\gamma,i)\in A'_i$ with $(\gamma,j)$.
\end{itemize}
\begin{lemma}\lemlab{Mgammasep}
Let $A\in \mathcal{I}(M_{\Gamma_k})$ be an independent set.  Then any separation of $A$ is also an independent set.
\end{lemma}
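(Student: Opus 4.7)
The plan is to show $g_1(B)=g_1(A)$. Combined with $|B|=|A|=g_1(A)$ (from the independence of $A$) and the matroid inequality $g_1(B)\le|B|$ from \propref{Mgammarank}, this gives $g_1(B)=|B|$, which is the independence of $B$.

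Rewrite $g_1(X)=\tfrac12\rep_{\Gamma_k}(\Trans(X))+R(X)$, where $R(X)$ counts the parts of $X$ that contain a rotation. Let $\alpha,\beta\in\{0,1\}$ indicate whether $A_i'$ and $A_i\setminus A_i'$, respectively, contain a rotation; a four-case check gives $R(B)-R(A)=\alpha\beta$. Thus the goal reduces to establishing $\rep_{\Gamma_k}(\Trans(A))-\rep_{\Gamma_k}(\Trans(B))=2\alpha\beta$.

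If $\alpha\beta=0$, I must show the two reps agree. Since the contributions of parts $\ell\ne i,j$ to $\Trans$ are unchanged, the content is that $\rep_{\Gamma_k}(\Trans(\Gamma_{A,i}))=\rep_{\Gamma_k}(\langle\Trans(\Gamma_{B,i}),\Trans(\Gamma_{B,j})\rangle)$, and at least one of $\Gamma_{B,i},\Gamma_{B,j}$ is purely translational. For $\Gamma_2$, the purely translational side is normalized by the rotations of the other side (conjugation by a $\Gamma_2$-rotation inverts translations), so a normal-form argument gives the stronger equality $\Trans(\Gamma_{A,i})=\langle\Trans(\Gamma_{B,i}),\Trans(\Gamma_{B,j})\rangle$. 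For $k=3,4,6$, \lemref{trrep} gives $\rep_{\Gamma_k}(\Trans(\cdot))\in\{0,2\}$, so it suffices to check that whenever $\rep_{\Gamma_k}(\Trans(A))=2$ we still have $\rep_{\Gamma_k}(\Trans(B))=2$: any non-trivial translation in $\Trans(A)$ either lives in $\Gamma_{A,\ell}$ for some $\ell\ne i$ (and hence in $\Gamma_{B,\ell}$), or in $\Gamma_{A,i}$, in which case the assumption $\alpha\beta=0$ forces the witness to arise entirely from one half of $A_i$ and so persists in $\Trans(\Gamma_{B,i})$ or $\Trans(\Gamma_{B,j})$.

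If $\alpha=\beta=1$, I must show a rep drop of exactly $2$. The lower bound $\rep_{\Gamma_k}(\Trans(B))\le\rep_{\Gamma_k}(\Trans(A))-2$ is forced by the matroid inequality $g_1(B)\le|B|$ together with $R(B)=R(A)+1$ and $g_1(A)=|A|$. For the upper bound, the case $k=3,4,6$ is immediate since the reps lie in $\{0,2\}$ by \lemref{trrep}. For $\Gamma_2$, fix rotations $r'\in A_i'$ and $r''\in A_i\setminus A_i'$; because order-two rotations are involutions, every cross product $r_ar_b$ with $r_a\in A_i'$, $r_b\in A_i\setminus A_i'$ satisfies $r_ar_b=(r_ar')(r'r'')(r''r_b)$, with $r_ar'\in\Trans(\Gamma_{B,j})$ and $r''r_b\in\Trans(\Gamma_{B,i})$. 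Hence $\Trans(\Gamma_{A,i})=\langle\Trans(\Gamma_{B,i}),\Trans(\Gamma_{B,j}),r'r''\rangle$, so by \lemref{trrep} the rep can drop by at most $2$. The main obstacle is exactly this $\Gamma_2$ subcase, where the translation subgroup really contracts: pinning the drop at $2$ requires the matroid upper bound on $g_1$ for the lower direction together with the explicit cross-product identity for the upper direction.
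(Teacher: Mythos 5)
Your argument is correct, but it takes a genuinely different route from the paper's. The paper first uses the structure of tight independent sets (\lemref{Mgammatight}), plus the fact that every independent set lies in a tight one, to conclude that one of the two parts produced by the separation is a singleton; it then deletes that single element (leaving a subset of $A$, hence independent) and re-adds it to the empty part, verifying via the closure machinery (\propref{rad-rep-minus-T}, monotonicity of $\Rad$) that this raises the rank by one. You instead compute directly that $g_1(B)=g_1(A)$ for an arbitrary split, using the rewriting $g_1 = R + \tfrac12\rep_{\Gamma_k}(\Trans(\cdot))$, the reduction to the case where both halves of $A_i$ contain rotations, the bound $g_1(B)\le |B|$ (legitimately available from \propref{Mgammarank}, which is established before this lemma, so there is no circularity), and explicit group-theoretic control of $\Trans(\Gamma_{A,i})$ in terms of the two halves (normality of translation subgroups of $\Gamma_2$, order-two cross products, \lemref{trrep}). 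Your route avoids \lemref{Mgammatight} altogether and handles any split without reducing to a singleton, at the price of the explicit $\Gamma_2$ computation; the paper's proof is shorter once the structural lemma is in hand.

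Two spots in your write-up should be tightened, though neither is a gap in the idea. In the $k=3,4,6$, $\alpha\beta=0$ case, a nontrivial translation of $\Gamma_{A,i}$ need not itself be a word in one half; the correct remark is that the purely translational half, if nonempty, already contributes a nontrivial translation to $\Trans(B)$, while if it is empty the other half generates all of $\Gamma_{A,i}$, so $\Trans(B)$ is nontrivial whenever $\Trans(A)$ is. In the $\Gamma_2$, $\alpha=\beta=1$ case, the cross-product identity only treats length-two cross words; to get $\Trans(\Gamma_{A,i})=\langle\Trans(\Gamma_{B,i}),\Trans(\Gamma_{B,j}),r'r''\rangle$, note (using \lemref{subgrpgen} on each half) that $\Gamma_{A,i}$ is generated by $r''$ together with that right-hand subgroup, which is a translation subgroup and hence normal in $\Gamma_2$, so its coset decomposition shows it is exactly $\Trans(\Gamma_{A,i})$; the drop of at most $2$ then follows from \lemref{trrep} as you say.
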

\begin{proof}
Let $B$ be a separation of $A$.  If the subset $A'_i$ in the definition of a separation is
empty, then $B$ is the same as $A$, and there is nothing to prove.

An independent set is either tight or a subset of a tight set.  (Bases in particular are tight.)
Consequently, by \lemref{Mgammatight}, either $B_i$ or $B_j$ consists of a single element.
Assume w.l.o.g., it is $B_j$.  Define $C \subset E_{\Gamma_k, n}$ as
$C_k = B_k$ for $k \neq j$ and $C_j$ empty; i.e. $C$ is $B$ with the single element in $B_j$ dropped.
Then $C$ is a subset of $A$ and hence independent.  If $B_j$ consists of a rotation, then adding it to $C$
clearly preserves independence.   If $B_j$ consists of a translation $\gamma$, then, since $A$ is independent,
we must have $\gamma \notin \Rad(\Trans(C))$.  Consequently $B = C + (\gamma, j)$ is independent
since $\Rad(\Trans(B)) \gneqq \Rad(\Trans(C))$ and hence $\rep_{\Gamma_k}(\Trans(B)) > \rep_{\Gamma_k}(\Trans(C))$.
\end{proof}

The reverse of separation is \emph{fusing a set $A$ on $A_i$ and $A_j$}.  This operation replaces $A_i$ with $A_i\cup A_j$ and
makes $A_j$ empty.  Fusing does not, in general, preserve independence, but it takes tight sets to spanning ones.
\begin{lemma}\lemlab{Mgammafuse}
Let $A$ be a tight independent set, and suppose that $A_i$ and $A_j$ are non-empty.  Then, after fusing $A$ on $A_i$ and $A_j$,
the result is a spanning set (with one less part).
\end{lemma}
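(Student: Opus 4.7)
The plan is to show that $g_1(A') = |A'|-1$ by comparing directly with $g_1(A)$, and then to extract the required tight subset via a sizing argument on $c(\cdot)$. Write $A'$ for the set obtained by fusing $A$ on parts $i$ and $j$. Since fusing only redistributes elements among parts, $|A'| = |A|$ while $c(A') = c(A)-1$. Combined with the tightness identity $|A| = c(A) + \tfrac12 \rep_{\Gamma_k}(\Trans(\Gamma_k))$, this gives
\[
|A'| - 1 = c(A') + \tfrac12 \rep_{\Gamma_k}(\Trans(\Gamma_k)),
\]
which is exactly the size a tight subset on $c(A')$ parts must have.

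The main calculation is that $g_1(A') = g_1(A) - 1$. For $\ell \neq i,j$ the subgroups $\Gamma_{A',\ell} = \Gamma_{A,\ell}$ are unchanged. The fused part $\Gamma_{A',i} = \langle \Gamma_{A,i}, \Gamma_{A,j}\rangle$ still contains a rotation (both $A_i$ and $A_j$ do, by \lemref{Mgammatight}), so $T(\Gamma_{A',i}) = 0$; meanwhile the emptied part contributes $T(\Gamma_{A',j}) = 2$ instead of the old $T(\Gamma_{A,j}) = 0$. Thus $\sum_\ell T$ jumps by exactly $2$. The one subtle point, which I view as the main (modest) obstacle, is to argue that $\rep_{\Gamma_k}(\Trans(A'))$ does not change. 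This holds because $\Trans(A) \leq \Trans(A')$ (fusing can only enlarge the translation subgroup generated across the parts), while tightness of $A$ has already pushed $\rep_{\Gamma_k}(\Trans(A))$ up to the maximum value $\rep_{\Gamma_k}(\Trans(\Gamma_k))$; hence equality must hold on both ends. Plugging these changes into the definition of $g_1$ yields $g_1(A') = g_1(A) - 1 = |A'| - 1$.

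To finish, pick any maximum independent subset $B \subseteq A'$; by what we just computed, $|B| = g_1(A') = c(A') + \tfrac12 \rep_{\Gamma_k}(\Trans(\Gamma_k))$. The general estimate $g_1(B) \leq c(B) + \tfrac12 \rep_{\Gamma_k}(\Trans(\Gamma_k))$ (each empty part contributes $T = 2$ to the sum, each non-empty part contributes $T \geq 0$, and $\rep_{\Gamma_k}(\Trans(B)) \leq \rep_{\Gamma_k}(\Trans(\Gamma_k))$) together with $|B| = g_1(B)$ forces $c(B) \geq c(A')$, while $B \subseteq A'$ forces $c(B) \leq c(A')$. Hence $c(B) = c(A')$, and then $|B|$ matches the tightness identity, so $B$ is tight on $c(A')$ parts. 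This exhibits the required tight subset inside $A'$, certifying that the fused set is spanning.
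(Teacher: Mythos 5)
Your proof is correct and takes essentially the same route as the paper's: fusing preserves the fact that every non-empty part contains a rotation, and since tightness already forces $\rep_{\Gamma_k}(\Trans(A))$ to the maximal value $\rep_{\Gamma_k}(\Trans(\Gamma_k))$, enlarging the translation subgroup cannot change the $\rep$ term, so $g_1$ drops by exactly one and equals $c(A') + \tfrac12\rep_{\Gamma_k}(\Trans(\Gamma_k))$. The only difference is that you make explicit the final step of extracting a tight subset on $c(A')$ parts, which the paper's proof leaves implicit.
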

\begin{proof}
Let $B$ be the set resulting from fusing $A$ on $A_i$ and $A_j$. By hypothesis, all the non-empty $A_\ell$ contain a rotation,
so this is true of the non-empty $B_\ell$ as well.
The lemma then follows by noting that $\Trans(A) \le \Trans(B)$, so the same is true of the closures by \lemref{rad-monotone}.
Thus, $g_1(B)=c(B)+\rep_{\Gamma_k}(\Trans(\Gamma_k))$, and this implies $B$ is spanning.
\end{proof}

\section{Matroidal sparse graphs}\seclab{bigsec-graphs}
\subsection{Colored graphs and the map $\rho$}\seclab{colored-graphs}
We will use \emph{colored graphs}, which are also known as ``gain graphs'' (e.g., \cite{R11}) or ``voltage graphs''
(e.g. \cite{Z98}) as the combinatorial model for crystallographic frameworks and direction networks.  In this section we give the
definitions and explain the relationship between colored graphs and graphs with a free $\Gamma_k$-action.

\subsubsection{Colored graphs}
Let $G=(V,E)$ be a finite, directed graph, with $n$ vertices
and $m$ edges.  We allow multiple edges and self-loops, which are treated the same as other edges.
A \emph{$\Gamma_k$-colored-graph} (shortly, \emph{colored graph}) $(G,\bgamma)$ is a finite, directed multigraph $G$ and an assignment
$\bgamma=(\gamma_{ij})_{ij\in E(G)}$ of a group element $\gamma_{ij}\in \Gamma_k$ (the ``color'') to each edge $ij\in E(G)$.

\subsubsection{The covering map} Although we work with colored graphs because they are technically easier, crystallographic
frameworks were defined in terms of infinite graphs $\Gtilde$ with $\Gamma_k$ acting freely and with
finite quotient by the representation
$\varphi : \Gamma_k\to \operatorname{Aut}(\tilde{G})$.  In fact, the formalisms are equivalent, via a specialization of
covering space theory (e.g.,  \cite[Section 1.3]{H02}).  We provide the dictionary here for completeness.

Let $(G,\bgamma)$ be a colored graph, we define its \emph{lift} $\Gtilde=(\Vtilde,\Etilde)$ by the following construction:
\begin{itemize}
\item For each vertex $i\in V(G)$, there is a subset of vertices
$\{i_{\gamma}\}_{\gamma\in \Gamma}\subset V(\Gtilde)$ (the fiber over $i$).
\item For each (directed) edge $ij\in E(G)$ with color $\gamma_{ij}$, and for each $\gamma\in \Gamma_k$,
there is an edge $i_{\gamma}j_{\gamma\cdot\gamma_{ij}}$
in $E(\Gtilde)$ (the fiber over $ij$).
\item The $\Gamma$-action on vertices is $\gamma \cdot i_{\gamma'} = i_{\gamma \gamma'}$.
The action on edges is that induced by the vertex action.
\end{itemize}

Now let $(\Gtilde,\varphi)$ be an infinite graph with a free $\Gamma_k$-action that has finite quotient.
We associate a colored graph $(G,\bgamma)$ to $(\Gtilde,\varphi)$ by the following construction, which we define to be
a \emph{colored quotient}:
\begin{itemize}
\item Let $G=\Gtilde/\Gamma$ be the quotient of $\Gtilde$ by $\Gamma$, and fix an (arbitrary) orientation
of the edges of $G$ to make it a directed graph.  By hypothesis, the vertices of $G$ correspond to the vertex orbits in $\tilde{G}$
and the edges to the edge orbits in $\tilde{G}$
\item For each vertex orbit under $\Gamma$ in $\Gtilde$, select a representative $\tilde{i}$.
\item For each edge orbit in $\Gtilde$ there is a unique edge $\widetilde{ij}$ that has the representative
$\tilde{i}$ as its tail.  There is also a unique element $\gamma_{ij}\in \Gamma$ such that the
head of $\widetilde{ij}$ is $\gamma_{ij}(\tilde{j})$.  We define this $\gamma_{ij}$ to be the color
on the edge $ij\in G$.
\end{itemize}
From the definition, we see that the specific colored quotient depends on the choice of representatives,
however they are all related as follows.  For any choice of representatives, the lift $\tilde{G}'$
is isomorphic to $\tilde{G}$ as a graph, and this isomorphism is $\varphi$-equivariant.  It the
follows that the lifts of any two colored quotients are isomorphic to each other via a
$\varphi$-equivariant map.

The \emph{projection map} from $(\Gtilde,\varphi)$ to its colored quotient is the function that sends
a vertex $\tilde{i}\in V(\Gtilde)$ its representative $i\in V(G)$.  Figures
\ref{fig:gam2graphtoperiodic} and \ref{fig:gam4graphtoperiodic} both show examples; the
color coding of the vertices in the infinite developments indicate the fibers over
vertices in the colored quotient.  The discussion above shows:
\begin{lemma}\lemlab{lifts}
Let $(G,\bgamma)$ be a $\Gamma_k$-colored graph.  Then its lift is well defined, and is an infinite
graph with a free $\Gamma_k$-action.  If $(\Gtilde,\varphi)$ is an infinite graph with a free
$\Gamma_k$-action, then it is the lift of any of its colored quotient, and the projection map
is well-defined and a covering map.
\end{lemma}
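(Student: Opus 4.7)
The plan is to verify the correspondence between colored graphs and free $\Gamma_k$-graphs in both directions by direct construction, and then to check that the two constructions are mutually inverse up to $\varphi$-equivariant isomorphism. Since everything is essentially bookkeeping, I would organize the proof around a single equivariant map and its inverse.

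For the first claim, I would argue that the lift $\tilde G$ of $(G,\bgamma)$ is well-defined because its vertex and edge sets are specified as disjoint unions indexed by $V(G)\times \Gamma_k$ and $E(G)\times \Gamma_k$ respectively. The $\Gamma_k$-action $\gamma\cdot i_{\gamma'} = i_{\gamma\gamma'}$ is the regular action on each fiber and is therefore free; it is compatible with the edge set because the edge $i_{\gamma'}j_{\gamma'\gamma_{ij}}$ maps to $i_{\gamma\gamma'}j_{\gamma\gamma'\gamma_{ij}}$, which is in $\tilde E$ by construction. The quotient has exactly one vertex orbit per vertex of $G$ and one edge orbit per edge of $G$, so $G = \tilde G/\Gamma_k$ is finite.

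For the second claim, let $(\Gtilde,\varphi)$ be given, fix vertex representatives $\{\tilde i\}$ of each vertex orbit, and extract the colored quotient $(G,\bgamma)$ by taking $G = \tilde G/\Gamma_k$ and defining each $\gamma_{ij}$ to be the unique element of $\Gamma_k$ (using freeness of $\varphi$) such that $\gamma_{ij}(\tilde j)$ is the head of the unique edge in its orbit whose tail is $\tilde i$. To show that $\tilde G$ is the lift of $(G,\bgamma)$, define $\Psi : \tilde G' \to \tilde G$ from the lift $\tilde G'$ of the quotient by $i_\gamma \mapsto \gamma(\tilde i)$. Freeness of $\varphi$ makes $\Psi$ a vertex bijection; on edges it sends $i_{\gamma'}j_{\gamma'\gamma_{ij}}$ to $\gamma'(\widetilde{ij})$, and hence $\Psi$ is a bijection of edge sets by the orbit count. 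Equivariance is immediate from the definition. The projection map $\tilde i \mapsto i$ and its extension by the $\Gamma_k$-action is thus a covering map with deck group $\Gamma_k$, since each fiber is a regular $\Gamma_k$-set.

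The main (modest) obstacle is checking that the construction really is independent of the choice of representatives, which is what licenses the phrase ``any of its colored quotient'' in the statement. A second choice $\tilde i' = \delta_i(\tilde i)$ produces edge colors $\gamma_{ij}' = \delta_i^{-1}\gamma_{ij}\delta_j$, and the associated maps $\Psi,\Psi'$ compose to a $\varphi$-equivariant graph isomorphism between the two lifts. I would isolate this computation as the only real content of the proof; after it, both the well-definedness of the projection and its status as a covering map follow at once from the fiber description.
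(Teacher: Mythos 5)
Your proof is correct and takes essentially the same route as the paper, which treats the lemma as immediate from the lift/colored-quotient dictionary set up just before it (a specialization of covering space theory), with the choice-of-representatives check being the only real content — exactly the point you isolate. One harmless convention slip: with $\tilde i' = \delta_i(\tilde i)$ the new colors are $\gamma'_{ij} = \delta_i\,\gamma_{ij}\,\delta_j^{-1}$ rather than $\delta_i^{-1}\gamma_{ij}\delta_j$ (your formula corresponds to the opposite convention $\tilde i = \delta_i(\tilde i')$), and either way the resulting $\varphi$-equivariant isomorphism of the two lifts goes through unchanged.
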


\subsubsection{The map $\rho$} \seclab{rho}
Let $(G,\bgamma)$ be a colored graph, and let $P=\{e_1, e_2,\ldots, e_t\}$ be any \emph{closed path}
in $G$; i.e., $P$ is a (not necessarily simple) walk in $G$ that starts and ends at the same vertex crossing
the edges $e_i$ in order.  If we select a vertex $b$ as a \emph{base point}, then the closed paths represent elements of the
\emph{fundamental group} $\pi_1(G,b)$.

We define the map
$\rho$ as:
\[
\rho(P) = \gamma_{e_1}^{\epsilon_1}\cdots \gamma_{e_t}^{\epsilon_t}
\]
where $\epsilon_i$ is $1$ if $P$ crosses $e_i$ in the forward direction (from tail to head) and $-1$ otherwise.  For a connected graph
$G$ and choice of base vertex $i$, the map $\rho$ induces a well-defined homomorphism $\rho: \pi_1(G, i) \to \Gamma$.

\subsection{The subgroup of a $\Gamma_k$-colored graph}\seclab{closed-paths}
The map $\rho$, defined in the previous section, is fundamental to the results of this paper.  In
this section, we develop properties of the $\rho$-image of a colored graph $(G,\bgamma)$ and
connect it with the matroid $M_{\Gamma_k,n}$ which was defined in \secref{group-matroid}.

\subsubsection{Colored graphs with base vertices}
Let $(G,\bgamma)$ be a colored graph with $n$ vertices and $c$ connected components $G_1,G_2,\ldots, G_c$.
We select a \emph{base vertex} $b_i$ in each connected component $G_i$, and denote the set of
base vertices by $B$.  The triple $(G,\bgamma,B)$ is then defined to be a \emph{marked colored graph}.

If $(G,\bgamma,B)$ is a marked colored graph then $\rho$ induces a homomorphism from $\pi_1(G_i,b_i)$
to $\Gamma_k$.  In the rest of this section, we show how to use these homomorphisms to
define a map from $(G,\bgamma)$ to $E_{\Gamma_k,n}$, the ground set of the matroid $M_{\Gamma_k,n}$.

\subsubsection{Fundamental closed paths generate the $\rho$-image}
Let $(G,\bgamma,B)$ be a marked colored graph with $n$ vertices and $c$ connected components.  Select
and fix a maximal forest $F$ of $G$, with connected components $T_1,T_2,\ldots, T_c$.  The $T_i$
are spanning trees of the connected components $G_i$ of $G$, with the convention that when a connected
component $G_i$ has no edges there is a one-vertex ``empty tree'' $T_i$.

With this data, we define, for each edge $ij\in E(G)-E(F)$ the \emph{fundamental closed path of $ij$} to be the path that:
\begin{itemize}
\item Starts at the base vertex $b_\ell$ in the same connected component $G_\ell$ as $i$ and $j$.
\item Travels the unique path in $T_\ell$ to $i$.
\item Crosses $ij$.
\item Travels the unique path in $T_\ell$ back to $b_\ell$.
\end{itemize}
Fundamental closed paths with respect to $F$ in $G_i$ generate $\pi_1(G_i, b_i)$ by \cite[Proposition 1A.2]{H02}.

\subsubsection{From colored graphs to sets in $E_{\Gamma_k,n}$}
We now let $(G,\bgamma,B)$ be a marked colored graph and fix a choice of spanning forest $F$.  We associate
with $(G,\bgamma,B,F)$ a subset $A(G,B,F)$ of $E_{\Gamma_k,n}$ (defined in \secref{group-matroid}) as follows:
\begin{itemize}
\item For each edge $ij\in E(G_\ell)-E(T_\ell)$, let $P_{ij}$ be the fundamental closed  path of $ij$
with respect to $T_\ell$ and $b_\ell$.
\item Add an element $(\rho(P_{ij}),\ell)$ to $A(G,B,F)$.
\end{itemize}
The following is immediate from the previous discussion.
\begin{lemma}\lemlab{GBF-eq-GammaA}
Using the notation from \secref{group-matroid}, $\Gamma_{A, \ell} = \rho(\pi_1(G_\ell, b_\ell))$
where $A = A(G, B, F)$.
\end{lemma}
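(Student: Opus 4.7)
The plan is essentially a definition chase that combines one fact already cited in the excerpt with the observation that $\rho$ is a homomorphism on each $\pi_1(G_\ell, b_\ell)$. Concretely, I would first unwind the construction of $A = A(G,B,F)$: by the bullet-list definition, the component $A_\ell$ consists exactly of the elements $\rho(P_{ij})$ as $ij$ ranges over the non-tree edges $E(G_\ell)-E(T_\ell)$. Consequently
\[
\Gamma_{A,\ell} = \langle \gamma : \gamma \in A_\ell \rangle = \langle \rho(P_{ij}) : ij \in E(G_\ell) - E(T_\ell)\rangle.
\]

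Next I would invoke the fact, already recorded just above the lemma via the reference to \cite[Proposition 1A.2]{H02}, that the fundamental closed paths $P_{ij}$ with respect to $T_\ell$ and $b_\ell$ generate $\pi_1(G_\ell, b_\ell)$. Since \secref{rho} establishes that $\rho$ descends to a well-defined group homomorphism $\pi_1(G_\ell, b_\ell) \to \Gamma_k$ on each connected component, the image $\rho(\pi_1(G_\ell, b_\ell))$ is generated by the images of any generating set. Applying this to the fundamental-path generators yields
\[
\rho(\pi_1(G_\ell, b_\ell)) = \langle \rho(P_{ij}) : ij \in E(G_\ell) - E(T_\ell)\rangle,
\]
and combining with the previous display finishes the proof.

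There is no real obstacle here: the only thing one must be careful about is that $\rho$ is defined on closed paths rather than as a genuine homomorphism on the free group on edges, and that well-definedness on $\pi_1$ has already been settled in \secref{rho}. Because each $G_\ell$ is connected and $b_\ell$ is chosen as its base vertex, all of the fundamental closed paths legitimately live in $\pi_1(G_\ell, b_\ell)$, and passing to the homomorphic image of a generating set commutes with taking the subgroup they generate. No matroidal content from \secref{group-matroid} beyond the definition of $\Gamma_{A,\ell}$ is needed.
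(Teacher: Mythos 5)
Your proof is correct and is exactly the argument the paper has in mind: the paper states the lemma as ``immediate from the previous discussion,'' namely that $A_\ell$ consists of the $\rho$-images of the fundamental closed paths, that these paths generate $\pi_1(G_\ell,b_\ell)$ by the cited Proposition 1A.2 of Hatcher, and that $\rho$ is a homomorphism so images of generators generate the image. Nothing further is needed.
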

Since we will show, in \secref{gamma22}, that the invariants we need are independent of $B$ and $F$,
we frequently suppress them from the notation when the context is clear.

\subsection{$\Gamma$-$(2,2)$ graphs} \seclab{gamma22}
In this section we define \emph{$\Gamma$-$(2,2)$ graphs} which are the first of two key
families of colored graphs introduced in this paper (the second is \emph{$\Gamma$-colored-Laman
graphs}, defined in \secref{gamma-laman}).  We also state the main combinatorial results on
$\Gamma$-$(2,2)$ graphs, but defer the proof of a key technical result, \propref{gamma11},
to \secref{sparse}.

\subsubsection{The translation subgroup of a colored graph}\seclab{translation-subgroup-colored-graph}
Let $(G,\bgamma,B)$ be a marked colored graph, as in \secref{closed-paths}, with connected components
$G_1,G_2,\ldots, G_c$ and base vertices $b_1,b_2,\ldots, b_c$.  Recall from \secref{rho}
that, with this data, there are homomorphisms
\[
\rho : \pi_{1}(G_i,b_i)\to \Gamma_k
\]
We define $\Trans(G,B)$ to be
\[
\Trans(G,B) = \langle \Trans(\rho(G_i,b_i)) : i = 1,2,\ldots, c \rangle
\]
We define $\rep_{\Gamma_k}(G)=\rep_{\Gamma_k}(\Trans(G,B))$.  As the notation suggests,
$\rep_{\Gamma_k}(G)$ is independent of the choice of base vertices $B$.
\begin{lemma}\lemlab{rep-of-trans-G}
Let $(G,\bgamma,B)$ be a marked colored connected graph.
The quantity  $\rep_{\Gamma_k}(G)$ is independent of the choice of base vertices, and so is
a property of the underlying colored graph $(G,\bgamma)$.
\end{lemma}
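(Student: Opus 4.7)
The plan is to show that changing the base vertex conjugates the relevant image subgroup inside $\Gamma_k$, and then invoke \lemref{rad-conj} to conclude that the quantity $\rep_{\Gamma_k}(\cdot)$, which depends only on the closure of a translation subgroup, is unchanged.

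More concretely, first I would take two candidate base vertices $b,b'\in V(G)$. Since $G$ is connected, there is a walk $P$ from $b'$ to $b$, and I would set $\gamma = \rho(P) \in \Gamma_k$. The standard fact from algebraic topology is that conjugation by the path $P$ induces an isomorphism $\pi_1(G,b') \to \pi_1(G,b)$ sending a closed path $Q$ based at $b'$ to $P \cdot Q \cdot P^{-1}$. Applying $\rho$, which is a homomorphism on words in edge-colors and their inverses (as defined in \secref{rho}), this gives
\[
\rho(\pi_1(G,b)) \;=\; \gamma\,\rho(\pi_1(G,b'))\,\gamma^{-1}.
\]
Taking translation subgroups on both sides (and noting that conjugation in $\Gamma_k$ permutes translations among themselves, since $\Trans(\Gamma_k)$ is normal) yields
\[
\Trans(\rho(\pi_1(G,b))) \;=\; \gamma\,\Trans(\rho(\pi_1(G,b')))\,\gamma^{-1}.
\]

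Now I would apply \lemref{rad-conj}, which says precisely that the closure of a translation subgroup is invariant under conjugation in $\Gamma_k$. Thus the two subgroups $\Trans(\rho(\pi_1(G,b)))$ and $\Trans(\rho(\pi_1(G,b')))$ have equal closures. By the very definition of the $r$-closure in \secref{radical}, the invariant $\rep_{\Gamma_k}(\Trans(\cdot))$ depends only on this closure, so the two values of $\rep_{\Gamma_k}(G)$ computed using $b$ and $b'$ agree. Since this holds for any pair of base vertices, $\rep_{\Gamma_k}(G)$ is a well-defined function of $(G,\bgamma)$ alone.

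There is essentially no obstacle here: the only mildly technical point is the bookkeeping showing that the change-of-basepoint isomorphism on $\pi_1$ intertwines with $\rho$, and that the conjugating element $\gamma$ lies in $\Gamma_k$ (which is immediate, since $\rho$ sends every walk in $G$ to an element of $\Gamma_k$). Everything else is an invocation of the already-established \lemref{rad-conj} together with the definition of closure.
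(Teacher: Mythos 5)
Your proposal is correct and follows the same route as the paper: changing the base vertex conjugates the $\rho$-image of $\pi_1$, hence conjugates its translation subgroup, and \lemref{rad-conj} plus the fact that $\rep_{\Gamma_k}(\Trans(\cdot))$ depends only on the closure finishes the argument. The paper's proof is just a terser version of exactly this.
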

\begin{proof}
Changing base vertices corresponds to conjugation.  \lemref{rad-conj} implies that
the closure of $\Trans(G,B)$ is preserved under conjugation.  Since $\rep_{\Gamma_k}(\cdot)$
depends only on the closure, the lemma follows.
\end{proof}

\subsubsection{The quantity $T$ for a colored graph}
Let $(G,\bgamma,B)$ be a marked colored graph, with $G$ connected (and so a single
base vertex $b$).  We define $T(G)$ to be $T(\rho(\pi_1(G,b)))$.  The proof of the
following lemma is similar to that of \lemref{rep-of-trans-G}.
\begin{lemma}\lemlab{T-of-G}
Let $(G,\bgamma,B)$ be a marked colored graph.
The quantity  $T(G)$ is independent of the choice of base vertices, and so is
a property of the underlying colored graph $(G,\bgamma)$.
\end{lemma}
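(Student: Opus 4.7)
The plan is to mimic the proof of \lemref{rep-of-trans-G} almost verbatim, with the only change being to replace the appeal to \lemref{rad-conj} (which tells us the closure of a translation subgroup is conjugation-invariant) by the simpler fact that the property ``contains a rotation'' is conjugation-invariant in $\Gamma_k$.

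First, I would recall the standard fact from covering space theory that changing the base vertex from $b$ to $b'$ in a connected graph $G$ replaces $\pi_1(G,b)$ by $\pi_1(G,b')$ via an isomorphism obtained by conjugation by a path $\alpha$ from $b'$ to $b$. Applying $\rho$, this means $\rho(\pi_1(G,b')) = \rho(\alpha)^{-1}\rho(\pi_1(G,b))\rho(\alpha)$; i.e., the two subgroups of $\Gamma_k$ are conjugate by $\rho(\alpha) \in \Gamma_k$.

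Second, I would observe that for any subgroup $\Gamma' < \Gamma_k$ and any $\gamma \in \Gamma_k$, the conjugate subgroup $\gamma \Gamma' \gamma^{-1}$ contains a rotation if and only if $\Gamma'$ does. This holds because $\Trans(\Gamma_k)$ is a normal subgroup of $\Gamma_k$ (it is the kernel of the projection $\Gamma_k \to \Z/k\Z$ coming from the semidirect product decomposition), so an element of $\Gamma_k$ is a translation if and only if its conjugate is. Hence $T(\gamma \Gamma' \gamma^{-1}) = T(\Gamma')$ by the definition of $T(\cdot)$.

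Combining these two observations, $T(\rho(\pi_1(G,b')))=T(\rho(\pi_1(G,b)))$, which is exactly the conclusion. There is no real obstacle; the argument is purely formal once normality of $\Trans(\Gamma_k)$ is invoked, and the statement follows as a direct analogue of \lemref{rep-of-trans-G}.
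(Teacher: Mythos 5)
Your proposal is correct and follows essentially the same route as the paper, which simply notes that the proof is analogous to that of \lemref{rep-of-trans-G}: changing base vertices conjugates the $\rho$-image, and $T(\cdot)$ is conjugation-invariant (in your case justified directly by normality of $\Trans(\Gamma_k)$ rather than via \lemref{rad-conj}). No gaps.
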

\begin{figure}[htbp]
\centering
\includegraphics[width=.6\textwidth]{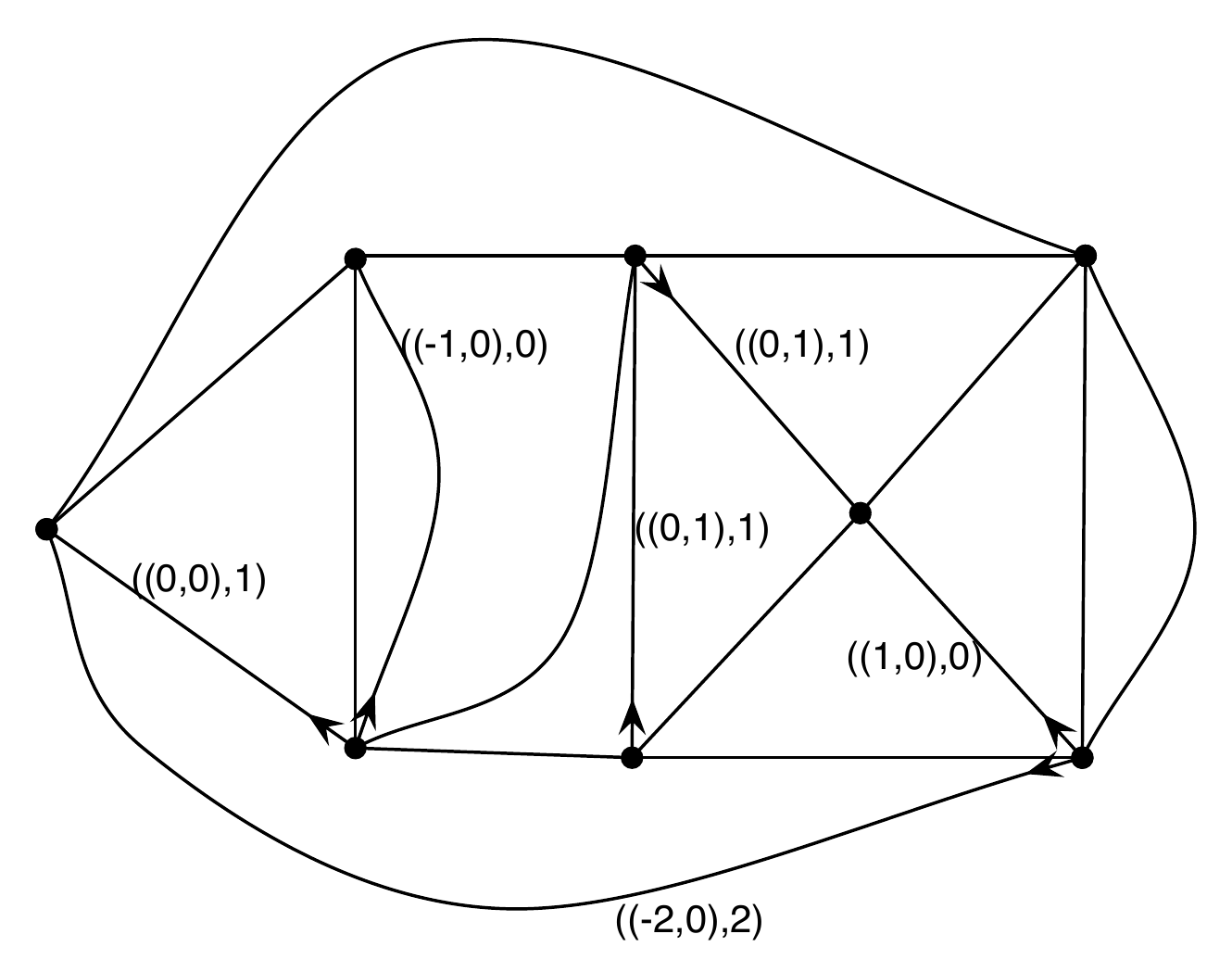}
\caption{An example of a \GammaTT graph when $\Gamma = \Gamma_3$.}
\label{fig:gam22}
\end{figure}
\subsubsection{$\Gamma$-$(2,2)$ graphs}
We are now ready to define $\Gamma$-$(2,2)$ graphs.  Let $(G,\bgamma)$ be a colored graph
with $n$ vertices and $c$ connected components $G_i$.  We define the function $f$ to be
\[
f(G) = 2n + \rep_{\Gamma_k}(G) - \sum_{i=1}^c T(G_i)
\]
A colored graph $(G,\bgamma)$ on $n$ vertices and $m$ edges is defined to be a \emph{$\Gamma$-$(2,2)$ graph} if:
\begin{itemize}
\item The number of edges $m$ is $2n+\rep_{\Gamma_k}(\Trans(\Gamma_k))$ (i.e., it is the maximum possible value for $f$).
\item For every subgraph $G'$ of $G$, with $m'$ edges, $m' \le f(G')$.
\end{itemize}
We note that it is essential that the definition is made
over \emph{all} subgraphs, and not just vertex-induced or connected ones.
\figref{gam22} shows an example of a \GammaTT-graph.

\subsubsection{Direction network derivation}
Before continuing with the development of the combinatorial theory, we quickly motivate
the definition of \GammaTT-graphs.  Readers who are not familiar with rigidity and
direction networks may want to either skip to \secref{gamma11-def} and revisit this,
purely informative, section after reading the definitions in
\secref{bigsec-dn}.

\propref{crystal-collapse}, in \secref{cdns} below,
implies that a generic direction network on a $\Gamma_k$-colored graph $(G,\bgamma)$ has
only \emph{collapsed} realizations (with all the points on top of each other and a trivial representation for
$\Trans(\Gamma_k)$) if and only if $(G,\bgamma)$ has a spanning \GammaTT subgraph.

The definition of the function $f$ comes from analyzing the degrees of freedom in realizations which have the endpoints
of each edge coincident (these are collapsed when $G$ is connected).
For any realization $G(\vec p, \Phi)$, we can translate it (this preserves directions),
so that $\Phi(r_k)$ has the origin as its rotation center.  Then, restricted to a subgraph $G'$ of $G$:
\begin{itemize}
\item The total number of variables involved in the equations giving the edge directions is $2n'+\rep_{\Gamma_k}(G')$.
Since we fix $\Phi(r_k)$ to rotate at the origin (see \secref{cdns} for an
explanation why we can do this), the only variability
left in $\Phi$ is $\Phi(\Trans(\Gamma_k))$.  Since $\rep_{\Gamma_k}(G')$
measures how much of $\Trans(\Gamma)$ is ``seen'' by $G'$, this is the term we add.
\item Each connected component $G_i$ has a $T(G'_i)$-dimensional space of collapsed realizations.  If $G_i'$ has a
rotation, then a collapsed realization of the lift $\tilde G_i'$ must lie on the corresponding rotation center
since a solution must be rotationally symmetric.  When $G_i'$ has no rotation, no such restriction exists, and
there are $2$-dimensions worth of places to put the collapsed $\tilde G_i'$.
Each collapsed connected component is independent of the others, so this term is additive
over connected components.
\end{itemize}
The heuristic above coincides with the definition of the function $f$.

\subsubsection{Map-graphs} \seclab{sparse-prelim}
In this section we recall the definition of a map-graph. As we will see in the next section, the structure of
\GammaTT graphs is closely related to map-graphs. A \emph{map-graph} is a graph in which every connected
component has exactly one cycle.  In this definition, self-loops
correspond to cycles.
A \emph{$2$-map-graph} is a graph that is the edge-disjoint union of two spanning
map-graphs.  Observe that map-graphs, and, consequently, $2$-map-graphs, do \emph{not} need
to be connected.

\subsubsection{$\Gamma$-$(1,1)$ graphs}\seclab{gamma11-def}
We will characterize $\Gamma$-$(2,2)$ graphs in terms of decompositions into simpler
\emph{$\Gamma$-$(1,1)$ graphs}\footnote{The terminology of ``$(2,2)$'' and ``$(1,1)$'' comes from
the fact that spanning trees of finite graphs are ``$(1,1)$-tight'' in the sense of \cite{LS08}.  The
$\Gamma$-$(1,1)$ graphs defined here are, in a sense made more precise in \cite[Section 5.2]{MT13},
analogous to spanning trees.}, which we now define.

Let $(G,\bgamma)$ be a colored graph and select a base vertex $b_i$ for each connected component $G_i$ of $G$.
We define $(G,\bgamma)$ to be a \emph{$\Gamma$-$(1,1)$ graph} if:
\begin{itemize}
\item $G$ is a map-graph plus $\frac{1}{2}\rep_{\Gamma_k}(\Trans(\Gamma_k))$ additional edges.
\item For each connected component $G_i$ of $G$, $\rho(\pi_1(G_i,b_i))$ contains a rotation.
\item We have $\rep_{\Gamma_k}(G)=\rep_{\Gamma_k}(\Trans(\Gamma_k))$, or equivalently, $\Rad(\Trans(G,B)) = \Trans(\Gamma_k)$.
\end{itemize}
Although we do not define \GammaOO graphs via sparsity counts, there is an alternative
characterization in these terms.  We define the function $g(G)$ to be
\[
g(G) = n + \frac{1}{2}\rep_{\Gamma_k}(G) - \frac{1}{2}\sum_{i=1}^c T(G_i)
\]
where $(G,\bgamma)$ is a colored graph and $n$ and $c$ are the number of vertices and
connected components.  Notice that $g=\frac{1}{2}f$.  In \secref{sparse} we will show:
\begin{restatable}{prop}{gammaOOmatroid} \proplab{gamma11}
The family of $\Gamma$-$(1,1)$ graphs gives the bases of a matroid, and the rank of
the $\Gamma$-$(1,1)$ matroid is given by the function:
\[
g(G) = n + \frac{1}{2}\rep_{\Gamma_k}(G) - \frac{1}{2}\sum_{i=1}^c T(G_i)
\]
In particular, this implies that $g$ is non-negative, submodular, and monotone.
\end{restatable}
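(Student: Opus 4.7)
The plan is to transfer the matroid structure of $M_{\Gamma_k, N}$ from \secref{group-matroid} to the edges of a colored graph via a spanning-forest bridge. Fix a colored graph $(G, \bgamma)$ with $n$ vertices and $c$ connected components, a marking $B$, and a spanning forest $F$; by \lemref{GBF-eq-GammaA} the associated set $A := A(G, B, F)$ satisfies $\Gamma_{A, \ell} = \rho(\pi_1(G_\ell, b_\ell))$, so $\Trans(A) = \Trans(G, B)$ and $T(\Gamma_{A, \ell}) = T(G_\ell)$. Unpacking the definitions of $g$ and $g_1$ yields the key identity
\[
g(G) \;=\; (n - c) + g_1\bigl(A(G, B, F)\bigr),
\]
valid when we view $A$ as living in $E_{\Gamma_k, N}$ for any $N \geq n$ (padding with empty parts does not change $g_1$). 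The first summand is the rank of $E(G)$ in the graphic matroid on $V$, and the second is a value of a rank function from \propref{Mgammarank}. This identity is the structural bridge between the two matroidal constructions.

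Using the bridge, I would next verify that $g$ is a matroid rank function on subsets of $E(G)$. Normalization on singletons is a direct check giving $g(\{e\}) \in \{0, 1\}$, with $g(\{e\}) = 0$ exactly when $e$ is a self-loop with identity color. For incremental normalization when an edge $e$ is added: if $e$ closes a cycle within a component, $(n - c)$ is unchanged and $e$ contributes a single new element to $A$, so \propref{Mgammarank} gives an increase of $0$ or $1$ in $g_1$; if $e$ merges two components, $(n - c)$ increases by $1$ while the effect on $A$ is to conjugate one part into another and fuse the two, so by \lemref{Mgammaconj}, \lemref{Mgammafuse}, and the conjugation-invariance \lemref{rad-conj}, $g_1$ can drop by at most $1$. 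Submodularity then reduces to local-increment inequalities and follows from the submodularity already established in the graphic matroid and in $M_{\Gamma_k, N}$, with the interaction controlled by the same conjugation/separation/fusion lemmas.

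Finally, the identification of bases with $\Gamma$-$(1,1)$ graphs is a direct translation through the bridge. A basis of $g$ on the full edge set of a ``rich enough'' colored graph has size $g(G) = n + \tfrac{1}{2}\rep_{\Gamma_k}(\Trans(\Gamma_k))$, matching the prescribed edge count in the definition of $\Gamma$-$(1,1)$. By the bridge and \lemref{Mgammasets}, such a basis decomposes as a spanning forest plus a basis of $M_{\Gamma_k, N}$; applying \lemref{Mgammatight} to the latter translates back into the graph-theoretic conditions that each connected component contains a closed path whose $\rho$-image is a rotation (equivalently, each component has exactly one cycle, yielding the map-graph structure) and that $\Rad(\Trans(G, B)) = \Trans(\Gamma_k)$. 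These are precisely the defining conditions of a $\Gamma$-$(1,1)$ graph. The main obstacle will be the submodularity check and the handling of the merging case for normalization: the spanning forest choice is non-canonical, so one must lean on \lemref{rad-conj} and Lemmas \lemrefX{Mgammaconj}, \lemrefX{Mgammasep}, \lemrefX{Mgammafuse} to confirm that the increments are intrinsic to $g$ rather than artifacts of the forest. Once this book-keeping is handled, the non-negativity and monotonicity asserted in the proposition follow as immediate consequences of $g$ being a matroid rank function.
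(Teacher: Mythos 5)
Your bridge identity $g(G) = n - c + g_1(A(G,B,F))$ is exactly the paper's \lemref{g-and-g1}, and your translation of bases through \lemref{Mgammatight} mirrors \lemref{gamma11-tight}; but your chosen route --- verifying the rank-function axioms for $g$ directly on edge sets --- has a genuine gap at its crux, submodularity. It does not ``follow from the submodularity already established in the graphic matroid and in $M_{\Gamma_k,n}$,'' because $g$ is not a sum of two submodular functions of the edge set: the map $E' \mapsto A(G[E'],B,F)$ is not restriction to a fixed ground set. Whether adding an edge $e$ corresponds to adding one element $(\gamma,\ell)$ or to a conjugate-and-fuse operation (\lemref{add-edge-AG}) depends on the component structure of the current edge set, and this changes as the set grows. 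The problematic case of the local inequality $g(X+e)-g(X)\ge g(Y+e)-g(Y)$ for $X\subset Y$ is when $e$ joins two components of $X$ but lies inside a single component of $Y$: on the $Y$ side the increment is governed by \propref{rad-rep-minus-T} (is the fundamental-cycle image in the closure?), while on the $X$ side it is $1$ plus the change of $g_1$ under fusing, and you must show that this fusing change is $0$ whenever the $Y$-side increment is $1$. Nothing in \lemref{Mgammaconj}, \lemref{Mgammasep}, or \lemref{Mgammafuse} gives this; \lemref{Mgammafuse} only says fusing takes tight sets to spanning ones. A smaller instance of the same omission appears in your unit-increase step: in the merging case you bound the drop of $g_1$ by $1$ but never rule out an increase, and an increase is in fact possible in one of the two terms of $g_1$ (fusing two parts that each contain a rotation can enlarge $\Trans(\cdot)$, e.g.\ for $\Gamma_2$ two order-$2$ rotations about distinct centers generate a new translation), so you need the compensating drop in the $\sum T$ term --- a balancing argument, not a citation.

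This is precisely why the paper runs the argument in the other order: it first proves the basis-exchange axiom for $\Gamma$-$(1,1)$ graphs (\lemref{gamma11-bases}), using \lemref{gamma11-tight} and \lemref{add-edge-AG} so that every exchange happens inside $M_{\Gamma_k,n}$, where submodularity is already known, and only afterwards identifies the rank function with $g$ via a greedy construction of a maximal independent subgraph (\lemref{gamma11-independent-g}, \lemref{gamma11-rank}); submodularity of $g$ is then a corollary, never an input. Your plan can probably be completed, but the case analysis of fusing versus element-addition across nested edge sets is the real content and is currently missing; alternatively, reorganize along the paper's lines, where your bridge identity and tight-set translation already do most of the work. (Minor slip: a $\Gamma$-$(1,1)$ graph is a map-graph plus $\frac{1}{2}\rep_{\Gamma_k}(\Trans(\Gamma_k))$ extra edges, so it is not true that each component has exactly one cycle.)
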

\subsubsection{Decomposition characterization of $\Gamma$-$(2,2)$ graphs}\seclab{cone22-decomp}
The key combinatorial result about $\Gamma$-$(2,2)$ graphs, that is used in an essential way to
prove the ``collapsing lemma'' \propref{crystal-collapse}, is the following.
\begin{prop}\proplab{gamma22-decomp}
Let $(G,\bgamma)$ be a colored graph.  Then $(G,\bgamma)$ is a $\Gamma$-$(2,2)$ graph if and only
if it is the edge-disjoint union of two spanning $\Gamma$-$(1,1)$ graphs.
\end{prop}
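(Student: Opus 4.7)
The plan is to apply Edmonds' Matroid Union Theorem to two copies of the $\Gamma$-$(1,1)$ matroid $M_{(1,1)}$, whose rank function is $g$ by \propref{gamma11}. The key quantitative observation is that $f = 2g$ on the nose, so the $\Gamma$-$(2,2)$ edge-count $m = 2n + \rep_{\Gamma_k}(\Trans(\Gamma_k))$ is exactly twice the rank of $M_{(1,1)}$ on $E(G)$, and the sparsity inequality $m' \le f(G')$ is exactly the hypothesis that drives Matroid Union. A small preliminary I would record first is that both $f$ and $g$ are invariant under adding or deleting isolated vertices (one new vertex contributes $+2$ to $2n$ and produces a new component with $T = 2$, and these changes cancel), so I can freely identify an edge subset $A \subseteq E(G)$ with a subgraph of $G$ without worrying about isolated vertices.

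The easy direction is direct: if $E(G)$ partitions as $E^1 \sqcup E^2$ with each $(V, E^j)$ a spanning $\Gamma$-$(1,1)$ graph, then each side contributes $n + \tfrac{1}{2}\rep_{\Gamma_k}(\Trans(\Gamma_k))$ edges, matching the required $\Gamma$-$(2,2)$ edge count. For any subgraph $G'$ with edge set $A$, splitting $A = A^1 \sqcup A^2$ along this partition makes each $A^j$ a subset of a basis of $M_{(1,1)}$, hence $M_{(1,1)}$-independent; then $|A^j| = g(A^j) \le g(A)$ by monotonicity of $g$ (\propref{gamma11}), and summing gives $|A| \le 2\,g(A) = f(G')$.

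For the other direction, assume $G$ is $\Gamma$-$(2,2)$. Applying the sparsity bound to $G$ itself together with $\rep_{\Gamma_k}(G) \le \rep_{\Gamma_k}(\Trans(\Gamma_k))$ and $T(G_i) \ge 0$ pins down $\rep_{\Gamma_k}(G) = \rep_{\Gamma_k}(\Trans(\Gamma_k))$ and $T(G_i) = 0$ for every connected component, so $g(G) = m/2$ and $M_{(1,1)}$ has rank $m/2$ on $E(G)$. Edmonds' formula then reads
\[
r_{M_{(1,1)} \vee M_{(1,1)}}(E(G)) \;=\; \min_{A \subseteq E(G)} \bigl( |E(G) \setminus A| + 2\,g(A)\bigr),
\]
and the $\Gamma$-$(2,2)$ sparsity $|A| \le 2\,g(A)$ makes each term in the minimum at least $m$, so the union matroid has rank $m$ on $E(G)$. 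Consequently $E(G)$ is the union of two $M_{(1,1)}$-independent sets $I_1, I_2$; since each $|I_j| \le m/2$ and $|I_1 \cup I_2| = m$, both $I_j$ must have size $m/2$ and be disjoint, so each is a spanning $\Gamma$-$(1,1)$ graph. I expect the main obstacle to be exactly this final bookkeeping: confirming that the Matroid Union output is a pair of \emph{disjoint bases} rather than merely two independent sets whose set-theoretic union happens to equal $E(G)$, which hinges on the single-matroid bound $|I_j| \le g(G) = m/2$ matching $m/2$ from the other side of the count.
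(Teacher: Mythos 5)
Your proof is correct and takes essentially the same route as the paper: both rest on the identity $f=2g$, the matroidal properties of $g$ from \propref{gamma11}, and Edmonds' Matroid Union Theorem. The paper merely compresses your explicit two-direction verification into a citation of the Edmonds--Rota construction together with the rank-function form of Matroid Union, so your write-up is a correct unpacking of the same argument.
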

\begin{proof}
Since $f=2g$, \propref{gamma11} implies that $g$ meets the hypothesis required for the
Edmonds-Rota construction \cite{ER66}, from which we conclude that $\Gamma$-$(2,2)$ graphs
are a matroidal family.  The existence of the desired decomposition follows from the Matroid
Union Theorem for rank functions \cite{ER66}.
\end{proof}

\subsection{$\Gamma$-colored Laman graphs}\seclab{gamma-laman}
We are now ready to define \emph{$\Gamma$-colored-Laman graphs}, which are the
colored graphs characterizing minimally rigid generic frameworks in \theoref{main}.
Just as for \GammaTT graphs, we define them via sparsity counts.
\begin{figure}[htbp]
\centering
\subfigure[]{\includegraphics[width=.45\textwidth]{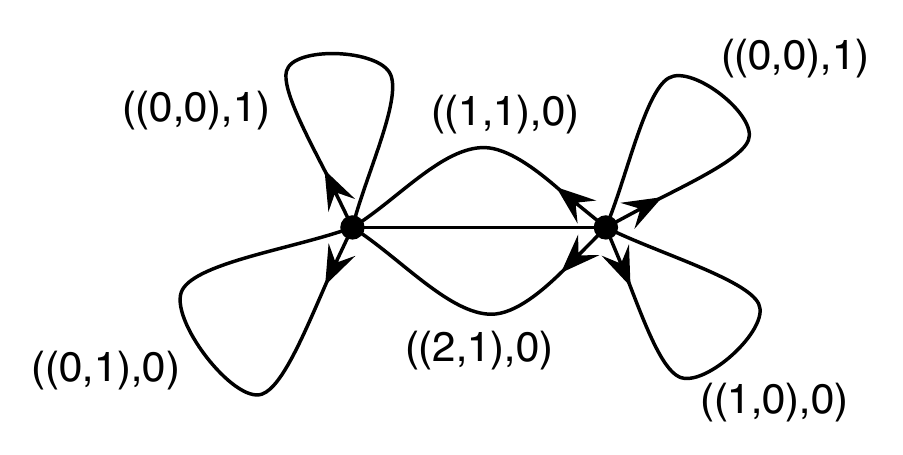}}
\subfigure[]{\includegraphics[width=.35\textwidth]{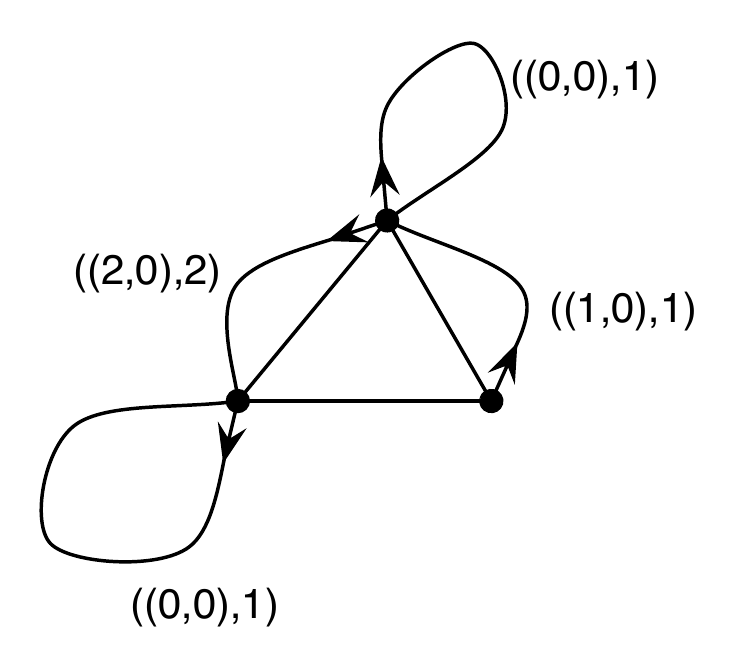}}
\caption{Examples of \GammaCL graphs:
(a) a $\Gamma_2$-colored-Laman graph;
(b) a $\Gamma_3$-colored-Laman graph
}
\label{fig:gamlaman}
\end{figure}
\subsubsection{Definition of $\Gamma$-colored-Laman graphs}
Let $(G,\bgamma)$ be a colored graph, and let $f$ be the sparsity function defined in \secref{gamma22}.
The most direct definition of the sparsity function $h$ for $\Gamma$-colored-Laman graphs is:
\[
h(G) = f(G) -1
\]

A colored graph $(G,\bgamma)$ is defined to be \emph{$\Gamma$-colored-Laman} if:
\begin{itemize}
\item $G$ has $n$ vertices and $m=2n + \rep_{\Gamma_k}(\Trans(\Gamma_k)) - T(\Gamma_k) - 1$ edges.
\item For all subgraphs $G'$ spanning $m'$ edges, $m'\le h(G')$
\end{itemize}
\figref{gamlaman} shows some examples of $\Gamma$-colored-Laman graphs.
If a colored graph is a subgraph of a $\Gamma$-colored-Laman graph, then it is defined to be
\emph{$\Gamma$-colored-Laman sparse}.  Equivalently, $(G,\bgamma)$ is $\Gamma$-colored-Laman sparse
when the condition ``$m'\le h(G')$'' above holds for all subgraphs $G'$.

\subsubsection{Alternate formulation of $\Gamma$-colored-Laman graphs}
While the definition of $h$ is all that is needed to prove
\theoref{main}, it does not give any motivation in terms of a degree-of-freedom count.  We now
give an alternate formulation of $\Gamma$-colored-Laman via the Teichm\"uller space and the centralizer, which
were defined in \secref{teich}, that will let us do this.

Let $(G,\bgamma,B)$ be a marked colored graph with connected components $G_1, \dots, G_c$ and $n$ vertices,
and let $\Trans(G,B)$ be its translation subgroup as defined in \secref{translation-subgroup-colored-graph}.
We define
\[
\teich_{\Gamma_k}(G) = \teich_{\Gamma_k}(\Trans(G, B))
\]
which, by a proof nearly identical to that of \lemref{rep-of-trans-G}, is well-defined and independent of the
choice of base vertices.

For a component $G_\ell$ with base vertex $b_\ell$, we set
$\cent_{\Gamma_k}(G_\ell) = \cent_{\Gamma_k}(\rho(\pi_1(G_\ell, b_\ell)))$.  For similar reasons,
$\cent_{\Gamma_k}(G_\ell)$ is also independent of the base vertex.

We can now define a ``more natural'' sparsity function
\[
h'(G) = 2n + \teich_{\Gamma_k}(G) -
\left(
\sum_{i=1}^c \cent_{\Gamma_k}(G_i))
\right)
\]
The class of colored graphs defined by $h'$ is the same as that arising from $h$, giving a second definition
of $\Gamma$-colored-Laman graphs.  Since \lemref{gamma-cl-equiv} is not used to prove any further results, we
omit the proof.
\begin{lemma}\lemlab{gamma-cl-equiv}
A colored graph $(G,\bgamma)$ is \GammaCL if and only if:
\begin{itemize}
\item $G$ has $n$ vertices and $m=2n+\teich(\Gamma)-\cent(\Gamma)$ edges.
\item For all subgraphs $G'$ spanning $m'$ edges, $m'\le h'(G')$
\end{itemize}
\end{lemma}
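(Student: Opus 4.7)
The plan is to verify that $h$ and $h'$ define the same class of colored graphs, by expressing their difference in closed form using \propref{reptoteich} and then handling the two directions of the biconditional separately.

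First I would verify the equivalence of the global edge counts. Since $\Gamma_k$ contains both rotations and translations, part (A) of \propref{reptoteich} gives $T(\Gamma_k) = \cent(\Gamma_k) = 0$; combined with part (B) applied to $\Trans(\Gamma_k)$, this yields $\teich(\Gamma_k) = \rep_{\Gamma_k}(\Trans(\Gamma_k)) - 1$, whence the two formulas $2n + \rep_{\Gamma_k}(\Trans(\Gamma_k)) - T(\Gamma_k) - 1$ and $2n + \teich(\Gamma_k) - \cent(\Gamma_k)$ agree. Next, for a subgraph $G'$ with components $G'_1,\ldots,G'_{c'}$, I would apply part (A) component-wise and parts (B), (C) to $\Trans(G',B)$ to obtain
\[
h(G') - h'(G') \;=\; [\rep_{\Gamma_k}(G') - \teich_{\Gamma_k}(G')] + N_{\mathrm{notrans}}(G') - 1,
\]
where $N_{\mathrm{notrans}}(G')$ is the number of components whose $\rho$-image has no translation. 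A case split on whether $\Trans(G',B)$ is trivial shows that this quantity is always non-negative and in fact vanishes whenever $G'$ is connected. This immediately gives the forward implication: an $h'$-sparse colored graph with the prescribed edge count is automatically $\Gamma$-colored-Laman.

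For the reverse implication, the key structural lemma is that every connected component of a $\Gamma$-colored-Laman graph $G$ has both a rotation and a translation in its $\rho$-image. I would establish this by applying $h$-sparsity to $G \setminus G_i$ together with the tight global edge count: a direct calculation shows that the bound $m(G \setminus G_i) \le h(G \setminus G_i)$ forces $m_i$ to be at least $2n_i$ plus a correction depending on the translation type of the remaining components, which exceeds $h(G_i)$ unless $G_i$ is of the rotation-plus-translation type. Ruling out the trivial, rotation-only, and translation-only component types in turn gives the structural statement.

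With this structural fact in hand, it remains to show $m' \le h'(G')$ for every subgraph $G' \subseteq G$. The connected case reduces to $h(G') = h'(G')$ and is handled by $h$-sparsity directly. In the disconnected case, the discrepancy $h(G') - h'(G') = N_{\mathrm{notrans}}(G')$ (or $c'-1$) must be absorbed by slack in the $h$-sparsity. The main obstacle is making this slack explicit; I would do so by an extension argument, enlarging each component $G'_i$ of $G'$ lacking translations by adding edges of $G$ (which exist by the structural lemma, since each $G$-component contains translations in its $\rho$-image) until every component of the extended subgraph $G'' \supseteq G'$ has translations; the $h$-sparsity applied to $G''$ then translates back to the required $h'$-bound on $G'$ after bookkeeping the additional edges against the change in $\rep_{\Gamma_k}$ and $\sum T$. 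This bookkeeping step—verifying that the extension is always possible within $G$ and that the counts line up—is the main technical work of the proof.
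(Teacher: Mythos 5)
The paper itself omits any proof of this lemma (it is explicitly skipped as unused), so there is nothing to compare your route against; judged on its own, your plan is correct exactly up to the step you yourself defer as ``the main technical work,'' and that step is where it breaks. Your discrepancy formula $h(G')-h'(G')=[\rep_{\Gamma_k}(G')-\teich_{\Gamma_k}(G')]+N_{\mathrm{notrans}}(G')-1\ge 0$, with equality for connected $G'$, is right (modulo a one-line remark identifying $\teich(\Gamma)$ in the statement with $\teich_{\Gamma_k}(\Trans(\Gamma_k))$), so the direction ``$h'$-sparse with the stated edge count $\Rightarrow$ \GammaCL'' is fine, as is the edge-count identity.

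The gap is the extension argument for the converse. If $G''\subseteq G$ is any enlargement of $G'$ with $a=m''-m'$ added edges, the only inequality you can extract from $h$-sparsity of $G''$ is $m'\le h(G'')-a = m' + \bigl(h(G'')-m''\bigr)$, and $h(G'')-m''\ge 0$ for every subgraph of a \GammaCL{} graph. So this route certifies $m'\le h'(G')$ only if you can exhibit a supergraph whose $h$-slack is at most the $h'$-slack of $G'$ -- which presupposes essentially what is being proved, and which the structural lemma does not supply: knowing that each component of $G$ has translations in its $\rho$-image only lets you attach them via edges and new vertices, and each new vertex raises $h$ by $2$ while each added edge raises the left side by only $1$ (concretely, for $k=3$ and $G'$ two isolated vertices with rotation self-loops, $m'=2$, $h(G')=3$, $h'(G')=2$, and every natural enlargement gives a bound of $3$ or worse). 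The slack must instead be harvested by applying $h$-sparsity to several \emph{pieces} of $G'$, so that each application contributes its own ``$-1$'': let $S$ be the components of $G'$ whose $\rho$-image contains a translation. For $G'_i\notin S$ one has $\rep_{\Gamma_k}(G'_i)=0$ and, by \propref{reptoteich}(A), $T(G'_i)=\cent(G'_i)-1$, so $h$-sparsity of the single component gives $m'_i\le 2n'_i-\cent(G'_i)$. For the union $G'_S$ of the components in $S$ (if nonempty), $\Trans(G'_S,B)=\Trans(G',B)$, so by \propref{reptoteich}(A) and (B), $h(G'_S)=2n'_S+\teich_{\Gamma_k}(G')-\sum_{i\in S}\cent(G'_i)$, and $h$-sparsity of $G'_S$ bounds $m'_S$ by this. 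Summing (and using \propref{reptoteich}(C) when $S=\emptyset$) yields $m'\le h'(G')$ directly. This also makes your structural lemma unnecessary; it is true, but note that your sketched proof of it, comparing only $G_i$ with $G\setminus G_i$, does not by itself exclude a $\Gamma_2$-component whose $\rho$-image is a rank-two translation subgroup -- to rule that out, apply $h$-sparsity to $G$ itself.
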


\subsubsection{Degree of freedom heuristic}
The function $h'$ is amenable to an interpretation that allows us, by \lemref{gamma-cl-equiv}, to
give a  rigidity-theoretic ``degree of freedom'' derivation of $\Gamma$-colored-Laman graphs.  This
section is expository, and readers unfamiliar with rigidity theory may skip to \secref{doubling}
and return here after reading \secref{bigsec-rigidity}.

Given a framework with underlying colored graph $(G,\bgamma)$, with
the graph $G$ having $n$ vertices and $c$ connected components $G_1,G_2,\ldots, G_c$, we find that:
\begin{itemize}
\item We have $2n$ degrees of freedom from the points.  From the representation
$\Phi: \Gamma_k \to \Euc(2)$,
there are $\rep(\Gamma_k)$ degrees of freedom,
but if we mod out by trivial motions from $\Euc(2)$, we have $\teich_{\Gamma_k}(\Gamma)$ degrees of
freedom left.  	However, we have only $\teich_{\Gamma_k}(G)$ degrees of freedom that apply to $G$.
\item Each connected component has $\cent_{\Gamma_k}(G_i)$ trivial degrees of freedom.  Since elements in the
centralizer for $G_i$ commute with those in $\rho(\pi_1(G_i))$,
we may ``push the vertices of $G_i$ around'' with the centralizer
elements while preserving symmetry.  Since these motions always exist, they are trivial.
\end{itemize}
This heuristic corresponds to the function $h'$.

\subsubsection{Edge-doubling characterization of $\Gamma$-colored-Laman graphs}\seclab{doubling}
The main combinatorial fact about $\Gamma$-colored-Laman graphs we need is the following simple
characterization by edge-doubling (cf. \cite{LY82,R84}).

\begin{prop}\proplab{doubleedge}
Let $\Gamma=\Gamma_k$ for $k=2,3,4,$ or $6$ be a crystallographic group and let $(G,\bgamma)$ be a
$\Gamma$-colored graph.  Then $(G,\bgamma)$ is $\Gamma$-colored-Laman if and only if for any
edge $ij\in E(G)$, the colored graph $(G',\bgamma')$ obtained by adding a copy of $ij$ to $G$
with the same color results in  a $\Gamma$-$(2,2)$ graph.
\end{prop}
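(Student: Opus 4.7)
The plan hinges on a single invariance: doubling an edge with its existing color leaves the sparsity function $f$ unchanged. I would first establish this as a key lemma. If $(H,\bgamma)$ is a colored graph containing an edge $ij$ and $H^*$ is obtained by adding a parallel edge with the same color $\gamma_{ij}$, then $H$ and $H^*$ share the same vertex set and component partition, so only the $\rho$-images of corresponding components need to be compared. Choosing a spanning forest of $H^*$ that does not use the new parallel edge, the associated fundamental closed path traverses $ij$ forward and the duplicate backward, so $\rho$ sends it to $\gamma_{ij}\gamma_{ij}^{-1} = \operatorname{id}$. Hence $\rho(\pi_1(H^*_\ell,b_\ell)) = \rho(\pi_1(H_\ell,b_\ell))$ for every component $\ell$, which gives $\rep_{\Gamma_k}(H^*) = \rep_{\Gamma_k}(H)$ and $T(H^*_\ell) = T(H_\ell)$, so $f(H^*) = f(H)$.

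Next, because $\Gamma_k$ contains rotations for every $k \in \{2,3,4,6\}$, we have $T(\Gamma_k) = 0$, so the $\Gamma$-$(2,2)$ edge count $2n + \rep_{\Gamma_k}(\Trans(\Gamma_k))$ is exactly one more than the $\Gamma$-colored-Laman edge count $2n + \rep_{\Gamma_k}(\Trans(\Gamma_k)) - 1$ on the same vertex set. Thus doubling a single edge matches the two counts precisely.

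For the forward direction, I would assume $(G,\bgamma)$ is $\Gamma$-colored-Laman, fix $ij \in E(G)$, and form $G'$. Any subgraph $G'' \subseteq G'$ either contains at most one copy of $ij$, in which case $G''$ is, up to relabeling, a subgraph of $G$ with the same sparsity data, so $m'' \le h(G'') < f(G'')$; or it contains both copies, in which case $G''' := G'' \setminus \{\text{new copy}\} \subseteq G$ still contains $ij$, and by the invariance $f(G''') = f(G'')$, so Laman sparsity on $G'''$ gives $m'' = m''' + 1 \le h(G''') + 1 = f(G'')$. Combined with the edge count, $G'$ is a $\Gamma$-$(2,2)$ graph.

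For the reverse direction, I would assume every doubled graph is $\Gamma$-$(2,2)$ and suppose for contradiction that some subgraph $G'' \subseteq G$ with at least one edge satisfies $m'' \ge f(G'')$. Picking any $ij \in E(G'')$, doubling it in $G$ and applying the invariance again makes $G'' \cup \{\text{new copy}\}$ a subgraph of $G'$ with $m'' + 1$ edges and $f$-value equal to $f(G'')$; the $(2,2)$ sparsity of $G'$ then forces $m'' + 1 \le f(G'')$, contradicting $m'' \ge f(G'')$. Edgeless subgraphs are handled by the standard convention that they impose no Laman constraint. The main, and really only, obstacle is the invariance in the first step; once it is in hand, everything else is elementary bookkeeping of the form ``add one edge to invoke $(2,2)$ sparsity, subtract one to recover the Laman bound.''
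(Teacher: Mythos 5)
Your proof is correct and is essentially the paper's argument spelled out: the paper merely observes that $\Gamma$-colored-Laman is the same as no subgraph attaining $m'=f(G')$ and calls the rest straightforward, and your invariance lemma (doubling an edge with its own color leaves $f$ unchanged because the $\rho$-image of each component is unchanged) together with the count bookkeeping using $T(\Gamma_k)=0$ is exactly that straightforward verification. One small repair: your claim that the new fundamental closed path maps to the identity tacitly assumes the original copy of $ij$ lies in the chosen spanning forest, which fails when $ij$ is a self-loop or already parallel to another edge; the lemma still holds because the new fundamental closed path always has the same $\rho$-image as the corresponding closed path through the original copy, which already lies in $\rho(\pi_1(H_\ell,b_\ell))$, so the component's $\rho$-image, and hence $\rep_{\Gamma_k}$, $T$, and $f$, are unchanged.
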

\begin{proof}
This is straightforward to check once one notices that $(G,\bgamma)$ is
$\Gamma$-colored-Laman if and only if no subgraph $G'$ with $m'$ edges has $m'=f(G')$.
\end{proof}

\subsubsection{$\Gamma$-colored-Laman circuits}
Let $(G,\bgamma)$ be a colored graph.  We define $(G,\bgamma)$ to be a \emph{$\Gamma$-colored-Laman circuit}
if it is edge-wise minimal with the property of not being $\Gamma$-colored-Laman sparse.  More formally,
$(G,\bgamma)$ is a $\Gamma$-colored-Laman circuit if:
\begin{itemize}
\item $(G,\bgamma)$ is not $\Gamma$-colored-Laman sparse
\item For all colored edges $ij\in E(G)$, $(G-ij,\bgamma)$ is $\Gamma$-colored-Laman sparse
\end{itemize}
As the terminology suggests, $\Gamma$-colored-Laman circuits are the circuits of the matroid that has, as its bases,
$\Gamma$-colored-Laman graphs.  The following lemmas are immediate from the definition.
\begin{lemma}\lemlab{not-gamma-laman-sparse-implies-circuit}
Let $(G,\bgamma)$ be a colored graph.  If $(G,\bgamma)$ is not $\Gamma$-colored-Laman sparse, then
it contains a $\Gamma$-colored-Laman circuit as a subgraph.
\end{lemma}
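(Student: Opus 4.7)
The plan is to give a short finite-descent argument directly from the definitions, without invoking any matroid machinery (which has not yet been set up for $\Gamma$-colored-Laman graphs at this point in the paper). The statement is a general fact of the form ``every non-sparse set contains an edge-wise minimal non-sparse subset,'' which is immediate once we notice that ``being $\Gamma$-colored-Laman sparse'' is a hereditary property and that $E(G)$ is finite.

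Concretely, I would proceed as follows. Suppose $(G,\bgamma)$ is not $\Gamma$-colored-Laman sparse; I will produce a $\Gamma$-colored-Laman circuit contained in $G$. Consider the collection $\mathcal{S}$ of spanning subgraphs $(H,\bgamma|_H)$ of $(G,\bgamma)$ that fail to be $\Gamma$-colored-Laman sparse. This collection is non-empty, since $(G,\bgamma) \in \mathcal{S}$, and every element of $\mathcal{S}$ has at most $|E(G)|$ edges. Choose $(H^*, \bgamma|_{H^*}) \in \mathcal{S}$ minimizing $|E(H^*)|$. I claim $(H^*, \bgamma|_{H^*})$ is a $\Gamma$-colored-Laman circuit.

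By construction $(H^*,\bgamma|_{H^*})$ is not $\Gamma$-colored-Laman sparse, so the first bullet in the definition of a circuit is satisfied. For the second bullet, let $ij \in E(H^*)$ be any edge. Then $(H^*-ij, \bgamma|_{H^*-ij})$ is a spanning subgraph of $G$ with strictly fewer edges than $H^*$, so by minimality of $|E(H^*)|$ it is not in $\mathcal{S}$; that is, $(H^*-ij, \bgamma|_{H^*-ij})$ is $\Gamma$-colored-Laman sparse. This verifies edge-wise minimality, proving $(H^*,\bgamma|_{H^*})$ is a $\Gamma$-colored-Laman circuit contained in $(G,\bgamma)$.

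There is no genuine obstacle here: the argument is simply the standard ``minimal counterexample'' reduction, and it never touches the rank function $h$ beyond the definition of sparseness via subgraph inequalities. The only thing to double-check is that ``$\Gamma$-colored-Laman sparse'' really is hereditary under taking subgraphs, but this is built into the definition in \secref{gamma-laman}: the sparsity condition $m' \le h(G')$ ranges over \emph{all} subgraphs, so passing to a subgraph of an already sparse graph preserves the inequality. Hence the proof reduces to two or three lines.
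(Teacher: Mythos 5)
Your argument is correct and is exactly the reasoning the paper has in mind: the paper states this lemma without proof as ``immediate from the definition,'' and your finite minimal-counterexample argument (pick an edge-minimal non-sparse subgraph; minimality forces every single-edge deletion to be sparse, which is precisely the circuit condition) is the natural formalization of that. The one point you flag --- heredity of sparseness --- is indeed built into the definition, so nothing further is needed.
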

\begin{lemma}\lemlab{gamma-laman-circuit-sparsity}
Let $(G,\bgamma)$ be a colored graph with $n$ vertices and $m$ edges.
Then $(G,\bgamma)$ is a $\Gamma$-colored-Laman circuit if and only if:
\begin{itemize}
\item The number of edges $m = f(G)$
\item For all subgraphs $G'$ of $G$, on $m'$ edges, $m' < f(G')$
\end{itemize}
\end{lemma}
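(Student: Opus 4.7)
The plan is to unwind both directions from the definitions of $\Gamma$-colored-Laman circuit and $\Gamma$-colored-Laman sparse, using the relation $h=f-1$ and the monotonicity of $f$ (which follows from $f=2g$ and the monotonicity clause of \propref{gamma11}).

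For the forward direction, suppose $(G,\bgamma)$ is a circuit. Since it is not $\Gamma$-colored-Laman sparse, there exists a witness subgraph $H\subseteq G$ with $|E(H)|\geq f(H)$. I first argue $H=G$: if not, pick $ij\in E(G)\setminus E(H)$; then $H\subseteq G-ij$, which by the circuit definition is Laman-sparse, contradicting $|E(H)|\geq f(H)>f(H)-1=h(H)$. Hence $m\geq f(G)$. For the reverse inequality, fix any edge $ij\in E(G)$; since $G-ij$ is Laman-sparse and $G-ij$ is a subgraph of itself, $m-1\leq h(G-ij)=f(G-ij)-1$, and monotonicity of $f$ gives $f(G-ij)\leq f(G)$, so $m\leq f(G)$. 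This yields $m=f(G)$. For the strict inequality on proper subgraphs, let $G'\subsetneq G$ and choose $ij\in E(G)\setminus E(G')$; then $G'\subseteq G-ij$, and Laman-sparsity of $G-ij$ gives $|E(G')|\leq f(G')-1<f(G')$.

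For the reverse direction, assume both bullet conditions. Since $m=f(G)>f(G)-1=h(G)$, the whole graph $G$ witnesses failure of $\Gamma$-colored-Laman sparsity. For any $ij\in E(G)$ and any subgraph $H\subseteq G-ij$, we have $H\subsetneq G$ (as $ij\notin E(H)$), so by hypothesis $|E(H)|<f(H)$, i.e., $|E(H)|\leq f(H)-1=h(H)$. Hence $G-ij$ is $\Gamma$-colored-Laman sparse, verifying the circuit axioms.

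The argument is essentially bookkeeping once one notices that $h=f-1$, so ``$|E(H)|\leq h(H)$'' is the same as ``$|E(H)|<f(H)$''. The only non-trivial ingredient is the monotonicity of $f$ used to pass from $f(G-ij)$ to $f(G)$ in the upper bound on $m$; everything else is pure combinatorial unpacking, and no new group-theoretic input beyond \propref{gamma11} is required. I do not anticipate a serious obstacle.
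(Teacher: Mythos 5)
Your argument is correct and is precisely the definitional unpacking the paper intends: the paper states this lemma without proof (calling it immediate from the definition), and your route via $h=f-1$, integrality of $f$, and monotonicity of $f$ from \propref{gamma11} is exactly the expected one. The only loose point is your claim that the non-sparsity witness $H$ must equal $G$ (and that every proper subgraph omits an edge): a subgraph can differ from $G$ only in isolated vertices, but since each isolated vertex contributes $2-T(\cdot)=0$ to $f$, such a subgraph has the same edge count and the same $f$-value, so nothing is lost --- this is the same implicit convention the lemma statement itself relies on when excluding $G'=G$ from the strict inequality.
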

Here, $f$ is the colored-$(2,2)$ sparsity function defined in \secref{gamma22}.

\subsection{\GammaOO{} graphs: proof of \propref{gamma11}} \seclab{sparse}
With the definitions and main properties of $\Gamma$-$(2,2)$ and $\Gamma$-colored-Laman
graphs developed, we prove:
\gammaOOmatroid*
\begin{proof}
The proposition follows immediately from Lemmas \ref{lemma:gamma11-bases} and \ref{lemma:gamma11-rank}, which are
proven below.
\end{proof}
With this, the proof of \propref{gamma22-decomp} is also complete.  The rest of this section is organized
as follows: first we prove that the $\Gamma$-$(1,1)$ graphs give the bases of a matroid and then
we argue that the rank function of this matroid is, in fact, the function $g$, defined in
\secref{gamma22}.

We recall from \secref{closed-paths}
that, for a marked colored graph $(G,\bgamma,B)$ with a fixed spanning forest
$F$, the map $\rho$, defined in \secref{colored-graphs}, induces a map from
$(G,\bgamma,B,F)$ to $E_{\Gamma_k,n}$, the ground set of the matroid $M_{\Gamma_k,n}$
from \secref{group-matroid}.  We adopt the notation of \secref{closed-paths}, and
denote the image of this map by $A(G,B,F)$.

We start by studying $A(G,B,F)$ in more detail.

\subsubsection{Rank of $A(G,B,F)$}
As defined, the set $A(G,B,F)$ depends on a choice of base vertices for each connected
component and a spanning forest $F$ of $G$.  Since we are interested in constructing a
matroid on colored graphs without additional data, the first structural lemma is that
the rank of $A(G,B,F)$ in $M_{\Gamma_k,n}$ is independent of the choices for $B$ and $F$.
\begin{lemma}\lemlab{rebase}
Let $(G,\bgamma,B)$ be a marked colored graph with connected components
$G_1,\ldots,$ $G_c$ and fix a spanning forest $F$ of $G$.  Then the rank of $A(G,B,F)$ in the matroid
$M_{\Gamma_k,n}$ is invariant under an arbitrary change of base vertices and spanning forest.
\end{lemma}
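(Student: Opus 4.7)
The plan is to leverage \propref{Mgammarank}: since $g_1$ is the rank function of $M_{\Gamma_k,n}$, it suffices to show that $g_1(A(G,B,F))$ is invariant under the choice of $B$ and $F$. Because $g_1(A)$ is built out of $\rep_{\Gamma_k}(\Trans(A))$ and the values $T(\Gamma_{A,i})$, which in turn depend on $A$ only through the subgroups $\Gamma_{A,i}$, I would reduce the problem to tracking how the $\Gamma_{A,i}$ transform under each of the two types of changes.

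Spanning forest invariance should be the easy step: by \lemref{GBF-eq-GammaA}, $\Gamma_{A,i} = \rho(\pi_1(G_i,b_i))$, and this subgroup is intrinsic to the colored graph together with the base vertex, with no dependence on the choice of $F$. Hence $g_1(A(G,B,F)) = g_1(A(G,B,F'))$ for any two spanning forests $F,F'$ once $B$ is fixed.

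The substantive work is invariance under change of base vertex. Replacing $b_i$ by another vertex $b_i'$ of $G_i$ yields an isomorphism $\pi_1(G_i, b_i') \cong \pi_1(G_i, b_i)$ given by conjugation by (the homotopy class of) any path from $b_i'$ to $b_i$; applying $\rho$ produces an element $\gamma_i \in \Gamma_k$ so that the new subgroup $\Gamma_{A',i}$ equals $\gamma_i^{-1}\Gamma_{A,i}\gamma_i$. Since $T(\cdot)$ depends only on whether the group contains a rotation, it is manifestly conjugation invariant, so the $T$-terms in $g_1$ are unaffected. The remaining task is to show that $\rep_{\Gamma_k}(\Trans(A))$ is unchanged, where each $\Trans(\Gamma_{A,i})$ has been conjugated by a (potentially different) $\gamma_i$.

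I expect this last piece to be the main obstacle, because the subgroup $\Trans(A)$ generated by conjugated pieces need not equal a single conjugate of $\Trans(A)$. My plan is to split into cases using \lemref{rad-welldefined}. For $k=2$, every translation subgroup of $\Gamma_2$ is normal ($\Z^2$ is abelian and the order-two rotation acts by $-\Id$), so conjugation fixes each $\Trans(\Gamma_{A,i})$ setwise and $\Trans(A)$ is literally unchanged. For $k=3,4,6$, \lemref{rad-welldefined} shows that every nontrivial translation subgroup has closure equal to $\Trans(\Gamma_k)$ while the trivial subgroup has trivial closure; since triviality is preserved by conjugation, each $\Trans(\Gamma_{A,i})$ retains its closure, and hence so does the generated subgroup $\Trans(A)$. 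Because $\rep_{\Gamma_k}$ factors through the closure, this yields the required invariance and completes the proof.
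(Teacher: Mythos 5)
Your proposal is correct and follows essentially the same route as the paper: forest changes leave the subgroups $\rho(\pi_1(G_i,b_i))$ untouched, base-vertex changes act by (componentwise) conjugation, and $g_1$ is conjugation-invariant because $T(\cdot)$ and the closure of the translation subgroup are. The only cosmetic difference is that the paper outsources the last step to \lemref{Mgammaconj} (via \lemref{rad-conj}), whereas you re-derive that invariance directly with the same normality/$\Rad$ case split.
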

\begin{proof}
For convenience, shorten the notation $A(G,B,F)$ to $A$.
By \lemref{GBF-eq-GammaA} $\rho(\pi_1(G_\ell, v_\ell)) = \Gamma_{A, \ell}$.
Changing the spanning forest
$F$ just picks out a different set of generators for $\pi_1(G_\ell, v_\ell)$, and so does not
change $\Gamma_{A, \ell}$, and thus the rank in $M_{\Gamma_k,n}$, which does not
depend on the generating set, is unchanged.

To complete the proof, we show that changing the base vertices corresponds, in $E_{\Gamma_k,n}$,
to applying the conjugation operation defined in \secref{group-matroid} to $A$.
Suppose that $G$ is connected and fix a spanning tree $F$ and a base vertex $b$.
If $P$ is a closed path starting and ending at $b$, for any other vertex $b'$ there is a path $P'$ that:
starts at $b'$, goes to $b$ along a path $P_{bb'}$, follows $P$, and then returns from $b$ to $b'$
along $P_{bb'}$ in the other direction.  We have $\rho(P') = \rho(P_{bb'})\rho(P)\rho(P_{bb'})^{-1}$, so
$P$ and $P'$ have conjugate images.  Thus changing base vertices corresponds to conjugation, and by
\lemref{Mgammaconj} we are done after considering connected components one at a time.
\end{proof}
In light of \lemref{rebase}, when we are interested only in the rank of $A(G,B,F)$, we can freely
change $B$ and $F$.  Thus, we shorten the notation to $A(G)$.

\subsubsection{The effect on $A(G)$ of adding or deleting a colored edge}
In the proof of the basis exchange property, we will need to start with a
$\Gamma$-$(1,1)$ graph, and add a colored edge to it.  There are two possibilities:
the edge $ij$ is in the span of some connected component $G_i$ of $G$ or it is not.  Each
of these has an interpretation in terms of how $A(G+ij)$  is different from $A(G)$.
\begin{lemma}\lemlab{add-edge-AG}
Let $(G,\bgamma)$ be a colored graph and let $ij$ be a colored edge.  Then:
\begin{itemize}
\item %
If the edge $ij$ is in the span of a connected component, $G_\ell$ of $G$, then
$A(G+ij)$ is $A(G)+(\gamma,\ell)$, where $\gamma$ is the image of the fundamental closed
path of $ij$ with respect to some spanning tree and base vertex of $G_\ell$.
\item %
If the edge $ij$ connects two connected components $G_\ell$ and $G_r$ of $G$, then
$A(G+ij)$ is a fusing operation (defined in \secref{group-matroid}) on $A(G)$ after a conjugation.  In
particular, in the notation of \secref{group-matroid}, $A(G)_\ell$ and $A(G)_r$ are fused.  Conversely,
$A(G)$ is a conjugation of a separation of $A(G+ij)$.
\end{itemize}
\end{lemma}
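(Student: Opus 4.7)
The plan is to compute $A(G+ij)$ directly from the definition by making a convenient choice of spanning forest and base vertices for $G+ij$ that extends the choices already made for $G$. By \lemref{rebase}, the resulting element of $E_{\Gamma_k,n}$ is determined up to conjugation by the equivalence relation coming from changing these choices, so we are free to pick the most convenient data.

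For Case 1, where $ij$ is internal to the connected component $G_\ell$, keep $F$ as a spanning forest of $G+ij$ (it still spans every component because $ij$ would create a cycle in $G_\ell$) and retain the base vertices $B$. The non-tree edges of $G+ij$ with respect to $F$ are precisely the non-tree edges of $G$ together with $ij$, and the fundamental closed path of each old non-tree edge lies inside $G$, so it is unchanged. The edge $ij$ contributes exactly one new fundamental closed path, based at $b_\ell$, whose image under $\rho$ is the element $\gamma$ described in the statement, giving $A(G+ij) = A(G) + (\gamma,\ell)$.

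For Case 2, where $ij$ bridges $G_\ell$ and $G_r$, extend $F$ to a spanning forest of $G+ij$ by adjoining $ij$; the fused component then has spanning tree $T_\ell\cup T_r\cup\{ij\}$, and we choose $b_\ell$ as its base vertex. The non-tree edges of the fused component are precisely the non-tree edges of $G_\ell$ together with those of $G_r$. Fundamental closed paths for non-tree edges of $G_\ell$ are unchanged, but for each non-tree edge $e$ of $G_r$, letting $Q$ be the unique path in $T_\ell\cup\{ij\}\cup T_r$ from $b_\ell$ to $b_r$, the new fundamental closed path for $e$ based at $b_\ell$ is $Q$, followed by the old fundamental closed path for $e$ (based at $b_r$), followed by $Q^{-1}$. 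Applying $\rho$, each element $\alpha$ of $A(G)_r$ is replaced by $\rho(Q)\,\alpha\,\rho(Q)^{-1}$. Setting $\gamma_r = \rho(Q)^{-1}$ and $\gamma_i = \Id$ for $i\ne r$, this is the conjugation operation of Section \secrefX{Mkn-conj}. Afterwards, the two non-empty parts indexed by $\ell$ and $r$ are combined into the $\ell$-part of $A(G+ij)$, which is the fusing operation of Section \secrefX{Mkn-fuse}. Reversing these two steps expresses $A(G)$ as a conjugation of a separation of $A(G+ij)$, as claimed.

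The primary difficulty is purely bookkeeping: one must be careful that the conjugating element is attached to the $r$-component rather than the $\ell$-component and that the direction of conjugation matches the convention $A_i\mapsto \gamma_i^{-1}A_i\gamma_i$ fixed in Section \secrefX{Mkn-conj}. The standard fact that a change of base vertex in $\pi_1$ along a connecting path induces conjugation by the image of that path under $\rho$ supplies the identification; with this in hand the two cases follow by unwinding the definitions.
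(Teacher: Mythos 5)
Your proof is correct and follows essentially the same route as the paper: in Case 1 the old forest and base vertices are retained and $ij$ contributes the single new fundamental closed path, while in Case 2 one adjoins $ij$ to the spanning forest, keeps $b_\ell$ as base vertex, and observes that the $\rho$-images of the fundamental paths coming from $G_r$ get conjugated by the image of the tree path from $b_\ell$ to $b_r$, which is exactly the conjugation-then-fusing description. Your extra care about matching the convention $A_i \mapsto \gamma_i^{-1} A_i \gamma_i$ is a harmless refinement of what the paper does.
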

\begin{proof}
The first part follows from the fact that if we pick a base vertex and spanning tree of $G_\ell$, then adding the
colored edge $ij$ to $G_\ell$ induces exactly one new fundamental closed path.

For the second part, w.l.o.g., assume that $G$ has two connected components $G_1, G_2$ and that $ij$ connects them.
Let $T_1$ and $T_2$ be the spanning trees and $b_1$ and $b_2$ the base vertices which define the set $A(G)$. Then,
we can choose $T = T_1 \cup T_2 + ij$ as the spanning tree for $G$ and $b_1$ for the base vertex. The fundamental
closed paths for edges in $E(G_1) - E(T)$ are unchanged. The $\rho$-image of the fundamental paths for $E(G_2) - E(T)$ are
conjugated by $\rho(P_{b_1 b_2})$ where $P_{b_1 b_2}$ is the unique path in $T$ from $b_1$ to $b_2$. Thus
$A(G+ij)_1$ consists of $A(G)_1$ and a conjugation of $A(G)_2$. The converse is clear since the inverse of a conjugation is a conjugation, and the inverse
of fusing is separating.
\end{proof}

\subsubsection{$\Gamma$-$(1,1)$ graphs and tight independent sets in $M_{\Gamma_k,n}$}
$\Gamma$-$(1,1)$ graphs $(G,\bgamma)$ have a simple characterization in terms of $A(G)$: they
correspond exactly to the situations in which $A(G)$ is tight and independent.

\begin{lemma}\lemlab{gamma11-tight}
Let $(G,\bgamma)$ be a colored graph.  Then $(G,\bgamma)$ is $\Gamma$-$(1,1)$ if and only if $A(G)$
is tight and independent in $M_{\Gamma_k,n}$.
\end{lemma}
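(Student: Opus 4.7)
The plan is to match each of the three defining clauses of a $\Gamma$-$(1,1)$ graph against the two identities that characterize a tight independent set in $M_{\Gamma_k,n}$, namely independence $|A(G)| = g_1(A(G))$ and the tightness size $|A(G)| = c(A(G)) + k$, where $k := \tfrac12\rep_{\Gamma_k}(\Trans(\Gamma_k))$. The dictionary from graphs to the matroid is provided by \lemref{GBF-eq-GammaA}, which identifies $\Gamma_{A(G),\ell} = \rho(\pi_1(G_\ell,b_\ell))$; in particular $\rep_{\Gamma_k}(\Trans(A(G))) = \rep_{\Gamma_k}(G)$, and $T(\Gamma_{A(G),\ell}) = 0$ precisely when the $\rho$-image of $G_\ell$ contains a rotation. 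Let $c$ be the number of connected components of $G$.

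For the forward direction, assume $(G,\bgamma)$ is $\Gamma$-$(1,1)$. The map-graph-plus-$k$-edges clause forces $m = n + k$ and a cycle in every component, so every base vertex labels a non-empty $A_{b_\ell}$; hence $c(A(G)) = c$ and $|A(G)| = m - (n - c) = c + k$, which is the tightness size. The rotation clause makes $T(\Gamma_{A(G),\ell}) = 0$ on the non-empty parts, while the $n - c$ non-base-vertex indices each contribute $T = 2$; together with $\rep_{\Gamma_k}(\Trans(A(G))) = \rep_{\Gamma_k}(G) = 2k$, plugging into $g_1$ yields $g_1(A(G)) = n + k - (n - c) = c + k = |A(G)|$, i.e., independence.

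For the reverse direction I would reverse this computation. Combining independence with the tightness size, after splitting $\sum_{i=1}^n T(\Gamma_{A(G),i})$ into empty-part contributions ($2$ each) and non-empty-part contributions, one obtains the identity
\[
\bigl(2k - \rep_{\Gamma_k}(\Trans(A(G)))\bigr) + \sum_{A_\ell \neq \emptyset} T(\Gamma_{A(G),\ell}) = 0,
\]
whose two summands are non-negative (the first by $\rep$-monotonicity on $\Trans(A(G)) \le \Trans(\Gamma_k)$, the second by the definition of $T$). Tightness forces each summand to vanish, giving $\rep_{\Gamma_k}(G) = 2k$ (third clause) and a rotation in the $\rho$-image of each component with a cycle (second clause). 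Combining the edge-count formula $|A(G)| = m - (n - c)$ with $|A(G)| = c(A(G)) + k$ then gives $m = n + k + (c(A(G)) - c)$, and together with $c(A(G)) = c$ this forces $m = n + k$ and every component to contain a cycle, i.e., the map-graph-plus-$k$-edges clause.

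The principal obstacle lies in the reverse direction, where three structurally distinct graph-theoretic conclusions must be extracted from a single scalar identity. The key observation is that $g_1$ decomposes as a sum of independently bounded non-negative quantities, so tightness saturates each bound separately; alternatively one may appeal to \lemref{Mgammatight} to read off the rotation and representation conclusions directly from the type \textbf{(A)}/\textbf{(B)} classification of tight independent sets, leaving only the edge-count bookkeeping to finish.
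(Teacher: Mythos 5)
Your overall route is the same as the paper's: push everything through $A(G)$ and \lemref{GBF-eq-GammaA}, so that the rotation clause becomes $T(\Gamma_{A(G),\ell})=0$, the representation clause becomes $\rep_{\Gamma_k}(\Trans(A(G)))=\rep_{\Gamma_k}(\Trans(\Gamma_k))$, and the edge count becomes $|A(G)|=m-(n-c)$. Your forward direction (which the paper dismisses as ``straightforward to check'') is complete and correct, and in the reverse direction the identity
\[
\bigl(2k-\rep_{\Gamma_k}(\Trans(A(G)))\bigr)+\sum_{A_\ell\neq\emptyset}T(\Gamma_{A(G),\ell})=0
\]
is correctly derived from independence plus the tightness count, and correctly forces the third clause and a rotation in the $\rho$-image of every component \emph{that contains a cycle}.

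The gap is the clause ``together with $c(A(G))=c$'': you never establish that every connected component of $G$ contains a cycle, and that is exactly what is still needed, both to upgrade the rotation conclusion from ``components with a cycle'' to all components and to get $m=n+k$ and a spanning map-graph. This cannot be extracted from tightness and independence of $A(G)$: empty parts contribute nothing to $|A|$, to $c(A)$, or to $g_1$ (each empty part adds $1-\tfrac12 T=0$), so if you take a genuine $\Gamma$-$(1,1)$ graph and adjoin an isolated vertex, $A(G)$ stays tight and independent in $M_{\Gamma_k,n+1}$ while the graph is no longer $\Gamma$-$(1,1)$; likewise \lemref{Mgammatight} only constrains the non-empty parts, so your suggested alternative does not close this either. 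To be fair, the paper's own proof of this direction is no more careful at exactly this spot --- it asserts that tightness makes each component have an edge outside the spanning forest --- so you are reproducing the authors' leap rather than introducing a new error; but since this is the only non-routine point of the reverse implication, a complete write-up must either build in the convention that $G$ has no acyclic components (equivalently $c(A(G))=c$) or verify that condition in the situations where the lemma is invoked, rather than fold it silently into the final sentence.
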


\begin{proof}
We recall that \lemref{Mgammatight} gives a structural characterization of tight independent sets in $M_{\Gamma_k,n}$.
The proof proceeds by translating the definitions from \secref{group-matroid-sets}
into graph theoretic terms.  In this proof, we adopt the notation of
\secref{group-matroid-sets}, and we remind the reader that a subset $A\subset E_{\Gamma_k,n}$ is tight if it is
independent in $M_{\Gamma_k,n}$ and has
\[
|A| = c(A) + \frac{1}{2}\rep_{\Gamma_k}(\Trans(\Gamma_k))
\]
elements.

We first suppose that $A(G)$ is tight, and show that $(G,\bgamma)$ is a $\Gamma$-$(1,1)$ graph.
By construction $A(G)$ has an element $(\gamma,\ell)$ if and only if there is some edge $ij$ in the
connected component $G_\ell$ not in the spanning forest $F$ used to compute $A(G)$.  It then follows that,
if $A(G)$ is tight, each connected component of $G_i$ of $G$ has at least one more edge than
$G_i\cap F$.     %
This implies that $G$ contains a spanning map graph.  Because $|A(G)| = c(A) + \frac{1}{2}\rep_{\Gamma_k}(\Trans(\Gamma_k))$
it then follows that $G$ is a map-graph plus $\frac{1}{2}\rep_{\Gamma_k}(\Trans(\Gamma_k))$ additional edges,
which are the combinatorial hypotheses for being a $\Gamma$-$(1,1)$ graph.

Now we use the fact that $A(G)$ is independent in $M_{\Gamma_k,n}$.  Independence implies that,
if non-empty, $A(G_i)$ contains a rotation, from which it follows that, for each connected component $G_i$ of $G$,
$\rho(\pi_1(G_i,b_i))$ does as well.  Similarly, independence of $A(G)$ implies that
$\rep_{\Gamma_k}(\Trans(A(G))) = \rep_{\Gamma_k}(\Trans(\Gamma_k))$, so $\rep_{\Gamma_k}(G)= \rep_{\Gamma_k}(\Trans(\Gamma_k))$.
We have now shown that $(G,\bgamma)$ is a $\Gamma$-$(1,1)$ graph.

The other direction is straightforward to check.
\end{proof}

\subsubsection{$\Gamma$-$(1,1)$ graphs form a matroid}
We now have the tools to prove that the $\Gamma$-$(1,1)$ graphs form the bases of a matroid.  We take as the ground
set the graph $K_{\Gamma_k,n}$ on $n$ vertices that has one copy of each possible directed edge $ij$
or self-loop $ij$ with color $\gamma\in \Gamma_k$.

\begin{lemma}\lemlab{gamma11-bases}
The set of $\Gamma$-$(1,1)$ graphs on $n$ vertices
form the bases of a matroid on $K_{\Gamma_k,n}$.
\end{lemma}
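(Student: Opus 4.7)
The plan is to transfer the basis-exchange property from the matroid $M_{\Gamma_k,n}$ (established in Proposition \ref{prop:Mgammarank}) to the family of $\Gamma$-$(1,1)$ graphs using Lemma \ref{lemma:gamma11-tight}, which identifies $\Gamma$-$(1,1)$ graphs on $n$ vertices with marked colored graphs whose associated set $A(G,B,F) \subset E_{\Gamma_k,n}$ is a tight independent set (equivalently, a basis) of $M_{\Gamma_k,n}$. The cardinality axiom is immediate: every $\Gamma$-$(1,1)$ graph on $n$ vertices is a map-graph (which has $n$ edges) plus $\frac{1}{2}\rep_{\Gamma_k}(\Trans(\Gamma_k))$ additional edges, so all such graphs have the same number of edges.

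For the basis-exchange property, let $G_1$ and $G_2$ be two $\Gamma$-$(1,1)$ graphs on $n$ vertices and let $e \in E(G_1) \setminus E(G_2)$; I want to produce $f \in E(G_2) \setminus E(G_1)$ such that $G_1 - e + f$ is $\Gamma$-$(1,1)$. First I would handle the case where $e$ lies in a cycle of some component of $G_1$: choose a spanning forest $F_1$ of $G_1$ avoiding $e$ together with base vertices $B_1$, so that by the first part of Lemma \ref{lemma:add-edge-AG} the edge $e$ contributes one specific element $a_e \in A(G_1,B_1,F_1)$, and $A(G_1 - e, B_1, F_1) = A(G_1,B_1,F_1) - a_e$. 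Applying basis exchange in $M_{\Gamma_k,n}$ to the bases $A(G_1)$ and $A(G_2)$ produces an element $a_f \in A(G_2) \setminus A(G_1)$ for which $A(G_1) - a_e + a_f$ is again a basis; Lemma \ref{lemma:add-edge-AG} identifies $a_f$ with an edge $f$ of $G_2$, and Lemma \ref{lemma:gamma11-tight} converts tightness of the new set back to the conclusion that $G_1 - e + f$ is $\Gamma$-$(1,1)$. A small bookkeeping step is required to ensure $f \notin E(G_1)$: choose the spanning forest of $G_2$ to contain as many edges of $E(G_1) \cap E(G_2)$ as possible, so that elements of $A(G_2) \setminus A(G_1)$ are parametrized precisely by edges in $E(G_2) \setminus E(G_1)$.

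The main obstacle is the case where $e$ is a bridge of $G_1$: removing $e$ splits a component of $G_1$ into two pieces, and by the second part of Lemma \ref{lemma:add-edge-AG} this corresponds on the $M_{\Gamma_k,n}$-side to the inverse of a fusion, namely a separation combined with a conjugation (the latter is rank-invariant by Lemma \ref{lemma:Mgammaconj}). My plan here is to first invoke Lemma \ref{lemma:Mgammasep} to see that $A(G_1 - e)$, after appropriate conjugation, is still independent in $M_{\Gamma_k,n}$, then run a basis-exchange argument inside $M_{\Gamma_k,n}$ to find an element of $A(G_2)$ whose addition corresponds, via a fusing operation (Lemma \ref{lemma:Mgammafuse}), to a bridging edge $f \in E(G_2) \setminus E(G_1)$ rejoining the split pieces; tightness of the resulting set then gives the $\Gamma$-$(1,1)$ property for $G_1 - e + f$ via Lemma \ref{lemma:gamma11-tight}. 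The most delicate part is matching the separation operation with the specific structure of the tight set (as classified in Lemma \ref{lemma:Mgammatight}), and coordinating the choices of spanning forest and base vertex on both $G_1$ and $G_2$ so that the basis exchange produced by $M_{\Gamma_k,n}$ indeed arises from an edge of $G_2$ outside $G_1$.
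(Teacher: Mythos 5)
Your plan has a genuine gap at its core exchange step. First, a smaller but real error: $\Gamma$-$(1,1)$ graphs do \emph{not} correspond to bases of $M_{\Gamma_k,n}$; by \lemref{gamma11-tight} they correspond to \emph{tight independent} sets, and a tight set has only $c(A)+\tfrac{1}{2}\rep_{\Gamma_k}(\Trans(\Gamma_k))$ elements with $c(A)$ nonempty parts, which is a basis of $M_{\Gamma_k,n}$ only when every part is nonempty (i.e.\ when the $\Gamma$-$(1,1)$ graph has $n$ connected components). So ``applying basis exchange in $M_{\Gamma_k,n}$ to the bases $A(G_1)$ and $A(G_2)$'' is not available as stated. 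More seriously, even with an exchange or augmentation step inside $M_{\Gamma_k,n}$, the element $a_f\in A(G_2)\setminus A(G_1)$ it produces is a pair $(\gamma,\ell)$ whose group element is the $\rho$-image of a fundamental closed path computed with respect to $G_2$'s spanning forest, base vertices, and component indexing. \lemref{add-edge-AG} only tells you what a given edge contributes when added to a \emph{fixed} graph; it does not let you realize a prescribed element of $A(G_2)$ by adding some edge $f$ of $G_2$ to $G_1-e$, because the tree paths, conjugacies, and component labels in $G_1-e$ are different from those in $G_2$. Your forest-overlap ``bookkeeping'' does not repair this: it controls which edges of $G_2$ index elements of $A(G_2)$, not what group element those edges would contribute relative to $G_1-e$. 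The bridge case is worse still, and your sketch there only expresses the hope that a suitable bridging edge of $G_2$ exists; no argument is given, and none follows from Lemmas \ref{lemma:Mgammasep} and \ref{lemma:Mgammafuse} alone.

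The paper sidesteps exactly this translation problem by running the exchange in the opposite order, which is also a legitimate basis axiom: starting from the $\Gamma$-$(1,1)$ graph $G$ and an edge $ij$ of another $\Gamma$-$(1,1)$ graph, it forms $G'=G+ij$ and chooses base vertices and a spanning forest of $G'$ \emph{containing} $ij$ (the self-loop case is handled separately). Then all computations happen inside the single marked graph $G'$ with one fixed forest: $A(G')$ is spanning but dependent, so some element can be deleted to leave a tight independent set, and since $ij$ lies in the forest that element corresponds to a non-forest edge of $G$, whence $G+ij-i'j'$ is $\Gamma$-$(1,1)$ by \lemref{gamma11-tight}. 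If you want to keep your direction of exchange, you would need an argument that some edge of $G_2\setminus G_1$ increases the rank of $A(G_1-e)$ when added to $G_1-e$ (a rank/augmentation statement about the graph-level function $g$), but that is essentially \lemref{gamma11-rank}, which in the paper is proved \emph{after} and using the matroid property you are trying to establish.
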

\begin{proof}
We check the basis axioms \cite[Section 1.2]{O11}:

\noindent
\textbf{Non-triviality:}
An uncolored tree plus $\frac{1}{2}\rep_{\Gamma_k}(\Gamma_k) + 1$ edges,
each of which is colored by a standard generator for $\Gamma_k$ is clearly \GammaOO.  Thus the set of bases is not empty.

\noindent
\textbf{Equal size:}
By definition, all \GammaOO{} graphs have the same number of edges.

\noindent
\textbf{Basis exchange:}
The more difficult step is checking basis exchange.  To do this we let $G$ be a \GammaOO{} graph and $ij$ a colored edge
of some other \GammaOO{} graph which is not in $G$.  It is sufficient to check that there is some colored edge $i'j'\in E(G)$
such that $G+ij-i'j'$ is also a \GammaOO{} graph.  Let $(G',\bgamma')$ be the colored graph $(G+ij,\bgamma)$.

Suppose the new edge $ij$ is not a self-loop. Then,
pick base vertices $B$ and a spanning forest $F$ of $G'$ that contains the new edge $ij$.
By \lemref{rebase}, changing $F$ so as to include $ij$ does not change the rank of $A(G',B,F)$ in
$M_{\Gamma_k,n}$.  \lemref{add-edge-AG} implies that $A(G',B,F)$ is spanning, but not
independent, in $M_{\Gamma_k,n}$.  Thus there is an element of
$A(G',B,F)$ that can be removed to leave a tight, independent set.  Since $ij$ is in $F$, this
element does not correspond to $ij$.  The basis exchange axiom then follows from the characterization
of $\Gamma$-$(1,1)$ graphs in \lemref{gamma11-tight}.

Suppose $ij$ is a self-loop. Then, the first conclusion of \lemref{add-edge-AG} applies.
Since $ij$ comes from some other \GammaOO graph, it has non-trivial color, and, thus is not dependent
as a singleton set. It follows that there is some element in $A(G')$ (not corresponding to $ij$) which can
be removed to give a tight independent set of the matroid $M_{\Gamma_k, n}$. Consequently, removing the
corresponding edge in $G'$ leaves a \GammaOO graph by \lemref{gamma11-tight}.
\end{proof}

\subsubsection{The rank function of the $\Gamma$-$(1,1)$ matroid}
Now we compute the rank function of the $\Gamma$-$(1,1)$ matroid.  The
following lemma is immediate from the definitions.
\begin{lemma}\lemlab{g-and-g1}
Let $(G,\bgamma)$ be a colored graph with $n$ vertices and $c$ connected components.
Then
\[
g(G) = n - c + g_1(A(G))
\]
where $g_1$ is the rank function of the matroid $M_{\Gamma_k,n}$.
\end{lemma}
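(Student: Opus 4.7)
The plan is to expand both sides using the definitions of $g$ and $g_1$ and verify agreement up to the promised constant $n - c$. First I would recall from \secref{gamma22} that $g(G) = n + \frac{1}{2}\rep_{\Gamma_k}(G) - \frac{1}{2}\sum_{i=1}^{c} T(G_i)$, and from \secref{g1} that, for a subset $A \subset E_{\Gamma_k, n'}$, we have $g_1(A) = n' + \frac{1}{2}\rep_{\Gamma_k}(\Trans(A)) - \frac{1}{2}\sum_{i=1}^{n'} T(\Gamma_{A,i})$. The essential bookkeeping point is that the index $\ell$ in the construction of $A(G,B,F)$ (\secref{closed-paths}) ranges over the connected components of $G$, so $A(G)$ naturally lives in $E_{\Gamma_k, c}$ and its leading term in $g_1$ is $c$, not $n$.

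Next I would translate each remaining piece of $g_1(A(G))$ into its graph-theoretic counterpart. By \lemref{GBF-eq-GammaA}, $\Gamma_{A(G), \ell} = \rho(\pi_1(G_\ell, b_\ell))$, so by the definition of $T$ for groups (\secref{radical}) and for colored graphs (\secref{gamma22}), $T(\Gamma_{A(G), \ell}) = T(G_\ell)$. The definitions of $\Trans(A)$ in \secref{group-matroid} and $\Trans(G,B)$ in \secref{translation-subgroup-colored-graph} match term-for-term once $\Gamma_{A(G), \ell}$ is rewritten as $\rho(\pi_1(G_\ell, b_\ell))$, giving $\Trans(A(G)) = \Trans(G,B)$ and hence $\rep_{\Gamma_k}(\Trans(A(G))) = \rep_{\Gamma_k}(G)$.

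Substituting these identifications into $g_1(A(G))$ yields $c + \frac{1}{2}\rep_{\Gamma_k}(G) - \frac{1}{2}\sum_{i=1}^{c} T(G_i)$, which differs from $g(G)$ only by the leading constant ($c$ versus $n$); rearranging gives $g(G) = n - c + g_1(A(G))$. Since the argument is purely a matter of unwinding notation, there is no genuine obstacle; the one subtlety worth flagging is choosing the ambient matroid parameter to be $c$ (one part per component), which is precisely why the $n - c$ correction term appears rather than vanishing. As an internal consistency check, if one instead took $A(G) \subset E_{\Gamma_k, n}$ with one part per vertex, each of the $n - c$ empty parts would contribute $T(\{e\}) = 2$ to the sum in $g_1$, shifting the leading term back from $n$ to $c$ and producing the same identity.
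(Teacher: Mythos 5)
Your proposal is correct and matches the paper's treatment, which states this lemma as immediate from the definitions: the identifications $\Gamma_{A(G),\ell}=\rho(\pi_1(G_\ell,b_\ell))$, $\Trans(A(G))=\Trans(G,B)$ (hence $\rep_{\Gamma_k}(\Trans(A(G)))=\rep_{\Gamma_k}(G)$), and the bookkeeping of the leading term are exactly the intended unwinding. The only caveat is that the paper keeps $A(G)$ inside $E_{\Gamma_k,n}$ and uses the rank function of $M_{\Gamma_k,n}$, so your closing ``consistency check'' (each empty part has $T=2$ and contributes nothing net, turning the leading $n$ into $c$) is really the computation in the paper's own convention rather than an optional variant.
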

We can use this to show:
\begin{lemma}\lemlab{gamma11-independent-g}
Let $(G,\bgamma)$ be a colored graph that is independent in the $\Gamma$-$(1,1)$ matroid with $m$ edges.
Then $m=g(G)$.
\end{lemma}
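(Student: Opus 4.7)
The plan is to reduce the claim to a statement about independence of $A(G)$ in the matroid $M_{\Gamma_k,n}$ that has already been built. By \lemref{g-and-g1}, for any colored graph $(G,\bgamma)$ one has
\[
g(G) = (n - c) + g_1(A(G)),
\]
where $c$ is the number of connected components of $G$. On the other hand, $A(G)$ contains exactly one element per edge of $G$ lying outside the chosen spanning forest, so straight from the definition $|A(G)| = m - (n - c)$. Combining these, the desired equality $m = g(G)$ is equivalent to $|A(G)| = g_1(A(G))$, i.e., to $A(G)$ being independent in $M_{\Gamma_k,n}$. The lemma therefore reduces to showing: \emph{if $(G,\bgamma)$ is independent in the $\Gamma$-$(1,1)$ matroid, then $A(G)$ is independent in $M_{\Gamma_k,n}$.}

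To prove this, I would first use the fact that, by matroid theory, $(G,\bgamma)$ extends to a basis $(G^*, \bgamma^*)$, which is a $\Gamma$-$(1,1)$ graph; then \lemref{gamma11-tight} asserts that $A(G^*)$ is tight, and in particular independent in $M_{\Gamma_k,n}$. The plan is then to start from $A(G^*)$ and undo the edge additions that take $G$ up to $G^*$ one at a time, tracking how $A(\cdot)$ transforms and checking that independence is preserved at each step. Formally, this is an induction on $k = |E(G^*)| - |E(G)|$; the base case $k = 0$ is exactly the cited \lemref{gamma11-tight}.

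For the inductive step, choose $ij \in E(G^*) \setminus E(G)$ and set $G' = G \cup \{ij\}$, so by induction $A(G')$ is independent. Now \lemref{add-edge-AG} describes precisely how $A(G')$ relates to $A(G)$: either $ij$ is spanned by a single component of $G$, in which case $A(G') = A(G) + (\gamma, \ell)$ so that $A(G) \subseteq A(G')$ is independent trivially; or $ij$ connects two distinct components of $G$, in which case $A(G)$ is obtained from $A(G')$ by a conjugation followed by a separation. Lemmas \lemrefX{Mgammaconj} and \lemrefX{Mgammasep} say exactly that these two operations preserve independence in $M_{\Gamma_k,n}$, so in either case $A(G)$ is independent and the induction closes.

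The only step requiring real care is the bridge case, because one must keep straight how base vertices, spanning forests, and the labeling by components change when two components merge into one. The point of \lemref{add-edge-AG} is that this bookkeeping is already packaged into the ``conjugation $+$ separation'' description, so once that lemma is invoked the argument is mechanical. Everything else — the reduction via \lemref{g-and-g1}, the counting identity $|A(G)| = m - n + c$, and the base case — is either a calculation or a citation.
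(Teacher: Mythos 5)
Your proposal is correct and follows essentially the same route as the paper: extend $G$ to a $\Gamma$-$(1,1)$ graph, invoke \lemref{gamma11-tight}, and peel off the extra edges one at a time, using \lemref{add-edge-AG} together with the conjugation and separation lemmas to preserve independence of $A(\cdot)$. Your preliminary reduction of $m=g(G)$ to independence of $A(G)$ via \lemref{g-and-g1} and the count $|A(G)|=m-(n-c)$ is just a mild repackaging of the bookkeeping the paper carries through each deletion, so the arguments are essentially identical.
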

\begin{proof}
By definition $(G, \bgamma)$ is a subgraph of some \GammaOO{} graph $(G', \bgamma')$.  By \lemref{gamma11-tight},
$m' = g(G')$, where $m'$ is the number of edges of $G'$.  It suffices to show that deleting an edge preserves this equality
and independence of $A(G')$.  By \lemref{add-edge-AG}, deleting an edge is equivalent to either removing an element from
$A(G')$ or separating and conjugating $A(G')$ and these both preserve independence
of $A(G')$.  In the first case, $g_1(A(\cdot))$ drops by $1$ while $n'$ and $c'$ remain constant, and in the second
case $n'$ and $g_1(A(\cdot))$ remain constant while $c'$ increases by $1$.
\end{proof}
We can now compute the rank function of the $\Gamma$-$(1,1)$ matroid.
\begin{lemma}\lemlab{gamma11-rank}
The function $g$ is the rank function of the $\Gamma$-$(1,1)$ matroid.
\end{lemma}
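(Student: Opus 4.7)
The plan is to establish $r(G') = g(G')$ for every colored subgraph $G' \subseteq K_{\Gamma_k,n}$ by proving both inequalities, using \lemref{g-and-g1} to decompose the rank into a spanning-forest term plus the rank $g_1$ in $M_{\Gamma_k,n}$ on the non-forest edges.

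For the upper bound $r(G') \le g(G')$, I would first establish that $g$ is monotone under edge addition. Writing $g(G) = n - c(G) + g_1(A(G))$, adding an edge $e$ to $G$ either (i) lies inside a single component, so $n-c$ is unchanged and, by \lemref{add-edge-AG}, $A(G+e) = A(G)+(\gamma,\ell)$, whence $g_1$ changes by $0$ or $1$; or (ii) joins two components, so $n-c$ increases by $1$ while $A(G+e)$ is obtained from $A(G)$ by a conjugation (which preserves $g_1$ via \lemref{Mgammaconj}) followed by fusing two non-empty parts. A short case check on whether each fused part contains a rotation, using the definition of $T$, shows the fusing step decreases $g_1$ by at most $1$, with the drop attained exactly when both fused parts had rotations. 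In either case $g(G+e) - g(G) \in \{0,1\}$, so $g$ is monotone. Combined with \lemref{gamma11-independent-g}, any independent $I \subseteq E(G')$ satisfies $|I| = g(G_I) \le g(G')$, giving $r(G') \le g(G')$.

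For the matching lower bound, I would construct an independent subset $I \subseteq E(G')$ with $|I| = g(G')$. Fix a spanning forest $F'$ and base vertices for $G'$, let $c' = c(G')$, and pick a maximum independent subset $A' \subseteq A(G')$ in $M_{\Gamma_k,n}$, so $|A'| = g_1(A(G'))$. Set $I = F' \cup \{\text{edges of } G' \setminus F' \text{ corresponding to } A'\}$; then $|I| = (n - c') + g_1(A(G')) = g(G')$ by \lemref{g-and-g1}. To verify $I$ is independent in the \GammaOO{} matroid, extend $A'$ to a maximum-size independent subset $A''$ of $E_{\Gamma_k,c'}$: such an $A''$ must have every part non-empty and each containing a rotation, since any empty part could be augmented by a rotation while preserving independence and increasing the size, contradicting maximality. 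By \lemref{Mgammatight}, $A''$ is tight with $c(A'') = c'$. Realize each element of $A'' \setminus A'$ as a colored self-loop at a vertex of the corresponding component of $G_I$; the resulting graph $G_{I''}$ has $(n - c') + (c' + \tfrac{1}{2}\rep_{\Gamma_k}(\Trans(\Gamma_k))) = n + \tfrac{1}{2}\rep_{\Gamma_k}(\Trans(\Gamma_k))$ edges, every component of $G_{I''}$ contains a rotation self-loop, and $\rep_{\Gamma_k}(G_{I''}) = \rep_{\Gamma_k}(\Trans(\Gamma_k))$ by tightness of $A''$. Hence $G_{I''}$ decomposes as a map-graph plus $\tfrac{1}{2}\rep_{\Gamma_k}(\Trans(\Gamma_k))$ extra edges, is \GammaOO, and contains $G_I$, so $I$ is independent and $r(G') \ge g(G')$.

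The main obstacle will be the fusing case of monotonicity: in the extremal situation where both fused parts contain rotations, $g_1$ genuinely drops by $1$, and the argument hinges on this drop being precisely balanced by the $+1$ contribution from $n - c$. Verifying this balance requires careful bookkeeping of the $T$ values via \propref{rad-rep-minus-T}. Everything else, including the self-loop realization and the verification that $G_{I''}$ satisfies each clause of the \GammaOO{} definition, is routine once the structural description of tight sets from \lemref{Mgammatight} is in hand.
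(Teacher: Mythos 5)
Your proposal is correct, and its lower-bound half is essentially the paper's own argument: the paper also builds a maximum independent edge set as a spanning forest plus non-forest edges whose $\rho$-images form a maximum independent subset of $A(G)$ in $M_{\Gamma_k,n}$, and counts via \lemref{g-and-g1}; your explicit extension by self-loops (reaching a tight independent set and hence a \GammaOO{} graph) just spells out the step the paper delegates to \lemref{gamma11-tight}. Where you genuinely diverge is the upper bound: the paper never proves monotonicity of $g$, but instead uses \lemref{gamma11-bases} --- since the \GammaOO{} sets already form a matroid, all maximal independent subsets of $E(G)$ have the same size, and the greedily constructed subgraph is maximal because any further edge of $G-F$ would make $A(\cdot)$ dependent --- so its edge count is the rank. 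You prove $r(G')\le g(G')$ directly by showing $g$ is non-decreasing under edge addition via \lemref{add-edge-AG}; this is sound and somewhat more self-contained (it does not lean on maximality of the constructed set), at the price of the fusing bookkeeping. Two small inaccuracies, neither fatal: the drop of $1$ in $g_1$ under fusing is \emph{not} attained exactly when both fused parts contain rotations (e.g.\ two order-$2$ rotations about distinct centers generate a new translation, so the $\rep_{\Gamma_k}(\Trans(\cdot))$ term rises by $2$ and $g_1$ is unchanged); only the one-sided bound matters, and it holds because the number of parts containing a rotation falls by at most one while the $\rep_{\Gamma_k}(\Trans(\cdot))$ term cannot fall under conjugation-plus-fusing. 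Also, ``every component of $G_{I''}$ contains a rotation self-loop'' should say that every component's $\rho$-image contains a rotation, and the new self-loops should be attached at the base vertices so that $A(G_{I''})=A''$ exactly; both are cosmetic.
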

\begin{proof}
Let $(G,\bgamma)$ be an arbitrary colored graph with $n$ vertices and $c$ connected components.  The rank of
$(G,\bgamma)$ in the $\Gamma$-$(1,1)$ matroid is equal to the maximum size of the
intersection of $G$ with a $\Gamma$-$(1,1)$ graph.  \lemref{gamma11-independent-g} implies that what
we need to show is that a maximal independent subgraph $(G',\bgamma)$ of $(G,\bgamma)$ has $g(G)$ edges.

We construct $G'$ as follows.  First pick a base vertex for every connected component of $G$ and a
spanning forest $F$ of $G$.  Initially set $G'$ to be $F$.  Then add edges one at a time
to $G'$ from $G-F$ so that $A(G')$ remains independent in $M_{\Gamma_k,n}$
until the rank of $A(G')$ is equal to that of $A(G)$.  This is possible by the
matroidal property of $M_{\Gamma_k,n}$ and \lemref{rebase}, which says the rank of $A(G')$ is
invariant under the choices of spanning forest and base vertices.

When the process stops, $A(G')$ is independent in $M_{\Gamma_k,n}$, so $G'$ is
independent in the $\Gamma$-$(1,1)$ matroid by \lemref{gamma11-tight}.  By construction $G'$ has
\[
m' = n - c + g_1(A(G))
\]
edges, which is $g(G)$ by \lemref{g-and-g1}.
\end{proof}

\subsection{Cone-$(1,1)$ and cone-$(2,2)$ graphs} \seclab{cone11-22}
In the proof of \theoref{direction} (specifically, \secref{gen-cone22-direction-networks} below),
we will require some results on direction networks with rotational symmetry
from \cite{MT11,MT12fsi}.  The combinatorial setup is given in this
short section.
\subsubsection{Cone-$(1,1)$ graphs}
Let $(G, \bgamma)$ be a graph whose edges are colored by elements of the group $\Z/k\Z$. As before, there
is a well-defined map $\rho: \pi_1(G_i, b_i) \to \Z/k\Z$ where $b_i$ is a vertex in the connected component $G_i$ of $G$.
We define $(G,\bgamma)$ to be a cone-$(1,1)$ graph if $G$ is a map-graph and the cycle in each connected component
has non-trivial $\rho$-image.
We define the quantity $T(G_i)$ to be the same one defined in \secref{gamma22}, where all nontrivial elements
of $\Z/k\Z$ are ``rotations''.

The sparsity characterization of cone-$(1,1)$ graphs is:
\begin{lemma}[name={\cite[Section 2.6]{MT12fsi}, \cite[``Matroid Theorem'']{Z82}}]
\lemlab{cone-11-sparse}
The cone-$(1,1)$ graphs on $n$ vertices are the bases of a matroid that has as its rank function
\[
r(G') = n' - \frac{1}{2}\sum_{i=1}^c T(G_i)
\]
where $n'$ and $c'$ are the number of vertices and connected components in $G'$.
\end{lemma}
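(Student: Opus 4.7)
The plan is to recognize the stated rank function as the standard rank function of Zaslavsky's frame (or ``bias'') matroid of a gain graph with gain group $\Z/k\Z$, and then identify its bases with cone-$(1,1)$ graphs. First, I would unwind the term $\frac{1}{2}T(G_i)$: under the convention stated in the lemma, a connected component $G_i$ has $T(G_i)=0$ exactly when $\rho(\pi_1(G_i,b_i))$ is nontrivial, and $T(G_i)=2$ exactly when $\rho(\pi_1(G_i,b_i))$ is trivial. In Zaslavsky's language the latter components are precisely the \emph{balanced} ones, so $\frac{1}{2}\sum_i T(G_i)$ equals the number $b(G')$ of balanced connected components, and the claimed formula becomes $r(G')=n'-b(G')$, which is exactly the rank function of the frame matroid (\cite{Z82}).

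Second, I would justify that this $r$ really is a matroid rank function. The cleanest route is to cite Zaslavsky's Matroid Theorem \cite{Z82}, but one can also specialize the arguments of \secref{group-matroid} to the case where $\Gamma_k$ is replaced by the finite cyclic group $\Z/k\Z$. In that case the translation subgroup is trivial, so $\rep_{\Gamma_k}(\Trans(\cdot))\equiv 0$, and the general rank function $g_1$ from \secref{g1} collapses to $n-\frac{1}{2}\sum_i T(\Gamma_{A,i})$, reproducing the formula above. Since \propref{Mgammarank} already establishes that $g_1$ is a matroid rank function, this gives the matroidal property for free in the cyclic setting.

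Third, I would identify the bases. Since the ground set $K_{\Z/k\Z,n}$ contains colored edges of every nontrivial color, we can realize a spanning subgraph with no balanced components (for instance, a tree plus an extra self-loop of nontrivial color in each component), so $r(K_{\Z/k\Z,n})=n$ and every basis has exactly $n$ edges. On the other hand, an independent subgraph $G'$ with $n'$ vertices, $c'$ components (of which $b'$ are balanced), and $m'$ edges satisfies $m'\le n'-b'$, with equality forcing each balanced component to be a tree and each unbalanced component to be a map-graph whose unique cycle has nontrivial $\rho$-image. A basis therefore must span $V$, have $b(G)=0$, and have every component be a map-graph with an unbalanced cycle — that is, a cone-$(1,1)$ graph in the sense defined above; conversely every cone-$(1,1)$ graph on $n$ vertices meets these constraints and has $n$ edges.

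The only real obstacle is the notational translation between the gain-graph language (balanced/unbalanced components, frame matroid) and this paper's conventions ($T(\cdot)$, $\rho$-image, map-graph), together with verifying that no balanced components can appear in a basis because the ground set is large enough to avoid them; after that the result is essentially a specialization of Zaslavsky's theorem, and no genuinely new combinatorics is needed beyond what is already developed in \secref{group-matroid} and \secref{sparse}.
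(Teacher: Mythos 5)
Your proposal is correct and matches the paper's treatment: the paper offers no proof of \lemref{cone-11-sparse} beyond the citations to \cite{MT12fsi} and Zaslavsky's Matroid Theorem \cite{Z82}, and your dictionary (components with trivial $\rho$-image are exactly the balanced ones, so $\frac{1}{2}\sum_i T(G_i)$ is the number of balanced components and $r$ is the frame-matroid rank $n'-b(G')$, whose bases over a ground set containing nontrivially colored loops at every vertex are precisely the spanning map-graphs with unbalanced cycles, i.e.\ cone-$(1,1)$ graphs) is exactly the intended specialization. Your alternative route via \propref{Mgammarank} is only a sketch (that proposition is stated for the crystallographic groups, and one would also have to redo the graph-level lemmas of \secref{sparse} for $\Z/k\Z$), but since you present it as optional, the citation-based argument stands on its own.
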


\subsubsection{Cone-$(2,2)$ graphs}
Let $(G,\bgamma)$ be a $\Z/k\Z$ colored graph with $n$ vertices.  We define $(G,\bgamma)$
to be a cone-$(2,2)$ graph if:
\begin{itemize}
\item $G$ has $m = 2n$ edges.
\item For all subgraphs with $m'$ edges, $n'$ vertices, and connected components $G_1,G_2,\ldots, G_c$,
\[
m' \le 2n' - \sum_{i=1}^c T(G_i)
\]
\end{itemize}
If only the second condition holds, then $(G,\bgamma)$ is defined to be
\emph{cone-$(2,2)$ sparse}.

\subsection{Generalized cone-$(2,2)$ graphs}\seclab{gen-cone-sparse}
As a technical tool in the proof of \theoref{direction}, we will use
\emph{generalized cone-$(2,2)$ graphs}.  These are $\Gamma_k$-colored
graphs, which we will define in terms of a decomposition property.

\subsubsection{Generalized cone-$(1,1)$ graphs}
Let $(G,\bgamma)$ be a $\Gamma_k$-colored graph.  We define $(G,\bgamma)$ to be a
\emph{generalized cone-$(1,1)$ graph} if, after considering the $\rho$-image modulo the translation subgroup,
the result is a cone-$(1,1)$ graph.  Equivalently, $(G,\bgamma)$ is a generalized cone-$(1,1)$ graph if:
\begin{itemize}
\item $G$ is a map graph
\item The $\rho$-image of the cycle in each connected component of $G$ is a rotation
\end{itemize}
The difference between cone-$(1,1)$ graphs and generalized cone-$(1,1)$ graphs is that
the rotations need not be around the same center. By modding colors out by $\Trans(\Gamma_k)$, the next lemma
follows easily from \lemref{cone-11-sparse}.
\begin{lemma}\lemlab{gen-cone-11-sparse}
The generalized cone-$(1,1)$ graphs on $n$ vertices are the bases of a matroid that has as its rank function
\[
r(G') = n' - \frac{1}{2}\sum_{i=1}^c T(G_i)
\]
where $n'$ and $c'$ are the number of vertices and connected components in $G'$.
\end{lemma}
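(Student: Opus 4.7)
The plan is to reduce the statement to \lemref{cone-11-sparse} via the quotient map $\pi: \Gamma_k \to \Gamma_k/\Trans(\Gamma_k) \cong \Z/k\Z$, which sends each translation to the trivial element and each rotation to a nontrivial element of $\Z/k\Z$. This extends to a map on colored graphs: for a $\Gamma_k$-colored graph $(G,\bgamma)$, let $\pi_*(G,\bgamma) = (G, \bar{\bgamma})$ be the $\Z/k\Z$-colored graph on the same underlying directed multigraph $G$ with colors $\bar{\gamma}_{ij} = \pi(\gamma_{ij})$. Note that $\pi_*$ operates edge by edge, so it does not identify parallel edges.

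The first step is to observe that $\pi_*$ translates the defining conditions of a generalized cone-$(1,1)$ graph to those of a cone-$(1,1)$ graph. The map-graph condition concerns only the underlying graph and is preserved. For the cycle condition, I would use the fact that for any closed path $P$ the $\bar{\rho}$-image in $\pi_*(G,\bgamma)$ equals $\pi(\rho(P))$, since $\rho$ is a product of colors and $\pi$ is a homomorphism. Because an element of $\Gamma_k$ is a rotation precisely when its $\pi$-image is nontrivial in $\Z/k\Z$, this yields two parallel dictionaries: $(G,\bgamma)$ is generalized cone-$(1,1)$ iff $\pi_*(G,\bgamma)$ is cone-$(1,1)$, and the value $T(G_i)$ is the same whether computed from the $\Gamma_k$- or $\Z/k\Z$-colors. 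In particular, the candidate rank formula $r(G') = n' - \tfrac{1}{2}\sum_i T(G'_i)$ takes the same value on $(G,\bgamma)$ and on its projection.

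With both dictionaries in hand, the matroid statement transfers cleanly. A subset $E' \subset K_{\Gamma_k,n}$ should be declared independent iff it extends to a generalized cone-$(1,1)$ graph, and by the first dictionary this is equivalent to $\pi_*(E')$ extending to a cone-$(1,1)$ graph in $K_{\Z/k\Z,n}$. The forward direction is immediate from projecting an extending graph; for the converse, any cone-$(1,1)$ extension of $\pi_*(E')$ lifts back to a generalized cone-$(1,1)$ graph on the same underlying graph by choosing, for each new edge, an arbitrary $\pi$-preimage of its $\Z/k\Z$-color. Combining this correspondence with \lemref{cone-11-sparse} then yields that generalized cone-$(1,1)$ graphs form the bases of a matroid on $K_{\Gamma_k,n}$ and that its rank function is given by $r$. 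I do not anticipate a substantive obstacle; the only mildly subtle point is bookkeeping across the two ground sets $K_{\Gamma_k,n}$ and $K_{\Z/k\Z,n}$, so that pairs of $\Gamma_k$-colored edges sharing both endpoints and $\pi$-projection are correctly treated as parallel elements in the extended matroid.
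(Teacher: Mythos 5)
Your proposal is correct and is essentially the paper's own argument: the paper proves \lemref{gen-cone-11-sparse} precisely by ``modding colors out by $\Trans(\Gamma_k)$'' and invoking \lemref{cone-11-sparse}, which is the reduction you carry out. You simply make explicit the dictionary (rotation $\leftrightarrow$ nontrivial $\Z/k\Z$-image, matching $T$ values, and the parallel-element bookkeeping) that the paper leaves implicit.
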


\subsubsection{Relation to $\Gamma$-$(1,1)$ graphs}
Generalized cone-$(1,1)$ graphs are related to $\Gamma$-$(1,1)$ graphs by this next sequence of
lemmas.
\begin{lemma}\lemlab{gamma11-is-gc11-spanning}
Let $(G,\bgamma)$ be a $\Gamma$-$(1,1)$ graph.  Then $(G,\bgamma)$ contains a
generalized cone-$(1,1)$ graph as a spanning subgraph.
\end{lemma}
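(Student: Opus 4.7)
The plan is to exploit the matroid structure of generalized cone-$(1,1)$ graphs from \lemref{gen-cone-11-sparse}, showing that $E(G)$ has rank $n$ in that matroid so that any maximum independent subset is a spanning base.

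First I would translate the hypothesis into matroid language: the definition of a $\Gamma$-$(1,1)$ graph in \secref{gamma11-def} requires that for every connected component $G_i$ the image $\rho(\pi_1(G_i,b_i))$ contains a rotation, which (by the definition of $T(\cdot)$ on colored graphs in \secref{gamma22}, reused in \secref{gen-cone-sparse}) is exactly the statement $T(G_i) = 0$. Applying the rank formula from \lemref{gen-cone-11-sparse} then gives
\[
r(G) = n - \tfrac{1}{2}\sum_{i=1}^c T(G_i) = n.
\]

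Second, since a base of the generalized cone-$(1,1)$ matroid on $n$ vertices is a map-graph on $n$ vertices and therefore has exactly $n$ edges, any maximal independent subset $E' \subset E(G)$ satisfies $|E'| = n$ and is a base of that matroid. By \lemref{gen-cone-11-sparse}, the subgraph $(V(G), E')$ with its inherited colors is a generalized cone-$(1,1)$ graph, and as a map-graph on $n$ vertices it is automatically spanning. There is no substantive obstacle: the content of the proof is the observation that the rotation condition built into the definition of $\Gamma$-$(1,1)$ graphs is precisely what drives the generalized cone-$(1,1)$ matroid rank to its maximum value $n$, at which point extracting a spanning base is immediate.
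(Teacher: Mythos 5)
Your proof is correct, and there is no circularity in invoking \lemref{gen-cone-11-sparse}, since that lemma is established independently (by reducing to the cone-$(1,1)$ matroid modulo $\Trans(\Gamma_k)$) before the statement at hand. The route is slightly different from the paper's, though both turn on the same observation. The paper argues directly, component by component: since each connected component $G_i$ of a $\Gamma$-$(1,1)$ graph has $T(G_i)=0$, i.e.\ $\rho(\pi_1(G_i,b_i))$ contains a rotation, one can take a spanning tree of $G_i$ together with one extra edge whose fundamental cycle has a rotation as its $\rho$-image, producing in each component a spanning connected map-graph with rotation cycle; the union over components is the desired spanning generalized cone-$(1,1)$ subgraph. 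You instead feed $T(G_i)=0$ into the rank formula of \lemref{gen-cone-11-sparse} to get $r(G)=n$, note that the matroid on $n$ vertices has rank $n$ with bases of size $n$, and extract a maximal independent subset of $E(G)$, which is then a base and hence a spanning generalized cone-$(1,1)$ graph (spanning because a map-graph has no isolated vertices, and $G$ itself has none since every component's $\rho$-image contains a rotation). Your approach buys a cleaner abstraction --- you never have to exhibit a cycle with rotation image, the rank computation does it for you --- at the cost of leaning on the matroid machinery; the paper's construction is more elementary and makes the witness explicit, which is in the spirit of how the g.c.-$(1,1)$ bases are subsequently manipulated in \lemref{gc11-circuits}.
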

\begin{proof}
This follows from the definition, since each connected component $G_i$ of $G$ has $T(G_i)=0$.  It follows
that $G_i$ has a spanning subgraph that is a connected map-graph with its cycle having a rotation as its
$\rho$-image.
\end{proof}

Let $(G,\bgamma)$ be a $\Gamma$-$(1,1)$ graph, and let $(G',\bgamma)$ be a spanning generalized cone-$(1,1)$
subgraph.  One exists by \lemref{gamma11-is-gc11-spanning}. We define $(G',\bgamma)$ to be a \emph{g.c.-$(1,1)$ basis}
of $(G,\bgamma)$.
\begin{lemma}\lemlab{gc11-circuits}
Let $(G,\bgamma)$ be a $\Gamma_k$-colored $\Gamma$-$(1,1)$ graph for $k=3,4,6$.  Let $(G',\bgamma)$
be a g.c.-$(1,1)$ basis of $(G,\bgamma)$, and let $ij$ be the (unique) edge in $E(G)-E(G')$.  Then either:
\begin{itemize}
\item The colored edge $ij$ is a self-loop and the color $\gamma_{ij}$ is a translation.
\item There is a unique minimal subgraph $G''$ of $G$, such that the $\rho$-image of $(G'',\bgamma)$
includes a translation, $ij$ is an edge of $G''$, and if $vw\in E(G'')$, then $(G'+ij-vw,\bgamma)$
is also a g.c.-$(1,1)$ basis of $(G,\bgamma)$.
\end{itemize}
\end{lemma}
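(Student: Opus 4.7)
The plan is to identify $G''$ with the fundamental circuit of $ij$ with respect to the basis $G'$ in the generalized cone-$(1,1)$ matroid of \lemref{gen-cone-11-sparse}. Since $(G,\bgamma)$ is $\Gamma$-$(1,1)$ with $k=3,4,6$, \lemref{trrep} gives $\rep_{\Gamma_k}(\Trans(\Gamma_k))=2$, so $|E(G)|=n+1$ and $ij$ is indeed the unique edge of $G$ outside $G'$.

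First I would dispose of case (a). If $ij$ is a self-loop whose color is a translation, then the one-vertex subgraph $\{ij\}$ has $T(\{ij\})=2$, and its generalized cone-$(1,1)$ rank is $1-1=0<1$. So $\{ij\}$ is itself a circuit and cannot lie in any g.c.-$(1,1)$ basis.

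For case (b), where $\{ij\}$ is independent, set $G'':=C(ij,G')$, the fundamental circuit of $ij$ with respect to $G'$ in the generalized cone-$(1,1)$ matroid. Property (ii) is immediate, and property (iii) is standard matroid basis exchange: $G'+ij-vw$ is a basis exactly when $vw\in C(ij,G')$ (with $vw=ij$ giving $G'$ itself). For property (i), I would observe that each component of $G'$ is a rotational map-graph, so $\rho(\pi_1(G'))$ contains no translations; hence the nontrivial translation subgroup $\Trans(G,B)$ forced by the $\Gamma$-$(1,1)$ hypothesis must live in the $\rho$-image of the unique component $G_\ell$ of $G$ containing $ij$. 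A short case analysis on whether $ij$ is (1) a self-loop with rotational color, (2) a non-self-loop joining two components of $G'$, or (3) a non-self-loop inside one component, shows that either $G''$ is a single translational cycle through $ij$, or $G''$ contains the old rotational cycle(s) together with a new rotational cycle through $ij$. In the latter case \lemref{rotscomm}, combined with the $\Gamma$-$(1,1)$ hypothesis, rules out the two rotations commuting (otherwise $\Trans(G,B)$ would be trivial), so they generate a translation in $\rho(\pi_1(G''))$.

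For uniqueness and minimality, any subgraph $H$ of $G$ satisfying property (iii) must be contained in $C(ij,G')=G''$, since basis exchange characterizes exactly those edges $vw$. Any proper subgraph of $G''$ is independent in the generalized cone-$(1,1)$ matroid and therefore decomposes into trees and rotational-cycle map-graph components, whose $\rho$-images contain no translations; such a subgraph fails property (i). Thus $G''$ is the unique subgraph of $G$ with all three properties, which is a fortiori the unique minimal one. The main obstacle is the case analysis for property (i): one must carefully track how $\pi_1$ of the fundamental circuit surjects onto a generating set for $\rho(\pi_1(G_\ell))$, and this is where \lemref{rotscomm} and the $\Gamma$-$(1,1)$ hypothesis combine to force the required translation.
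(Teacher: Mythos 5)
Your proposal is correct and follows essentially the same route as the paper: its (two-sentence) proof likewise disposes of the translation-colored self-loop as a one-element circuit and takes $G''$ to be the fundamental generalized cone-$(1,1)$ circuit of $ij$ in $(G',\bgamma)$. The extra details you supply --- matroid basis exchange for the third property, the $\Gamma$-$(1,1)$ hypothesis forcing a (nontrivial) translation into the circuit's $\rho$-image, and uniqueness/minimality via independence of proper subsets of a circuit --- are precisely the facts the paper leaves implicit, and they check out.
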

\begin{proof}
If $ij$ is a self-loop colored by a translation, then it is a circuit in the matroid of
generalized cone-$(1,1)$ graphs on the ground set $(G,\bgamma)$.  Otherwise, the subgraph $G''$
which the lemma requires is just the fundamental generalized cone-$(1,1)$ circuit of
$ij$ in $(G',\bgamma)$.
\end{proof}

\subsubsection{Generalized cone-$(2,2)$ graphs}
Let $(G,\bgamma)$ be a $\Gamma_k$-colored graph.  We define $(G,\bgamma)$ to be a
\emph{generalized cone-$(2,2)$ graph} if it is the union of two generalized cone-$(1,1)$
graphs.  Using the Edmonds-Rota construction \cite{ER66}, the same way we did in \secref{cone22-decomp},
we get:
\begin{lemma}\lemlab{gen-cone22-matroid}
The generalized cone-$(2,2)$ graphs on $n$ vertices give the bases of a matroid.
\end{lemma}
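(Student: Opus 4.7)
The plan is to mimic exactly the argument used for $\Gamma$-$(2,2)$ graphs in the proof of \propref{gamma22-decomp}, by invoking the Edmonds--Rota matroid union construction. By \lemref{gen-cone-11-sparse}, the generalized cone-$(1,1)$ graphs on $n$ vertices are the bases of a matroid $M_1$ on the ground set $K_{\Gamma_k,n}$, with rank function
\[
r(G') \;=\; n' - \tfrac{1}{2}\sum_{i=1}^{c'} T(G_i).
\]
As a matroid rank function, $r$ is automatically non-negative, monotone, and submodular, which are precisely the hypotheses needed for the Edmonds--Rota construction \cite{ER66}.

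First I would form the matroid union $M_1 \vee M_1$. By Edmonds--Rota and the Matroid Union Theorem, this union is itself a matroid on $K_{\Gamma_k,n}$, whose independent sets are exactly the sets of edges expressible as an edge-disjoint union of two $M_1$-independent subsets. This step is purely formal once \lemref{gen-cone-11-sparse} is in hand and requires nothing beyond the usage already made in \secref{cone22-decomp}.

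The remaining step is to identify the bases of $M_1 \vee M_1$ with the generalized cone-$(2,2)$ graphs. A basis of $M_1$ is a spanning generalized cone-$(1,1)$ graph, which on $n$ vertices has exactly $n$ edges because each connected component of a map-graph contains as many edges as vertices. Hence $r(M_1) = n$, and two edge-disjoint bases of $M_1$ exist on $K_{\Gamma_k,n}$ (for example, choose two distinct rotation-colored self-loops at each vertex), so $M_1 \vee M_1$ has rank $2n$. By the definition in \secref{gen-cone-sparse}, a $2n$-edge set that splits as an edge-disjoint union of two generalized cone-$(1,1)$ graphs is exactly a generalized cone-$(2,2)$ graph, and conversely; this matches the bases of $M_1 \vee M_1$.

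I do not expect a genuine obstacle: the only thing to verify beyond the formal application of Edmonds--Rota is that the ground set $K_{\Gamma_k,n}$ supports two edge-disjoint generalized cone-$(1,1)$ spanning subgraphs, and the abundance of rotation-colored self-loops in $K_{\Gamma_k,n}$ handles this immediately. This is the same pattern, with the same level of difficulty, as the derivation of \propref{gamma22-decomp} from \propref{gamma11}.
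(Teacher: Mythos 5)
Your proposal is correct and follows essentially the same route as the paper, which proves this lemma by invoking the Edmonds--Rota construction ``the same way we did in \secref{cone22-decomp}'', i.e.\ as the union of two copies of the generalized cone-$(1,1)$ matroid from \lemref{gen-cone-11-sparse}. You have merely spelled out the details (rank $2n$, existence of two edge-disjoint spanning generalized cone-$(1,1)$ subgraphs via rotation-colored self-loops) that the paper leaves implicit, and these check out.
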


The other fact about generalized cone-$(2,2)$ graphs is their relationship to
$\Gamma$-$(2,2)$ graphs.
\begin{lemma}\lemlab{gamma22-is-cone22-spanning}
Let $(G,\bgamma)$ be a $\Gamma$-$(2,2)$ graph.  Then $(G,\bgamma)$ contains a
generalized cone-$(2,2)$ graph as a spanning subgraph.
\end{lemma}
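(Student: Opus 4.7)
The statement follows from a direct chain of already-established results, so my plan is mostly a matter of assembling the right ingredients in the right order rather than developing new machinery.

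First, I would apply \propref{gamma22-decomp} to the $\Gamma$-$(2,2)$ graph $(G, \bgamma)$. This gives an edge-disjoint decomposition $E(G) = E(H_1) \sqcup E(H_2)$ where $(H_1, \bgamma)$ and $(H_2, \bgamma)$ are both spanning $\Gamma$-$(1,1)$ subgraphs of $(G, \bgamma)$. Both $H_1$ and $H_2$ have the same vertex set as $G$ by virtue of being spanning.

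Next, I would invoke \lemref{gamma11-is-gc11-spanning} on each of $(H_1, \bgamma)$ and $(H_2, \bgamma)$ to obtain spanning generalized cone-$(1,1)$ subgraphs $(H_1', \bgamma)$ and $(H_2', \bgamma)$ of $(H_1, \bgamma)$ and $(H_2, \bgamma)$, respectively. Since $H_1$ and $H_2$ are edge-disjoint, and $H_i'$ is an edge-subgraph of $H_i$, the graphs $H_1'$ and $H_2'$ are automatically edge-disjoint as well. Their union $(H_1' \cup H_2', \bgamma)$ is therefore a spanning subgraph of $(G, \bgamma)$ consisting of two edge-disjoint generalized cone-$(1,1)$ subgraphs, which by definition makes it a generalized cone-$(2,2)$ graph.

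There is no real obstacle in this argument; every step cites a result already proved. The only thing to double-check is that the decomposition of generalized cone-$(2,2)$ graphs is genuinely an edge-disjoint union (as suggested by the analogy with cone-$(2,2)$ graphs requiring $m = 2n$ edges, and by the $n$-edge count of each generalized cone-$(1,1)$ summand), which is precisely what our construction delivers. Hence $(H_1' \cup H_2', \bgamma)$ is the required spanning generalized cone-$(2,2)$ subgraph of $(G, \bgamma)$.
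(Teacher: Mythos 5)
Your proposal is correct and is essentially the paper's own proof, which simply cites \propref{gamma22-decomp} and \lemref{gamma11-is-gc11-spanning} in exactly the way you spell out. Your side remark about edge-disjointness is harmless but unnecessary: the paper defines a generalized cone-$(2,2)$ graph merely as a union of two generalized cone-$(1,1)$ graphs, so the union you construct qualifies regardless.
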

\begin{proof}
This follows from \propref{gamma22-decomp} and \lemref{gamma11-is-gc11-spanning}.
\end{proof}

\section{Direction networks}\seclab{bigsec-dn}
\subsection{Crystallographic direction networks} \seclab{cdns}
Let $(\tilde{G},\varphi)$ be a graph with a $\Gamma_k$-action $\varphi$.  A \emph{crystallographic direction network}
$(\tilde{G},\varphi,\tilde{\vec d})$ is given by $(\tilde{G},\varphi)$ and an assignment of
a direction $\tilde{\vec d}_{ij}\in\R^2\setminus \{0\}$ to each edge $ij\in E(\tilde{G})$.

We will, moreover, require that the direction networks themselves be symmetric in the following sense. For any
$\Phi \in \Rep(\Gamma)$ and any $\gamma \in \Gamma$, the rotational part $\Phi(\gamma)_r$ of $\Phi(\gamma)$
depends only on $\gamma$ and not $\Phi$. Thus, we will require the directions to be equivariant
with respect to this action; i.e., if
$i'j' = \gamma \cdot ij,$ then $\vec d_{i'j'} = \Phi(\gamma)_r \vec d_{ij}$.

A direction network on $(\tilde{G}, \varphi)$, thus, is completely determined after assigning a direction to one edge in each
$\Gamma$-orbit. It is plain, then, that this is equivalent to assigning directions to the colored quotient graph $(G, \bgamma)$.
The dictionary is straightforward as well. Given $(\tilde{G}, \varphi, \vec d)$, the direction for edge $ij$ of $G$ is the same
as the direction of the chosen edge representative in its fiber.

\subsubsection{The realization problem}
A \emph{realization} of a crystallographic direction network is given by a point set $\vec p=(\vec p_i)_{i\in V(\tilde{G})}$
and a representation $\Phi \in \overline{\Rep}(\Gamma_k)$:
\begin{eqnarray}
\iprod{\vec p_j - \vec p_i}{\tilde{\vec d}^\perp_{ij}} = 0 & \text{for all edges $ij\in E(\tilde{G})$} \\
\vec p_{\gamma\cdot i} = \Phi(\gamma)\cdot \vec p_i & \text{for all vertices $i\in V(\tilde{G})$}
\end{eqnarray}
We denote realizations by $\tilde{G}(\vec p,\Phi)$, to indicate the dependence on $\Phi$.

We define now \emph{collapsed} and \emph{faithful} realizations.
An edge $ij$ is \emph{collapsed} in a realization $\tilde{G}(\vec p,\Phi)$ if $\vec p_i = \vec p_j$.
A \emph{realization is collapsed} when all the edges are collapsed \emph{and} $\Phi$ is trivial.  A representation
is \emph{trivial} if it maps $\Lambda(\Gamma_k)$ to zero.  A realization
is \emph{faithful} if no edge is collapsed and $\Phi$ is not trivial.

\subsubsection{Direction Network Theorem}
Our main theorem on crystallographic direction networks is the following.
\directionthm*

For technical simplification, we reduce \theoref{direction} to the following proposition
which is the same result except that the rotation center of $\Phi(r_k)$ is fixed to be the origin.
\begin{prop} \proplab{dnthmfixedcenter}
Let $\Gamma$ be an orientation-preserving crystallographic group.  A
generic $\Gamma$-crystallographic direction network $\Gad$ has a unique, up to scaling,
faithful realization $\tilde G(\vec p, \Phi)$ satisfying $\Phi(r_k) = ( 0, R_k)$
if and only if its associated colored graph is $\Gamma$-colored-Laman.
\end{prop}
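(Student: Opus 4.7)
The plan is to reformulate the existence and uniqueness of faithful realizations as a generic rank computation on a linear system, and then apply the matroidal structure developed earlier. With $\Phi(r_k)=(0,R_k)$ fixed, \lemref{repspace} identifies the remaining unknowns as the $2n$ coordinates of the representative points $\vec p_i$ together with a $\rep_{\Gamma_k}(\Trans(\Gamma_k))$-dimensional parameter space for the restriction $\Phi|_{\Trans(\Gamma_k)}$. Each edge $ij \in E(G)$ contributes a single linear equation (after unwinding the equivariance constraint into the colored quotient), so the realization space is the kernel of a linear map $A(\vec d)$ of size $m \times M$, where $M = 2n + \rep_{\Gamma_k}(\Trans(\Gamma_k))$. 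The scaling action $(\vec p, \Phi|_{\Trans}) \mapsto (\lambda\vec p, \lambda \Phi|_{\Trans})$ preserves this kernel, providing the one-parameter family claimed in the statement.

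For a $\Gamma$-colored-Laman graph $(G,\bgamma)$ we have $m = M - 1$, so uniqueness up to scaling reduces to showing that the generic rank of $A(\vec d)$ is exactly $M - 1$. To establish this, I would invoke the edge-doubling characterization \propref{doubleedge}: for any edge $ij$, adding a parallel colored copy produces a $\Gamma$-$(2,2)$ graph $(G',\bgamma')$. By the collapse proposition (\propref{crystal-collapse}, proved later in \secref{cdns}), a generic direction network on a $\Gamma$-$(2,2)$ graph admits only collapsed realizations, which under the fixing $\Phi(r_k)=(0,R_k)$ reduce to $\vec p \equiv 0$ and trivial $\Phi|_{\Trans(\Gamma_k)}$; equivalently, the $M \times M$ direction matrix of the doubled graph is generically invertible. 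Deleting the row corresponding to the duplicate edge drops the rank by at most one, so the matrix of $G$ has generic rank exactly $M - 1$ and its kernel is a single line. For faithfulness, I would use a second genericity argument: if every kernel element had $\vec p_i = \vec p_j$ for some edge $ij$, then adjoining a second direction equation on $ij$ would not change the kernel, contradicting the fact that any generic second direction would reproduce the invertible doubled system. The analogous argument applied to the locus ``$\Phi|_{\Trans(\Gamma_k)}$ trivial'' rules out non-faithful representations, so generically the unique one-dimensional kernel consists of faithful realizations.

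For the converse, suppose $(G,\bgamma)$ is not $\Gamma$-colored-Laman. If $m \ne M - 1$ the claim fails trivially: $m < M - 1$ forces a kernel of dimension at least two, whereas $m > M - 1$ leaves no room for scaling after the collapse forced by the genericity above. Otherwise $m = M - 1$ but some subgraph $G' \subseteq G$ violates $h$-sparsity, so $m' \ge f(G')$. By \propref{gamma22-decomp} and the matroid structure on $\Gamma$-$(2,2)$ graphs, $G'$ contains a $\Gamma$-$(2,2)$-tight subgraph $G''$, and \propref{crystal-collapse} then forces every realization of $G$ to collapse the points indexed by $V(G'')$ onto a single location while restricting $\Phi$ trivially on the translation subgroup of $\rho(\pi_1(G''))$. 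This obstructs faithfulness and precludes a unique one-dimensional faithful kernel.

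The hard part will be the faithfulness half of the ``if'' direction, together with the endgame of the ``only if'' direction when $G''$ does not span all of $V(G)$. In both places one must control how local collapse on a subgraph propagates to the full framework, which requires tracking how the translation subgroups generated by $\rho$-images of subgraphs sit inside $\Trans(\Gamma_k)$ --- precisely the quantitative content of $\rep_{\Gamma_k}(\Trans(\cdot))$ and $T(\cdot)$ in the sparsity function $f$. This is where the matroidal groundwork of \secref{bigsec-groups} and \secref{bigsec-graphs} becomes essential, and I expect most of the proof effort to concentrate on these genericity-to-collapse translations rather than on the linear-algebra bookkeeping.
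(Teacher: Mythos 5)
Your ``if'' direction is essentially the paper's argument: pin $\Phi(r_k)=(0,R_k)$, note a $\Gamma$-colored-Laman graph has $m = 2n+\rep_{\Gamma_k}(\Trans(\Gamma_k))-1$ rows of a linear system, use \propref{doubleedge} together with \propref{crystal-collapse} to see the doubled system has zero-dimensional kernel, and conclude the kernel of the original system is a line; faithfulness then follows by doubling a putatively collapsed edge and deriving a contradiction with \propref{crystal-collapse}. (One small imprecision: a collapsed edge in the colored quotient means $\vec p_i=\Phi(\gamma_{ij})\cdot\vec p_j$, not $\vec p_i=\vec p_j$; see \lemref{colored-collapsed-edge}.)

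The converse, however, has a genuine gap. You claim that a subgraph $G'$ violating the $h$-count contains a $\Gamma$-$(2,2)$-tight subgraph $G''$ to which \propref{crystal-collapse} can be applied. That is false in general: a $\Gamma$-$(2,2)$ graph must have $m''=2n''+\rep_{\Gamma_k}(\Trans(\Gamma_k))$ edges and full $\rep_{\Gamma_k}$, whereas a sparsity-violating subgraph can have a $\rho$-image generating only a small subgroup. For example, a single vertex carrying two self-loops colored by the same rotation has $m'=2=f(G')>h(G')$, so it breaks Laman sparsity (indeed it is a $\Gamma$-colored-Laman circuit), yet it contains no $\Gamma$-$(2,2)$ subgraph at all, since that would need $2+\rep_{\Gamma_k}(\Trans(\Gamma_k))$ edges on one vertex. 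This is exactly why the paper does not argue via $(2,2)$-tight subgraphs: instead it passes to a $\Gamma$-colored-Laman circuit (\lemref{not-gamma-laman-sparse-implies-circuit}) and proves \propref{gamma-laman-circuit-collapse} by a squeeze: independence of the circuit's rows (obtained from \propref{crystal-collapse} applied to a $\Gamma$-$(2,2)$ extension) gives the kernel dimension $\rep_{\Gamma_k}(\Trans(\Gamma_k))-\rep_{\Gamma_k}(G)+\sum_i T(G_i)$, while \lemref{collapsed-dimensions} explicitly constructs that many dimensions of realizations with every edge collapsed, so every realization has collapsed edges even when the circuit sees only a proper subgroup of $\Gamma_k$. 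Your proposal correctly anticipates that the difficulty lies in circuits not spanning the full group data, but the mechanism you propose to handle it does not work; you would need to replace it with (or reprove) the collapsed-dimension count of \lemref{collapsed-dimensions} and the resulting \propref{gamma-laman-circuit-collapse}. The same lemma also covers your ``$m>M-1$'' case, where your appeal to ``no room for scaling'' is too vague to substitute for it.
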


\theoref{direction} follows easily from the proposition.
\begin{proof}[Proof of \theoref{direction} from \propref{dnthmfixedcenter}]
Let $\Gad$ be any generic direction network. For any realization $\tilde G(\vec p, \Phi)$
of $\Gad$ and any translation $\psi$, we have that $\tilde G(\psi(\vec p), \Phi^\psi)$
is also realization of $\Gad$ where $\Phi^\psi$ is the representation defined by
$\Phi^\psi(\gamma) = \psi \Phi(\gamma) \psi^{-1}$. In particular, for any realization
$\tilde G(\vec p, \Phi)$, there is a unique translation $\psi$ such that $\psi \Phi(r_k) \psi^{-1} = (0, R_k)$.
\end{proof}

Our goal then is to prove \propref{dnthmfixedcenter}. For the remainder of this section, we will
require all realizations to
map the rotational generator $r_k$ of $\Gamma_k$ to the counter-clockwise rotation around the origin $R_k$ of angle $2\pi/k$.
All of the propositions in the remainder of this section operate under this assumption.
However, note that some of the results about direction networks, including
\propref{gamma-laman-circuit-collapse}, hold without this restriction.

\subsubsection{Proof of \propref{dnthmfixedcenter}}
Let $(G,\bgamma)$ be a colored graph. The key proposition, which is proved in
\secref{crystal-collapse} is the following.
\begin{restatable}{prop}{crystalcollapseprop} \proplab{crystal-collapse}
A generic crystallographic direction network which has a $\Gamma$-$(2,2)$ colored quotient graph
has only collapsed realizations.
\end{restatable}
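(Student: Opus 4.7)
The plan is to recast the existence of non-collapsed realizations as a kernel computation for a square linear system. Since $\Phi(r_k) = (0, R_k)$ is fixed, the unknowns of a realization are the vertex-orbit representatives $\vec p \in \R^{2n}$ together with $\Phi$ restricted to $\Lambda(\Gamma_k)$, a $\rep_{\Gamma_k}(\Lambda(\Gamma_k))$-dimensional linear space of parameters $\vec v$. Writing each color as $\gamma_{ij} = (t_{ij}, r_{ij}) \in \Lambda(\Gamma_k) \rtimes \Z/k\Z$, the direction equations on the colored quotient become
\[
\iprod{\vec p_i - R_k^{r_{ij}}\vec p_j - \Phi_\Lambda(t_{ij})}{\vec d_{ij}^\perp} = 0 \qquad (ij \in E(G)),
\]
organized as $M(\vec p,\vec v)^T = 0$. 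Because each connected component $G_\ell$ of a $\Gamma$-$(2,2)$ graph has $T(G_\ell)=0$, some closed path in $G_\ell$ has a rotation as its $\rho$-image; equivariance along that path forces the representatives to lie at rotation centers, which collapse to the origin when $\vec v = 0$. Hence the unique collapsed realization is the zero solution, and since $M$ is square of size $m = 2n + \rep_{\Gamma_k}(\Lambda(\Gamma_k))$, it suffices to exhibit one $\vec d$ for which $M$ is nonsingular.

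To construct such a $\vec d$, I would use the decomposition machinery developed earlier. By \propref{gamma22-decomp}, $G = H_1 \cup H_2$ splits into two spanning $\Gamma$-$(1,1)$ subgraphs, and \lemref{gamma11-is-gc11-spanning} extracts from each $H_\ell$ a spanning generalized cone-$(1,1)$ subgraph $H_\ell'$. Their union $G_c = H_1' \cup H_2'$ is a spanning generalized cone-$(2,2)$ subgraph with $|E(G_c)| = 2n$, and $E' = E(G) - E(G_c)$ has exactly $\rep_{\Gamma_k}(\Lambda(\Gamma_k))$ ``extra'' edges. Ordering the rows of $M$ by $E(G_c)$ then $E'$ and the columns by $\vec p$ then $\vec v$, one obtains the block form
\[
M = \begin{pmatrix} M_c & A \\ B & C \end{pmatrix},
\]
where the $\vec p$-block $M_c$ does not see the translational parts of the colors. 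Projecting the colors of $G_c$ modulo $\Lambda(\Gamma_k)$ therefore converts $M_c$ into the direction matrix of a cone-$(2,2)$ network on a $\Z/k\Z$-colored graph, and the cone direction network theorem from \cite{MT12fsi}, combined with the matroidal result \lemref{gen-cone22-matroid}, guarantees that $M_c$ is nonsingular for generic directions on $E(G_c)$.

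The main obstacle, and the geometric heart of the proof, is showing that directions on the extra edges can then be chosen so that the Schur complement $S = C - B M_c^{-1} A$ is nonsingular; this would force $\vec v = 0$ and hence $\vec p = -M_c^{-1} A \vec v = 0$. By \lemref{gc11-circuits}, each extra edge $ij \in E' \cap E(H_\ell)$ is either a self-loop colored by a translation or it closes a fundamental generalized cone-$(1,1)$ circuit in $H_\ell'$ whose $\rho$-image $\lambda_{ij}$ is a translation in $\Lambda(\Gamma_k)$. The defining property $\rep_{\Gamma_k}(G) = \rep_{\Gamma_k}(\Lambda(\Gamma_k))$ of $\Gamma$-$(2,2)$ graphs guarantees that the $\lambda_{ij}$ jointly span $\Lambda(\Gamma_k) \otimes \R$.

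Unwinding $M_c^{-1}$ along the cycle of the relevant generalized cone-$(1,1)$ component shows that, after substitution, the row of $S$ corresponding to the extra edge $ij$ reduces to a pairing of the form $\iprod{\Phi_\Lambda(\lambda_{ij})}{\vec d_{ij}^\perp}$ up to correction terms determined entirely by the directions already fixed on $E(G_c)$. Because the $\lambda_{ij}$ span $\Lambda(\Gamma_k) \otimes \R$ and each $\vec d_{ij}$ for $ij \in E'$ is still a free parameter, a Zariski-open condition on the choice of $\{\vec d_{ij}\}_{ij \in E'}$ makes these rows linearly independent, so $S$ is invertible. This produces the required direction assignment, and by openness it extends to the generic case, completing the proof.
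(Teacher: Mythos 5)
Your overall skeleton --- fix $\Phi(r_k)=(0,R_k)$, pass to a spanning generalized cone-$(2,2)$ subgraph $G_c$ whose $2n\times 2n$ point-block is generically nonsingular via the cone result (\theoref{conedn}, \propref{gen-cone-rank}), and then argue that the $\rep_{\Gamma_k}(\Trans(\Gamma_k))$ leftover edges kill the translational parameters of $\Phi$ --- is exactly the strategy the paper follows for $k=3,4,6$ (there, ``solve for $\vec p$ as a linear function of $\Phi$'' is \lemref{collapse-proof-dimension}, and your Schur complement is analyzed in \lemref{collapse-proof-good-basis}). The problem is that the one step you leave to an ``unwinding'' computation is precisely the heart of the proof, and the justification you offer for it is incorrect. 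First, the extra edge in a $\Gamma$-$(1,1)$ piece need not determine a translation $\lambda_{ij}$ at all: by \lemref{gc11-circuits} the generalized cone-$(1,1)$ circuit it closes merely \emph{contains} a translation in its $\rho$-image; the circuit may be a handcuff of two rotational cycles, and the fundamental cycle through $ij$ can map to a rotation. Second, even when the $\lambda_{ij}$ exist, the claim that the $\Gamma$-$(2,2)$ count forces them to span $\Trans(\Gamma_k)\otimes\R$ is unfounded: for $k=3,4,6$, $\rep_{\Gamma_k}(G)=\rep_{\Gamma_k}(\Trans(\Gamma_k))=2$ already holds when the image contains a \emph{single} nontrivial translation (\lemref{trrep}), so the two extra edges may well carry parallel translations (e.g.\ a vertex with loops colored $r_k,t_1,r_k,t_1$ is $\Gamma$-$(2,2)$); and in general translations in $\Trans(G,B)$ can arise as products of rotations coming from the cycles of $G_c$, with no relation to the extra edges. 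So ``the $\lambda_{ij}$ span, hence the rows of $S$ are independent'' is not a valid inference; what actually has to be ruled out is that the linear maps $\vec v\mapsto \Phi(\gamma_{ij})\vec p_j(\vec v)-\vec p_i(\vec v)$ attached to the extra edges are degenerate (e.g.\ identically zero, or with a common kernel), in which case $S$ is singular for \emph{every} choice of the free directions.

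This is exactly what \lemref{collapse-proof-good-basis} establishes, and note that it is existential: the paper does not fix the g.c.-$(1,1)$ bases in advance as you do, but chooses \emph{which} edge of each circuit $X''$, $Y''$ to treat as ``extra,'' and proves by contradiction (using \propref{gen-cone-unique} together with the fact that collapsing a circuit whose image contains a translation forces $\Phi$ trivial) that some choice makes the corresponding edge vector sweep out all of $\R^2$. Your argument needs this choice, or an argument that every choice works, and neither is supplied. Finally, your uniform treatment of all $k$ hides that the paper handles $k=2$ by an entirely different argument (assigning one common direction to each $\Gamma$-$(1,1)$ piece and showing all vertices lie on a line through rotation centers, \lemref{collapse-proof-G2-gam11}); for $k=2$ your Schur complement is $4\times4$, \lemref{gc11-circuits} is not available, and the degeneracy issues above become harder, so this case in particular cannot be waved through. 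As it stands the proposal records the right reduction but assumes, rather than proves, the geometric content of the proposition.
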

By counting the dimension of the space of collapsed realizations, done in
\secref{gamma-laman-circuit-collapse} below, it follows that:
\begin{restatable}{prop}{gammalamancircuitcollapse} \proplab{gamma-laman-circuit-collapse}
A generic crystallographic direction network which has a $\Gamma$-colored-Laman circuit as its quotient graph
has only realizations with collapsed edges.
\end{restatable}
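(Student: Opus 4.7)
The plan is to deduce the proposition from \propref{crystal-collapse} via a dimension-count argument. After fixing $\Phi(r_k)=(0,R_k)$, the direction-network matrix $M$ has $m$ rows and $2n+\rep_{\Gamma_k}(\Trans(\Gamma_k))$ columns, so $\dim\ker M\ge e:=2n+\rep_{\Gamma_k}(\Trans(\Gamma_k))-m$. I will realize this lower bound by an explicit $e$-dimensional subspace $K\subseteq\ker M$ of realizations with all edges collapsed, and obtain the matching upper bound $\dim\ker M\le e$ by augmenting the Laman circuit to a $\Gamma$-$(2,2)$ graph and invoking \propref{crystal-collapse}.

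To compute $\dim K$, choose base vertices $b_\ell$ for the connected components $G_\ell$ of the quotient and set $\Gamma_\ell:=\rho(\pi_1(G_\ell,b_\ell))$. The edge-collapsing equations on $G_\ell$ are equivalent to $\vec p_{b_\ell}$ being a common fixed point of $\Phi(\Gamma_\ell)$. By \lemref{subgrpgen}, either $\Gamma_\ell$ is a translation subgroup (forcing $\Phi|_{\Gamma_\ell}=0$ and leaving $\vec p_{b_\ell}$ free in $\R^2$), or $\Gamma_\ell=\langle r',\Trans(\Gamma_\ell)\rangle$ for some rotation $r'$ (forcing $\Phi|_{\Trans(\Gamma_\ell)}=0$ and determining $\vec p_{b_\ell}$ as the rotation center of $\Phi(r')$, which is automatically fixed by every rotation in $\Phi(\Gamma_\ell)$ since, once the translational constraints hold, $\Phi(\Gamma_\ell)$ reduces to the cyclic group generated by $\Phi(r')$). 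Aggregating over components, the $\Phi$-constraints amount to $\Phi|_{\Trans(G,B)}=0$, which cuts the $\Phi$-parameters by $\rep_{\Gamma_k}(G)$ dimensions, while the free base points contribute $\sum_\ell T(G_\ell)$. Using the circuit identity $m=f(G)=2n+\rep_{\Gamma_k}(G)-\sum_\ell T(G_\ell)$, this yields
\[
\dim K=\bigl(\rep_{\Gamma_k}(\Trans(\Gamma_k))-\rep_{\Gamma_k}(G)\bigr)+\sum_\ell T(G_\ell)=e.
\]

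For the upper bound, \lemref{gamma-laman-circuit-sparsity} shows $(G,\bgamma)$ is $\Gamma$-$(2,2)$ sparse and hence independent in the $\Gamma$-$(2,2)$ matroid of \propref{gamma22-decomp}. I extend it to a basis $(G^+,\bgamma^+)$ by adding $e$ colored edges and extend the direction assignment generically. Applying \propref{crystal-collapse} to the resulting generic direction network on the $\Gamma$-$(2,2)$ graph $(G^+,\bgamma^+)$ forces every realization to be collapsed, and since $\Gamma$-$(2,2)$ implies $\sum_\ell T(G^+_\ell)=0$ the collapsed space is trivial, so $\ker M^+=\{0\}$. Because $M^+$ is obtained from $M$ by appending $e$ rows, $\dim\ker M\le\dim\ker M^++e=e$. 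Combined with $K\subseteq\ker M$ and $\dim K=e$, this forces $\ker M=K$, so every generic realization has all edges collapsed.

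The main obstacle I anticipate is the dimension count of $K$: verifying that, once $\Phi|_{\Trans(\Gamma_\ell)}=0$ is imposed, the condition that $\vec p_{b_\ell}$ be fixed by every rotation in $\Phi(\Gamma_\ell)$ introduces no further constraints on $\Phi$. This reduces to the observation that under the translational constraints $\Phi(\Gamma_\ell)$ becomes a cyclic rotation group whose elements share a common fixed point. The matroidal augmentation step and the row-count inequality $\dim\ker M\le\dim\ker M^++e$ are routine given \propref{gamma22-decomp}.
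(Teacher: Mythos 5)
Your argument is correct and takes essentially the same route as the paper: your subspace $K$ and its dimension count are exactly the content of the paper's \lemref{collapsed-dimensions} (the paper builds the collapsed realizations by transporting a base point along tree paths, while you characterize them as common fixed points of $\Phi(\rho(\pi_1(G_\ell,b_\ell)))$, which amounts to the same computation), and your upper bound via extending the circuit to a $\Gamma$-$(2,2)$ basis and invoking \propref{crystal-collapse} just makes explicit the rank/dimension step the paper leaves implicit. No gaps beyond the paper's own level of detail.
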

\propref{gamma-laman-circuit-collapse}
readily implies one direction of \propref{dnthmfixedcenter}. If $(G,\bgamma)$ is not a $\Gamma$-colored-Laman
graph, then it has either too few edges or contains a $\Gamma$-colored-Laman circuit as a subgraph.
In the former case, a dimension count implies that a faithful realization cannot be unique up to
translation and scale and in the latter, every realization contains collapsed edges by
\propref{gamma-laman-circuit-collapse}.

For the other direction, we assume that $(G,\bgamma)$ is $\Gamma$-colored-Laman with $n$ vertices.
Since every $\Gamma$-colored-Laman graph is $\Gamma$-$(2,2)$ sparse by \propref{doubleedge},
we see from \propref{crystal-collapse} that the equations defining the realization space
of a generic colored direction network with quotient graph $G(\bgamma)$ is $1$-dimensional.
This means every realization is a rescaling of a single realization, so if any edge $ij$ is
collapsed, it is collapsed in all realizations.  In particular, if we double $ij$ and
assign it a generic direction, the realization space will not change.
\propref{doubleedge} tells us that the graph obtained in this way is $\Gamma$-$(2,2)$,
so \propref{crystal-collapse} applies to it, showing that the realization space is
zero-dimensional.  The resulting contradiction completes the proof.\eop

\subsubsection{Colored direction networks}\seclab{colored-crystal-networks}  We will make use of
\emph{colored crystallographic direction networks} to study crystallographic direction networks.  Since
there is no chance of confusion, we simply call these ``colored direction networks''
in the next several sections.
A \emph{colored direction network} $(G,\bgamma,\vec d)$ is given by a $\Gamma_k$-colored graph $(G,\bgamma)$
and an assignment of a direction $\vec d_{ij}$ to every edge $ij$.  The realization system for
$(G,\bgamma,\vec d)$ is given by
\begin{eqnarray}
\iprod{\Phi(\gamma_{ij})\cdot\vec p_j - \vec p_i}{\vec d_{ij}^\perp} = 0
\label{colored-crystal-directions}
\end{eqnarray}
The unknowns are the representation $\Phi$ of $\Gamma_k$ and the points $\vec p_i$. (As above $\Phi(r_k)$
is restricted to be rotation about the origin.)  We denote points in the realization space by $G(\vec p,\Phi)$.
We observe here that, for $\Phi$ parameterized by the vectors in \lemref{repspace}, the realization system
is linear. This can be seen by, e.g.,  the computations in \secref{gen-cone22-direction-networks}.
The following two lemmas linking crystallographic direction networks and colored direction networks
follow easily from \lemref{lifts} and the observations above.
\begin{lemma}\lemlab{crystal-direction-network-lifts}
Given a colored direction network $(G,\bgamma,\vec d)$, its lift to a crystallographic direction network
$(\Gtilde, \varphi, \tilde{\vec d})$ is well-defined and the realization spaces of $(G,\bgamma,\vec d)$ and
$(\Gtilde, \varphi, \tilde{\vec d})$ are isomorphic.  In particular, they have the same dimension.
\end{lemma}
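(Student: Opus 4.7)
The plan is to verify two claims: (i) the lifted direction assignment $\tilde{\vec d}$ on $\tilde G$ is well-defined and satisfies the required equivariance, and (ii) the linear systems defining the two realization spaces are in explicit bijection via the covering map and the equivariance relation \eqref{equivariantD}. The construction of $\tilde G$ from the colored quotient is already given by \lemref{lifts}; I only need to extend it with directions. Given $\vec d_{ij}$ on a colored edge $ij\in E(G)$, the lift has a fiber of edges $\{i_\gamma j_{\gamma\gamma_{ij}} : \gamma\in\Gamma_k\}$, and I will define $\tilde{\vec d}_{i_\gamma j_{\gamma\gamma_{ij}}} := \gamma_r \vec d_{ij}$, where $\gamma_r\in O(2)$ is the rotational part of $\Phi(\gamma)$ for any $\Phi\in\Rep(\Gamma_k)$. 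As noted in \secref{cdns}, $\gamma_r$ is independent of $\Phi$, so this is intrinsic to $\Gamma_k$; equivariance $\tilde{\vec d}_{\gamma\cdot ij} = \gamma_r \tilde{\vec d}_{ij}$ is then immediate.

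Next I would construct the bijection between realization spaces. Pick, once and for all, a representative $\tilde i\in V(\tilde G)$ in each fiber over $i\in V(G)$; this is the same choice used to define the colored quotient. Given a crystallographic realization $\tilde G(\vec p,\Phi)$, define a colored realization by restricting $\vec p$ to the chosen representatives and keeping $\Phi$. Conversely, given a colored realization $G(\vec p, \Phi)$, define $\vec p_{\gamma\cdot i} := \Phi(\gamma)\cdot\vec p_i$; the free $\Gamma_k$-action on $V(\tilde G)$ guarantees this is well-defined and, by construction, is equivariant. These two operations are inverse on the level of underlying data, so all that remains is to check that each direction constraint on one side is equivalent to the corresponding constraint on the other.

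The main (though routine) step is the matching of the direction equations. On the crystallographic side, the equation for the representative lifted edge $\tilde i\,\tilde j_{\gamma_{ij}}$ is
\[
\iprod{\vec p_{\tilde j_{\gamma_{ij}}} - \vec p_{\tilde i}}{\tilde{\vec d}_{\tilde i\,\tilde j_{\gamma_{ij}}}^\perp} = 0.
\]
Using $\vec p_{\tilde j_{\gamma_{ij}}} = \Phi(\gamma_{ij})\cdot \vec p_{\tilde j}$ and $\tilde{\vec d}_{\tilde i \, \tilde j_{\gamma_{ij}}} = \vec d_{ij}$ (the fiber representative for the identity), this reduces exactly to the colored direction network equation \eqref{colored-crystal-directions}. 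For a general fiber edge $i_\gamma j_{\gamma\gamma_{ij}}$, writing $\Phi(\gamma)=(\vec v,\gamma_r)$, the translation part cancels in the difference $\vec p_{j_{\gamma\gamma_{ij}}} - \vec p_{i_\gamma} = \gamma_r\cdot(\Phi(\gamma_{ij})\cdot\vec p_{\tilde j} - \vec p_{\tilde i})$. Combining this with $\tilde{\vec d}_{i_\gamma j_{\gamma\gamma_{ij}}}^\perp = \gamma_r\cdot \vec d_{ij}^\perp$ (valid because $\gamma_r$ is orientation-preserving orthogonal, hence commutes with $(\cdot)^\perp$ up to sign which cancels in the product), and using $\iprod{\gamma_r \vec u}{\gamma_r \vec w} = \iprod{\vec u}{\vec w}$, this fiber equation collapses to exactly the representative equation. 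Hence within each $\Gamma_k$-orbit of edges, all equations are simultaneously satisfied or simultaneously violated.

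I expect the only subtle point to be the orthogonality identity $(\gamma_r \vec d)^\perp = \gamma_r(\vec d^\perp)$, which uses that $\gamma_r$ is orientation-preserving; this is where \lemref{TorR} and the hypothesis that $\Gamma_k$ is orientation-preserving enter. Everything else is formal manipulation using the free $\Gamma_k$-action and $\Phi$-equivariance. Once the bijection on realization data is in place and the equations are shown to correspond, the map is linear in the $(\vec p,\Phi)$-coordinates, so the two realization spaces are isomorphic as affine varieties and in particular have the same dimension, completing the lemma.
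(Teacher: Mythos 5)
Your proof is correct and takes essentially the same route the paper intends: the paper disposes of this lemma by asserting it ``follows easily from \lemref{lifts} and the observations above'' (the equivariance convention for lifted directions and the fact that the rotational part $\Phi(\gamma)_r$ is independent of $\Phi$), and your write-up simply carries out that routine verification --- lifting directions by the rotational part, extending/restricting realizations equivariantly along the covering, and checking that each $\Gamma_k$-orbit of lifted equations collapses to the single colored equation \eqref{colored-crystal-directions} via orthogonality of $\gamma_r$ and the perp identity. No gaps; nothing further is needed.
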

\begin{lemma}\lemlab{crystal-direction-network-lifts2}
Let $(G,\bgamma)$ be a $\Gamma_k$-colored graph and $(\Gtilde, \varphi)$ its lift.  Assigning a direction
to one representative of each edge orbit under $\varphi$ in $\Gtilde$ gives a well defined
colored direction network $(G,\bgamma,\vec d)$.
\end{lemma}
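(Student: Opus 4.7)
The approach is to recognize that this lemma is essentially a bookkeeping/compatibility statement that follows directly from the covering-space dictionary established in \lemref{lifts}, combined with the equivariance convention for directions spelled out at the start of \secref{cdns}. There is no deep content to unpack; the plan is simply to translate along the edge-orbit correspondence and verify that freeness of $\varphi$ makes the transferred data consistent.

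First, I would invoke \lemref{lifts}: the colored quotient map from $(\tilde{G},\varphi)$ to $(G,\bgamma)$ identifies the edges of $G$ with the edge orbits of $\varphi$, and the colored quotient construction in \secref{colored-graphs} selects a canonical representative in each orbit (the edge whose tail is the chosen vertex representative). Therefore, assigning a vector $\vec d_e \in \R^2\setminus\{0\}$ to one representative of each edge orbit in $\tilde{E}$ is exactly the data of a function that assigns a direction $\vec d_{ij}$ to each edge $ij \in E(G)$. By definition, this produces a colored direction network $(G,\bgamma,\vec d)$.

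Next, I would check that this datum is compatible with the equivariance requirement from \secref{cdns}, so that it really corresponds to a crystallographic direction network on $(\tilde G,\varphi)$ in the sense of \lemref{crystal-direction-network-lifts}. Given the direction $\vec d_e$ on a chosen representative $e$ of an edge orbit, extend it to the whole orbit by
\[
\tilde{\vec d}_{\gamma\cdot e} := \Phi(\gamma)_r \vec d_e.
\]
As noted in \secref{cdns}, the rotational part $\Phi(\gamma)_r$ depends only on $\gamma$, not on the choice of $\Phi \in \Rep(\Gamma_k)$, so this formula is unambiguous in that sense. For well-definedness on the orbit itself, I must check that whenever $\gamma_1\cdot e = \gamma_2 \cdot e$, we have $\Phi(\gamma_1)_r \vec d_e = \Phi(\gamma_2)_r \vec d_e$. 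But since $\varphi$ is a \emph{free} action on the edges of $\tilde{G}$ (part of the standing hypothesis on crystallographic direction networks), the stabilizer of $e$ is trivial, forcing $\gamma_1 = \gamma_2$, so nothing needs to be checked.

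The main (and only) subtlety, if one wishes to call it that, is precisely this freeness observation: without it, assigning an arbitrary direction to a single edge representative could conflict with the symmetry requirement via a nontrivial stabilizer. Since freeness is built into the setup, no obstruction arises, and the lemma follows. The isomorphism of realization spaces asserted implicitly via the companion \lemref{crystal-direction-network-lifts} is not needed here; we only need that the transferred data determines a valid colored direction network in the sense of \secref{colored-crystal-networks}, which is immediate once the edge-orbit identification is in place.
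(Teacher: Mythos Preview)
Your proposal is correct and matches the paper's approach: the paper does not give a detailed proof but simply asserts that this lemma (together with \lemref{crystal-direction-network-lifts}) ``follow[s] easily from \lemref{lifts} and the observations above,'' meaning the covering-space dictionary and the equivariance convention for directions. You have merely written out those details, including the freeness observation that guarantees well-definedness.
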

This next lemma, which is also immediate from the definitions, describes collapsed edges in
terms of colored direction networks.
\begin{lemma}\lemlab{colored-collapsed-edge}
Let $(G,\bgamma,\vec d)$ be a colored direction network and let $G(\vec p,\Phi)$ be a realization of $(G,\bgamma,\vec d)$.
Let $(\tilde{G},\varphi,\tilde{\vec d})$ be the lift of $(G,\bgamma,\vec d)$ and $\tilde{G}(\vec p,\Phi)$ be the associated
lift of $G(\vec p,\Phi)$.  Then a colored edge $ij\in E(G)$ lifts to an orbit of collapsed edges
in $\tilde{G}(\vec p,\Phi)$ if and only if
\[
\vec p_i = \Phi(\gamma_{ij})\cdot\vec p_j
\]
in $G(\vec p,\Phi)$.
\end{lemma}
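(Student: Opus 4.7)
The plan is to unpack the definitions of the lift and of the equivariance condition, then observe that the collapse condition propagates uniformly across the orbit.

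First I would recall that by the construction in \secref{colored-graphs}, the edge $ij\in E(G)$ with color $\gamma_{ij}$ lifts to the orbit of edges $\{i_\gamma\, j_{\gamma\cdot\gamma_{ij}} : \gamma\in\Gamma_k\}$, where $i_\gamma$ is the vertex in the fiber over $i$ indexed by $\gamma$. Next I would use the equivariance condition \eqref{equivariantD} satisfied by the lifted realization $\tilde{G}(\vec p,\Phi)$, together with the convention that the base representatives satisfy $\vec p_{i_{\mathrm{id}}} = \vec p_i$, to conclude that for every $\gamma\in\Gamma_k$,
\[
\vec p_{i_\gamma} = \Phi(\gamma)\cdot\vec p_i \quad\text{and}\quad \vec p_{j_{\gamma\cdot\gamma_{ij}}} = \Phi(\gamma)\cdot\Phi(\gamma_{ij})\cdot\vec p_j.
\]

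Then the edge $i_\gamma\, j_{\gamma\cdot\gamma_{ij}}$ is collapsed if and only if $\Phi(\gamma)\cdot\vec p_i = \Phi(\gamma)\cdot\Phi(\gamma_{ij})\cdot\vec p_j$. Since $\Phi(\gamma)$ is a Euclidean isometry, in particular injective, this simplifies to $\vec p_i = \Phi(\gamma_{ij})\cdot\vec p_j$, which is independent of $\gamma$. Hence either every edge in the orbit is collapsed or none is, and the common criterion is exactly the displayed equation in the statement.

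The only subtlety is keeping the conventions straight: one must be careful that $\vec p_i$ on the colored side corresponds to $\vec p_{i_{\mathrm{id}}}$ on the lifted side, so that the formula for $\vec p_{j_{\gamma\cdot\gamma_{ij}}}$ comes out with $\Phi(\gamma_{ij})\cdot\vec p_j$ rather than $\Phi(\gamma_{ij})^{-1}\cdot\vec p_j$. I do not anticipate any genuine obstacle; this is essentially a bookkeeping lemma whose content is that the condition for edge collapse on $\tilde{G}$ descends to a single equation on $G$ because of the enforced $\Gamma$-equivariance.
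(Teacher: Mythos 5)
Your argument is correct and is exactly the routine unpacking the paper has in mind: the paper offers no separate proof, declaring the lemma ``immediate from the definitions,'' and your computation (lift the edge to the orbit $i_\gamma\, j_{\gamma\cdot\gamma_{ij}}$, apply equivariance, cancel the isometry $\Phi(\gamma)$) is precisely that verification, with the sign/direction convention handled correctly.
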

In light of Lemmas \ref{lemma:crystal-direction-network-lifts}--\ref{lemma:colored-collapsed-edge}, we
may switch freely between the formalisms, and we do so in subsequent sections.

\subsubsection{A result on cone direction networks}
We prove \propref{crystal-collapse} by bootstrapping results for
generalized cone-$(2,2)$ graphs (defined in \secref{gen-cone-sparse}).
The steps are:
\begin{itemize}
\item We show that, for fixed $\Phi$,
a generic direction network on a generalized cone-$(2,2)$ graph
has a unique solution (\propref{gen-cone-unique}).
\item Then we allow $\Phi$ to flex. We show that by adding $\rep_{\Gamma_k}(\Trans(\Gamma_k))$ edges
that extend a generalized cone-Laman graph to a $\Gamma$-$(2,2)$ graph, realizations of a generic
direction network are forced to collapse.
\end{itemize}
This is done in Sections \ref{sec:gen-cone22-direction-networks} and \ref{sec:crystal-collapse}.  In the
first step, we will make use of a result on finite direction networks with rotational symmetry
from \cite{MT12fsi}.
A \emph{cone direction network}
$(G, \bgamma, \vec d$) is an assignment $\vec d_{ij}$ to each edge $ij$. A realization $G(\vec p)$
of the direction network is a selection of points $\vec p_i \in \R^2$ such that
\begin{equation} \label{conedn}
\langle R_k^{\gamma_{ij}} \vec p_j - \vec p_i,\vec d_{ij} \rangle = 0 \text{ for all } ij
\end{equation}
Here $R_k$ is the rotation about the origin that rotates through angle $2 \pi/k$.
A straightforward application of \cite[Proposition 3.1]{MT12fsi} yields:
\begin{theorem}[name={\cite{MT12fsi}}] \theolab{conedn} The system \eqref{conedn} defining a generic cone
direction network $(G, \bgamma, \vec d)$ is independent if and only if
$(G, \bgamma)$ is cone-$(2,2)$ sparse.
\end{theorem}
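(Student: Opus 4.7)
The plan is to recognize system \eqref{conedn} as a specialization of the parallel-redrawing / direction matrix treated in \cite[Proposition 3.1]{MT12fsi}, and to use that result to identify the matroid of generically independent edge subsets with the cone-$(2,2)$ matroid.

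First, I would organize \eqref{conedn} as a single $m \times 2n$ matrix $M(G,\bgamma,\vec d)$ whose rows are indexed by edges and whose columns are indexed by vertex coordinates: each edge $ij$ contributes a row with a block $-\vec d_{ij}^{\perp}$ in the columns for $\vec p_i$ and a block $(R_k^{\gamma_{ij}})^\top \vec d_{ij}^{\perp}$ in the columns for $\vec p_j$. Independence of the system is precisely the statement that $M$ has full row rank. Since the entries of $M$ are polynomial in the entries of $\vec d$, the set of $\vec d$ for which $M$ fails to attain its maximum row rank is a proper algebraic subset, so ``generic independence'' is well-defined, and to verify it one need only exhibit one direction assignment with full row rank.

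Next, I would establish the ``only if'' direction by exhibiting an explicit kernel that forces the cone-$(2,2)$ inequality on every subgraph. Fix a subgraph $G'\subseteq G$ and a connected component $G'_\ell$ of $G'$. Any assignment of the form $\vec p_j = \vec v_\ell$ for all $j\in V(G'_\ell)$, where $\vec v_\ell$ is a vector fixed by every $R_k^{\gamma_{ij}}$ appearing on edges of $G'_\ell$, satisfies the rows of $M$ indexed by $E(G'_\ell)$; by the definition of $T(\cdot)$ from \secref{gamma22}, this ``collapsed'' subspace has dimension $T(G'_\ell)$. Summing over components gives a kernel of dimension $\sum_\ell T(G'_\ell)$ in the column restriction to $V(G')$, hence
\[
\rk M|_{E(G'),V(G')} \;\le\; 2n' - \sum_\ell T(G'_\ell).
\]
If the full row set of $M$ is independent, the left-hand side is $|E(G')|$, which yields cone-$(2,2)$ sparsity.

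For the converse, I would invoke \cite[Proposition 3.1]{MT12fsi}, which provides the matching geometric lower bound: whenever $(G,\bgamma)$ is cone-$(2,2)$ sparse, one can exhibit directions $\vec d$ for which $M(G,\bgamma,\vec d)$ has full row rank, and in fact the edge matroid defined by generic row independence coincides with the cone-$(2,2)$ matroid arising from \lemref{cone-11-sparse} via the Edmonds-Rota construction. Combined with the upper bound above, generic independence is equivalent to cone-$(2,2)$ sparsity. The main obstacle is essentially a bookkeeping one: verifying that the rigidity-type matrix of \cite[Proposition 3.1]{MT12fsi}, after specializing the framework constraints to the direction constraints \eqref{conedn}, coincides row-by-row with $M$ here. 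Once this identification is made, no further geometric argument is needed.
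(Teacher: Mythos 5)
Your overall skeleton matches what the paper actually does: \theoref{conedn} is not proved in this paper at all, it is imported (and attributed) as a ``straightforward application'' of \cite[Proposition 3.1]{MT12fsi}, and your proposal likewise delegates the substantive direction to that proposition, adding only the standard observation that generic rank is well defined and a Maxwell-type necessity count.

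The necessity count you supply, however, contains a genuine error. You build the kernel on a component $G'_\ell$ by sending every vertex to one common vector $\vec v_\ell$ that is fixed by each rotation $R_k^{\gamma_{ij}}$ occurring on an edge of $G'_\ell$, and you claim this subspace has dimension $T(G'_\ell)$. But $T(G'_\ell)$ is determined by the $\rho$-image of $\pi_1(G'_\ell,b_\ell)$, not by the individual edge colors: a tree (or a single non-loop edge) carrying a nontrivial color has $T(G'_\ell)=2$, while the only vector fixed by a nontrivial $R_k^{\gamma_{ij}}$ is $0$, so your subspace is $0$-dimensional there. The inequality you then get, $m'\le 2n'-\dim(\text{your kernel})$, is strictly weaker than cone-$(2,2)$ sparsity, so the ``only if'' direction does not follow as written. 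The repair is exactly the construction the paper uses in the proof of \lemref{collapsed-dimensions}: in each component fix a base vertex $b$ and spanning tree, set $\vec p_{b}=\vec v_\ell$, and propagate along the tree so that every tree edge is collapsed (i.e.\ $\vec p_i = R_k^{\gamma_{ij}}\vec p_j$ on tree edges); a non-tree edge is then automatically collapsed precisely when $\vec v_\ell$ is fixed by the $\rho$-image of its fundamental cycle, and the space of admissible $\vec v_\ell$ has dimension exactly $T(G'_\ell)$ (all of $\R^2$ if the $\rho$-image contains no rotation, $\{0\}$ otherwise). With that corrected kernel --- or by simply letting \cite[Proposition 3.1]{MT12fsi} carry both implications, as the paper does --- your argument goes through.
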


\subsection{Direction networks on generalized cone-$(2,2)$ graphs} \seclab{gen-cone22-direction-networks}
Let $(G,\bgamma)$ be a generalized cone-$(2,2)$ graph.  In light of \theoref{conedn}, it should be unsurprising
that the realization system \eqref{colored-crystal-directions} has generic rank $2n$ for a colored
direction network on $(G,\bgamma)$, since cone direction networks are a ``special case''.  Here is
the precise reduction.
\begin{prop}\proplab{gen-cone-unique}
Fix a representation $\Phi$ of $\Gamma_k$.  Holding $\Phi$ fixed, if a generic crystallographic
direction network has a generalized cone-$(2,2)$ colored quotient, then it has a unique
realization.
\end{prop}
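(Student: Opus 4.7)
The plan is to reduce the realization system to a cone direction network problem and apply \theoref{conedn}. Since $\Phi(r_k)=(\vec 0, R_k)$ and $\Phi$ sends $\Trans(\Gamma_k)$ into translations, every $\Phi(\gamma)$ has the form $(\vec w_\gamma, R_k^{s(\gamma)})$, where $s(\gamma)\in\Z/k\Z$ depends only on the image of $\gamma$ in $\Gamma_k/\Trans(\Gamma_k)$. Substituting into \eqref{colored-crystal-directions} and rearranging gives the affine system
\[
\iprod{R_k^{s(\gamma_{ij})}\vec p_j - \vec p_i}{\vec d_{ij}^\perp} \;=\; -\iprod{\vec w_{\gamma_{ij}}}{\vec d_{ij}^\perp}
\]
for each edge $ij$. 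With $\Phi$ fixed, the right-hand side is a constant, and the left-hand side coincides (after replacing $\vec d$ by $\vec d^\perp$, which is immaterial for genericity) with the cone direction network system \eqref{conedn} for the $\Z/k\Z$-colored graph $(G,\overline{\bgamma})$ obtained by reducing each color modulo $\Trans(\Gamma_k)$.

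Next I would verify that $(G,\overline{\bgamma})$ is cone-$(2,2)$. By definition, $(G,\bgamma)$ decomposes as an edge-disjoint union of two generalized cone-$(1,1)$ graphs. Each such piece is a map graph whose unique cycle in every connected component has $\rho$-image a rotation; reducing colors modulo $\Trans(\Gamma_k)$ preserves the map-graph structure and sends each cycle color to a nontrivial element of $\Z/k\Z$, producing a cone-$(1,1)$ graph. Hence $(G,\overline{\bgamma})$ is the union of two cone-$(1,1)$ graphs, and therefore is cone-$(2,2)$-sparse with $m=2n$ edges.

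By \theoref{conedn}, for generic $\vec d$ the $2n\times 2n$ coefficient matrix $M(\vec d)$ of the reduced system is of full rank $2n$, hence invertible. Consequently, the affine system has a unique solution $\vec p$, which is the unique realization of the original network with $\Phi$ fixed. Genericity is automatic: $\det M(\vec d)$ is a polynomial in $\vec d$ that is not identically zero (by \theoref{conedn} applied to the cone-$(2,2)$ graph $(G,\overline{\bgamma})$), so the singular locus is a proper algebraic subset of the direction space.

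I do not expect a significant obstacle. The decomposition of $(G,\bgamma)$ into two generalized cone-$(1,1)$ graphs is part of the definition from \secref{gen-cone-sparse}, and the only bookkeeping is to track that the homogeneous part of the affine system really is the cone direction network system for $(G,\overline{\bgamma})$. The only subtle point is that the inhomogeneous term $-\iprod{\vec w_{\gamma_{ij}}}{\vec d_{ij}^\perp}$ depends on $\vec d$ as well as on $\Phi$, but this is harmless: uniqueness depends only on the invertibility of the homogeneous matrix $M(\vec d)$, and genericity of $\vec d$ is preserved when we exclude the simultaneous zero set of finitely many nonzero polynomials.
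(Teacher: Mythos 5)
Your proposal is correct and follows essentially the same route as the paper: the paper likewise splits $\Phi(\gamma_{ij})$ into rotational and translational parts, rewrites \eqref{colored-crystal-directions} as an inhomogeneous linear system whose homogeneous part is the cone direction network system \eqref{conedn} for the colors reduced modulo $\Trans(\Gamma_k)$, and invokes \theoref{conedn} (the reduced graph being cone-$(2,2)$ by the generalized cone-$(2,2)$ decomposition) to get rank $2n$ and hence a unique solution for fixed $\Phi$. The only cosmetic difference is that the paper states the rank computation as a separate proposition and passes between the crystallographic and colored formulations via \lemref{crystal-direction-network-lifts}, which you do implicitly.
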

\propref{gen-cone-unique} is immediate from the following statement and \lemref{crystal-direction-network-lifts}.
\begin{prop}\proplab{gen-cone-rank}
Let $(G,\bgamma)$ be a generalized cone-$(2,2)$ graph with $n$ vertices.
Then the generic rank of the realization system \eqref{colored-crystal-directions} is $2n$.
\end{prop}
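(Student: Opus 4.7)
The plan is to reduce the statement directly to \theoref{conedn} by observing that, with $\Phi$ held fixed, the translation parts of the colors $\gamma_{ij}$ only contribute to the inhomogeneous term of the realization system, and therefore do not affect the rank of the coefficient matrix.

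First I would expand the equation \eqref{colored-crystal-directions} using the semidirect product coordinates on $\Euc(2)$. Writing $\gamma_{ij} = (t_{ij}, m_{ij})$ with $t_{ij} \in \Trans(\Gamma_k)$ and $m_{ij} \in \Z/k\Z$, we have $\Phi(\gamma_{ij}) = (\vec w_{ij}, R_k^{m_{ij}})$ for some fixed vector $\vec w_{ij}$ determined by $\Phi$ (and $\Phi(r_k)=(0,R_k)$ is already pinned). The edge equation becomes
\[
\iprod{R_k^{m_{ij}} \vec p_j - \vec p_i}{\vec d_{ij}^\perp} \;=\; -\iprod{\vec w_{ij}}{\vec d_{ij}^\perp},
\]
and in particular the coefficient matrix in the unknowns $(\vec p_i)_{i \in V(G)}$ depends only on the rotational parts $m_{ij} \in \Z/k\Z$ of the colors, not on the translation parts.

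Next I would form the $\Z/k\Z$-colored graph $(G, \bar{\bgamma})$ obtained from $(G, \bgamma)$ by replacing each color $\gamma_{ij}$ with its rotational part $m_{ij}$, and observe that the coefficient matrix of the system above is exactly the coefficient matrix of the cone direction network $(G, \bar{\bgamma}, \vec d)$ as defined by \eqref{conedn}. By the definition of a generalized cone-$(1,1)$ graph, modding out colors by $\Trans(\Gamma_k)$ turns a generalized cone-$(1,1)$ graph into a cone-$(1,1)$ graph, and hence, since $(G, \bgamma)$ decomposes as the union of two generalized cone-$(1,1)$ graphs, $(G, \bar{\bgamma})$ is a cone-$(2,2)$ graph on $n$ vertices with $2n$ edges.

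Finally I would invoke \theoref{conedn}: since $(G, \bar{\bgamma})$ is cone-$(2,2)$ (in particular cone-$(2,2)$ sparse), the system \eqref{conedn} for generic $\vec d$ is independent, so its $2n \times 2n$ coefficient matrix has rank $2n$. Genericity of the directions for the original network coincides with genericity of the directions for the reduced cone direction network, since the reduction only changes colors, not directions. Therefore the coefficient matrix of \eqref{colored-crystal-directions} has generic rank $2n$ as well. There is no serious obstacle here: the only subtle point is the bookkeeping to confirm that the translation parts of the colors genuinely appear only in the right-hand side once $\Phi(r_k)$ is pinned to rotate about the origin, which is exactly why \propref{dnthmfixedcenter} (rather than \theoref{direction}) is the version we work with in this section.
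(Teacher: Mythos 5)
Your proposal is correct and follows essentially the same route as the paper's proof: expand \eqref{colored-crystal-directions}, separate the rotational and translational parts of $\Phi(\gamma_{ij})$ so that the translational data lands in the inhomogeneous term, identify the resulting coefficient matrix with that of a cone direction network on the $\Z/k\Z$-reduced colored graph, and apply \theoref{conedn}. The only cosmetic difference is that you make explicit the (correct) observation that reducing colors modulo $\Trans(\Gamma_k)$ turns the generalized cone-$(2,2)$ graph into a cone-$(2,2)$ graph, a step the paper leaves implicit.
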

\begin{proof}
Expanding \eqref{colored-crystal-directions} we get
\begin{equation}
\label{gen-cone-rank-1}
\iprod{\Phi(\gamma_{ij})\cdot\vec p_j - \vec p_i}{\vec d_{ij}^\perp}  =
\iprod{\Phi(\gamma_{ij})\cdot\vec p_j}{\vec d_{ij}^\perp} - \iprod{\vec p_i}{\vec d_{ij}^\perp}
\end{equation}
Define $\Phi(\gamma_{ij})_{r}\in SO(2)$ to be the rotational part of $\Phi(\gamma_{ij})$ and $\Phi(\gamma_{ij})_t\in \R^2$
to be the translational part, so that $\Phi(\gamma_{ij})\cdot \vec p = \Phi(\gamma_{ij})_{r}\cdot \vec p + \Phi(\gamma_{ij})_t$.
In this notation, \eqref{gen-cone-rank-1} becomes
\begin{equation}
\label{gen-cone-rank-2}
\iprod{\Phi(\gamma_{ij})_r\cdot\vec p_j}{\vec d_{ij}^\perp} + 	\iprod{\Phi(\gamma_{ij})_t}{\vec d_{ij}^\perp} -
\iprod{\vec p_i}{\vec d_{ij}^\perp} = 0
\end{equation}
Since the rotational part $\Phi(\gamma_{ij})_r$ preserves the inner product, we see that \eqref{colored-crystal-directions}
is equivalent to the inhomogeneous system
\begin{equation}
\label{gen-cone-rank-3}
\iprod{\vec p_j}{\Phi(\gamma_{ij}^{-1})_r\cdot\vec d_{ij}^\perp}  -
\iprod{\vec p_i}{\vec d_{ij}^\perp} = - 	\iprod{\Phi(\gamma_{ij})_t}{\vec d_{ij}^\perp}
\end{equation}
The l.h.s. of \eqref{gen-cone-rank-3} is equivalent to \eqref{conedn}, and thus the generic
rank of \eqref{gen-cone-rank-3} is at least as large as that of  \eqref{conedn}.  The proposition
then follows from \theoref{conedn}.
\end{proof}

\subsection{Proof of \propref{crystal-collapse}}\seclab{crystal-collapse}
We now have the tools in place to prove:
\crystalcollapseprop*
The proof is split into two cases, $\Gamma_2$ and $\Gamma_k$ for $k=3,4,$ or $6$.

\subsubsection{Proof for rotations of order $3$, $4$, or $6$}
Let $(G,\bgamma)$ be a $\Gamma$-$(2,2)$ graph.  We construct a direction network
on $(G,\bgamma)$ that has only collapsed solutions, from which the desired
generic statement follows.

\paragraph{Assigning directions}
We select directions $\vec d$ for each edge in $G$ with coordinates that
are algebraically independent over $\mathbb{Q}$; i.e.,
their coordinates satisfy no polynomial with integer coefficients.

\paragraph{The realization space of any spanning g.c.-$(2,2)$ basis}
With these direction assignments, we can compute the dimension
of the realization space for the direction network
induced on any spanning g.c.-$(2,2)$ basis of $(G,\bgamma)$ where by g.c.-$(2,2)$ basis
we mean a basis in the generalized cone-$(2,2)$ matroid.  One
exists by \lemref{gamma22-is-cone22-spanning}.

\begin{lemma}\lemlab{collapse-proof-dimension}
Let $(G',\bgamma)$ be a spanning g.c.-$(2,2)$ basis of $(G,\bgamma)$.  Then the
realization space of the induced direction network $(G',\bgamma,\vec d)$
is $2$-dimensional, and linearly depends on the representation $\Phi$.
\end{lemma}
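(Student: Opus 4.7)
The plan is to linearize the realization equations on $G'$ and show that, after the normalization $\Phi(r_k) = (0, R_k)$, the remaining continuous moduli in $\Phi$ are exactly the two coordinates of $\vec v_1 \in \R^2$ (per \lemref{repspace}, with the discrete sign $\varepsilon$ also fixed). Since $G'$ is a g.c.-$(2,2)$ graph on $n$ vertices, it has exactly $2n$ edges, so the realization system consists of $2n$ equations in $2n + 2$ unknowns: the $2n$ point coordinates together with $\vec v_1$. I will show the coefficient matrix in $\vec p$ has full rank $2n$ and that the solution is a linear function of $\vec v_1$.

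The main step is to inspect the form \eqref{gen-cone-rank-3} of the realization equations derived in the proof of \propref{gen-cone-rank}. The rotational part $\Phi(\gamma)_r$ appearing in the coefficient matrix is completely determined by $\gamma$ and does not depend on $\vec v_1$. On the other hand, iterating the semidirect product formula $(\vec w_1, r_1)(\vec w_2, r_2) = (\vec w_1 + r_1 \vec w_2, r_1 r_2)$ on any product of generators, and using $\Phi(r_k)_t = 0$, shows that the translational part $\Phi(\gamma)_t$ appearing on the right-hand side is a $\mathbb{Z}$-linear combination of $\vec v_1$ and $R_k^{\varepsilon} \vec v_1$. Consequently \eqref{gen-cone-rank-3} becomes a $2n \times 2n$ inhomogeneous linear system in $\vec p$ whose coefficient matrix depends only on $\vec d$ and whose right-hand side depends linearly on $\vec v_1$.

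By \propref{gen-cone-rank}, the coefficient matrix has generic rank $2n$, meaning its determinant is a nonzero polynomial with integer coefficients in the entries of $\vec d$. Our algebraically independent choice of $\vec d$ therefore makes this determinant nonzero, so Cramer's rule produces a unique solution $\vec p(\vec v_1)$ whose coordinates are linear in $\vec v_1$. Letting $\vec v_1$ range over $\R^2$ traces out a two-dimensional realization space in which each realization depends linearly on $\Phi$, exactly as claimed.

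The only subtle step is ensuring that the ``generic'' conclusion of \propref{gen-cone-rank} applies to our specific algebraic direction assignment; this is immediate, since algebraically independent directions over $\mathbb{Q}$ cannot be zeros of any nonzero polynomial with integer coefficients. Everything else is an explicit linear-algebra bookkeeping exercise.
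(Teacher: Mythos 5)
Your argument is correct and takes essentially the same route as the paper's proof, which likewise obtains the dimension count from the rank-$2n$ statement for fixed $\Phi$ (\propref{gen-cone-unique}/\propref{gen-cone-rank}) and gets linear dependence on $\Phi$ by moving the representation variables to the right-hand side as in \eqref{gen-cone-rank-3}. One tiny correction: for $k=3,6$ the determinant you invoke is a polynomial in the entries of $\vec d$ with coefficients in $\Z[\cos(2\pi/k),\sin(2\pi/k)]$ rather than $\Z$, but since algebraic independence over $\mathbb{Q}$ implies algebraic independence over $\overline{\mathbb{Q}}$, your genericity conclusion still stands.
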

\begin{proof}
The dimension comes from \propref{gen-cone-unique} and comparing the number of
variables to the number of equations in the realization system
\eqref{colored-crystal-directions}.  Moving the variables associated with $\Phi$
to the right as in equation \eqref{gen-cone-rank-3} completes the proof.
\end{proof}

\paragraph{A g.c.-$(2,2)$ basis with non-collapsed complement}
By edge counts, there are exactly two edges $ij$ and $vw$ in the
complement of any g.c.-$(2,2)$ basis of $(G, \bgamma)$. We will show that there
are two edges which do not collapse (and more) when enforcing the directions
on the complement.

\begin{lemma}\lemlab{collapse-proof-good-basis}
There are edges $ij, vw$ of $G$ such that $G' = G - ij - vw$ is a g.c.-$(2,2)$ basis and
for all vectors $\vec u \in \R^2$ there is a realisation $G(\vec p, \Phi)$ of
$(G', \bgamma, \vec d)$ such that $\Phi(\gamma_{ij}) \vec p_j - \vec p_i = \vec u$
(similarly there is a realization such that $\Phi(\gamma_{vw}) \vec p_w - \vec p_v = \vec u$).
\end{lemma}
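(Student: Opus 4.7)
The plan is to construct $ij, vw$ via a double decomposition. By \propref{gamma22-decomp} the $\Gamma$-$(2,2)$ graph $(G,\bgamma)$ decomposes edge-disjointly as $G = H_1 \cup H_2$ with each $H_s$ a spanning $\Gamma$-$(1,1)$ subgraph, and by \lemref{gamma11-is-gc11-spanning} each $H_s$ contains a spanning g.c.-$(1,1)$ subgraph $H'_s$; let $e_s$ denote the unique edge of $H_s \setminus H'_s$. Setting $G' := H'_1 \cup H'_2$ produces a spanning g.c.-$(2,2)$ subgraph, and since $\rep_{\Gamma_k}(\Trans(\Gamma_k)) = 2$ for $k = 3, 4, 6$, an edge count forces $E(G) \setminus E(G') = \{e_1, e_2\}$, so $G'$ is a g.c.-$(2,2)$ basis. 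I then take $ij := e_1$ and $vw := e_2$.

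Next I would reduce the realization claim to linear algebra. By \lemref{collapse-proof-dimension} the realization space of $(G', \bgamma, \vec d)$ is two-dimensional and depends linearly on the parameter $\vec v_1 \in \R^2$ of the representation $\Phi$ (the rotation center of $\Phi(r_k)$ is pinned at the origin, so $\Phi$ is determined by $\vec v_1$ once $\varepsilon$ is fixed). Hence each
\[
L_s : \R^2 \to \R^2, \qquad L_s(\vec v_1) = \Phi(\gamma_{e_s}) \vec p_{j_s} - \vec p_{i_s}
\]
is a linear map, and the lemma reduces to showing that $L_1, L_2$ are linear isomorphisms (surjectivity between two-dimensional spaces is equivalent to being an isomorphism) for generic $\vec d$.

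The heart of the argument invokes \lemref{gc11-circuits}, which gives for each $s$ either a self-loop $e_s$ at $i_s$ with translation color $\tau_s \in \Trans(\Gamma_k) \setminus \{\Id\}$, or a fundamental g.c.-$(1,1)$ circuit $C_s \subset H'_s + e_s$ containing $e_s$ whose $\rho$-image is a nontrivial translation $\tau_s$. For $k \in \{3, 4, 6\}$ the linear map $\vec v_1 \mapsto \Phi(\tau)_t$ is invertible for any nontrivial translation $\tau = t_1^a t_2^b$: its matrix $aI + bR_k^\varepsilon$ has complex eigenvalues $a + be^{\pm 2\pi i\varepsilon/k}$ which vanish only at $a = b = 0$. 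In the self-loop case $L_s(\vec v_1) = \Phi(\tau_s)_t$ directly, so $L_s$ is an isomorphism. In the cycle case, orient $C_s$ as a closed walk from $i_s = \mathrm{tail}(e_s)$ with $\rho$-image $\tau_s$, and telescope the direction constraints of the edges $f \in C_s \setminus \{e_s\}$ along the lifted walk in $\tilde G$; this will yield the identity
\[
L_s(\vec v_1) \;=\; \Phi(\tau_s)_t \;-\; \sum_{f \in C_s \setminus \{e_s\}} \beta_f(\vec v_1)\, \Phi(g_f)_r\, \vec d_f,
\]
where $g_f$ is the cumulative color product along $C_s$ up to $f$ and $\beta_f$ is the direction scalar for $f$ (linear in $\vec v_1$, rational in $\vec d$). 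Writing $L_s = M_{\tau_s} - B(\vec d)$, the matrix $M_{\tau_s}$ is invertible.

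To conclude, $\det L_s = \det(M_{\tau_s} - B(\vec d))$ is a non-identically-zero rational function of $\vec d$: in any limit where the $\beta_f$-correction degenerates it tends to $\det M_{\tau_s} \neq 0$. Since the coordinates of $\vec d$ are algebraically independent over $\mathbb{Q}$, both $L_1$ and $L_2$ are linear isomorphisms, giving the required surjectivity. The hard part will be making the telescoping identity fully precise—tracking signs, orientations, and the cumulative products $g_f$ along $C_s$—and exhibiting a concrete direction specialization that witnesses the non-vanishing of $\det L_s$ prior to the genericity argument.
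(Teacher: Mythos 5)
Your reduction of the lemma to showing that the two linear maps $L_1,L_2$ are isomorphisms is sound, and your self-loop case is correct, but the crux of the lemma is exactly the step you defer: proving $\det L_s\not\equiv 0$ as a function of $\vec d$ for your particular choice of $e_s$. The ``limit'' justification is not a proof. The coefficients $\beta_f(\vec v_1)$ are entries of the solution of the full $2n\times 2n$ system on $G'$, so they depend on \emph{all} of $\vec d$, and you exhibit no specialization of directions at which the correction term $B(\vec d)$ is known to vanish (or even become negligible) while the system on $G'$ stays nondegenerate; a priori $B(\vec d)$ could cancel $M_{\tau_s}$ identically in $\vec d$, and ruling this out is precisely the content of the lemma. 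Worse, it is not clear that your specific edges are the right ones: the statement only asserts the \emph{existence} of a suitable pair $ij,vw$, and the paper proves exactly this weaker fact by a pigeonhole-style contradiction. It shows that if \emph{every} edge of the unique g.c.-$(1,1)$ circuit $X''$ of one $\Gamma$-$(1,1)$ piece failed to sweep $\R^2$, then by genericity of the unused direction every edge of $X''$ would be collapsed in every realization of $(X\cup Y',\bgamma,\vec d)$; since the $\rho$-image of $X''$ contains a nontrivial translation, telescoping over the collapsed circuit forces $\Phi$ to be trivial, whence by \propref{gen-cone-unique} the realization space would be $0$-dimensional, contradicting the at-least-$1$-dimensional count coming from \lemref{collapse-proof-dimension}. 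That argument only guarantees a good edge \emph{somewhere} in each circuit, not that the edge $e_1$ outside your chosen g.c.-$(1,1)$ basis works; if you insist on $e_1$, you must supply the nonvanishing certificate that you yourself note is missing, and that is the hard part, not the bookkeeping of signs.

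A secondary gap: circuits of the generalized cone-$(1,1)$ matroid need not be single cycles. For $k=3,4,6$ the subgraph produced by \lemref{gc11-circuits} can be a balanced cycle, but it can also consist of two rotation cycles joined at a vertex or by a path, in which case the translation in its $\rho$-image is represented by a closed walk that traverses some edges more than once; so ``orient $C_s$ as a closed walk crossing each edge of $C_s\setminus\{e_s\}$ once'' is not available in general, and the telescoping identity (and hence $B(\vec d)$) must be set up for walks with repetitions and cumulative rotation factors. None of this dooms your strategy in principle, but as written the proof has a genuine hole at its central step, which the paper's argument sidesteps by never computing $L_s$ at all: it trades the explicit determinant for a dimension-count contradiction.
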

\begin{proof}
By \propref{gamma22-decomp}, we can decompose $(G,\bgamma)$ into two spanning
$\Gamma$-$(1,1)$ graphs $X$ and $Y$. Since $\Gamma$-$(1,1)$ graphs are g.c.-$(1,1)$ graphs
plus an edge, there are unique g.c.-$(1,1)$ circuits $X''$ and $Y''$ in $X$ and $Y$ respectively.
Let $v'w'$ be some edge in $Y''$ and let $Y' = Y - v'w'$.

Suppose, for contradiction, that for an arbitrary edge $ij$ of $X''$, the vector
$\Phi(\gamma_{ij}) \vec p_j - \vec p_i$ is constrained to a one-dimensional
subspace (or smaller) over all realizations $G(\vec p, \Phi)$ of the direction network
$(X - ij \cup Y', \bgamma, \vec d)$. Then, either the vector is identically zero or by
genericity of $\vec d$, the vector $\vec d_{ij}$ differs from $\Phi(\gamma_{ij}) \vec p_j - \vec p_i$.
In either case the edge $ij$ is collapsed in all realizations of $(X \cup Y', \bgamma, \vec d)$.
Since $ij$ was arbitrary in $X''$, all edges in $X''$ are collapsed in all realizations of
$(X \cup Y', \bgamma, \vec d)$.

However, if every edge in $X''$ is collapsed
in every realization of the direction network $(X \cup Y',\bgamma, \vec d)$,
this implies that $\Phi$ must always be trivial in any realization.
\propref{gen-cone-unique} would then imply that the realization space is
$0$-dimensional, and this contradicts the fact that it is at least $1$-dimensional,
by \lemref{collapse-proof-dimension}.
Thus, it must be that for some edge $ij$
in $X''$, the vector $\Phi(\gamma_{ij}) \vec p_j - \vec p_i$ sweeps out
all of $\R^2$ as $G(\vec p, \Phi)$ varies over all realizations of
$(X - ij \cup Y', \bgamma, \vec d)$: any direction is achievable by changing $\Phi$ and we can scale.
Let now $X' = X - ij$.

By reversing the roles of $X$ and $Y$ we can find an edge $vw$ of $Y$ with the same properties.
(Note that the $ij$ we chose was in $X''$ so the situation is symmetric!)
\end{proof}

\paragraph{The representation $\Phi$ must be trivial}
The rest of the proof will be to show that, adding back $ij$ and
$vw$ forces all realizations of $(G,\bgamma,\vec d)$ to collapse.
\lemref{collapse-proof-good-basis} and \lemref{collapse-proof-dimension} tell us that
for realizations $G(\vec p, \Phi)$ of $(G', \bgamma, \vec d)$,
both vectors $\Phi(\gamma_{ij}) \vec p_j - \vec p_i$ and $\Phi(\gamma_{vw}) \vec p_w - \vec p_v$
depend linearly in a one-to-one fashion on $\Phi$ which parameterizes the realization space.

Consequently, if we add the edge $ij$ to $G'$, the new direction network must constrain
$\Phi$, and thus $\Phi(\gamma_{vw}) \vec p_w - \vec p_v$, to some one-dimensional space.
Since $\vec d$ was chosen generically, $\vec d_{vw}$ differs from this latter vector, and thus
$\Phi$ is trivial in a realization of $(G, \bgamma, \vec d)$. Since $\Phi$ is trivial, then
by \propref{gen-cone-unique} the unique realization must be the completely collapsed one.
\eop

\subsubsection{Proof for rotations of order $2$}
Let $(G,\bgamma)$ be a $\Gamma$-$(2,2)$ graph.  Again, we will assign directions so that the
resulting direction network $(G,\bgamma, \vec d)$ has only collapsed solutions.  The
proof has a slightly different structure from the $k=3,4,6$ case.  The main
geometric lemma is the following.

\begin{lemma}\lemlab{collapse-proof-G2-gam11}
Let $(X,\bgamma)$ be a $\Gamma$-$(1,1)$ graph with $\Gamma_2$ colors,
and let $(X,\bgamma,\vec d)$ be a colored direction network which assigns
the same direction $\vec v$ to every edge. Then, any
realization $X(\vec p,\Phi)$ lifts to a realization $\tilde{X}(\vec p, \Phi)$
such that every vertex lies on a single line in the direction of $\vec v$.
\end{lemma}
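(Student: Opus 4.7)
The plan is to show that every vertex of $\tilde X$ lies on the line $\ell_0$ through the origin in direction $\vec v$. Let $X_1,\ldots,X_c$ be the connected components of $X$ with base vertices $b_i$ and chosen lifts $\tilde b_i \in \tilde X$. Observe first that every edge of $\tilde X$ is constrained to have direction $\pm\vec v$ (since directions lift $\Phi(\gamma)_r$-equivariantly and $\Phi(\gamma)_r\in\{\Id,R_2\}$ preserves the line spanned by $\vec v$), so within any connected component of $\tilde X$ all vertices share a single $\vec v^\perp$-coordinate; each component thus lies on some line parallel to $\vec v$, and I only have to pin down which one.

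The first step is to show that $\Phi$ sends every element of $\Trans(\Gamma_2)$ to a translation parallel to $\vec v$. For each $t\in L_i:=\Trans(\rho(\pi_1(X_i,b_i)))$, a representing closed path in $X_i$ lifts to a path from $\tilde b_i$ to $t\cdot\tilde b_i$ composed of direction-$\vec v$ edges, so $\Phi(t)_t=\vec p_{t\tilde b_i}-\vec p_{\tilde b_i}$ is parallel to $\vec v$. By the $\Gamma$-$(1,1)$ hypothesis, $\Rad(L)=\Z^2$ for $L:=\langle L_1,\ldots,L_c\rangle$, so $L$ has finite index in $\Z^2$. The induced additive map $\Z^2\to\R^2/\langle\vec v\rangle$, $t\mapsto\Phi(t)_t\bmod\langle\vec v\rangle$, vanishes on $L$ and lands in a torsion-free group, so it vanishes identically, giving $\Phi(t)_t\parallel\vec v$ for every $t\in\Trans(\Gamma_2)$.

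The second step is to pin the component of $\tilde X$ containing each $\tilde b_i$ onto $\ell_0$. Each $\rho(\pi_1(X_i,b_i))$ contains a rotation by the $\Gamma$-$(1,1)$ hypothesis; write it as $s_i r_2$ with $s_i\in\Z^2$. The same lifting argument gives $\Phi(s_i r_2)\vec p_{\tilde b_i}-\vec p_{\tilde b_i}\parallel\vec v$. Using $\Phi(r_2)=(0,R_2)$ and step one, this reduces to $\Phi(s_i)_t-2\vec p_{\tilde b_i}\parallel\vec v$ with $\Phi(s_i)_t\parallel\vec v$, forcing $\vec p_{\tilde b_i}\in\ell_0$. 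Hence the component of $\tilde X$ containing $\tilde b_i$ lies on $\ell_0$.

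Finally, every other component of $\tilde X$ is a $\Gamma_2$-translate (via $\Phi$) of one of these base components. Translations $\Phi(t)$ for $t\in\Trans(\Gamma_2)$ preserve $\ell_0$ by step one, and rotations $\Phi(sr_2)=(\Phi(s)_t,R_2)$ are $\pi$-rotations centered at $\tfrac{1}{2}\Phi(s)_t$, a point lying on $\ell_0$, so they preserve $\ell_0$ as well. Thus every vertex of $\tilde X$ sits on $\ell_0$. I expect the main subtlety to be step one — bootstrapping the parallelism condition from the finite-index subgroup $L$ to all of $\Z^2$ via torsion-freeness of $\R^2/\langle\vec v\rangle$ — and carefully tracking possibly degenerate $\Phi\in\overline{\Rep}(\Gamma_2)$ throughout (where some $\Phi(t_i)$ may vanish).
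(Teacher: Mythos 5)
Your proof is correct and follows essentially the same route as the paper: the paper first shows (via a sub-lemma phrased for a spanning g.c.-$(1,1)$ basis) that the order-$2$ rotation in each component's $\rho$-image pins that component to a line in direction $\vec v$ through the rotation center, and then uses two independent translations in the $\rho$-image to force $\Phi(\Trans(\Gamma_2))$ to be parallel to $\vec v$, putting all rotation centers, and hence all vertices, on one line. Your steps reproduce this argument directly on $(X,\bgamma)$, with the finite-index/torsion-free bootstrap (which the paper leaves implicit) spelled out, and with the common line normalized to pass through the origin using the section's convention $\Phi(r_2)=(0,R_2)$.
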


\paragraph{Proof that \lemref{collapse-proof-G2-gam11} implies \propref{crystal-collapse} for $\Gamma_2$}
With \lemref{collapse-proof-G2-gam11}, the Proposition follows readily:
the combinatorial \propref{gamma22-decomp} says we may decompose $(G,\bgamma)$ into two spanning
$\Gamma$-$(1,1)$ graphs, which we define to be $X$ and $Y$.  We assign
the edges of $X$ a direction $\vec v_X$ and the edges of $Y$ a linearly independent
direction $\vec v_Y$.  Applying \lemref{collapse-proof-G2-gam11}, to $X$ and $Y$ separately
shows that every vertex of a lifted realization $\tilde{G}(\vec p,\Phi)$ must lie in
two skew lines.  This is possible only when they are all at the intersection of these
lines, implying only collapsed realizations. \eop

\paragraph{Proof of \lemref{collapse-proof-G2-gam11}}
Let $(X,\bgamma)$ be a $\Gamma$-$(1,1)$ graph with $\Gamma_2$ colors, and let $(X,\bgamma, \vec d)$
be a direction network that assigns all the edges the same direction. Let $(X',\bgamma)$  be a spanning g.c.-$(1,1)$
basis of $(X,\bgamma)$; one exists by \lemref{gamma11-is-gc11-spanning}.

\begin{lemma}\lemlab{collapse-proof-G2-gc11}
Let $(X',\bgamma,\vec d)$ be a g.c.-$(1,1)$ graph, and let $\vec d$
assign the same direction $\vec v$ to every edge.  Then, in any realization of the lifted
crystallographic direction network $(\tilde{X},\varphi,\vec d)$, every vertex
and every edge lies on a line in the direction $\vec v$ through a rotation center
of $\Phi(\gamma)$ for some $\gamma \in \Gamma_k$.
\end{lemma}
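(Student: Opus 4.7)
The plan is to prove the lemma one connected component $X_\ell'$ of $X'$ at a time. Since $(X',\bgamma)$ is a generalized cone-$(1,1)$ graph, $X_\ell'$ is a map-graph, and fixing a base vertex $b_\ell$, the image $\rho(\pi_1(X_\ell', b_\ell))$ is a cyclic group $\langle r \rangle$ generated by a nontrivial rotation $r \in \Gamma_k$.

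I would next describe the connected components of the lift of $X_\ell'$ inside $\tilde X'$. By the construction in \lemref{lifts}, they are in bijection with the right cosets of $\langle r \rangle$ in $\Gamma_k$, and the $\Gamma_k$-action permutes them; the stabilizer of the component corresponding to the coset $\gamma\langle r \rangle$ is the conjugate subgroup $\gamma\langle r \rangle\gamma^{-1}$, generated by the rotation $s := \gamma r \gamma^{-1}$. In particular, $\Phi(s)$ is a nontrivial rotation in $\Euc(2)$ that preserves this component as a graph.

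Fix a realization $\tilde X'(\vec p, \Phi)$ and a component $C$ from the previous step. Since every edge of the lift has direction $\vec v$, the positions at the endpoints of any edge in $C$ differ by a scalar multiple of $\vec v$; by connectedness of $C$, all vertex positions of $C$ lie on a single affine line $L$ parallel to $\vec v$ (which I allow to degenerate to a single point if all positions coincide). The equivariance condition $\vec p_{\gamma \cdot i} = \Phi(\gamma) \vec p_i$ applied to $\gamma = s$ shows that $\Phi(s)$ permutes $V(C)$ and so maps the finite point set $\{\vec p_v : v \in V(C)\}$ into $L$.

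The final step relies on a standard fact about Euclidean rotations: a nontrivial rotation of angle $\theta$ preserves an affine line $L'$ only if $\theta \equiv 0 \pmod \pi$ and the rotation center lies on $L'$. If the vertices of $C$ affinely span $L$, then $\Phi(s)$ must send $L$ to itself, forcing the center of $\Phi(s)$ onto $L$; otherwise the points of $C$ all coincide at a common fixed point of $\Phi(s)$, which must be the rotation center of $\Phi(s)$. In either case, every vertex and every edge of $C$ lies on a line in direction $\vec v$ through a rotation center of $\Phi(s) = \Phi(\gamma r \gamma^{-1})$. Running this argument over all components of the lift of $X_\ell'$ and all $\ell$ gives the lemma. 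The main obstacle is this geometric invariance step: it cleanly handles both $k=2$, where the line can be genuinely one-dimensional and must pass through the rotation center, and $k=3,4,6$, where the line is forced to collapse to the rotation center.
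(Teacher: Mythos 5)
Your proof is correct, and its skeleton matches the paper's: decompose the lift into connected components, each stabilized by a conjugate rotation $s=\gamma r\gamma^{-1}$, use the common direction $\vec v$ plus connectivity to put each component on a line parallel to $\vec v$, and use the rotational symmetry of the component to pin that line at a rotation center, handling the remaining components by conjugation/translation. Where you differ is the final geometric step. The paper, which only invokes this lemma inside the $\Gamma_2$ argument, starts from a single vertex $\tilde i$ on the lifted cycle, observes that $\vec p_{\tilde i}-\Phi(r)\vec p_{\tilde i}$ is parallel to $\vec v$, and uses the explicit order-two identity (a rotation by $\pi$ about $c$ satisfies $\vec p-\Phi(r)\vec p=2(\vec p-c)$) to place $\tilde i$ on the line through $c$ in direction $\vec v$; connectivity then spreads this over the component and translates finish the proof. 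You instead first place the whole (finite) component on a line $L$ parallel to $\vec v$ and then exploit that $\Phi(s)$ permutes its position set: either two positions are distinct, so $\Phi(s)$ preserves $L$ and the standard fact forces an angle-$\pi$ rotation with center on $L$, or all positions coincide at the unique fixed point of $\Phi(s)$, i.e.\ its center. This invariance argument is slightly more general than the paper's computation: it proves the lemma as literally stated for every $\Gamma_k$ and every rotation order, with the order-$>2$ case automatically forcing collapse onto the center. One small imprecision in your closing remark only: for $k=4,6$ the stabilizing conjugate rotation can have order two (e.g.\ $r_4^2$), so the line need not degenerate to the center in those cases; your dichotomy already covers this, so the proof itself is unaffected.
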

\noindent {\it Proof:}
It will suffice to prove the lemma when $X'$ is connected.
Because the $\rho$-image of $X'$ contains
an order $2$ rotation $r$, for some vertex $i\in V(X')$, there is a vertex $\tilde{i}$ in the
fiber over $i$ such that
$\vec p_{\tilde{i}} - \vec p_{r\cdot \tilde{i}} = \vec p_{\tilde{i}} - \Phi(r)\cdot\vec p_{\tilde{i}}$
is in the direction $\vec v$.  Because $\Phi(r)$ is a rotation through angle $\pi$, this means that
$\vec p_{\tilde{i}}$ and $\vec p_{r\cdot \tilde{i}}$ lie on a line through the rotation center of $\Phi(r)$
in the direction $\vec v$.  Because $X'$ is connected, and edge directions (up to sign) are fixed under an
order $2$ rotation, the same is then true for every vertex in the
connected component $\tilde{X}_0'$ of the lifted realization $\tilde{X}(\vec p,\Phi)$ that contains $\vec p_{\tilde{i}}$.
The lemma then follows by considering translates of $\tilde{X}_0'$. \eop \lemref{collapse-proof-G2-gc11}

To complete the proof, we recall that the $\rho$-image of $(X,\bgamma)$ contains
two linearly independent translations $t$ and $t'$. This implies that in the lifted realization
$\tilde{X}(\vec p,\Phi)$, there is a vertex $\tilde i$ connected by a path of edges in
$\tilde X$ to $t(\tilde i)$. Since all edges have the same direction in a realization, there is some $\lambda \in \R$
such that $\lambda \vec v = \vec p_{t (\tilde i)} - \vec p_{\tilde i} = \Phi(t) \cdot \vec p_{\tilde i} - \vec p_{\tilde i}$.
Thus, $\Phi(t)$ is a translation in the direction of $\vec v$. The same argument applies
to $\Phi(t')$.

From this, it follows that the rotation center of all rotations $\Phi(r)$ must lie on single line.
\lemref{collapse-proof-G2-gc11} then applies, so we are done.
\eop

\subsection{Proof of \propref{gamma-laman-circuit-collapse}}\seclab{gamma-laman-circuit-collapse}
We now prove the proposition required for the ``Maxwell direction'' of \theoref{direction}:
\gammalamancircuitcollapse*
In the proof, we will use the following statement. (cf. \cite[Lemma 14.2]{MT13} for the case when
the $\rho$-image is a translation subgroup and $\Gamma = \Gamma_1$.)
\begin{lemma}\lemlab{collapsed-dimensions}
Let $(G,\bgamma,\vec d)$ be a colored direction network on a colored graph $(G,\bgamma)$ with connected components
$G_1,G_2,\ldots, G_c$.  Then $(G,\bgamma,\vec d)$ has at least
\[
\rep_{\Gamma_k}(\Trans(\Gamma_k)) - \rep_{\Gamma_k}(G) + \sum_{i=1}^c T(G_i)
\]
dimensions of collapsed realizations.
\end{lemma}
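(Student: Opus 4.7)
The plan is to exhibit a linear subspace $W$ inside the space of collapsed realizations whose dimension matches the claimed lower bound, built by decoupling the parameters of $\Phi$ from the parameters of the vertices along a spanning forest. First I would pick a spanning forest $F\subseteq G$ and a base vertex $b_\ell$ in each connected component $G_\ell$. Along $F$, the tree-edge collapse equations $\vec p_i = \Phi(\gamma_{ij})\vec p_j$ determine $\vec p_j = \Phi(\rho(P_{b_\ell j})^{-1})\vec p_{b_\ell}$ recursively from $\vec p_{b_\ell}$ and $\Phi$, where $P_{b_\ell j}$ is the tree path from $b_\ell$ to $j$. The remaining non-tree edge conditions then translate into cycle-closure equations at the base vertex, requiring that $\vec p_{b_\ell}$ be a fixed point of $\Phi(\gamma)$ for every $\gamma\in \Gamma_{A,\ell}:=\rho(\pi_1(G_\ell,b_\ell))$.

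Next I would construct $W$ from two independent ingredients. The first is to restrict $\Phi$ to the subspace where $\Phi|_{\Trans(G,B)} = 0$, with $\Trans(G,B) = \langle \Trans(\Gamma_{A,\ell}):\ell=1,\dots,c\rangle$. By \lemref{subgrpgen}, each $\Gamma_{A,\ell}$ is generated by its translation subgroup together with at most one rotation, so after this restriction $\Phi(\Gamma_{A,\ell})$ reduces either to the trivial group (when $T(G_\ell)=2$) or to a single cyclic rotation subgroup of $\Euc(2)$ (when $T(G_\ell)=0$). The second ingredient is the choice of $\vec p_{b_\ell}$: in the trivial case any $\vec p_{b_\ell}\in \R^2$ satisfies the cycle-closure conditions, contributing $T(G_\ell)=2$ free dimensions, while in the cyclic-rotation case $\vec p_{b_\ell}$ is forced to be the rotation center (a specific linear function of $\Phi$), contributing $T(G_\ell)=0$. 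Thus every element of $W$ built this way is a genuine collapsed realization.

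To finish I would count dimensions. The linear restriction map $\Phi \mapsto \Phi|_{\Trans(G,B)}$, from the $\rep_{\Gamma_k}(\Trans(\Gamma_k))$-dimensional ambient space of representations with $\Phi(r_k)=(0,R_k)$ fixed, is surjective onto the $\rep_{\Gamma_k}(G)$-dimensional restricted representation space $\Rep_{\Gamma_k}(\Trans(G,B))$ by the definition of the latter; therefore its kernel, where the first ingredient holds, has dimension $\rep_{\Gamma_k}(\Trans(\Gamma_k))-\rep_{\Gamma_k}(G)$. Adding the $\sum_\ell T(G_\ell)$ free dimensions coming from the independent base-vertex choices gives $\dim W = \rep_{\Gamma_k}(\Trans(\Gamma_k))-\rep_{\Gamma_k}(G) + \sum_\ell T(G_\ell)$, as required. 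The delicate step will be verifying that the single condition $\Phi|_{\Trans(G,B)}=0$ really suffices to resolve the fixed-point problem in every component simultaneously: absent \lemref{subgrpgen} one might worry that a component containing multiple rotations at different centers could force extra vanishings on $\Phi$ to align them, but because $\Gamma_{A,\ell}$ is generated by its translation subgroup and at most one rotation, killing those translations reduces the image to a single cyclic rotation group and the consistency issue evaporates.
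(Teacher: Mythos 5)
Your proposal is correct and follows essentially the same route as the paper's proof: restrict to representations killing $\Trans(G,B)$ (a $(\rep_{\Gamma_k}(\Trans(\Gamma_k))-\rep_{\Gamma_k}(G))$-dimensional family), propagate the points along a spanning forest from the base vertices, and note that the resulting cycle-closure conditions are vacuous when $T(G_i)=2$ and pin $\vec p_{b_i}$ to the relevant rotation center when $T(G_i)=0$. The only cosmetic difference is that you phrase the non-tree-edge check as a fixed-point condition and handle all components simultaneously, whereas the paper runs the same explicit construction component by component.
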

We defer the proof of \lemref{collapsed-dimensions} to \secref{proof-of-collapsed-dimensions} and first show how
\lemref{collapsed-dimensions} implies \propref{gamma-laman-circuit-collapse}.
Let $(G,\bgamma)$ be a $\Gamma$-colored-Laman circuit with $n$ vertices, $m$ edges,and $c$
connected components $G_1,G_2,\ldots G_c$.
By \lemref{gamma-laman-circuit-sparsity}, we have
\[
m = 2n + \rep_{\Gamma_k}(G) - \sum_{i=1}^c T(G_i)
\]
It follows from \propref{crystal-collapse} that for generic directions,
a colored direction network $(G,\bgamma,\vec d)$ has a space of
realizations with dimension
\[
2n + \rep_{\Gamma_k}(\Trans(\Gamma_k)) - m = \rep_{\Gamma_k}(\Trans(\Gamma_k)) - \rep_{\Gamma_k}(G) + \sum_{i=1}^c T(G_i).
\]
Applying \lemref{collapsed-dimensions} shows that in all of them
every edge is collapsed.
\eop

\subsubsection{Proof of \lemref{collapsed-dimensions}}\seclab{proof-of-collapsed-dimensions}
For now, assume that the colored graph $(G,\bgamma)$ is connected. Select a
base vertex $b$.

\paragraph{Representations that are trivial on $\Trans(G,b)$}
Consider $\Phi\in \overline{\Rep_{\Gamma_k}}(\Trans(\Gamma_k))$ such that
\[
\Phi(t) = ((0,0),\Id)
\]
for all translations $t\in \Trans(G,b)$.
These representations form a  $(\rep_{\Gamma_k}(\Trans(\Gamma_k)) - \rep_{\Gamma_k}(G))$-dimensional
space.

\paragraph{Collapsed realizations for a fixed representation}
Now we show that there are $T(G)$ dimensions of realizations with all edges collapsed.  We
do this with an explicit construction.  There are two cases.

\noindent
\textbf{Case 1: $T(G) = 2$.}
In this case, we know that the subgroup generated by $\rho(\pi_1(G,b))$ is a
translation subgroup.  Fix a spanning tree $T$ of $G$ and a point $\vec p_b\in \R^2$.
We will construct a realization with vertex $b$ mapped to $\vec p_b$ and all edges
collapsed.

For any pair of vertices $i$ and $j$, define $Q_{ij}$ to be the path in $T$ from $i$ to $j$
and define $\eta_{ij}$ to be $\rho(Q_{ij})$.  We then set $\vec p_i = \Phi(\eta_{bi}^{-1})\cdot \vec p_b$
for all vertices $i\in V(G)$ other than $b$.  Thus all vertex locations are determined by $\vec p_b$,
giving a $2$-dimensional space of realizations for this $\Phi$. We need to
check that all edges are collapsed.
If $ij$ is an edge of $T$ with color $\gamma_{ij}$, then we have
\[
\gamma_{ij}^{-1} = \eta_{bj}^{-1}\cdot\eta_{bi}
\]
Using this relation, we see that
\[
\vec p_j = \Phi(\eta_{bj}^{-1})\cdot \vec p_b = \Phi(\gamma_{ij}^{-1}\cdot\eta_{bi}^{-1})\cdot \vec p_b =
\Phi(\gamma_{ij}^{-1})\cdot \vec p_i
\]
so the edge $ij$ is collapsed.  If $ij$ is not an edge in $T$, then the fundamental closed path $P_{ij}$
of $ij$ relative to $T$ and $b$ follows $Q_{bi}$, crosses $ij$, and returns to $b$ along $Q_{jb}$.  This
gives us the relation
\[
\gamma_{ij} = \eta_{bi}^{-1}\cdot \rho(P_{ij})\cdot \eta_{bj}
\]
We then compute
\[
\Phi(\gamma_{ij})\cdot\vec p_j = (\Phi(\eta_{bi}^{-1})\cdot \Phi(\rho(P_{ij}))\cdot \Phi(\eta_{bj}))\cdot \vec p_j
\]
Since $\Phi$ is trivial on the $\rho$-images of fundamental closed paths, the r.h.s. simplifies to
\[
\Phi(\eta_{bi}^{-1})\cdot\Phi(\eta_{bj})\cdot\vec p_j = 	\Phi(\eta_{bi}^{-1})\cdot\vec p_b = \vec p_i
\]
and we have shown that all edges are collapsed.
\vspace{1ex}

\noindent
\textbf{Case 2: $T(G) = 0$.} We adopt the notation from Case 1.  As before, we fix a spanning tree $T$
and a representation $\Phi$ that is trivial on the translation subgroup $\Trans(G,b)$.  By \lemref{subgrpgen},
$\rho(\pi_1(G,b))$ is generated by a translation subgroup $\Gamma'<\Trans(G,b)$ and a rotation $r\in \Gamma_k$.
We set $\vec p_b$ to be on the rotation center of $\Phi(r)$ and define the rest of the $\vec p_i$
as before: $\vec p_i = \Phi(\eta_{bi}^{-1})\cdot \vec p_b$.  Observe that $\Phi(r)$ then fixes $\vec p_b$.

For edges $ij$ in the tree $T$, the argument that $ij$ is collapsed from Case 1 applies verbatim.  For non-tree
edges $ij$, a similar argument relating the fundamental closed path $P_{ij}$ to $Q_{bi}$ and $Q_{bj}$ yields
the relation
\[
\gamma_{ij} = \eta_{bi}^{-1}\cdot \rho(P_{ij})\cdot \eta_{bj}
\]
Since $\Phi$ is trivial on translations $t\in \Gamma'$, we see that for some $\ell$
\[
\Phi(\gamma_{ij}) = \Phi(\eta_{bi}^{-1})\cdot\Phi(r^\ell)\cdot\Phi(\eta_{bj})
\]
We then compute
\[
\Phi(\gamma_{ij})\vec p_j = \Phi(\eta_{bi}^{-1})\cdot\Phi(r^\ell)\cdot\Phi(\eta_{bj})\cdot \vec p_j =
\Phi(\eta_{bi}^{-1})\cdot\Phi(r^\ell)\cdot\vec p_b
\]
Because $\Phi(r^\ell)\cdot\vec p_b=\vec p_b$, the r.h.s. simplifies to $\vec p_i$, and so the edge $ij$ is
collapsed.

\paragraph{Multiple connected components}
The proof of the lemma is completed by considering connected components one at a time to remove the assumption that
$G$ is connected.
\eop

\section{Rigidity}\seclab{bigsec-rigidity}
\subsection{Crystallographic and colored frameworks}\seclab{continuous}
We now return to the setting of crystallographic frameworks, leading to the proof of \theoref{main}
in \secref{main-proof}.  The overall structure is very similar to \cite[Sections 16--18]{MT13},
but we give sufficient detail for completeness.

\subsubsection{Crystallographic frameworks}
We recall the following definition from the introduction: a \emph{crystallographic framework}
$\Gal$ is given by:
\begin{itemize}
\item An infinite graph $\tilde{G}$
\item A free action $\varphi$ on $\tilde{G}$ by a crystallographic group $\Gamma$ with finite quotient
\item An assignment of a \emph{length} $\ell_{ij}$ to each edge $ij\in E(\tilde{G})$
\end{itemize}
In what follows, $\Gamma$ will always be one of the groups $\Gamma_2$, $\Gamma_3$, $\Gamma_4$,
or $\Gamma_6$.

\subsubsection{The realization space}
A \emph{realization} $\tilde{G}(\vec p,\Phi)$ of a crystallographic framework $\Gal$ is given by
an assignment
$\vec p=\left(\vec p_{i}\right)_{i\in\Vtilde}$ of points to the vertices of $\Gtilde$ and
a representation $\Phi$ of $\Gamma \into \Euc(2)$ by Euclidean isometries
acting discretely and co-compactly, such that
\begin{align}
||\vec p_i - \vec p_j||  =  \elltilde_{ij} & \text{\qquad for all edges $ij\in \Etilde$} \label{lengthsX} \\
\Phi(\gamma)\cdot \vec p_i  =  \vec p_{\gamma(i)} &
\text{\qquad for all group elements $\gamma\in \Gamma$ and vertices $i\in\Vtilde$} \label{equivariantX}
\end{align}
We see that \eqref{equivariantX} implies that, to be realizable at all, the framework $\Gal$ must assign the same
length to each edge in every $\Gamma$-orbit of the action $\varphi$.  The condition \eqref{lengthsX} is the standard one
from rigidity theory that says the distances between endpoints of each edge realize the specified lengths.

We define the \emph{realization space} $\mathcal{R}\Gal$ (shortly $\mathcal{R}$)
of a crystallographic framework to be the set of all realizations
\[
\mathcal{R}\Gal = \left\{ (\vec p,\Phi) : \text{$\tilde{G}(\vec p,\Phi)$ is a realization of $\Gal$}
\right\}
\]

\subsubsection{The configuration space}
The group $\Euc(2)$ of Euclidean isometries acts naturally on the realization space.  Let $\psi\in \Euc(2)$ be an
isometry. For any point $(\vec p,\Phi)\in \mathcal{R}$,
\[
(\psi\circ \vec p, \Phi^\psi)
\]
is a point in $\mathcal{R}$ as well where $\Phi^\psi$ is the representation defined by
$$\Phi^\psi(\gamma) = \psi \Phi(\gamma) \psi^{-1}.$$
We define the \emph{configuration space} $\mathcal{C}\Gal$
(shortly $\mathcal{C}$) of a crystallographic framework to be the quotient $\mathcal{R}/\Euc(2)$ of
the realization space by Euclidean isometries.

Since the spaces $\mathcal{R}$ and $\mathcal{C}$ are subsets of an infinite-dimensional space, there are
some technical details to check that we omit in the interest of brevity.
Interested readers can find a development for the periodic setting
in \cite[Appendix A]{MT10a}\footnote{The reference \cite{MT10a}
is an earlier version of \cite{MT13}.}.  The present crystallographic case proceeds along the
same lines.

\subsubsection{Rigidity and flexibility}
A realization $\tilde{G}(\vec p,\Phi)$ is defined to be (continuously) \emph{rigid} if it is isolated in the configuration
space $\mathcal{C}$.  Otherwise it is \emph{flexible}.  As the definition makes clear, rigidity is a \emph{local} property
that depends on a realization.
A framework that is rigid, but ceases to be so if any orbit of bars is removes is defined to be \emph{minimally rigid}.

\subsubsection{Colored crystallographic frameworks}
In principle, the realization and configuration spaces $\mathcal{R}\Gal$ and $\mathcal{C}\Gal$ of crystallographic
frameworks could be complicated infinite dimensional objects. They are, in fact,
equivalent to the finite-dimensional configuration spaces of \emph{colored crystallographic frameworks}, which will
be technically simpler to work with. (See \propref{colored-and-crystallographic-frameworks} below.)

A \emph{colored crystallographic framework} (shortly a \emph{colored framework}) is a triple $(G,\bgamma,\bm{\ell})$,
where $(G,\bgamma)$ is a $\Gamma_k$-colored graph and $\bm{\ell}=(\ell_{ij})_{ij\in E(G)}$ is an assignment of a length
to each edge. There is a dictionary between crystallographic and
colored frameworks, which is a simple modification of the dictionary for direction networks.

\subsubsection{The colored realization and configuration spaces}
A \emph{realization} $G(\vec p,\Phi)$ of a colored framework is an assignment of points
$\vec p = (\vec p_i)_{i\in V(G)}$ and a representation $\Phi$ of $\Gamma_k$ by Euclidean isometries acting discretely and
cocompactly such that
\[
||\Phi(\gamma_{ij})\cdot\vec p_j - \vec p_i||^2 = \ell_{ij}^2
\]
for all edges $ij\in E(G)$.  The \emph{realization space} $\mathcal{R}(G,\bgamma,\ell)$ is then defined to be
\[
\mathcal{R}(G,\bgamma,\ell) = \left\{ (\vec p,\Phi) :
\text{$G(\vec p,\Phi)$ is a realization of $(G,\bgamma,\bm{\ell})$}
\right\}
\]
The Euclidean group $\Euc(2)$ acts naturally on $\mathcal{R}(G,\bgamma,\ell)$ by
\[
\psi\cdot(\vec p,\Phi) = (\psi\cdot\vec p,\Phi^\psi)
\]
where $\psi$ is a Euclidean isometry.  Thus we define the
\emph{configuration space}  $\mathcal{C}(G,\bgamma,\ell)$ to be the quotient
$\mathcal{R}(G,\bgamma,\ell)/\Euc(2)$ of the realization space by the Euclidean group.

\subsubsection{The modified configuration space}
Because it is technically simpler, we will consider the modified realization space
$\mathcal{R'}(G,\bgamma,\ell)$, which we define to be:
\[
\mathcal{R'}(G,\bgamma,\ell) = \left\{ (\vec p,\Phi) :
\text{$G(\vec p,\Phi)$ is a realization of $(G,\bgamma,\bm{\ell})$ with $\Phi(r_k)$ fixing the origin}
\right\}
\]
Recall that $r_k$ is the rotation of order $k$ that is one of the generators of $\Gamma_k$.  The modified
configuration space $\mathcal{C'}(G,\bgamma,\ell)$ is then defined to be the quotient $\mathcal{R'}(G,\bgamma,\ell)/O(2)$
of the modified realization space by the orthogonal group $O(2)$.  Since every representation $\Phi\in \Rep(\Gamma_k)$
is conjugate by a Euclidean translation to a representation $\Phi'$ that has the origin as a rotation center,
this next lemma follows immediately.
\begin{lemma}\lemlab{modified-config-space}
Let $(G,\bgamma,\bm{\ell})$ be a colored framework.  Then the configuration space $\mathcal{C}(G,\bgamma,\ell)$
is homeomorphic to the modified configuration space $\mathcal{C'}(G,\bgamma,\ell)$.
\end{lemma}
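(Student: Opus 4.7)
The plan is to exhibit an explicit homeomorphism using the fact that every non-trivial rotation in $\Euc(2)$ has a unique fixed point. Since $k \in \{2,3,4,6\}$, the matrix $I - R_k^{\pm 1}$ is invertible, so for any $\Phi \in \Rep(\Gamma_k)$ with $\Phi(r_k) = (\vec w, R_k^{\pm 1})$, the image $\Phi(r_k)$ has a unique rotation center $\vec c(\Phi) := (I - R_k^{\pm 1})^{-1}\vec w$, which depends continuously (indeed linearly) on $\vec w$. Let $\tau_{\vec v}$ denote translation by $\vec v \in \R^2$.

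First I would define a retraction $\Pi : \mathcal{R}(G,\bgamma,\bm{\ell}) \to \mathcal{R'}(G,\bgamma,\bm{\ell})$ by
\[
\Pi(\vec p,\Phi) = (\tau_{-\vec c(\Phi)} \cdot \vec p, \; \Phi^{\tau_{-\vec c(\Phi)}}).
\]
Because $\tau_{-\vec c(\Phi)} \Phi(r_k) \tau_{\vec c(\Phi)}$ is a rotation through the same angle whose fixed point is the origin, $\Pi(\vec p,\Phi)$ lies in $\mathcal{R'}$, and $\Pi$ restricts to the identity on $\mathcal{R'}$. Continuity of $\Pi$ follows from continuity of $\vec c(\Phi)$ in $\Phi$. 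The inclusion $\iota : \mathcal{R'} \hookrightarrow \mathcal{R}$ and the quotient maps then give candidate continuous maps $\mathcal{C'} \to \mathcal{C}$ (from $\iota$) and $\mathcal{C} \to \mathcal{C'}$ (from $\Pi$), provided they descend to the respective quotients.

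Next I would check that these maps descend and are mutually inverse. For descent: $O(2) \subset \Euc(2)$, so $\iota$ obviously descends to $\mathcal{C'} \to \mathcal{C}$; and for $\Pi$, if $\psi = (\vec v, r) \in \Euc(2)$ then $\vec c(\Phi^\psi) = \psi\cdot \vec c(\Phi)$, so $\Pi(\psi\cdot(\vec p,\Phi)) = r \cdot \Pi(\vec p,\Phi)$ where $r \in O(2)$, showing $\Pi$ induces a well-defined map on $\mathcal{C}$. These descended maps are inverse to one another: composing $\iota$ followed by $\Pi$ is the identity on $\mathcal{R'}$; composing $\Pi$ followed by $\iota$ moves each orbit to a point in the same $\Euc(2)$-orbit, since $\Pi$ differs from the identity only by a Euclidean translation.

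The only step that needs care is the injectivity check on $\mathcal{C'} \to \mathcal{C}$: if $(\vec p,\Phi), (\vec p',\Phi') \in \mathcal{R'}$ lie in the same $\Euc(2)$-orbit via $\psi$, then $\psi \Phi(r_k) \psi^{-1} = \Phi'(r_k)$, so $\psi$ must send the rotation center of $\Phi(r_k)$ to that of $\Phi'(r_k)$; both of these centers are the origin, so $\psi$ fixes the origin and hence lies in $O(2)$. This uses essentially the uniqueness of the rotation center from \lemref{TorR} and \lemref{orderk}. There is no substantive obstacle here; the work is purely formal and, as the authors indicate, ``immediate'' once one observes that conjugating by a translation freely normalizes the rotation center.
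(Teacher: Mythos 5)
Your proof is correct and takes essentially the same route as the paper: the paper's proof is exactly the observation that every representation is conjugate by a Euclidean translation to one whose rotation center is the origin, and your translation-normalizing retraction $\Pi$ together with the descent checks for the $\Euc(2)$- and $O(2)$-quotients is just the careful formalization of that one-line argument.
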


From the definition and \lemref{repspace} we see that the modified configuration space is an
algebraic subset of $\R^{2n}\times \R^4$, for $\Gamma_2$ and of $\R^{2n}\times \R^2$ for
$\Gamma_k$ with $k=3,4,6$.

\subsubsection{Colored rigidity and flexibility}
We now can define rigidity and flexibility in terms of colored frameworks.  A realization $G(\vec p,\Phi)$ of a colored
framework is \emph{rigid} if it is isolated in the configuration space and otherwise \emph{flexible}.  \lemref{modified-config-space}
implies that a realization is rigid if and only if it is isolated in the modified configuration space.

\subsubsection{Equivalence of crystallographic and colored rigidity}
The connection between the rigidity of crystallographic and colored frameworks
is captured in the following proposition, which says that we can switch between the two models.
\begin{prop}\proplab{colored-and-crystallographic-frameworks}
Let $\Gal$ be a crystallographic framework and let $(G,\bgamma,\bm{\ell})$ be an associated
colored framework quotient.  Then the configuration spaces $\mathcal{C}\Gal$ and
$\mathcal{C'}(G,\bgamma,\ell)$ are homeomorphic.
\end{prop}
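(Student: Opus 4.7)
The plan is to construct mutually inverse continuous maps between the realization spaces $\mathcal{R}\Gal$ and $\mathcal{R}(G,\bgamma,\bm{\ell})$ that descend to homeomorphisms of the configuration spaces. By \lemref{modified-config-space}, it is equivalent to prove $\mathcal{C}\Gal \cong \mathcal{C}(G,\bgamma,\bm{\ell})$, which sidesteps any subtleties about fixing the rotation center.

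First, I would fix the combinatorial bookkeeping. By \lemref{lifts}, the colored quotient $(G,\bgamma)$ and the lift $(\Gtilde,\varphi)$ determine each other up to $\varphi$-equivariant isomorphism once representatives $\tilde\imath\in \Vtilde$ for each vertex $i\in V(G)$ are chosen. Using these representatives, define a \emph{projection} map $\pi: \mathcal{R}\Gal \to \mathcal{R}(G,\bgamma,\bm{\ell})$ by $\pi(\vec p,\Phi) = \bigl((\vec p_{\tilde\imath})_{i\in V(G)},\Phi\bigr)$, and a \emph{lift} map $L: \mathcal{R}(G,\bgamma,\bm{\ell})\to\mathcal{R}\Gal$ by setting $L(\vec q,\Phi)_{\gamma\cdot \tilde\imath} = \Phi(\gamma)\cdot \vec q_i$ on every fiber (using freeness of $\varphi$ so the expression is well-defined).

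Next, I would verify that these maps land in the correct spaces. For $L$: equation \eqref{equivariantX} holds by construction; for edge $ij$ of $G$ with color $\gamma_{ij}$, the corresponding orbit in $\Etilde$ contains the edge from $\tilde\imath$ to $\gamma_{ij}\tilde\jmath$, and the squared-length equation at that edge reduces exactly to $\|\Phi(\gamma_{ij})\vec q_j - \vec q_i\|^2 = \ell_{ij}^2$, which is the colored framework equation. Translating this edge across its $\Gamma$-orbit using \eqref{equivariantX} preserves lengths because $\Phi(\gamma)$ is an isometry, so \eqref{lengthsX} holds on all of $\Etilde$. For $\pi$: restriction to representatives is trivially consistent with the edge equations of $(G,\bgamma,\bm{\ell})$. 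That $L$ and $\pi$ are mutual inverses is immediate from \eqref{equivariantX}: any realization of the crystallographic framework is determined by its values on the chosen representatives, and those values can be chosen freely subject to the colored framework equations. Continuity in both directions is automatic because the product topology on $(\R^2)^{\Vtilde}$ makes evaluation at any $\tilde\imath$ continuous, while $L$ is continuous because each coordinate $L(\vec q,\Phi)_{\tilde\jmath} = \Phi(\gamma)\vec q_i$ depends continuously on $(\vec q,\Phi)$.

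Finally, I would pass to the configuration spaces. The natural $\Euc(2)$-action $\psi\cdot(\vec p,\Phi) = (\psi\circ\vec p, \Phi^\psi)$ on $\mathcal{R}\Gal$ corresponds under $\pi$ and $L$ precisely to the action $\psi\cdot(\vec q,\Phi)=(\psi\circ\vec q,\Phi^\psi)$ on $\mathcal{R}(G,\bgamma,\bm{\ell})$; this is a direct verification using that $\Phi^\psi(\gamma)\psi(\vec q_i) = \psi(\Phi(\gamma)\vec q_i)$. Consequently $\pi$ and $L$ descend to continuous maps on the quotients $\mathcal{C}\Gal$ and $\mathcal{C}(G,\bgamma,\bm{\ell})$ that remain mutual inverses, giving the desired homeomorphism; combining with \lemref{modified-config-space} yields the statement.

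The main obstacle I anticipate is not any single step but the careful handling of the infinite-dimensional topology of $\mathcal{R}\Gal$: one must fix the ambient topology (the product topology on $(\R^2)^{\Vtilde}$ times the space of representations) and check that the $\Euc(2)$-action is continuous and proper enough for the quotient to be Hausdorff, so that ``homeomorphism of configuration spaces'' is a meaningful statement. The authors indicate that these routine but technical verifications parallel the periodic case in \cite{MT10a}, so the substantive work is really the bijective correspondence established above.
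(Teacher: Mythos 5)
Your proposal is correct and is essentially the paper's own argument: the paper's proof simply states that the result "follows from the definitions, a straightforward computation, and \lemref{modified-config-space}", and your projection/lift dictionary, the $\Euc(2)$-equivariance check, and the appeal to \lemref{modified-config-space} are exactly the straightforward computation being invoked. Your closing remark about the infinite-dimensional topology also matches the paper, which defers those routine verifications to the periodic-case treatment in \cite{MT10a}.
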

\begin{proof}
This follows from the definitions, a straightforward computation, and \lemref{modified-config-space}. %
\end{proof}

\subsection{Infinitesimal and generic rigidity}\seclab{infinitesimal}
As discussed above, the modified realization space $\mathcal{R}'(G,\bgamma,\bm{\ell})$ of a colored
framework is an algebraic subset of $\mathbb{R}^{2n+2r}$, where $r = \rep_{\Gamma_k}(\Gamma_k)$.
The coordinates are given as follows:
\begin{itemize}
\item The first $2n$ coordinates are the coordinates of the points $\vec p_1, \vec p_2,\ldots, \vec p_n$
\item The final $2r$ coordinates are the vectors $v_i$ specifying the representation of the translation
subgroup $\Trans(\Gamma_k)$.  (Since we have ``pinned'' a rotation center to the origin, the vector
$w$ from \lemref{repspace} is fixed to be $0$.)
\end{itemize}

\subsubsection{Infinitesimal rigidity}
As is typical in the derivation of Laman-type theorems, we
linearize the problem by considering the tangent space of $\mathcal{R}'(G,\bgamma,\bm{\ell})$
near a realization $G(\vec p,\Phi)$.

The vectors in the tangent space are infinitesimal motions of the framework, and
they can be characterized as follows.  Let
$(\vec q, \vec u_1, \vec u_2) \in \R^{2n+4}$ for $k =2$ or $(\vec q, \vec u_1) \in \R^{2n+2}$ for $k =3,4,6$.
To this vector there is an associated representation $\Phi'$ defined by
$\Phi'(r_k) = (0, R_k)$ and $\Phi'(t_i) = (\vec u_i, \Id)$.
Then differentiation of the length equations yield this linear system ranging over all
edges $ij \in E(G)$:
\begin{equation}
\iprod{\Phi(\gamma_{ij}) \cdot \vec p_j - \vec p_i}{\Phi'(\gamma_{ij}) \cdot \vec q_j  -  \vec q_i}  \label{eq:infinitesimal}
\end{equation}
The given data are the $\vec p_i$ and $\Phi$, and then unknowns are the $\vec q_i$ and $\Phi'$.
A realization $G(\vec p, \Phi)$ of a colored framework
is defined to be  \emph{infinitesimally rigid} if the  system \eqref{eq:infinitesimal}
has a $1$-dimensional solution space.  A realization that is infinitesimally rigid but ceases to be
so when any colored edge is removed is minimally infinitesimally rigid.

\subsubsection{Infinitesimal rigidity implies rigidity}
A standard kind of result relating infinitesimal rigidity and rigidity for generic frameworks holds in our setting.
Since our realization space
is finite dimensional, we can adapt the arguments of, e.g., \cite{AR78} to our setting to show:
\begin{lemma}\lemlab{infinitesimal-rigidity-implies-rigidity}
If a realization $G(\vec p,\Phi)$ of a colored framework is infinitesimally rigid, then it is rigid.
\end{lemma}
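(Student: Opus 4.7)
The plan is to adapt the classical Asimow--Roth argument \cite{AR78} to our finite-dimensional modified realization space.  Write $N = 2n + 2r$ for the dimension of the ambient space containing $\mathcal{R}'(G, \bgamma, \bm{\ell})$ and define the length-squared map $f : \mathbb{R}^N \to \mathbb{R}^m$ by $f(\vec p, \Phi)_{ij} = \|\Phi(\gamma_{ij})\vec p_j - \vec p_i\|^2$, so that $\mathcal{R}'(G, \bgamma, \bm{\ell}) = f^{-1}\!\bigl((\ell_{ij}^2)_{ij}\bigr)$.  A direct computation identifies the differential $df$ at a point $(\vec p, \Phi)$ with the coefficient matrix of the infinitesimal rigidity system \eqref{eq:infinitesimal}, so $\ker df(\vec p, \Phi)$ is exactly the space of infinitesimal motions.

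Next I will identify the trivial infinitesimal motions as coming from an $SO(2)$-action.  Rotations about the origin act smoothly on $\mathbb{R}^N$, acting on the $\vec p_i$ in the usual way and on $\Phi$ by conjugation, and this action preserves the fibers of $f$.  At any honest realization $(\vec p, \Phi)$---with $\Phi$ discrete and faithful---the orbit is $1$-dimensional: were some nontrivial rotation to fix $(\vec p, \Phi)$, each $\Phi(t_i)$ would be an eigenvector of that planar rotation, forcing $\Phi(t_i) = 0$ and contradicting faithfulness of $\Phi$.  The tangent vector to this orbit thus always lies in $\ker df$.  Infinitesimal rigidity of $(\vec p, \Phi)$ then means precisely that the trivial direction accounts for all of $\ker df$, i.e., $\dim \ker df = 1$ and $\rk df(\vec p, \Phi) = N - 1$.

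The main step is then to apply the constant rank theorem.  Since $\rk df$ is lower semicontinuous on $\mathbb{R}^N$ and trivially bounded above by $N$, it equals $N - 1$ throughout some open neighborhood $U$ of $(\vec p, \Phi)$ in $\mathbb{R}^N$.  The constant rank theorem now guarantees that $\mathcal{R}' \cap U = f^{-1}\!\bigl((\ell_{ij}^2)_{ij}\bigr) \cap U$ is a smooth $1$-dimensional submanifold of $U$.  The $SO(2)$-orbit through $(\vec p, \Phi)$ is likewise a smooth $1$-dimensional submanifold contained in this fiber, so by dimension count the two coincide locally.  Consequently $(\vec p, \Phi)$ projects to an isolated point in $\mathcal{C}'(G, \bgamma, \bm{\ell}) = \mathcal{R}'/O(2)$, and by \lemref{modified-config-space} the realization is rigid.

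The argument is largely formal once the linearization is set up; the one non-trivial point to verify is that the $SO(2)$-orbit of an honest realization is truly $1$-dimensional, which reduces to discreteness and faithfulness of $\Phi$.  A pleasant feature is that infinitesimal rigidity forces $(\vec p, \Phi)$ to be a regular point of the length-squared map, so no delicate analysis of singularities of $\mathcal{R}'$ is required.
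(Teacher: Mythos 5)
Your overall route is exactly the one the paper intends: its ``proof'' of this lemma is just the remark that the modified realization space is finite dimensional, so the Asimow--Roth argument of \cite{AR78} can be adapted, and your write-up is that adaptation made explicit (length-squared map on $\R^{2n+2r}$, kernel of its differential identified with the solution space of \eqref{eq:infinitesimal}, trivial motions coming from the $SO(2)$-action by rotation of the points together with conjugation of $\Phi$, locally constant rank, fiber equals orbit locally, hence an isolated point of $\mathcal{C}'$ and rigidity via \lemref{modified-config-space}).

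One step is misjustified as written, although the repair is already contained in your second paragraph. Lower semicontinuity of $\rk df$ together with the bound $\rk df \le N$ does \emph{not} give constant rank $N-1$ on a neighborhood: a priori the rank could jump to $N$ at points arbitrarily close to $(\vec p,\Phi)$ (the number of edges may well exceed $N$), and then the constant rank theorem is unavailable. What you need is the upper bound $\rk df \le N-1$ on a whole neighborhood, and your orbit observation supplies it: $f$ is $SO(2)$-invariant on all of $\R^N$, and at any point of $\R^N$ whose coordinates are not all zero the tangent vector to its $SO(2)$-orbit --- apply the quarter-turn to every $\vec p_i$ and to every translation parameter $\vec v_j$ --- is a nonzero element of $\ker df$. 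Since discreteness and faithfulness of $\Phi$ are open conditions on the parameters (for $\Gamma_2$, linear independence of $\vec v_1,\vec v_2$; for $k=3,4,6$, $\vec v_1 \neq 0$), every point in a small neighborhood of an honest realization is in particular nonzero, so $\rk df \le N-1$ there; combined with lower semicontinuity the rank is constant equal to $N-1$ near $(\vec p,\Phi)$. With that correction the constant rank theorem applies and the remainder of your argument (the locally $1$-dimensional fiber contains, hence locally coincides with, the $1$-dimensional orbit, so the image in $\mathcal{C}'$ is isolated) goes through as the paper intends.
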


\subsubsection{Generic rigidity}
The converse of \lemref{infinitesimal-rigidity-implies-rigidity} does not hold in general, but it does for
nearly all realizations.  Let $(G,\bgamma,\bm{\ell})$ be a colored framework.  A realization
$G(\vec p,\Phi)$ is defined to be \emph{regular} for $(G,\bgamma,\bm{\ell})$
if the rank of the system \eqref{eq:infinitesimal} is maximal over all realizations.

Whether a realization is regular depends on both the colored graph $(G,\bgamma)$ and the given
lengths $\bm{\ell}$.  Let $G(\vec p,\Phi)$ be a regular realization of a colored framework.
If, in addition, the rank of \eqref{eq:infinitesimal} at $G(\vec p,\Phi)$ is maximal over all
realizations of colored frameworks with the same colored graph $(G,\bgamma)$, we define
$G(\vec p,\Phi)$ to be \emph{generic}.
We define the rank of \eqref{eq:infinitesimal} at a generic realization to be its \emph{generic rank}.
Since it depends on formal minors of the matrix underlying \eqref{eq:infinitesimal} only, it is a
property of the colored graph $(G,\bgamma)$.

If $(G,\bgamma,\bm{\ell})$ is a framework with generic realizations, it is immediate that the set
of non-generic realizations is a proper algebraic subset of the realization space.  Alternatively,
if we consider frameworks as being induced by realizations, the set of non-generic realizations
is a proper algebraic subset of $\R^{2n+2r}$, where $r=1$ for $\Gamma_{3}$, $\Gamma_{4}$, and
$\Gamma_{6}$, and $r=2$ for $\Gamma_2$.

For generic realizations, a standard argument (again, along the lines of \cite{AR78}) shows that
rigidity and infinitesimal rigidity coincide.
\begin{prop}\proplab{generic-rigidity}
A generic realization of a colored framework $(G,\bgamma,\bm{\ell})$ is rigid if and only if
it is infinitesimally rigid.
\end{prop}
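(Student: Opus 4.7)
The plan is to adapt the classical Asimow--Roth argument \cite{AR78} to our finite-dimensional setting. The implication from infinitesimal rigidity to rigidity is \lemref{infinitesimal-rigidity-implies-rigidity}, so the task is to show that a generic rigid realization is infinitesimally rigid. I will work in the ambient space $\R^{2n+2r}$ in which the modified realization space sits, with the last $2r$ coordinates parameterizing $\Phi$ via \lemref{repspace} (the rotation center of $\Phi(r_k)$ pinned at the origin).

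First, I would realize $\mathcal{R}'(G,\bgamma,\bm{\ell})$ as the fiber $f^{-1}(\bm{\ell}^2)$ of the length-squared map
\[
f : \R^{2n+2r} \to \R^m, \qquad f(\vec p, \Phi) = \bigl( ||\Phi(\gamma_{ij})\cdot \vec p_j - \vec p_i||^2 \bigr)_{ij \in E(G)}.
\]
A direct computation shows that $df|_{(\vec p, \Phi)}$ is, up to the usual factor of $2$, the coefficient matrix of the linear system \eqref{eq:infinitesimal}, so its rank equals the rank of the infinitesimal rigidity system at $G(\vec p, \Phi)$. The set of regular realizations is open in $\mathcal{R}'$, since its complement is cut out by the vanishing of the top-rank minors. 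Near a regular realization the rank of $df$ is therefore locally constant and equal to some value $d$, so by the constant rank theorem, $f^{-1}(\bm{\ell}^2)$ is locally a smooth manifold of dimension $2n + 2r - d$.

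Next, I would pass to the modified configuration space $\mathcal{C}' = \mathcal{R}'/O(2)$. For a discrete faithful $\Phi$, the vector $\vec v_1$ parameterizing $\Phi(t_1)$ is nonzero, so the $O(2)$-action on $\mathcal{R}'$ by conjugation has trivial stabilizer at $(\vec p, \Phi)$; its orbits are $1$-dimensional, and $\mathcal{C}'$ is locally a smooth manifold of dimension $2n + 2r - d - 1$ near a regular realization. On the other hand, the infinitesimal generator of this $O(2)$-orbit lies in the kernel of \eqref{eq:infinitesimal} as a trivial motion, contributing a $1$-dimensional subspace of infinitesimal motions. Therefore a regular realization is rigid if and only if $2n + 2r - d - 1 = 0$, if and only if the solution space of \eqref{eq:infinitesimal} is exactly $1$-dimensional, which is infinitesimal rigidity. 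Since generic realizations are regular, this gives the equivalence.

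The main obstacle is verifying local freeness and transversality of the $O(2)$-action at regular points, so that passing to the quotient strictly subtracts a single dimension and does not introduce new critical locus. This uses the discrete faithful structure of $\Phi$ together with \lemref{repspace} to rule out degenerate stabilizers, and while standard in spirit it requires some bookkeeping; the remaining ingredients (Jacobian identification, openness of regular realizations, constant rank theorem) are routine once the setup is in place.
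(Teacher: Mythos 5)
Your overall route is the one the paper intends: the paper gives no details for \propref{generic-rigidity} beyond citing the Asimow--Roth argument, and your setup (identifying $d f$ of the squared-length map with the coefficient matrix of \eqref{eq:infinitesimal}, cutting out $\mathcal{R}'$ as a fiber, then quotienting by the residual $O(2)$ and counting dimensions) is exactly that adaptation. The Jacobian identification, the triviality of stabilizers (nontrivial rotations move $\Phi(t_1)$ since $\vec v_1\neq 0$; reflections conjugate $\Phi(r_k)=(0,R_k)$ to $(0,R_k^{-1})$ for $k\ge 3$, and cannot fix both $\vec v_1,\vec v_2$ for $k=2$), and the final count (rigid iff the local dimension of $\mathcal{C}'$ is $0$ iff the kernel of \eqref{eq:infinitesimal} is exactly the $1$-dimensional space of rotations about the origin) are all fine.

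The genuine gap is in the step feeding the constant rank theorem. You justify local constancy of $\operatorname{rank} df$ by observing that the set of \emph{regular} realizations is open in $\mathcal{R}'$. That only gives constancy of the rank along the fiber $f^{-1}(\bm{\ell}^2)$; the constant rank theorem needs the rank to be constant on an open neighborhood in the ambient $\R^{2n+2r}$, and regularity (maximality of the rank over realizations with the \emph{same lengths}) does not provide this --- the rank can jump up at nearby ambient points off the fiber, and then $f^{-1}(\bm{\ell}^2)$ need not be a manifold of dimension $2n+2r-d$ near the point. Indeed your concluding claim, that a \emph{regular} realization is rigid iff $2n+2r-d-1=0$, is false: the colored analogue of the degenerate collinear triangle is rigid and regular (all realizations with those lengths are congruent, so the rank is fiber-maximal) yet not infinitesimally rigid. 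What saves the proposition is precisely the \emph{genericity} hypothesis: at a generic realization the rank of \eqref{eq:infinitesimal} equals its maximum over all $(\vec p,\Phi)$ with the same colored graph, so by lower semicontinuity of rank it is constant on an ambient neighborhood, and the constant rank theorem then applies. With genericity invoked at that step (rather than only in the last sentence), the rest of your argument goes through; by contrast, the $O(2)$ local-freeness issue you single out as the main obstacle is comparatively routine.
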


\subsection{Proof of \theoref{main}}\seclab{main-proof}
We recall, from the introduction, our main theorem:
\mainthm*
The proof occupies the rest of this section.

\subsubsection{Reduction to colored frameworks}
By \propref{colored-and-crystallographic-frameworks}, it is sufficient to prove the statement
of \theoref{main} for colored frameworks.  \propref{generic-rigidity} then implies that the
theorem will follow from a characterization of generic infinitesimal rigidity for colored
frameworks.

Thus, to prove the theorem, we show that, for a colored graph $(G,\bgamma)$ with
$n$ vertices and $m=2n + \rep_{\Gamma_k}(\Trans(\Gamma_k)) - 1$ edges,
the generic rank of the system \eqref{eq:infinitesimal} is $m$ if and only if
$(G,\bgamma)$ is a $\Gamma$-colored-Laman graph.

\subsubsection{Necessity: the ``Maxwell direction''}
We recall the definition of the sparsity function $h(G)$ from \secref{gamma-laman},
which defines $\Gamma$-colored-Laman graphs.  We have, for a colored graph $(G,\bgamma)$
with $n$ vertices and $c$ connected components $G_1,G_2,\ldots,G_c$,
\[
h(G) = 2n + \rep_{\Gamma_k}(G) - 1 - \sum_{i=1}^c T(G_i)
\]
\begin{prop}\proplab{maxwell-direction}
Let $(G,\bgamma)$ be a colored graph.  Then the generic rank of the system \eqref{eq:infinitesimal} is at most
$h(G)$.
\end{prop}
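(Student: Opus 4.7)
The plan is to bound the rank of the linear system \eqref{eq:infinitesimal} from above by exhibiting many independent kernel vectors. Since the ambient space of $(\vec q, \vec u_1, \vec u_2)$ (or $(\vec q, \vec u_1)$ when $k=3,4,6$) has dimension $2n + \rep_{\Gamma_k}(\Trans(\Gamma_k))$, it suffices to produce at least $\rep_{\Gamma_k}(\Trans(\Gamma_k)) - \rep_{\Gamma_k}(G) + \sum_{i=1}^{c} T(G_i) + 1$ linearly independent kernel vectors.

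The key observation is that, once the semidirect-product expansion of $\Phi'(\gamma_{ij})$ is unpacked, the system \eqref{eq:infinitesimal} is exactly the realization system \eqref{colored-crystal-directions} of the colored crystallographic direction network $(G, \bgamma, \vec d)$ with directions $\vec d_{ij} = \bigl(\Phi(\gamma_{ij}) \vec p_j - \vec p_i\bigr)^\perp$, where the pair $(\vec q, \Phi')$ plays the role of a point configuration together with a degenerate representation in $\overline{\Rep}(\Gamma_k)$. Kernel vectors satisfying $\Phi'(\gamma_{ij}) \vec q_j = \vec q_i$ for every edge $ij$ correspond to collapsed realizations of this direction network, and by \lemref{collapsed-dimensions} they contribute at least $\rep_{\Gamma_k}(\Trans(\Gamma_k)) - \rep_{\Gamma_k}(G) + \sum_{i=1}^{c} T(G_i)$ to the dimension of the kernel.

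To obtain the extra dimension, I would exhibit the infinitesimal rotation about the origin: set $\vec q_i = J \vec p_i$ for every $i$ and $\vec u_l = J \vec v_l$, where $J$ is counter-clockwise rotation by $90^\circ$ about the origin. Because $J$ commutes with every $R_k^a$, the semidirect-product formula yields
\[
\Phi'(\gamma_{ij}) \vec q_j - \vec q_i \;=\; J \bigl(\Phi(\gamma_{ij}) \vec p_j - \vec p_i\bigr),
\]
which is orthogonal to $\Phi(\gamma_{ij}) \vec p_j - \vec p_i$ and therefore satisfies \eqref{eq:infinitesimal}. At a generic realization no edge vector vanishes, so this motion is not collapsed and is linearly independent of the subspace furnished by \lemref{collapsed-dimensions}. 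Summing, the kernel has dimension at least the target, and the rank is bounded above by $h(G)$, as claimed.

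The main point to be careful with is the direction-network identification. One must verify that the additive translational discrepancy between $\Phi'(\gamma)$ and $\Phi(\gamma)$ plays precisely the role of the translational part appearing in a direction-network realization equation, and that allowing $\vec u_l$ to range over $\R^2$ is equivalent to allowing $\Phi'$ to range over $\overline{\Rep}(\Gamma_k)$. Once this is set up, the remainder of the proof is a direct application of \lemref{collapsed-dimensions} combined with the rotation motion just described.
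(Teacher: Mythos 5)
Your proposal is correct and follows essentially the same route as the paper: identify the kernel of \eqref{eq:infinitesimal} with the realization space of the direction network with directions $(\Phi(\gamma_{ij})\cdot\vec p_j - \vec p_i)^{\perp}$, invoke \lemref{collapsed-dimensions} for the collapsed solutions, and add the $\pi/2$-rotated copy of $(\vec p,\Phi)$ as one further independent kernel vector before counting dimensions. Your explicit verification that $J$ commutes with $R_k$ and that the rotated motion lies outside the collapsed subspace is exactly the (implicit) content of the paper's argument.
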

\begin{proof}
Let $G(\vec p,\Phi)$ be any realization of a colored framework on a colored graph $(G,\bgamma)$ with
no collapsed edges.  That is select a representation $\Phi$ of $\Gamma_k$ and points $\vec p_i$, such
that, $\Phi(\gamma_{ij})\cdot\vec p_j\neq \vec p_i$ for all edges $ij\in E(G)$.

We now define the direction $\vec d_{ij}$ to be $(\Phi(\gamma_{ij})\cdot\vec p_j - \vec p_i)^{\perp}$ for
each edge $ij\in E(G)$.  These directions define a colored direction network $(G,\bgamma,\vec d)$ with the
property that any solution to this direction network corresponds to an infinitesimal motion of the
colored framework realization $G(\vec p,\Phi)$.

\lemref{collapsed-dimensions} implies that there are
\[
\rep_{\Gamma_k}(\Trans(\Gamma_k)) - \rep_{\Gamma_k}(G) + \sum_{i=1}^c T(G_i)
\]
dimensions of  realizations with every edge collapsed.  By construction, there is a non-collapsed realization of this
direction network as well: it is simply $(\vec p,\Phi)$ rotated by $\pi/2$.  Since this is not obtained by taking linear combinations of
realizations where every edge is collapsed, the dimension of the space of infinitesimal motions is always at
least
\[
\rep_{\Gamma_k}(\Trans(\Gamma_k)) - \rep_{\Gamma_k}(G) + \sum_{i=1}^c T(G_i) + 1
\]
The proposition follows by subtracting from $2n + \rep_{\Gamma_k}(\Trans(\Gamma_k))$ and comparing to $h(G)$.
\end{proof}

\subsubsection{Sufficiency: the ``Laman direction''}
The other direction of the proof of \theoref{main} is this next proposition
\begin{prop}\proplab{laman-direction}
Let $(G,\bgamma)$ be a $\Gamma$-colored-Laman graph.  Then the generic rank of
the system \eqref{eq:infinitesimal} is $h(G)$.
\end{prop}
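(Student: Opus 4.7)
\emph{Approach.} By \propref{maxwell-direction}, the generic rank of the infinitesimal rigidity system \eqref{eq:infinitesimal} is at most $h(G)$. Since generic rank equals the maximum rank over all realizations, it suffices to exhibit a single realization $G(\vec p,\Phi)$ at which the rigidity matrix attains rank $m = h(G)$, equivalently, at which its kernel is one-dimensional. The strategy is to build $(\vec p,\Phi)$ from a direction network and then use a second invocation of the Direction Network Theorem to count infinitesimal motions.

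\emph{Dictionary with direction networks.} The key observation is that for any realization $G(\vec p,\Phi)$ with no collapsed edges, setting $\vec d_{ij} = (\Phi(\gamma_{ij})\cdot\vec p_j - \vec p_i)^\perp$ turns the kernel equations \eqref{eq:infinitesimal} (in the unknowns $(\vec q,\Phi')$) into the direction network realization equations \eqref{colored-crystal-directions} for the colored direction network $(G,\bgamma,\vec d)$, with $(\vec q,\Phi')$ now playing the roles of points and representation. Since the infinitesimal rigidity parameterization fixes $\Phi'(r_k) = (0,R_k)$ (matching the fixed-rotation-center convention of \propref{dnthmfixedcenter}) and uses $\rep_{\Gamma_k}(\Trans(\Gamma_k))$ translation parameters, the kernel of the rigidity matrix at $(\vec p,\Phi)$ is canonically identified with the realization space of $(G,\bgamma,\vec d)$ in the sense of \propref{dnthmfixedcenter}.

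\emph{Producing $(\vec p,\Phi)$ and counting the kernel.} Pick $\vec d^{(0)}$ generically so that both $\vec d^{(0)}$ and its edgewise $\pi/2$-rotation $(\vec d^{(0)})^\perp$ satisfy the genericity hypothesis of \propref{dnthmfixedcenter}; this is an intersection of two open and dense loci, hence nonempty. Since $(G,\bgamma)$ is $\Gamma$-colored-Laman, \propref{dnthmfixedcenter} applied to $(G,\bgamma,\vec d^{(0)})$ produces a faithful realization $G(\vec p,\Phi)$, with $\Phi(r_k) = (0,R_k)$, whose realization space is one-dimensional. Take this $(\vec p,\Phi)$, equipped with the induced lengths $\ell_{ij} = \|\Phi(\gamma_{ij})\cdot\vec p_j - \vec p_i\|$, as the base realization of the framework. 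Faithfulness means each $\Phi(\gamma_{ij})\cdot\vec p_j - \vec p_i$ is a nonzero scalar multiple of $\vec d^{(0)}_{ij}$, so the derived $\vec d_{ij}$ is edgewise a nonzero scalar multiple of $(\vec d^{(0)}_{ij})^\perp$; edgewise rescaling does not alter a direction network, so $(G,\bgamma,\vec d)$ and $(G,\bgamma,(\vec d^{(0)})^\perp)$ have the same realization space. A second application of \propref{dnthmfixedcenter} to the generic direction network $(G,\bgamma,(\vec d^{(0)})^\perp)$ bounds this realization space to dimension one. Via the dictionary, the kernel of the rigidity matrix at $(\vec p,\Phi)$ has dimension $1$, so its rank is $(2n + \rep_{\Gamma_k}(\Trans(\Gamma_k))) - 1 = m = h(G)$. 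The main subtlety—and the reason we choose $\vec d^{(0)}$ in an intersection of two generic loci—is that \propref{dnthmfixedcenter} is used twice: once to produce $(\vec p,\Phi)$, and once to count the infinitesimal motions of the resulting framework.
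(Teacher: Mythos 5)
Your proposal is correct and follows essentially the same route as the paper: choose directions with both $\vec d$ and $\vec d^\perp$ generic in the sense of \propref{dnthmfixedcenter}, use that proposition to produce a faithful realization, and observe that the infinitesimal system \eqref{eq:infinitesimal} at this realization is (up to edgewise rescaling) the realization system of the direction network with the perpendicular directions, so a second appeal to genericity of $\vec d^\perp$ gives a one-dimensional kernel and hence rank $h(G)$. Your explicit remarks on the two-fold genericity and the linearity of the realization space just make precise what the paper's proof leaves implicit.
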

\begin{proof}
It is sufficient to construct a single example at which this rank is attained, since the generic rank is
always at least the rank for any specific realization.  We will do this using direction networks.

Let $(G,\bgamma)$ be a $\Gamma$-colored-Laman graph, and select a direction $\vec d_{ij}$ for each edge
$ij\in E(G)$, such that both $\vec d$ and $\vec d^{\perp}=(\vec d_{ij}^\perp)$ are
generic in the sense of \propref{dnthmfixedcenter}.
By \propref{dnthmfixedcenter}, the colored direction network $(G,\bgamma,\vec d)$ has a unique,
up to scaling, faithful realization $(\vec p,\Phi)$, which implies that, for all edges $ij\in E(G)$
\[
\Phi(\gamma_{ij})\cdot \vec p_j - \vec p_i = \alpha_{ij}\vec d_{ij}
\]
for some non-zero scalar $\alpha_{ij}\in \R$.  It follows that, by replacing $\vec d_{ij}$ with
$\Phi(\gamma_{ij})\cdot \vec p_j - \vec p_i$ in the direction realization system
\eqref{colored-crystal-directions} we obtain \eqref{eq:infinitesimal}.  Since $\vec d^\perp$ is
also generic for \propref{dnthmfixedcenter}, we conclude that \eqref{eq:infinitesimal} has full rank
as desired.
\end{proof}

\bibliographystyle{abbrvnat}

\begin{thebibliography}{28}
\providecommand{\natexlab}[1]{#1}
\providecommand{\url}[1]{\texttt{#1}}
\expandafter\ifx\csname urlstyle\endcsname\relax
\providecommand{\doi}[1]{doi: #1}\else
\providecommand{\doi}{doi: \begingroup \urlstyle{rm}\Url}\fi

\bibitem[Asimow and Roth(1978)]{AR78}
L.~Asimow and B.~Roth.
\newblock The rigidity of graphs.
\newblock \emph{Trans. Amer. Math. Soc.}, 245:\penalty0 279--289, 1978.
\newblock ISSN 0002-9947.
\newblock \doi{10.2307/1998867}.
\newblock URL \url{http://dx.doi.org/10.2307/1998867}.

\bibitem[Bieberbach(1911)]{B11}
L.~Bieberbach.
\newblock \"{U}ber die {B}ewegungsgruppen der {E}uklidischen {R}\"aume.
\newblock \emph{Math. Ann.}, 70\penalty0 (3):\penalty0 297--336, 1911.
\newblock \doi{10.1007/BF01564500}.

\bibitem[Bieberbach(1912)]{B12}
L.~Bieberbach.
\newblock \"{U}ber die {B}ewegungsgruppen der {E}uklidischen {R}\"aume
({Z}weite {A}bhandlung.) {D}ie {G}ruppen mit einem endlichen
{F}undamentalbereich.
\newblock \emph{Math. Ann.}, 72\penalty0 (3):\penalty0 400--412, 1912.
\newblock \doi{10.1007/BF01456724}.

\bibitem[Borcea and Streinu(2010)]{BS10}
C.~S. Borcea and I.~Streinu.
\newblock Periodic frameworks and flexibility.
\newblock \emph{Proc. R. Soc. Lond. Ser. A Math. Phys. Eng. Sci.}, 466\penalty0
(2121):\penalty0 2633--2649, 2010.
\newblock ISSN 1364-5021.
\newblock \doi{10.1098/rspa.2009.0676}.
\newblock URL \url{http://dx.doi.org/10.1098/rspa.2009.0676}.

\bibitem[Borcea and Streinu(2011)]{BS11}
C.~S. Borcea and I.~Streinu.
\newblock Minimally rigid periodic graphs.
\newblock \emph{Bulletin of the London Mathematical Society}, 2011.
\newblock \doi{10.1112/blms/bdr044}.
\newblock URL \url{http://dx.doi.org/blms.bdr044.abstract}.

\bibitem[Conway et~al.(2001)Conway, Delgado~Friedrichs, Huson, and
Thurston]{CDHT01}
J.~H. Conway, O.~Delgado~Friedrichs, D.~H. Huson, and W.~P. Thurston.
\newblock On three-dimensional space groups.
\newblock \emph{Beitr\"age Algebra Geom.}, 42\penalty0 (2):\penalty0 475--507,
2001.
\newblock ISSN 0138-4821.

\bibitem[Edmonds(1965)]{E65}
J.~Edmonds.
\newblock Minimum partition of a matroid into independent subsets.
\newblock \emph{J. Res. Nat. Bur. Standards Sect. B}, 69B:\penalty0 67--72,
1965.
\newblock ISSN 0160-1741.

\bibitem[Edmonds and Rota(1966)]{ER66}
J.~Edmonds and G.-C. Rota.
\newblock Submodular set functions (abstract).
\newblock In \emph{Waterloo Combinatorics Conference}, University of Waterloo,
Ontario, 1966.

\bibitem[Hatcher(2002)]{H02}
A.~Hatcher.
\newblock \emph{Algebraic topology}.
\newblock Cambridge University Press, Cambridge, 2002.
\newblock ISBN 0-521-79160-X; 0-521-79540-0.

\bibitem[Lee and Streinu(2008)]{LS08}
A.~Lee and I.~Streinu.
\newblock Pebble game algorithms and sparse graphs.
\newblock \emph{Discrete Math.}, 308\penalty0 (8):\penalty0 1425--1437, 2008.
\newblock ISSN 0012-365X.
\newblock \doi{10.1016/j.disc.2007.07.104}.
\newblock URL \url{http://dx.doi.org/10.1016/j.disc.2007.07.104}.

\bibitem[Lov{\'a}sz and Yemini(1982)]{LY82}
L.~Lov{\'a}sz and Y.~Yemini.
\newblock On generic rigidity in the plane.
\newblock \emph{SIAM J. Algebraic Discrete Methods}, 3\penalty0 (1):\penalty0
91--98, 1982.
\newblock ISSN 0196-5212.
\newblock \doi{10.1137/0603009}.
\newblock URL \url{http://dx.doi.org/10.1137/0603009}.

\bibitem[Malestein and Theran(2010)]{MT10a}
J.~Malestein and L.~Theran.
\newblock Generic combinatorial rigidity of periodic frameworks.
\newblock Preprint, arXiv:1008.1837v2, 2010.
\newblock URL \url{http://arxiv.org/abs/1008.1837v2}.

\bibitem[Malestein and Theran(2011)]{MT11}
J.~Malestein and L.~Theran.
\newblock Generic rigidity of frameworks with orientation-preserving
crystallographic symmetry.
\newblock Preprint, arXiv:1108.2518, 2011.
\newblock URL \url{http://arxiv.org/abs/1108.2518}.

\bibitem[Malestein and Theran(2012{\natexlab{a}})]{MT12a}
J.~Malestein and L.~Theran.
\newblock Generic rigidity of reflection frameworks.
\newblock Preprint, arXiv:1203.2276, 2012{\natexlab{a}}.
\newblock URL \url{http://arxiv.org/abs/1203.2276}.

\bibitem[Malestein and Theran(2012{\natexlab{b}})]{MT12fsi}
J.~Malestein and L.~Theran.
\newblock Frameworks with forced symmetry {I}: rotations and reflections.
\newblock Submitted manuscript, 2012{\natexlab{b}}.
\newblock See also \cite{MT11}.

\bibitem[Malestein and Theran(2013)]{MT13}
J.~Malestein and L.~Theran.
\newblock Generic combinatorial rigidity of periodic frameworks.
\newblock \emph{Advances in Mathematics}, 233:\penalty0 291--331, 2013.
\newblock \doi{10.1016/j.aim.2012.10.007}.
\newblock URL \url{http://dx.doi.org/10.1016/j.aim.2012.10.007}.

\bibitem[Oxley(2011)]{O11}
J.~Oxley.
\newblock \emph{Matroid theory}, volume~21 of \emph{Oxford Graduate Texts in
Mathematics}.
\newblock Oxford University Press, Oxford, second edition, 2011.
\newblock ISBN 978-0-19-960339-8.

\bibitem[Recski(1984)]{R84}
A.~Recski.
\newblock A network theory approach to the rigidity of skeletal structures.
{II}. {L}aman's theorem and topological formulae.
\newblock \emph{Discrete Appl. Math.}, 8\penalty0 (1):\penalty0 63--68, 1984.
\newblock ISSN 0166-218X.
\newblock \doi{10.1016/0166-218X(84)90079-9}.
\newblock URL \url{http://dx.doi.org/10.1016/0166-218X(84)90079-9}.

\bibitem[Ross(2011)]{R11}
E.~Ross.
\newblock \emph{The Rigidity of Periodic Frameworks as Graphs on a Torus}.
\newblock PhD thesis, York University, 2011.
\newblock URL \url{http://www.math.yorku.ca/~ejross/RossThesis.pdf}.

\bibitem[Ross et~al.(2011)Ross, Schulze, and Whiteley]{RSW10}
E.~Ross, B.~Schulze, and W.~Whiteley.
\newblock Finite motions from periodic frameworks with added symmetry.
\newblock \emph{International Journal of Solids and Structures}, 48\penalty0
(11-12):\penalty0 1711 -- 1729, 2011.
\newblock ISSN 0020-7683.
\newblock \doi{10.1016/j.ijsolstr.2011.02.018}.
\newblock URL
\url{http://www.sciencedirect.com/science/article/pii/S0020768311000850}.

\bibitem[Schulze(2010{\natexlab{a}})]{S10a}
B.~Schulze.
\newblock Symmetric versions of {L}aman's theorem.
\newblock \emph{Discrete Comput. Geom.}, 44\penalty0 (4):\penalty0 946--972,
2010{\natexlab{a}}.
\newblock ISSN 0179-5376.
\newblock \doi{10.1007/s00454-009-9231-x}.
\newblock URL \url{http://dx.doi.org/10.1007/s00454-009-9231-x}.

\bibitem[Schulze(2010{\natexlab{b}})]{S10b}
B.~Schulze.
\newblock Symmetric {L}aman theorems for the groups {$\mathcal{C}_2$} and
{$\mathcal{C}_s$}.
\newblock \emph{Electron. J. Combin.}, 17\penalty0 (1):\penalty0 Research Paper
154, 61, 2010{\natexlab{b}}.
\newblock ISSN 1077-8926.
\newblock URL
\url{http://www.combinatorics.org/Volume_17/Abstracts/v17i1r154.html}.

\bibitem[Schulze and Whiteley(2011)]{SW10}
B.~Schulze and W.~Whiteley.
\newblock The orbit rigidity matrix of a symmetric framework.
\newblock \emph{Discrete \& Computational Geometry}, 46:\penalty0 561--598,
2011.
\newblock ISSN 0179-5376.
\newblock \doi{10.1007/s00454-010-9317-5}.
\newblock URL \url{http://dx.doi.org/10.1007/s00454-010-9317-5}.

\bibitem[Streinu and Theran(2010)]{ST10}
I.~Streinu and L.~Theran.
\newblock Slider-pinning rigidity: a {M}axwell-{L}aman-type theorem.
\newblock \emph{Discrete \& Computational Geometry}, 44\penalty0 (4):\penalty0
812--837, 2010.
\newblock ISSN 0179-5376.
\newblock \doi{10.1007/s00454-010-9283-y}.
\newblock URL \url{http://dx.doi.org/10.1007/s00454-010-9283-y}.

\bibitem[Tanigawa(2012)]{T12}
S.-I. Tanigawa.
\newblock Matroids of gain graphs in applied discrete geometry.
\newblock Preprint, arXiv:1207.3601, 2012.
\newblock URL \url{http://arxiv.org/abs/1207.3601}.

\bibitem[Whiteley(1988)]{W88}
W.~Whiteley.
\newblock The union of matroids and the rigidity of frameworks.
\newblock \emph{SIAM J. Discrete Math.}, 1\penalty0 (2):\penalty0 237--255,
1988.
\newblock ISSN 0895-4801.
\newblock \doi{10.1137/0401025}.
\newblock URL \url{http://dx.doi.org/10.1137/0401025}.

\bibitem[Zaslavsky(1982)]{Z82}
T.~Zaslavsky.
\newblock Voltage-graphic matroids.
\newblock In \emph{Matroid theory and its applications}, pages 417--424.
Liguori, Naples, 1982.

\bibitem[Zaslavsky(1998)]{Z98}
T.~Zaslavsky.
\newblock A mathematical bibliography of signed and gain graphs and allied
areas.
\newblock \emph{Electron. J. Combin.}, 5:\penalty0 Dynamic Surveys 8, 124 pp.
(electronic), 1998.
\newblock ISSN 1077-8926.
\newblock URL \url{http://www.combinatorics.org/Surveys/index.html}.
\newblock Manuscript prepared with Marge Pratt.

\end{thebibliography}

\end{document}